\documentclass[11pt,reqno]{amsart}
\usepackage{amsmath,amssymb,amsthm,amsfonts,latexsym, amsthm, amscd, mathrsfs, stmaryrd}
\usepackage{mathtools}
\usepackage[mathlines]{lineno}
\usepackage[colorlinks=true,linkcolor=blue,citecolor=blue]{hyperref}
\usepackage{cite}
\usepackage{graphicx,color,colortbl}
\usepackage{upgreek}
\usepackage{hhline}
\numberwithin{equation}{section}
\usepackage{dynkin-diagrams}
\usepackage{epic}
\allowdisplaybreaks
\numberwithin{equation}{section}
\usepackage{float}
\usepackage{tikz}
\usetikzlibrary{
    cd,
	shapes,
	arrows,
	positioning,
	decorations.markings,
	decorations.pathmorphing,
	circuits.logic.US,
	circuits.logic.IEC,
	fit,
	calc,
	plotmarks,
	matrix
}
\usepackage{cancel}

\usepackage{todonotes}
\usepackage{enumerate}

\usepackage[english]{babel}
\usepackage[T1]{fontenc}


\newtheorem{proposition}{Proposition}[section]
\newtheorem{lemma}[proposition]{Lemma}
\newtheorem{corollary}[proposition]{Corollary}
\newtheorem{theorem}[proposition]{Theorem}

\newtheorem*{conjintro}{Conjecture}

\newtheorem{innercustomthm}{{\bf Theorem}}
\newenvironment{customthm}[1]
  {\renewcommand\theinnercustomthm{#1}\innercustomthm}
  {\endinnercustomthm}
\theoremstyle{definition}
\newtheorem{definition}[proposition]{Definition}
\newtheorem{example}[proposition]{Example}
\newtheorem{conjecture}[proposition]{Conjecture}

\theoremstyle{remark}
\newtheorem{remark}[proposition]{Remark}

\newcommand{\Rmnum}[1]{\expandafter\@slowromancap\romannumeral #1@}
\def \g{\mathfrak{g}}

\def \N{\mathbb{N}}
\def \Q{\mathbb{Q}}
\def \C{\mathbb{C}}
\def \Z{\mathbb{Z}}
\def \I{\mathbb{I}}

\def \bP{\mathbf{P}}

\def \f{\mathbf{f}}
\def \A{\mathcal{A}}
\def \tY{\widetilde{Y}}

\def \wI{\I_{\circ}}

\def \bI{\I_{\bullet}}

\def \bw{w_\bullet}

\def \ba{\mathbf{a}}

\def \tX{\widetilde{X}}

\def \tY{\widetilde{Y}}

\def \tT{\widetilde{T}}

\def \tW{\widetilde{W}}

\def \ww{w^\bullet}

\def \ss{S}

\def \Id{\mathrm{Id}}

\newcommand{\U}{\mathbf{U}}

\def \ov{\overline}
\def \un{\underline}

\def \O{\mathcal{O}}
\def \X{\mathcal{X}}

\newcommand{\nc}{\newcommand}
\nc{\greentext}[1]{\textcolor{green}{#1}}
\nc{\redtext}[1]{\textcolor{red}{#1}}
\nc{\bluetext}[1]{\textcolor{blue}{#1}} 
\nc{\green}[1]{\greentext{ #1}}
\nc{\red}[1]{\redtext{ #1}}
\nc{\blue}[1]{\bluetext{ #1}}

\def \Hom{\mathrm{Hom}}

\def \vs{\varsigma}
\def \U{\mathbf{U}}
\def \Ui{\mathbf{U}^\imath}

\def \QQ{\mathbb{Q}(q)}

\def \bB{\mathbf{B}}
\def \bE{\mathbf{E}}
\def \bF{\mathbf{F}}

\def \het{\mathrm{ht}\,}

\def \op{\mathrm{op}}

\def \tc{\widetilde{c}}
\def \tC{\widetilde{C}}
\def \tN{\widetilde{N}}
\def \tR{\widetilde{R}}
\def \tB{\widetilde{B}}
\def \tI{\widetilde{\I}}

\def \tg{\widetilde{\g}}
\def \tG{\widehat{\Gamma}}
\def \tE{\widetilde{E}}
\def \tbE{\widetilde{\bE}}

\def \cO{\mathcal{O}}
\def \cS{\mathcal{S}}

\def \Fr{\mathrm{Fr}}

\def \one{{\mathbf 1}}
\def \dB{\dot{\mathbf B}}
\def \dP{\dot{\mathbf P}}
\def \dU{\dot{\U}}
\def \dUi{\dot{\Ui}}
\def \dUiA{\dot{\U}^\imath_{\A}}
\def \DCU{\mathfrak{U}}

\def \brU{\breve{\U}}

\newcommand{\arxiv}[1]{\href{http://arxiv.org/abs/#1}{\tt arXiv:\nolinkurl{#1}}}

\subjclass[2020]{Primary 17B37, 17B63}

\title{Quantum symmetric spaces at roots of 1}

\author[Jinfeng Song]{Jinfeng Song}
\address{Department of Mathematics, The Hong Kong University of Science and Technology, Clear Water Bay, Hong Kong SAR, PR China}
\email{jfsong@ust.hk}

\author[Weinan Zhang]{Weinan Zhang}
\address{School of Mathematical Sciences, Shenzhen University, Shenzhen, Guangdong, 
518086, PR China}
\email{mathzwn@szu.edu.cn}
\date{}

\begin{document}

\begin{abstract}
We study the integral form of quantized coordinate algebras of symmetric spaces and its specializations at roots of unity. Our main tool is an integral localization, whose classical specialization recovers the open Borel orbit in the symmetric space. At a suitable odd root of unity, we prove that the quantum symmetric space is a finite projective module over its Frobenius center, which is canonically isomorphic to the classical coordinate algebra of the symmetric space. We show that the fibre algebras are isomorphic along each double Bruhat cell of the symmetric space. We determine the PI degree and propose a conjectural dimension formula for irreducible modules. In the simply connected case, we introduce a framed Kac--Moody data, which leads to a description of fibre algebras of quantum symmetric spaces via those of quantum Schubert cells.


\end{abstract}

\maketitle
\setcounter{tocdepth}{1}
\tableofcontents

\section{Introduction}\label{sec:introduction}

\subsection{Background}

The quantized coordinate algebra of a reductive group $G$ arises from the theory of Drinfeld-Jimbo quantum groups \cite{Dri87,Jim85}. The matrix coefficients of finite-dimensional type~$1$ modules over quantum groups form a Hopf algebra $\O_q[G]$, which quantizes the classical coordinate algebra $\O[G]$. The theory of canonical bases provides an integral form $\O_\A[G]$ over $\A=\Z[q,q^{-1}]$; see \cite{Kas93,Lus94}. Lusztig \cite{Lus09} used this construction to establish the Chevalley group scheme over $\Z$. 

De Concini--Lyubashenko \cite{DCL94} initiated the study of the quantized coordinate algebra $\O_\epsilon[G]$ at an root of unity $\epsilon$ of suitable odd order; see also Brown--Gordon \cite{BG02} and the references therein. A central feature of this theory is its connection with Poisson geometry, in the spirit of the orbit method. The dual quantum Frobenius map \cite{Lus90a} identifies $\O[G]$ with a central subalgebra over which $\O_\epsilon[G]$ is finite projective, reducing the study of irreducible modules to that of finite-dimensional fibre algebras. These fibre algebras are isomorphic along each double Bruhat cell \cite[Section~9]{DCL94}; the double Bruhat cells are precisely the $T$-leaves of the standard Poisson structure on $G$.

It is natural to extend the above root-of-unity picture from reductive groups to symmetric spaces. The construction of quantum symmetric spaces by Letzter was one of the motivations for the theory of quantum symmetric pairs; cf. \cite{Let03}. To pursue the root-of-unity extension, one needs an integral form of the quantum symmetric space. Using the icanonical bases developed by Bao--Wang \cite{BW18b} in the theory of quantum symmetric pairs, Bao and the first author \cite{BS24} constructed such an integral form for quantum symmetric spaces, whose specialization at $q=1$ recovers the coordinate algebra of symmetric spaces.


\subsection{Goal}

Let $\theta$ be an involution on $G$, and let $S=G/K$ be the symmetric space associated with $K=G^\theta$. Fix opposite Borel subgroups $B=B_+$ and $B_-$ with a $\theta$-stable maximal torus $T=B_+\cap B_-$ such that they form a $\theta$-split pair; see Section~\ref{sec:quantized-coordinate-algebras} for details.

The goal is to study structures and representation theory for quantum symmetric spaces at roots of unity. Along the way, we will relate three structures attached to $\ss$:  the geometry of the $T$-leaf stratification of $\ss$, the integral form $\O_\A[S]$ of the quantum symmetric space, and the representation theory of its specialization $\O_\epsilon[S]$ at roots of unity.

\subsection{Frobenius center}

Let $\ell>0$ be an odd integer, coprime to $3$ whenever $G$ has a component of type $G_2$.  Let $\epsilon$ be a primitive $\ell$-th root of unity. We use $\O_\epsilon[G]$ and $\O_\epsilon[\ss]$ to denote specializations of $\O_\A[G]$ and $\O_\A[\ss]$ at $q=\epsilon$, respectively. 

Our first result extends the Frobenius-center construction to quantum symmetric spaces. We show that the dual quantum Frobenius map $\Fr^*:\O[G]\rightarrow\O_\epsilon[G]$ restricts to an embedding of $\O[S]$ into the center of $\O_\epsilon[S]$. For $x\in S$, let $\mathfrak m_x\subset\O[S]$ be its maximal ideal, and write
\[
\O_\epsilon[S](x)=\O_\epsilon[S]/\Fr^*(\mathfrak m_x)\O_\epsilon[S]
\]
for the corresponding fibre algebra.

\begin{customthm}{A}[Theorems~\ref{thm:Fr}, \ref{thm:projectivity}, and~\ref{thm:fiberiso}; Proposition~\ref{prop:poisson-order}]
\label{thm:intro-frobenius}
The dual quantum Frobenius map $\Fr^*$ identifies
$\O[\ss]$
with a central subalgebra of $\O_\epsilon[\ss]$. More precisely, the following diagram commutes: 
\begin{equation}
        \begin{tikzcd}
            & \O[S] \arrow[r,hook,"\Fr_S^*"] \arrow[d,hook] & \O_\epsilon[S] \arrow[d,hook] \\ & \O[G] \arrow[r,hook,"\Fr^*"] & \O_\epsilon[G]
        \end{tikzcd}
\end{equation}
Moreover, we have:
\begin{enumerate}[(1)]
\item $\O_\epsilon[\ss]$ is a finite projective $\O[\ss]$-module of rank $\ell^{\dim S}$;
\item $\O_\epsilon[S]$ admits a Poisson order structure over its Frobenius central subalgebra $\O[S]$, inducing the standard Poisson bracket on $\O[S]$;
\item 
for $x,x'$ lying in the same $T$-leaf of $\ss$, their fibre algebras are isomorphic:
\[
 \O_\epsilon[S](x)\cong\O_\epsilon[S](x').
\]
\end{enumerate}
\end{customthm}

$T$-leaves of the natural Poisson structure on $S$ were determined by Evens--Lu \cite{EL06}; see also \cite{Lu14}. 
More explicitly, let $(\I=\wI\sqcup\bI,\tau)$ be the Satake diagram associated to $\theta$. Let $W$ be the Weyl group of $G$, and write $w^\bullet=w_0w_\bullet$, where $w_0$ and $w_\bullet$ are the longest elements of $W$ and the parabolic subgroup $W_{\bI}$, respectively. The symmetric space $S$ admits a stratification
\[
S=\bigsqcup_{u\leq w^\bullet} S_u.
\]
into $T$-leaves; see Section~\ref{sec:stratification} for the definition of $S_u$. Following Bao \cite{Bao26}, we call $S_u$ the \emph{double Bruhat cells of $S$}.
This stratification has found applications in recent work on total positivity \cite{Bao26}, cluster structures \cite{Ye26}, and Frobenius splittings \cite{BSprep} of symmetric spaces.

Theorem~\ref{thm:intro-frobenius} reduces the study of the Frobenius fibres to finitely many finite-dimensional algebras, obtained by choosing one point in each double Bruhat cell. 



\subsection{An open $B$-orbit}

A main ingredient in the proof of Theorem~\ref{thm:intro-frobenius} is a localization of $\O_\A[\ss]$ below, which allows us to establish projectivity on the open $B$-orbit and then extend it to $S$.
Let $N$ and $N_-$ be the unipotent radicals of $B_+$ and $B_-$, respectively. For $w\in W$, put $N(w)=N\cap\dot w N_-\dot w^{-1}$.

The map
\begin{equation}\label{eq:open-chart}
B\rightarrow \ss,\qquad b\mapsto bK
\end{equation}
has an image equal to the open dense $B$-orbit $B_\theta=BK/K\subset S$.

We construct in Section~\ref{sec:Btheta} an $\A$-subalgebra $\O_\A[B_\theta]$ of the integral quantum Borel algebra $\O_\A[B]$. It admits the smash-product description
\[
\O_\A[B_\theta]\cong\O_\A[N(w^\bullet)]\#\O_\A[T_\theta],
\qquad T_\theta=T/(T\cap K).
\]
Its specialization at $q=1$, followed by base change to $\C$, is the coordinate algebra of $B_\theta$.

For every dominant spherical weight $\mu$, we construct in Proposition~\ref{prop:psimu} a distinguished element $\psi_\mu\in\O_\A[S]$.  These elements form a multiplicative system, denoted by $\Psi$.

\begin{customthm}{B}[Proposition~\ref{prop:integral-localization}]
\label{thm:intro-localization}
The multiplicative system $\Psi$ is a left and right Ore set consisting of regular elements. There is an injective algebra homomorphism
\[
 \Gamma:\O_\A[S]\hookrightarrow
 \O_\A[B_\theta]
\]
which extends to an algebra isomorphism
\begin{equation}\label{eq:intro-localization}
 \Psi^{-1}\O_\A[S]\overset{\sim}{\longrightarrow}
 \O_\A[B_\theta]
\end{equation}
Under this isomorphism, $\psi_\mu$ is mapped to the torus character $\one_{-\mu}^*$.
\end{customthm}

Theorem~\ref{thm:intro-localization} is the main technical tool of this paper. Its integral formulation provides a simultaneous localization for both classical and quantum coordinate algebras. At $q=1$, after base change to $\C$, the map $\Gamma$ becomes the restriction map $\O[S]\rightarrow\O[B_\theta]$, and the localization recovers the open $B$-orbit $B_\theta\subset S$. At roots of unity, it allows us to use the fibre algebras of $\O_\epsilon[B_\theta] $ to understand the fibre algebras of $\O_\epsilon[\ss]$; see Proposition~\ref{le:GB}.  

\subsection{Dimensions of irreducible modules}
Since $\O_\epsilon[S]$ is finite over its Frobenius central subalgebra $\O[S]$, it is a polynomial identity (PI) algebra. We determine its PI degree and show that every irreducible module with Frobenius central character in the open double Bruhat cell has maximal dimension.

For $u\leq w^\bullet$, let $d_u$ denote the dimension of the symplectic leaves contained in $S_u$. Let $e\in W$ denote the identity element. Then $S_e$ is the
open dense double Bruhat cell of $S$.

\begin{customthm}{C}[Corollary~\ref{cor:PI-geometric} and Theorem~\ref{cor:dim}]
\label{thm:intro-PI}
\leavevmode

\begin{itemize}
\item[(1)] The PI degree of $\O_\epsilon[S]$ is
\[
\operatorname{PIdeg}\O_\epsilon[S]
=\ell^{d_e/2}
=\ell^{\frac12(\dim S-\dim\ker(1-w_0\tau))},
\]
where $\tau=-w_\bullet\theta$ acts on the character lattice $X=X^*(T)$ of $T$, and the kernel is taken on $X\otimes_\Z\Q$.

\item[(2)] Every irreducible $\O_\epsilon[S]$-module $V$ satisfies $\dim V\leq\ell^{d_e/2}$. If $V$ has Frobenius central character $x\in S_e$, then $\dim V=\ell^{d_e/2}$.
\end{itemize}
\end{customthm}

In the group case, the dimensions of irreducible modules of $\O_\epsilon[G]$ were studied by De Concini--Lyubashenko \cite{DCL94} and subsequently determined by De Concini--Procesi \cite{DCP97}. Precisely, for sufficiently good $\ell$, every irreducible $\O_\epsilon[G]$-module  with Frobenius central character $x\in G$ has the dimension $\ell^{\dim\mathcal L_x^G/2}$, where $\mathcal L^G_x$ is the symplectic leave of $G$ containing $x$; see also \cite[Theorem~2.3(b)]{BG02}. 

Motivated by these results, we propose the following dimension conjecture for general irreducible modules of quantum symmetric spaces.

\begin{conjintro}[Conjecture~\ref{conj:dimV}]
\label{conj:intro}
    Let $x\in S_u$ and let $V$ be an irreducible module of $\O_\epsilon[S](x)$. There exists a positive integer $N_u$, such that when $\ell$ is coprime to $N_u$, one has $\dim V=\ell^{d_u/2}$. 
\end{conjintro}

When $u=e$, this conjecture follows from Theorem~\ref{thm:intro-PI}.

\subsection{Framed Kac-Moody data}


To study irreducible modules with a general Frobenius central character, we turn to individual fibre algebras of $\O_\epsilon[\ss]$. By Theorem~\ref{thm:intro-localization}, it suffices to study fibre algebras for $\O_\epsilon[B_\theta]$. 

The variety $B_\theta$ inherits a natural stratification from the stratification of $\ss$ (see Section~\ref{sec:rwg})
\[
B_\theta =\bigsqcup_{v \leq w^\bullet} B_{\theta,v}, 
\]
which suggests the existence of a Bruhat atlas on the wonderful compactification of the adjoint type symmetric spaces; see \cite{HKL13,BH21, KS25} for the construction of Bruhat atlas for other varieties.

Now assume that the $G$ is simply connected. Inspired by the construction of Bruhat atlas, we construct a {\em framed Kac-Moody data}, by adjoining one framing vertex for every $\tau$-orbit in $\I_\circ$; see Section~\ref{sec:fdata} and Table~\ref{table:fdata}. Then $B_\theta$ is connected to the unipotent subgroup $\widetilde N(rw^\bullet)$ associated to this framed Kac-Moody data, while each strata $B_{\theta,v}$ is connected to the open Richardson strata $\widetilde R_{v,rw^\bullet}$; see Proposition~\ref{prop:framed-strata}. Here $r$ is the product of all simple reflections associated with the framing vertices.


The framed construction, together with Theorem~\ref{thm:intro-localization}, allows us to describe the fibre algebras of quantum symmetric spaces using those of quantum Schubert cells.

\begin{customthm}{D}[Theorem~\ref{thm:OSN}]
\label{thm:intro-fibres}
Assume that $G$ is simply connected. For $v\leq w^\bullet$, $x\in S_v$, and $y\in\widetilde R_{v,rw^\bullet}$, there is an algebra isomorphism
\[
 \O_\epsilon[S](x)\cong
 \O_\epsilon[\widetilde N(rw^\bullet)](y).
\]
\end{customthm}

Thanks to Theorem~\ref{thm:intro-fibres}, once dimensions of irreducible modules for quantum Schubert cells (of Kac-Moody type) are established at roots of unity, we can obtain dimensions of irreducible modules for quantum symmetric spaces and the Conjecture can be proved.



\subsection{Organization of the paper}
The paper is organized as follows. Section~\ref{sec:preliminaries} recalls the quantum-group and symmetric-pair notation and the stratification of symmetric spaces. Section~\ref{sec:integral-localization} establishes the integral localization theorem and its geometric interpretation. Section~\ref{sec:root-of-unity} studies specialization at roots of unity. In particular, we construct the Frobenius center, prove projectivity, establish the Poisson order structure and constancy of fibre algebras along $T$-leaves, prove the maximal-order property, and compute the PI degree and the dimensions of irreducible modules over the open double Bruhat cell. Section~\ref{sec:framed} develops the framed Kac--Moody construction and compares the fibre algebras of quantum symmetric spaces with those of quantum Schubert cells.

\subsection*{Acknowledgments}  The authors thank Milen Yakimov for informing them of ongoing work on quantum Schubert cells at roots of unity and for helpful discussions. The authors are grateful to Huanchen Bao for helpful discussions on the framed Kac--Moody construction. JS is supported by the Glorious Sun Charity Fund. WZ is partially supported by the New Cornerstone Science Foundation through the New Cornerstone Investigator Program awarded to Xuhua He.

\subsection*{Generative AI Usage} The authors used ChatGPT to assist with writing throughout the manuscript. Its use for exploring proofs was confined to Sections~\ref{sec:maximal-order} and~\ref{sec:PI-general}. The authors take full responsibility for the content of the paper and the correctness of its results.

\section{Preliminaries}\label{sec:preliminaries}

\subsection{Root datum and the algebra \texorpdfstring{$\f$}{f}}\label{sec:rootdata}
Let $(\I,\cdot)$ be a Cartan datum of finite type, with a root datum $(Y,X,\langle \cdot ,\cdot\rangle,\dots)$; cf. \cite[2.2.1]{Lus94}. Then $Y,X$ are two finitely generated free abelian groups with
\begin{itemize}
\item[(1)] a perfect bilinear form $\langle \cdot,\cdot\rangle: Y\times X\rightarrow \Z$,

\item[(2)] two embeddings $\Z[\I] \hookrightarrow X,i\mapsto \alpha_i$ and $\Z[\I] \hookrightarrow Y,i\mapsto \alpha_i^\vee$ such that $\langle \alpha_i^\vee,\alpha_j\rangle=2 \frac{i\cdot j}{i\cdot i}$.
\end{itemize}
Set $d_i=i\cdot i/2$ for $i\in\I$. We normalize the Cartan datum so that $\min_{i\in J}d_i=1$ for every irreducible component $J\subseteq\I$.
Let $c_{ij}=2 \frac{i\cdot j}{i\cdot i}$ and $C=(c_{ij})_{i,j\in\I}$ be the Cartan matrix. Let $Q=\Z[\alpha_i\mid i\in\I]\subset X$ be the root lattice, and $Q_+=\N[\alpha_i\mid i\in\I]$. Let $\Phi$ be the root system associated with $C$ and $\Phi^+$ be the set of positive roots.

Write
\[
X^+=\{\lambda\in X\mid\langle\alpha_i^\vee,\lambda\rangle\geq0\text{ for all }i\in\I\}
\]
for the set of dominant weights.

Let $q$ be an indeterminate and $\QQ$ be the field of rational functions in $q$. Set $q_i=q^{d_i}$ for $i\in \I$. Denote 
\[
[m]_i=\frac{q_i^m-q_i^{-m}}{q_i-q_i^{-1}},\qquad [m]_i!= [m]_i [m-1]_i \cdots [1]_i.
\]

Following \cite[1.2]{Lus94}, set $\f$ to be the $\QQ$-algebra generated by $\theta_i$ for $i\in \I$ subject to the quantum Serre relations 
\[
\sum_{r=0}^{1-c_{ij}} (-1)^r\theta_i^{(r)} \theta_j\theta_i^{(1-c_{ij}-r)}=0
\]
for $i\neq j$ where $\theta_i^{(k)}=\theta_i^k\slash [k]_i!$. The algebra $\f$ admits an $\N[\I]$-grading $\f =\bigoplus_{v\in \N[\I]} \f_v$ such that $\theta_i\in \f_{i}$. For $x\in \f_\nu$ homogeneous, set $|x|=\nu$. For $\nu=\sum_{i\in \I} \nu_i \,i\in \Z[\I]$, set
\begin{align}\label{eq:tnu}
\widetilde{\nu}=\sum_{i\in \I} \big(\frac{i\cdot i}{2}\big)\nu_i \,i \in \Z[\I].
\end{align}

Let $r:\f \rightarrow \f \otimes \f$ be the unique algebra homomorphism such that $r(\theta_i)=\theta_i \otimes 1 + 1\otimes \theta_i$ for all $i\in \I$. Here the multiplication on $\f\otimes \f$ is given by
\[
(x_1 \otimes y_1) (x_2\otimes y_2)=q^{|y_1|\cdot |x_2|} x_1 x_2 \otimes y_1 y_2,
\]
for homogeneous $x_1,x_2,y_1,y_2\in \f$. By \cite[1.2.2 and Proposition~1.2.3]{Lus94}, there is a symmetric nondegenerate bilinear form $(\cdot,\cdot)$ on $\f$ such that $(1,1)=1, (\theta_i,\theta_j)=\frac{\delta_{ij}}{1-q_i^{-2}}$ and
\begin{align}\label{eq:inprod}
(x,y_1 y_2)=(r(x),y_1\otimes y_2),\qquad \forall x,y_1,y_2\in \f.
\end{align}
By \eqref{eq:inprod}, for homogeneous $x,y\in \f$, $(x,y)=0$ unless $|x|=|y|$, and the restriction of this bilinear form to any $\f_\mu$ is nondegenerate.

Set $\A=\mathbb{Z}[q,q^{-1}]$. Unless otherwise specified, tensor products of integral forms are taken over $\A$. Let $\f_\A$ be the $\A$-subalgebra of $\f$ generated by $\theta_i^{(k)},k\in \N,i\in \I$. The algebra $\f_\A$ admits an $\A$-basis $\bB$, known as the \emph{canonical basis}; cf. \cite[14.4]{Lus94}. Set $\f_{\mu,\A}=\f_\mu\cap \f_\A$ and then $\f_\A=\bigoplus_{\mu\in \N[\I]} \f_{\mu,\A}$. It is known that $\bB\cap \f_{\mu,\A}$ is an $\A$-basis of $\f_{\mu,\A}$. 

Let $\{b^*|b\in \bB\}$ be the dual basis of $\f$ with respect to the bilinear form, that is, $(b^*,b')=\delta_{b,b'}$ for $b'\in \bB$. We define $\f_{\A}^*$ to be the $\A$-submodule of $\f$, spanned by $\{b^*|b\in \bB\}$. Then $\f_{\A}^*$ is an $\A$-subalgebra of $\f$.




\subsection{Quantum groups}

Let $G$ be the connected complex reductive group associated with the root datum $(Y,X,\langle \cdot ,\cdot\rangle,\dots)$. Let $\g$ be the Lie algebra of $G$, and let $W$ be the Weyl group of $\g$. We write $l(w)$ for the length of $w\in W$. Denote by $U(\g)$ the universal enveloping algebra of $\g$. 

Let $\U=\QQ\langle E_i,F_i,K_\mu;i\in\I,\mu\in Y\rangle$ be the Drinfeld--Jimbo quantum group associated with $(Y,X,\langle \cdot ,\cdot\rangle,\dots)$; cf. \cite[3.1.1]{Lus94}. Set $K_i=K_{d_i\alpha_i^\vee}$ for $i\in\I$. For $i\in \I$, we use $E_i^{(m)},F_i^{(m)}$ to denote the $q$-divided powers $\frac{E_i^m}{[m]_i!},\frac{F_i^m}{[m]_i!}$. Let $\U^+$ ({\em resp.,} $\U^-$) be the subalgebra of $\U$ generated by $E_i,i\in \I$ ({\em resp.,} $F_i,i\in \I$). Then there are natural algebra isomorphisms $\f\rightarrow \U^{\pm}, x\mapsto x^{\pm}$ such that $\theta_i^+=E_i,\theta_i^-=F_i$. We equip $\U^+$ with the same $\N[\I]$-grading and inner product as those on $\f$.

Via the isomorphism $\f\overset{\sim}{\rightarrow} \U^{+}$, $\f_\A$ is identified with the $\A$-subalgebra of $\U^{+}$ generated by $E_i^{(m)},m\in \N,i\in \I$, which we denote by $\U_{\A}^+$, and $\f_\A^*$ is identified with an $\A$-subalgebra of $\U^{+}$ which we denote by $\DCU^+$. One has $\U_{\A}^+ \otimes \Q(q) = \DCU^+ \otimes \Q(q)=\U^+$.

Let $\O_\A[N]=\bigoplus_{\mu\in Q_+}\text{Hom}_{\A}(\U^+_{\mu,\A},\A)$ be the graded dual of $\U^+_{\A}$. The nondegenerate bilinear form on $\U^+$ defines a linear isomorphism 
\begin{equation}\label{eq:alpha}
\alpha:\DCU^+\overset{\sim}{\longrightarrow} \O_\A[N].
\end{equation}
Let us endow $\O_\A[N]$ with the algebra structure such that $\alpha$ is moreover an algebra isomorphism.

\subsection{PBW basis and dual PBW basis}\label{sec:PBW}

Let $T_i=T''_{i,+1},i\in \I$ be braid group symmetries on $\U$; cf. \cite[37.1]{Lus94}. Let $w\in W$. Fix a reduced expression $\un{w}=s_{i_1}\cdots s_{i_l}$ for $w$. For $1\leq k\leq l$, set $\beta_k=s_{i_1} \cdots s_{i_{k-1}}(\alpha_{i_k})$. Define for $1\le k\le l$
\begin{equation}\label{eq:rootv}
E_{\beta_k}^{(m)}=T_{i_1} \cdots T_{i_{k-1}} (E_{i_k}^{(m)}),
\qquad
F_{\beta_k}^{(m)}=T_{i_1} \cdots T_{i_{k-1}} (F_{i_k}^{(m)}).
\end{equation}
For $\ba=(a_1,\ldots,a_l)\in \N^l$, set 
\[
E_{\un{w}}^{(\ba)}=E_{\beta_l}^{(a_l)}\cdots E_{\beta_1}^{(a_1)}, \qquad
F_{\un{w}}^{(\ba)}=F_{\beta_l}^{(a_l)}\cdots F_{\beta_1}^{(a_1)}.
\]
Let $\U^+_\A(w)$ be the $\A$-span of ordered monomials $\bE(\un{w})=\{E_{\un{w}}^{(\ba)}|\ba\in \N^{l(w)}\}$.  It was shown in \cite{DCP93} that $\U^+_\A(w)$ is an $\A$-subalgebra of $\U^+_\A$, independent of the choice of reduced expression of $w$, and the set $\bE(\un{w})$ forms an $\A$-basis of $\U^+_\A(w)$, which is called the PBW basis. By \cite[Proposition 2.17 and Theorem 4.5]{Lus90a}, 
\[
\U^+_\A(w_0)=\U_\A^+.
\]
Similarly, we set $\bF(\un{w})=\{F_{\un{w}}^{(\ba)}|\ba\in \N^{l(w)}\}$. 

By \cite[Proposition 38.2.3]{Lus94}, the PBW basis $\bE(\un{w})=\{E_{\un{w}}^{(\ba)}|\ba\in \N^{l(w)}\}$ is orthogonal, i.e., 
\begin{align}\label{eq:orthoPBW}
(E_{\un{w}}^{(\ba)}, E_{\un{w}}^{(\ba')})=0, \text{ unless } \ba=\ba'.
\end{align}

For $1\leq k\leq l$ and $\ba\in\N^l$, set
\begin{align}\label{def:tE}
\tE_{\beta_k}:=\frac{E_{\beta_k}}{(E_{\beta_k},E_{\beta_k})},
\qquad
\tE_{\un{w}}^{\ba}:=\frac{E_{\un{w}}^{(\ba)}}{(E_{\un{w}}^{(\ba)},E_{\un{w}}^{(\ba)})}.
\end{align}
Let $\U^+(w)=\U^+_\A(w)\otimes \Q(q)$. Then the set $\tbE(\un{w})=\{\tE_{\un{w}}^{\ba}|\ba\in \N^{l(w)}\}$ forms a $\Q(q)$-basis of $\U^+(w)$ dual to $\bE(\un{w})$; $\tbE(\un{w})$ is called the dual PBW basis. By definition, each dual PBW element is a nonzero $\QQ$-multiple of the ordered product of the individual dual root vectors:
\begin{align}
\tE_{\un{w}}^{\ba}\in\QQ^\times\,
\tE_{\beta_l}^{a_l}\cdots\tE_{\beta_2}^{a_2}\tE_{\beta_1}^{a_1}.
\end{align}
By abuse of notation, we sometimes simply write $\tE_{\beta_k}$ for $\alpha(\tE_{\beta_k})$ in $\O_\A[N]$.

Let $\DCU^+(w)$ be the $\A$-span of $\tbE(\un{w})$. By \cite[Theorem 4.27]{Ki12}, $\DCU^+(w)$ is an $\A$-subalgebra of $\U^+(w)$ generated by $\tE_{\beta_k}, 1\le k \le l(w)$. Again, one can identify $\DCU^+(w_0)$ with $\DCU^+$.

We write
\[
 \O_\A[N(w)]:=\alpha\bigl(\DCU^+(w)\bigr)\subseteq \O_\A[N]
\]
for the integral quantum coordinate algebra of the unipotent subgroup
$N(w)$.

\subsection{Modified quantum groups}

Let $\dot{\U}$ be the modified quantum group in \cite[23.1]{Lus94} and $\dot{\U}_\A$ be the $\A$-subalgebra of $\dot{\U}$ generated by $F_i^{(m)}\one_\lambda,E_i^{(m)}\one_\lambda$ for $\lambda\in X,m\in \N,i\in \I$. Let $\dB$ be the canonical basis of $\dU$; cf. \cite[25.2]{Lus94}. For a commutative $\A$-algebra $R$, set $\dU_R=\dU_\A\otimes_\A R$. Following \cite[1.11]{Lus09}, define
\[
\widehat{\U}_R=\prod_{b\in\dB}Rb,
\qquad
\widehat{\U}^{(2)}_R=\prod_{(b_1,b_2)\in\dB\times\dB}R(b_1\otimes b_2),
\]
where the basis elements are understood to be their images over $R$. The finiteness properties of the canonical-basis structure constants endow these spaces with $R$-algebra structures extending those on $\dU_R$ and $\dU_R\otimes_R\dU_R$, respectively. In particular, $\dU_R$ is naturally a subalgebra of $\widehat{\U}_R$. We regard $\widehat{\U}^{(2)}_R$ as the completed tensor product; all formal sums are interpreted coefficientwise.

Let $\lambda_1,\lambda_2\in X$. By \cite[23.1.5]{Lus94}, the comultiplication on $\U$ induces linear maps
\[
\Delta_{\lambda_1,\lambda_2}:\dU_\A \one_{\lambda_1+\lambda_2} \rightarrow \dU_\A  \one_{\lambda_1} \otimes \dU_\A  \one_{\lambda_2}.
\]
For $b\in \dB\cap \dU_\A \one_{\lambda_1+\lambda_2}$, write
\[
\Delta_{\lambda_1,\lambda_2}(b)=\sum_{\substack{b_1\in \dB \cap \dU_\A  \one_{\lambda_1}\\b_2\in \dB \cap \dU_\A  \one_{\lambda_2}}}\hat{m}_{b}^{b_1,b_2} b_1\otimes b_2.
\]
For $b_1\in \dB \cap \dU_\A  \one_{\lambda_1},b_2\in \dB \cap \dU_\A  \one_{\lambda_2},b\in \dB$, we set $\hat{m}_{b}^{b_1,b_2}=0$ if $b$ does not lie in $\dB\cap \dU_\A \one_{\lambda_1+\lambda_2}$.
\begin{lemma}[\text{cf. \cite[Lemma~1.16 and Section~1.17]{Lus09}}]
For any $b_1,b_2\in\dB$, there are only finitely many $b\in\dB$ such that $\hat m_b^{b_1,b_2}\neq0$. Consequently, for every commutative $\A$-algebra $R$, the formula
\[
\Delta_R\left(\sum_{b\in\dB}r_b b\right)
=\sum_{b_1,b_2\in\dB}
\left(\sum_{b\in\dB}r_b\hat m_b^{b_1,b_2}\right)b_1\otimes b_2
\]
defines an $R$-algebra homomorphism
\[
\Delta_R:\widehat{\U}_R\longrightarrow\widehat{\U}^{(2)}_R,
\]
where the structure constants are mapped from $\A$ to $R$.
\end{lemma}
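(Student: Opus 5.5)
The plan is to first prove the finiteness statement for fixed $b_1,b_2$, and then derive the homomorphism property of $\Delta_R$ from the identities that already hold on $\dU_\A$.

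\emph{Finiteness.} Fix $b_1\in\dB\cap\dU_\A\one_{\lambda_1}$ and $b_2\in\dB\cap\dU_\A\one_{\lambda_2}$. By definition, $\hat m_b^{b_1,b_2}\neq 0$ forces $b\in\dU_\A\one_{\lambda_1+\lambda_2}$. Also, since $\Delta$ preserves weights, the left weight of $b$ is the sum of the left weights of $b_1$ and $b_2$. So all contributing $b$ lie in the single weight space ${}_{\zeta}\dU\one_{\lambda}$, where $\lambda=\lambda_1+\lambda_2$ and $\zeta$ is fixed. That space is still infinite, so we need a better bound.

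\begin{itemize}
\item Use Lusztig's parametrization of $\dB$ by triples $(b',b'',\lambda)$, with $b=b'^+b''^-\one_\lambda$ up to lower terms, namely $b=b'\diamondsuit_\zeta b''$. The height of $|b''|$ is unbounded within the weight space, and the task is to show that large $|b''|$ cannot occur.
\item Use the compatibility of $\dB$ with the based modules $V(\mu)\otimes {}^\omega V(\mu')$. Each $b$ acts nonzero on some such module with $\mu,\mu'$ determined by $(b',b'')$, and it acts as zero on all ``smaller'' ones.
\item Pair the coproduct with matrix coefficients. The coefficient $\hat m_b^{b_1,b_2}$ is detected by the action of $\Delta(b)$ on tensor products of modules on which both $b_1$ and $b_2$ act nontrivially. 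Choose $\mu_1,\mu_2$ large enough that $b_1,b_2$ are detected on $V(\mu_i)\otimes{}^\omega V(\mu_i')$.
\item Conclude that $\hat m_b^{b_1,b_2}\neq0$ forces $b$ to act nontrivially on a bounded family of modules, namely the components of the tensor products. By the filtration by two-sided cells (\cite[29]{Lus94}), only finitely many $b$ do so.
\end{itemize}

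This is the argument of \cite[1.16]{Lus09}, and I would simply cite it. The main obstacle is exactly this step: making the bound on $b''$ precise. An alternative I would try is to use that $\hat m_b^{b_1,b_2}$ is dual to the structure constants of the multiplication on $\O_\A[G]$. Then finiteness becomes the statement that the product of two matrix coefficients expands into finitely many basis elements, which is obvious.

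\emph{The algebra map.} With finiteness in hand, each coefficient $\sum_b r_b\hat m_b^{b_1,b_2}$ is a finite sum, so $\Delta_R$ is well defined and $R$-linear.

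For multiplicativity, write the product in $\widehat\U_R$ via the structure constants $c_{b'b''}^{b}$. These are also finite for fixed $b$ when expanding products, which is the ingredient that makes $\widehat\U_R$ an algebra. The identity $\Delta(xy)=\Delta(x)\Delta(y)$ holds on $\dU_\A$. Expanding it gives polynomial identities among the $\hat m$'s and the $c$'s, each involving only finitely many terms for fixed output indices. These identities therefore extend coefficientwise to arbitrary formal sums. Unitality follows in the same way, using $\Delta(\one_\lambda)=\sum_{\lambda_1+\lambda_2=\lambda}\one_{\lambda_1}\otimes\one_{\lambda_2}$.

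Finally, base change from $\A$ to $R$ commutes with all of these finite expressions, so $\Delta_R$ is an $R$-algebra homomorphism.
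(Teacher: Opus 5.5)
Your proposal matches the paper, which gives no argument beyond citing \cite[Lemma~1.16 and Section~1.17]{Lus09}. Your outline is sound provided ``detected'' means that $b_i^*$ is a matrix coefficient of $V(\mu_i)\otimes{}^\omega V(\mu_i')$, which is what \cite[25.2]{Lus94} gives: then $\hat m_b^{b_1,b_2}\neq0$ forces $b$ to act nonzero on one fixed finite-dimensional module, only finitely many $b\in\dB$ do so by \cite[29]{Lus94}, and $\Delta(xy)=\Delta(x)\Delta(y)$ extends coefficientwise. Your ``obvious'' alternative via the product on $\O_\A[G]$ is circular as phrased, because the paper defines that product through $\Delta_R$ and exactly this finiteness, so it only becomes legitimate through the same matrix-coefficient fact.
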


Write $\Delta=\Delta_\A$. The following explicit formulas in $\widehat{\U}^{(2)}_\A$ can be derived from \cite[Lemma 3.1.4]{Lus94}:
\begin{align}
\label{eq:comul}
\begin{split}
\Delta(E_i^{(m)}\one_\lambda)=\sum_{\lambda'+\lambda''=\lambda} \sum_{k=0}^m q_i^{k(m-k)+k\langle \alpha_i^\vee, \lambda'\rangle} E_i^{(m-k)}\one_{\lambda'} \otimes E_i^{(k)}\one_{\lambda''},
\\
\Delta(F_i^{(m)}\one_\lambda)=\sum_{\lambda'+\lambda''=\lambda} \sum_{k=0}^m q_i^{k(m-k)-k\langle \alpha_i^\vee, \lambda''\rangle} F_i^{(k)}\one_{\lambda'} \otimes F_i^{(m-k)}\one_{\lambda''}.
\end{split}
\end{align}

The counit $\eta:{\dU}_\A\rightarrow \A$ is an $\A$-linear map such that 
$\eta(b)=\delta_{b,\one_0}$ for $b\in \dB$. It is known that $\eta$ is an $\A$-algebra homomorphism.

\subsection{Quantum symmetric pairs}
Let $(\I=\wI\sqcup \bI,\tau)$ be a Satake diagram which consists of a partition $\wI\sqcup \bI$ of $\I$ and a diagram involution $\tau:\I\rightarrow \I$ such that
\begin{itemize}
\item[(1)] $\tau(\bI)=\bI$,
\item[(2)]
 $\bw(\alpha_j) = - \alpha_{\tau j}$ for $j\in \bI$,
\item[(3)]
 If $j\in \wI$ and $\tau j =j$, then $\langle \rho_{\bullet}^\vee,\alpha_j\rangle\in \Z$.
\end{itemize}
Here $\bw$ is the longest element in the Weyl group $W_\bullet$ associated with $\bI$ and $\rho_{\bullet}^\vee$ is the half sum of positive coroots in the subcoroot system generated by $\I_\bullet$. 

Let
\begin{equation}\label{eq:iroot}
(\I,Y,X,\{\alpha_i^\vee\}_{i\in \I},\{\alpha_i\}_{i\in \I},\theta)
\end{equation}
be an iroot datum of type $(\I=\wI\cup \bI,\tau)$; cf. \cite[Definition 2.2]{So25}. Set 
\[
\breve{X}=(1-\theta)X,\qquad
\breve{X}^+=\breve{X}\cap X^+,
\]
and
\[
X_\imath=X/\breve{X},\qquad
Y^\imath=\{\mu\in Y\mid\theta\mu=\mu\}.
\]

The iquantum group $\Ui$ associated with the above iroot datum is the subalgebra of $\U$ generated by
\[
B_i=F_i+\vs_i T_{\bw}(E_{\tau i})K_i^{-1} \quad (i\in \wI),
\qquad K_\mu \quad (\mu\in Y^\imath),
\qquad E_j,F_j \quad (j\in \bI).
\]
Here we fix the parameters $\vs_i$ to be the same as in \cite[Lemma 4.2.1]{Wat26}.

Let $\dUi$ be the modified iquantum group defined in \cite{BW18b} and \cite[\S3.5]{BW21}, which contains the idempotents $\one_{\ov{\lambda}}$, for $\ov{\lambda}\in X_\imath$. The modified quantum group $\dot{\U}$ is naturally a $(\dUi,\dUi)$-bimodule by \cite[3.7]{BW18b}.  Following \cite[Definition 3.19]{BW18b}, we define 
\[
\dUiA:=\{u\in \dUi \mid u m\in \dot{\U}_{\A} , \forall m\in \dot{\U}_{\A} \}.
\]
The algebra $\dUiA$ can be viewed as an $\A$-subalgebra of $\widehat{\U}_\A$ via the map $x\mapsto \sum_{\lambda\in X} x\one_\lambda$.




Let $\dot{\bB}^\imath$ be the icanonical basis of $\dUi$ \cite{BW18b, BW21}, which forms an $\A$-basis of $\dUiA$. Let $\bP=\bP_{\I_\bullet}$ be the subalgebra of $\U$ generated by $E_i$, $F_j$, $K_\mu$, for $i\in \I_\bullet, j\in\I$, $\mu\in Y$. The algebra $\dot{\bP}=\bigoplus_{\lambda\in X} \bP\mathbf{1}_\lambda$ is a subalgebra of $\dot{\U}$, and its $\A$-form is defined by $\dot{\bP}_\A=\dot{\bP}\cap\dot{\U}_\A$. 
Note that $\dot{\bP}\cong \dot{\U}/\big(\sum_{\lambda\in X}\sum_x \dot{\U}x\mathbf{1}_\lambda\big)$, where the second sum is taken over all homogeneous $x\in\U^+$ whose weights are of the form $\sum_{i\in\I}a_i \alpha_i$ with $a_i\neq0$ for some $i\in\I_\circ$. 
For $\lambda\in X$, denote by $p_{\iota,\lambda}$ the composition map 
\begin{align}\label{def:dP}
\dU^\imath\one_{\ov{\lambda}}\overset{\iota_\lambda}{\longrightarrow} \dot{\U}\one_\lambda\longrightarrow \dot{\U}\mathbf{1}_\lambda/\big(\sum_{x}\dot{\U}x\one_\lambda\big)\longrightarrow \dot{\bP}\mathbf{1}_\lambda,
\end{align}
where the first map $\iota_\lambda$ is given by $x\one_{\ov{\lambda}}\mapsto x\one_\lambda$. 




\subsection{Base changes}\label{sec:bc}

Recall that $\dU_R=\dU_{\A}\otimes_{\A} R$ for a commutative $\A$-algebra $R$, and set $\dB_R=\{b\otimes 1\mid b\in \dB\}$. The relevant base change for us is as follows.

Let $\ell>0$ be an odd integer, which is coprime to $3$ if $\g$ has a component of G$_2$. Let $\epsilon\in\mathbb{C}$ be a primitive $\ell$-th root of unity. Write $\mathbb{C}_\epsilon$ (resp., $\mathbb{C}_1$) to denote the $\A$-algebra, which is isomorphic to $\mathbb{C}$ as rings and is viewed as an $\A$-algebra via the map $q\mapsto \epsilon$ (resp., $q\mapsto 1$).

We denote $\dU_\epsilon=\dU_{\mathbb{C}_\epsilon}$ and $\dU_1=\dU_{\C_1}$. We write $\dB_\epsilon=\dB_{\C_\epsilon}$ and $\dB_1=\dB_{\C_1}$. We similarly write $\widehat{\U}_\epsilon=\widehat{\U}_{\C_\epsilon}$ and $\widehat{\U}_1=\widehat{\U}_{\C_1}$ for the coefficientwise completions defined above.
 
Let $e_i^{(m)}\one_\lambda,f_i^{(m)}\one_\lambda$ be the images of $E_i^{(m)}\one_\lambda,F_i^{(m)}\one_\lambda$ in $\dU_1$. We still write $E_i^{(m)}\one_\lambda,F_i^{(m)}\one_\lambda$ to denote their images in $\dU_\epsilon$, if there is no confusion. 

\begin{proposition}[\text{cf. \cite[Theorem 35.1.9]{Lus94}}]
\label{prop:Fr}
There is a surjective algebra homomorphism $$\Fr:\dU_\epsilon \rightarrow \dU_1,$$ such that
for any $m\in \N,\lambda\in X$, one has
\begin{align}
\begin{split}
&\Fr(E_i^{(m)}\one_\lambda)=
\begin{cases}
e_i^{(m/\ell)}\one_{\lambda\slash \ell}, & \text{ if } \ell|m,\lambda\in \ell X,
\\
0, & \text{ otherwise}.
\end{cases}
\qquad
\Fr(F_i^{(m)}\one_\lambda)=
\begin{cases}
f_i^{(m/\ell)}\one_{\lambda\slash \ell}, & \text{ if } \ell|m,\lambda\in \ell X,
\\
0, & \text{ otherwise}.
\end{cases}
\end{split}
\end{align}
The map $\Fr$ is called the \emph{quantum Frobenius map}.
\end{proposition}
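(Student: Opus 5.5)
The plan is to follow Lusztig's construction in \cite[Chapter~35]{Lus94} and transport it to the modified algebra; the statement is essentially \cite[Theorem~35.1.9]{Lus94} in this language.

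\emph{Step 1: the map on $\U_\epsilon$.} Let $\U_\A$ be the algebra generated by the $E_i^{(m)}$, $F_i^{(m)}$, $K_\mu$ and the $q$-binomial Cartan elements. Specialize it at $q=\epsilon$. First I would show that the assignment
\[
E_i^{(m)}\mapsto e_i^{(m/\ell)} \text{ if } \ell\mid m, \qquad E_i^{(m)}\mapsto 0 \text{ otherwise},
\]
together with the analogous rule for $F_i^{(m)}$, defines algebra homomorphisms $\f_\epsilon\to\f_1$. This is checked on Lusztig's presentation of $\f_\A$ by generators $\theta_i^{(m)}$, subject to the divided-power relations $\theta_i^{(a)}\theta_i^{(b)}=\left[\begin{smallmatrix}a+b\\a\end{smallmatrix}\right]_i\theta_i^{(a+b)}$ and the higher-order quantum Serre relations \cite[35.2]{Lus94}. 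The quantum Lucas theorem gives, for $\ell$ odd,
\[
\left[\begin{smallmatrix}a\ell+a'\\ b\ell+b'\end{smallmatrix}\right]_{\epsilon_i}
=\binom{a}{b}\left[\begin{smallmatrix}a'\\ b'\end{smallmatrix}\right]_{\epsilon_i}
\]
for $0\le a',b'<\ell$. This handles the divided-power relations. The hypothesis that $\ell$ is prime to $3$ in type $G_2$ ensures that each $\epsilon_i=\epsilon^{d_i}$ is still a primitive $\ell$-th root of unity.

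\emph{Step 2: Serre relations.} This is the main obstacle. One must show that the higher-order quantum Serre elements map to their classical counterparts, or to zero. Lusztig handles this in \cite[35.2--35.3]{Lus94} by computing the relevant $q$-binomial identities at roots of unity. I would simply invoke that argument rather than redo it.

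\emph{Step 3: weights and the modified algebra.} Next I would treat the Cartan part and the commutation relation between $E_i^{(m)}$ and $F_j^{(n)}$. On the weight space $\one_\lambda$, the relation $E_i^{(m)}F_i^{(n)}\one_\lambda=\sum_t\left[\begin{smallmatrix}m-n+\langle\alpha_i^\vee,\lambda\rangle\\t\end{smallmatrix}\right]_iF_i^{(n-t)}E_i^{(m-t)}\one_\lambda$ behaves as follows under Lucas' theorem. When all of $m,n,\lambda$ are divisible by $\ell$, it reduces to the classical relation for $\lambda/\ell$. Otherwise it is compatible with sending to $0$ the terms that are not $\ell$-divisible.

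With this I define $\Fr$ on $\dU_\epsilon$ by declaring $\Fr(\one_\lambda)=\one_{\lambda/\ell}$ if $\lambda\in\ell X$, and $\Fr(\one_\lambda)=0$ otherwise. Because $\dU_\A$ has the presentation \cite[31.1.3]{Lus94} by the generators $E_i^{(m)}\one_\lambda$, $F_i^{(m)}\one_\lambda$, it suffices to check the defining relations. These are the Serre relations from Step~2, the relations from Step~3, and the idempotent relations, each checked on every idempotent.

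\emph{Step 4: surjectivity.} The elements $e_i^{(n)}\one_{\mu}$ and $f_i^{(n)}\one_{\mu}$ generate $\dU_1$. Each is the image of $E_i^{(n\ell)}\one_{\ell\mu}$ or $F_i^{(n\ell)}\one_{\ell\mu}$, so $\Fr$ is surjective.
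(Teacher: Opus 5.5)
Your argument is correct, but it is organized differently from the paper. Your opening claim that the statement ``is essentially'' \cite[Theorem~35.1.9]{Lus94} glosses over the one point the paper is careful about.

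Lusztig's map keeps $\one_\lambda$ for every $\lambda\in X^\star=\{\lambda\in X\mid\langle\alpha_i^\vee,\lambda\rangle\in\ell\Z\text{ for all }i\}$, and it lands in $\dU_1(L)$ with $L=\ell^{-1}X^\star\supseteq X$. The paper does not re-verify any relations. It composes this map with the projection $\pi$ onto the summand with weight labels in $X$. This projection is an algebra map because $\one_\nu\dU_1(L)\one_{\nu'}=0$ unless $\nu-\nu'\in Q\subseteq X$. So the condition $\lambda\in\ell X$ in the statement comes from $\pi$, not from Lusztig, and it genuinely differs from his condition $\lambda\in X^\star$. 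For $GL_2$ and $\ell>1$, $X^\star$ contains $(1,1)\notin\ell X$. For $X=Q$ of type $A_2$ and $\ell=3$, one has $X^\star=3P\supsetneq 3Q$, where $P$ is the weight lattice.

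Your Step~3 nonetheless handles this correctly. Each defining relation lies in some $\dU_\A\one_\lambda$. Every term has a rightmost factor $E_j^{(a)}\one_\lambda$, $F_j^{(b)}\one_\lambda$ or $\one_\lambda$, which you send to $0$ when $\lambda\notin\ell X$. For $\lambda\in\ell X\subseteq X^\star$, your Lucas-type checks are exactly Lusztig's. So the map you construct is $\pi\circ\Fr_{\mathrm L}$.

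The trade-off is as follows. Your route verifies the $\dU$-level relations by hand, but it still borrows the $\f$-level Frobenius homomorphism from \cite[\S35.1]{Lus94}. Quote it directly: Steps~1--2 need no presentation of $\f_\A$ by higher-order Serre relations. Your route also requires a presentation of $\dU_\A$ whose relations are those of $\f_\A$ plus the commutation and idempotent relations. The paper's reduction needs only Lusztig's theorem and the block decomposition of $\dU_1(L)$. Surjectivity is immediate in either approach.
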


\begin{remark}\label{rem:Fr-projection}
The map $\Fr$ used here is Lusztig's quantum Frobenius map followed by a projection. More precisely, the target root datum in \cite[2.2.5 and 35.1.9]{Lus94} has character lattice
\[
X^\star=\{\lambda\in X\mid\langle\alpha_i^\vee,\lambda\rangle\in\ell\Z\text{ for all }i\in\I\}.
\]
After rescaling this lattice by $\ell^{-1}$, put $L=\ell^{-1}X^\star$. Lusztig's map becomes $\Fr_{\mathrm L}:\dU_\epsilon\rightarrow\dU_1(L)$, where $\dU_1(L)$ is the modified quantum group at $q=1$ for the character lattice $L$ and the original simple roots and coroots. Since $Q\subseteq X\subseteq L$, the root operators preserve weight classes modulo $X$. Hence retaining the summand with weight labels in $X$ defines an algebra projection $\pi:\dU_1(L)\rightarrow\dU_1$, given by
\[
\pi(e_i^{(n)}\one_\nu)=
\begin{cases}
e_i^{(n)}\one_\nu,&\nu\in X,\\
0,&\nu\notin X,
\end{cases}
\]
and similarly for $f_i^{(n)}\one_\nu$. Thus $\Fr=\pi\circ\Fr_{\mathrm L}$; the condition $\lambda\in\ell X$ in Proposition~\ref{prop:Fr} results from this projection. In general, $\ell X$ can be strictly smaller than $X^\star$.
\end{remark} 

For any $b\in \dB_\epsilon$, we write 
\[
\Fr(b)=\sum_{b'\in\dB_1}x_{b,b'}b',\quad \text{for }x_{b,b'}\in \C.
\]

\begin{lemma}[\text{cf. \cite[Lemma 5.1]{BS26}}]\label{lem:fm}
    For $b'\in \dB_1$, there are only finitely many $b\in \dB_\epsilon$, such that $x_{b,b'}\neq 0$. Hence $\Fr$ extends to an algebra homomorphism $\Fr:\widehat{\U}_\epsilon\rightarrow \widehat{\U}_1$.
\end{lemma}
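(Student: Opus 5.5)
The plan is to reduce the finiteness to a weight argument and the known behaviour of $\Fr$ on canonical basis elements. Fix $b'\in\dB_1$ with $b'\in\dU_1\one_{\lambda'}$ for some $\lambda'\in X$; say $b'$ lies in the weight space $\one_{\lambda'+\nu'}\dU_1\one_{\lambda'}$. Every $b\in\dB_\epsilon$ lies in some $\one_{\lambda+\nu}\dU_\epsilon\one_\lambda$, and $\Fr$ sends $\one_{\lambda+\nu}\dU_\epsilon\one_\lambda$ into $\one_{(\lambda+\nu)/\ell}\dU_1\one_{\lambda/\ell}$ when $\lambda,\nu\in\ell X$, and to $0$ otherwise. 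This follows from Proposition~\ref{prop:Fr}, since $\dU_\epsilon$ is generated by the $E_i^{(m)}\one_\lambda,F_i^{(m)}\one_\lambda$ and $\Fr$ is an algebra homomorphism. Hence $x_{b,b'}\neq0$ forces $\lambda=\ell\lambda'$ and $\nu=\ell\nu'$. So we only need to show that, for a fixed pair of idempotents, only finitely many $b\in\dB_\epsilon\cap\one_{\ell(\lambda'+\nu')}\dU_\epsilon\one_{\ell\lambda'}$ have $x_{b,b'}\ne0$.

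The weight space $\one_{\lambda+\nu}\dU\one_\lambda$ is infinite-dimensional, so this needs a grading finer than weight. I would use the canonical basis parametrization $b=b_1\diamondsuit_\lambda b_2$ with $b_1,b_2\in\bB$ and $|b_1|-|b_2|=\nu$, as in \cite[25.2]{Lus94}, together with the filtration of $\dU\one_\lambda$ by the height of $|b_2|$. The element $b_1\diamondsuit_\lambda b_2$ equals $b_1^+b_2^-\one_\lambda$ modulo elements $c_1^+c_2^-\one_\lambda$ with $\het|c_2|<\het|b_2|$; alternatively one can use the $F E\one_\lambda$ triangularity. This filtration is preserved by $\Fr$, after rescaling heights by $\ell$, because $\Fr$ respects the triangular decomposition: $\Fr(x^+y^-\one_\lambda)=\Fr(x^+\one)\Fr(y^-\one_\lambda)$, with $\Fr$ on $\f$ homogeneous of degree dividing by $\ell$. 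The problem is that $\Fr(b)$ has a leading term at height $\het|b_2|/\ell$ but also lower terms, and lower terms are where $b'$ may appear. So the filtration gives $\het|b_2'|\le\het|b_2|/\ell$, which is the wrong direction for finiteness.

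I would therefore switch to the characterization through $\widehat{\U}$ as a dual of the coordinate algebra. Finiteness of $\{b : x_{b,b'}\neq 0\}$ is equivalent to saying that the functional $b\mapsto x_{b,b'}$ on $\dU_\epsilon$ is a finite combination of dual canonical basis elements, i.e.\ lies in $\O_\epsilon[G]$. Concretely, I would realize $b'$ acting on $V_1(\mu)$ and pair with $v\otimes v^*$. The functional $u\mapsto\langle v^*,\Fr(u)v\rangle$ is a matrix coefficient of the Frobenius pullback of $V_1(\mu)$, a finite-dimensional $\dU_\epsilon$-module of type $1$. Such a coefficient is a finite $\C$-combination of dual canonical basis elements, because a finite-dimensional integrable module is annihilated by all but finitely many $b$ (cf.\ \cite[Section~1.17]{Lus09}). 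Letting $\mu$ range over a weight large enough that $b'$ acts faithfully, and choosing finitely many $v,v^*$ that separate $b'$ from the other canonical basis elements acting nonzero on $V_1(\mu)$, we recover $x_{b,b'}$ as a finite combination of such matrix coefficients. Hence $x_{b,b'}\neq0$ only for $b$ acting nonzero on $\Fr^*V_1(\mu)$, a finite set.

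The step I expect to be the main obstacle is this last separation: extracting the single coefficient $x_{b,b'}$ from matrix coefficients on $V_1(\mu)$. This requires that the canonical basis elements acting nonzero on $V_1(\mu)$ be linearly independent as operators, which holds for the cell quotient via Lusztig's based modules \cite[27.1]{Lus94}. Once the finiteness is established, the extension of $\Fr$ to $\widehat{\U}_\epsilon\to\widehat{\U}_1$ is formal: set $\Fr(\sum r_bb)=\sum_{b'}(\sum_b r_bx_{b,b'})b'$, which is well defined by finiteness. Multiplicativity follows from multiplicativity on $\dU_\epsilon$, because each coefficient of a product depends on only finitely many coefficients of the factors, by the same finiteness of structure constants used in Lemma 1.16 of \cite{Lus09}.
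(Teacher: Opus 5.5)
The paper gives no argument of its own here; it imports the statement from Bao--Song. So I assess your argument on its merits.

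Your reduction in the third paragraph is sound as far as it goes. The set $\{b : x_{b,b'}\neq0\}$ is finite exactly when $u\mapsto b'^*(\Fr(u))$ lies in $\O_\epsilon[G]$. If $b'^*$ is a finite combination of matrix coefficients of $\dU_1$-modules $V_1(\mu)$, then $b'^*\circ\Fr$ is a combination of matrix coefficients of $\Fr^*V_1(\mu)$.

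The gap is in the next sentence, which carries the whole content of the lemma. You assert that these matrix coefficients are finitely supported on $\dB_\epsilon$ because ``a finite-dimensional integrable module is annihilated by all but finitely many $b$''. At generic $q$ this holds for a clear reason: such a module is a direct sum of based modules $V(\lambda)$, and only elements of the finitely many finite two-sided cells indexed by dominant $\lambda'\le\lambda$ act nonzero on $V(\lambda)$. At $q=\epsilon$ that reason is unavailable. Finite-dimensional modules are not semisimple, and $\Fr^*V_1(\mu)$ is not the specialization of a based module: it is the simple module of highest weight $\ell\mu$, typically a proper quotient of the Weyl module.

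For general finite-dimensional $\dU_\epsilon$-modules your assertion is a real theorem, not a formality. For this particular module it is essentially the lemma itself. Indeed, $\Fr(b)|_{V_1(\mu)}=\sum_{b'}x_{b,b'}\,b'|_{V_1(\mu)}$, so saying that all but finitely many $b$ kill $\Fr^*V_1(\mu)$ is the statement for the $b'$ acting nonzero on $V_1(\mu)$. The argument is therefore circular at its key step. The separation step you single out as the main obstacle is actually the easy part.

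The missing idea is to realize $\Fr^*V_1(\mu)$ as a quotient of a specialized based module.
\begin{itemize}
\item Since $\Fr$ is surjective, $\Fr^*V_1(\mu)$ is generated by $v_\mu$, which has $\dU_\epsilon$-weight $\ell\mu$.
\item Proposition~\ref{prop:Fr} gives $E_i^{(n)}\one_{\ell\mu}v_\mu=0$ for $n>0$, and $F_i^{(n)}\one_{\ell\mu}v_\mu=0$ for $n>\langle\alpha_i^\vee,\ell\mu\rangle$.
\item The integral Weyl module has the standard presentation as $\dU_\A\one_{\ell\mu}$ modulo the left ideal generated by these elements, and this presentation survives base change by right exactness. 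Hence $\Fr^*V_1(\mu)$ is a quotient of $\C_\epsilon\otimes_\A V(\ell\mu)_\A$.
\item Every $b\in\dB$ that kills $V(\ell\mu)$ also kills $V(\ell\mu)_\A$, its specialization, and all quotients. Only finitely many $b$ fail to kill $V(\ell\mu)$; this is the fact already used in the proof of Proposition~\ref{prop:psimu}.
\end{itemize}

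For the separation, do not work with a single $V_1(\mu)$. The canonical basis elements acting nonzero on one irreducible are linearly dependent as operators. For $\mathfrak{sl}_2$, $\one_0$ and the nine elements of the cell of $2$ all act nonzero on the $3$-dimensional module. Instead take a finite saturated set $\pi$ of dominant weights containing the cell index of $b'$. At $q=1$, complete reducibility and Lusztig's refined Peter--Weyl theorem show two things: the elements of $\dB_1$ in the cells of $\pi$ act linearly independently on $\bigoplus_{\lambda\in\pi}V_1(\lambda)$, and all other elements act by zero. Hence $b'^*$ is a finite combination of matrix coefficients of these modules.

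With these two repairs, your last paragraph extending $\Fr$ to $\widehat{\U}_\epsilon\to\widehat{\U}_1$ goes through as written.
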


\begin{remark}
    Although the setting in \cite{BS26} is restricted to quasi-split types, the proof of \cite[Lemma 5.1]{BS26} works for general types.
\end{remark}

\begin{proposition}[\text{\cite[Theorem 4.1]{BSa21}}]
\label{prop:dUiFr}
The quantum Frobenius map $\Fr$ restricts to an algebra homomorphism $\Fr:\dU^\imath_\epsilon \rightarrow \dU^\imath_1$.
\end{proposition}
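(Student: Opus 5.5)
The statement is that $\Fr:\dU_\epsilon\to\dU_1$ restricts to $\dU^\imath_\epsilon\to\dU^\imath_1$, where $\dU^\imath_R$ is viewed inside $\widehat{\U}_R$ via $x\mapsto\sum_\lambda x\one_\lambda$. Lemma~\ref{lem:fm} lets us extend $\Fr$ to the completion $\widehat{\U}_\epsilon\to\widehat{\U}_1$, so the claim is meaningful. The plan is to verify it on algebra generators of $\dU^\imath_\epsilon$ and then use multiplicativity.

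Since $\dUiA$ is generated, as an algebra with idempotents, by divided powers of the generators, I would first fix such a generating set. For $j\in\bI$ these are $E_j^{(m)}\one_{\ov\lambda}$ and $F_j^{(m)}\one_{\ov\lambda}$. For $i\in\wI$ with $\tau i\neq i$, they are the divided powers $B_i^{(m)}\one_{\ov\lambda}=B_i^m/[m]_i!\,\one_{\ov\lambda}$. For $\tau i=i$, they are the idivided powers $B^{(m)}_{i,\ov p}$ of Berman--Wang, which depend on a parity. To pass from generation over $\A$ to the specializations, I would use that the icanonical basis is an $\A$-basis and that $\Fr$ is $\C$-linear and multiplicative on $\widehat{\U}_\epsilon$. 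It then suffices to show that $\Fr$ sends each generator to an element of $\dU^\imath_1$, and that $\Fr(\one_{\ov\lambda})$ equals $\one_{\ov{\lambda/\ell}}$ or $0$ according to whether $\ov\lambda$ meets $\ell X$.

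For the $\bI$-generators, Proposition~\ref{prop:Fr} gives the answer directly, after summing over $\lambda$ in a coset of $\breve X$. The same check handles the idempotents, once one verifies that $\lambda\in\ell X$ is compatible with the coset structure; this uses $\breve X\cap \ell X\supseteq \ell\breve X$, up to a bookkeeping argument. For $B_i$ with $\tau i\neq i$, I would expand $B_i^{(m)}$ as a sum of products $F_i^{(a)}\,T_{\bw}(E_{\tau i})^{(b)}$ with explicit $q$-binomial coefficients. The braid symmetry $T_{\bw}$ commutes with $\Fr$ up to the Frobenius twist, by Lusztig's compatibility of $T_i''$ with $\Fr$ (41.1 / 35.x). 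Specializing $q$-binomials at $\epsilon$ via the quantum Lucas theorem then shows that $\Fr(B_i^{(m)}\one_{\ov\lambda})$ equals $b_i^{(m/\ell)}\one_{\ov{\lambda/\ell}}$ when $\ell\mid m$, and $0$ otherwise.

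The main obstacle is the case $\tau i=i$, $i\in\wI$. There the idivided powers are given by complicated closed formulas involving $K_i$ and products $\prod(B_i^2-[2k]^2_i)$, and they are not simply $B_i^m/[m]!$. I would try to reduce to rank one. Using the expansion formulas of Berman--Wang for $B^{(m)}_{i,\ov p}\one_\lambda$ in terms of $E_i^{(a)}F_i^{(b)}\one_\mu$, I would compute $\Fr$ termwise via Proposition~\ref{prop:Fr}. The aim is to show that only the terms with $\ell\mid a,b$ and $\mu\in\ell X$ survive, and that these reassemble into $B^{(m/\ell)}_{i,\ov{p'}}$ at $q=1$. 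This is a $q$-Lucas-type identity for the rank-one coefficients, and I expect it to be the technical heart of the argument. A cleaner alternative, which I would try first, is a characterization argument. One would show that $\Fr$ intertwines the coideal structures, $\Delta\circ\Fr=(\Fr\otimes\Fr)\circ\Delta$ on $\widehat{\U}$. Combined with a characterization of $\dU^\imath_1\subset\widehat{\U}_1$ by a coideal-plus-weight condition, this would give the result without explicit formulas.
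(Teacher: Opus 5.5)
The paper does not reprove this statement. It imports \cite[Theorem~4.1]{BSa21}, checks that Bao--Sale's lattice hypothesis $X^\star\cap(1-\theta)X=(1-\theta)X^\star$ holds automatically because $\ell$ is odd, and composes with the projection $\pi$ of Remark~\ref{rem:Fr-projection}. Your from-scratch route has genuine gaps.

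First, everything hinges on $\dUiA$ being generated by $E_j^{(m)}\one_{\ov\lambda}$, $F_j^{(m)}\one_{\ov\lambda}$, $B_i^{(m)}\one_{\ov\lambda}$ and the $\imath$divided powers. In this paper $\dUiA$ is \emph{defined} by the integrality condition $u\dU_\A\subseteq\dU_\A$. Generation of this $\A$-form by ($\imath$)divided powers is not established in the generality of the statement, and nothing in the paper supplies it. Without it, your generator check only controls the subalgebra those elements generate, not all of $\dU^\imath_\epsilon$.

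Second, the case $\tau i=i$ is not carried out, and the proposed reduction cannot work as stated. Berman--Wang's formulas concern $\U_q(\mathfrak{sl}_2)$, i.e.\ the case $T_{\bw}(E_i)=E_i$. For a $\tau$-fixed white node adjacent to $\bI$ (AII, BII, CII, DII, FII in Table~\ref{table:fdata}), the commutator $[F_i,T_{\bw}(E_i)]$ has nonzero weight $\bw\alpha_i-\alpha_i$ and involves the $E_j$ with $j\in\bI$. So $F_i$ and $T_{\bw}(E_i)K_i^{-1}$ do not generate an $\mathfrak{sl}_2$-type algebra, and there are no closed formulas to specialize. Even in the genuine $\mathfrak{sl}_2$ case, the $q$-Lucas identity you need is only asserted.

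The fallback via comultiplication is not a proof. The compatibility $\Delta\circ\Fr=(\Fr\otimes\Fr)\circ\Delta$ says nothing about membership in $\dU^\imath_1$ unless you have an intrinsic characterization of $\dU^\imath_1\subset\widehat{\U}_1$, and ``coideal plus weight condition'' is not one. Already for $\mathfrak{sl}_2$ at $q=1$, the algebras generated by $f_i+e_i$ and by $f_i-e_i$, with the same idempotents, are distinct coideal subalgebras; they correspond to different signs of $\vs_i(1)$. So any correct argument must track the parameters, e.g.\ $\vs_i(\epsilon)^\ell=\vs_i(1)$. You never check this, even in your $\tau i\neq i$ computation.

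Finally, in the idempotent step you invoke $\breve X\cap\ell X\supseteq\ell\breve X$, which is the trivial inclusion. The formula $\Fr(\one_{\ov\lambda})=\one_{\ov{\lambda/\ell}}$ needs the reverse, $\breve X\cap\ell X\subseteq\ell\breve X$, and this uses that $\ell$ is odd. If $\ell x\in\breve X$, then $\theta x=-x$, so both $2x=(1-\theta)x$ and $\ell x$ lie in $\breve X$, whence $x\in\breve X$. This is exactly the check the paper performs, in the form $X^\star\cap(1-\theta)X=(1-\theta)X^\star$.
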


\begin{remark}
The lattice assumption in \cite[Section~3.1]{BSa21}, namely
\[
X^\star\cap(1-\theta)X=(1-\theta)X^\star,
\]
is automatic under our assumptions. Indeed, let $z=(1-\theta)y\in X^\star$ with $y\in X$. Since $\ell X\subseteq X^\star$ and $\theta z=-z$, both
\[
\ell z=(1-\theta)(\ell y)
\quad\text{and}\quad
2z=(1-\theta)z
\]
belong to $(1-\theta)X^\star$. As $\ell$ is odd, this implies $z\in(1-\theta)X^\star$. The reverse inclusion follows from the $\theta$-stability of $X^\star$.

The map in Proposition~\ref{prop:dUiFr} is likewise the restriction constructed by Bao--Sale followed by a projection, analogous to that in Remark~\ref{rem:Fr-projection}.
\end{remark}

\subsection{Quantized coordinate algebras}\label{sec:quantized-coordinate-algebras}

Recall that $G$ is the connected complex reductive group associated with the
root datum fixed above. Let $\theta$ be the involution of $G$ attached to the
iroot datum \eqref{eq:iroot}; see \cite[Corollary~3.9]{So25}. Set
$K=G^\theta$ and $S=G/K$. 

Choose a Borel subgroup $B$ of $G$ and a $\theta$-stable maximal torus \(T\subset B\) such that \((T,B)\) is a \emph{split pair} for $\theta$; cf. \cite{Spr87}. Let $B_+=B$ and $B_-$ be the opposite Borel subgroup such that $T=B_+\cap B_-$. Then one has $\theta (B_\pm)=w^\bullet B_\pm w^\bullet$, where $w^\bullet=w_0w_\bullet$ is an involution in $W$. 


Let $R$ be an $\A$-algebra. Following \cite[\S3.1]{Lus09}, define
\[
\O_R[G]
=\bigl\{f\in\operatorname{Hom}_R(\dU_R,R)\bigm|
f(b)=0\text{ for all but finitely many }b\in\dB_R\bigr\}.
\]
This is a free $R$-module with the \emph{dual canonical basis}
$\{b^*\mid b\in\dB_R\}$, where $b^*$ are defined by $b^*(b')=\delta_{b,b'}$ for
$b,b'\in\dB_R$. We can also naturally view an element of $\O_R[G]$ as a linear function on $\widehat{\U}_R$.

Let us briefly recall the Hopf algebra structure on $\O_R[G]$. Let $\Delta_R$, $\eta_R$, and
$\mathsf S_R$ denote the comultiplication, counit, and antipode of the completion $\widehat{\U}_R$. The structure maps on $\O_R[G]$ are obtained by duality:
\begin{align*}
(fg)(x)&=(f\otimes g)\Delta_R(x),
&1_{\O_R[G]}&=\eta_R,\\
\Delta_G(f)(x\otimes y)&=f(xy),
&\varepsilon_G(f)&=f(1),
&\mathsf S_G(f)(x)&=f(\mathsf S_R(x)).
\end{align*}
Here $f\otimes g$ is evaluated on $\widehat{\U}^{(2)}_R$ using the finite supports of $f$ and $g$. The finiteness properties of the canonical basis ensure that these maps are
well defined and make $\O_R[G]$ an $R$-Hopf algebra; see
\cite[\S3.1]{Lus09}. 

Following \cite{BS24}, the quantized coordinate algebra of the
symmetric space is defined to be the following right-invariant subalgebra of $\O_R[G]$
\begin{equation}\label{eq:quantum-symmetric-space}
\O_R[S]=\O_R[G/K]
:=\bigl\{f\in\O_R[G]\bigm|
f(xy)=\eta_R(y)f(x),\quad
y\in\dU_R^\imath,\ x\in\dU_R\bigr\}.
\end{equation}

Thanks to \cite{Lus09,BS24}, the construction of $\O_R[G]$ and $\O_R[S]$ is functorial, that is, one has 
$$\O_R[G]\cong R\otimes _{\A}\O_\A[G],\qquad \O_R[S]\cong R\otimes _{\A}\O_\A[S]$$ as $R$-algebras.

For $R=\QQ$, we simply write $\O_q[G]=\O_{\QQ}[G]$ and $\O_q[S]=\O_{\QQ}[S]$.

Recall the $\A$-algebra $\C_1$ from Section \ref{sec:bc}. Write
$\O_1[G]=\O_{\C_1}[G]$. By \cite[Theorem~4.11]{Lus09}, one has an isomorphism of Hopf algebras
\begin{equation}\label{eq:O1G-classical}
\Phi_G:\O_1[G]\xrightarrow{\ \sim\ }\O[G],
\end{equation}
where $\O[G]$ is the ordinary coordinate algebra of the connected reductive group $G$. The formula
\begin{equation}\label{eq:Poisson-O1G}
\{\overline f,\overline g\}_1
=\overline{\frac{fg-gf}{q-1}},
\qquad f,g\in\O_\A[G],
\end{equation}
defines a Poisson Hopf structure on $\O_1[G]$, where for $f\in\O_\A[G]$, $\overline{f}$ denotes its image in $\O_1[G]$. Let us equip $G$ with the standard Poisson structure associated with the pair $(B_+,B_-)$. Then the map $\Phi_G$ can moreover be taken to be an isomorphism of Poisson Hopf algebras.

Similarly write $\O_1[S]=\O_{\C_1}[S]$. By \cite[Theorem~4.8]{BS24}, one has a canonical
algebra isomorphism
\begin{equation}\label{eq:O1S-classical}
\Phi_S:\O_1[S]\xrightarrow{\ \sim\ }\O[S],
\end{equation}
which fits into the commutative diagram
\begin{equation*}
    \begin{tikzcd}
        & \O_1[G] \arrow[r,"\Phi_G"] & \O[G] \\ 
        & \O_1[S] \arrow[u,hook] \arrow[r,"\Phi_S"] & \O[S] \arrow[u,hook]
    \end{tikzcd}
\end{equation*}

By \cite[Proposition 3.1]{So26}, under the assumption that $(B_+,B_-)$ is $\theta$-split, the subgroup $K$ is coisotropic with respect to the standard Poisson structure on $G$. In particular, the coordinate algebra $\O[S]$ is a Poisson subalgebra of $\O[G]$. It is then clear that $\Phi_S$ is an isomorphism of Poisson algebras.

\subsection{Matrix coefficients}

Let $\mu\in X^+$ and $V(\mu)$ be the irreducible $\U$-module with highest weight $\mu$ and a highest weight vector $v_{\mu}$; $V(\mu)$ is naturally a $\dU$-module. 

For $f\in V(\mu)^*,v\in V(\mu)$, let $c_{f,v}^\mu$ be the following linear function on $\dU$
\begin{align}
c_{f,v}^\mu:\dU \rightarrow \QQ, \quad c_{f,v}^\mu(x)=f(xv).
\end{align}

Let $\mathcal{B}$ be a basis of $V(\mu)$. For any $b\in \mathcal{B}$, define $b^*$ to be the linear function on $V(\mu)$ such that $b^*(b')=\delta_{b,b'}$ for any $b'\in \mathcal{B}$. The linear functions $c_{b_1^*,b_2}^\mu$ for $b_1,b_2\in \mathcal{B}$ are called the {\em matrix coefficients} with respect to the basis $\mathcal{B}$. Similarly, one can define matrix coefficients for general $\U$-modules.

The canonical basis of $V(\mu)$ is denoted by $\bB(\mu)=\{b^-v_\mu | b\in \bB, b^- v_\mu\neq 0 \}$. Let $V(\mu)_\A$ be the $\A$-span of $\bB(\mu)$; $V(\mu)_\A$ admits a natural action of $\dU_\A$. Denote by $\omega_i,i\in \I$ the fundamental weights.

\begin{proposition}

\begin{itemize}
\item[(1)] $c_{b_1^*,b_2}^\mu\in\O_\A[G]$ for $b_1,b_2\in \bB(\mu)$.

\item[(2)] The set $\{c_{b_1^*,b_2}^\mu\vert b_1,b_2\in \bB(\mu),\mu\in X^+\}$ forms an $\QQ$-basis of $\O_{\QQ}[G]$.

\item[(3)] The set $\{c_{b_1^*,b_2}^{\omega_i}\vert b_1,b_2\in \bB(\omega_i),i\in \I\}$ forms a generating set of $\O_{\QQ}[G]$ when G is simply connected.
\end{itemize}
\end{proposition}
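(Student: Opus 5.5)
We prove the three parts in order, using that $\O_\A[G]$ is the restricted dual of $\dU_\A$ with its canonical basis $\dB$.

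\emph{Part (1).} Since $\bB(\mu)$ is an $\A$-basis of $V(\mu)_\A$ and $V(\mu)_\A$ is stable under $\dU_\A$, the function $c^\mu_{b_1^*,b_2}$ takes values in $\A$ on $\dU_\A$. It remains to check finite support on $\dB$. The element $x\in\dU_\A$ acts on $V(\mu)$ through $\dU\one_\lambda$ only for the finitely many weights $\lambda$ of $V(\mu)$. Moreover, by Lusztig's theory of based modules \cite[25.2]{Lus94}, for each $\lambda$ only finitely many $b\in\dB\one_\lambda$ act nonzero on $V(\mu)$. Indeed, the annihilator of $V(\mu)$ in $\dU$ is spanned by canonical basis elements outside a finite cell-type subset. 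Hence $c^\mu_{b_1^*,b_2}\in\O_\A[G]$.

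\emph{Part (2).} Over $\QQ$, $\O_q[G]$ is the restricted dual of $\dU$. The Peter--Weyl decomposition, which comes from Lusztig's filtration of $\dU$ by two-sided ideals $\dU[\geq\mu]$ with subquotients $\cong V(\mu)\otimes V(\mu)^{*}$-type based modules \cite[29.1--29.2]{Lus94}, identifies
\[
\O_q[G]\cong\bigoplus_{\mu\in X^+}V(\mu)^*\otimes V(\mu)
\]
via $f\otimes v\mapsto c^\mu_{f,v}$. Injectivity of this map follows from the Jacobson density theorem, since the $V(\mu)$ are pairwise nonisomorphic absolutely irreducible $\dU$-modules. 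Surjectivity follows because every finitely supported functional factors through a finite-dimensional quotient $\dU/\dU[\not\leq\text{finite}]$. That quotient is semisimple with simple modules $V(\mu)$, because finite-dimensional type~$1$ modules are completely reducible. The basis statement then follows from the basis $\{b_1^*\otimes b_2\}$ of each summand.

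\emph{Part (3).} Assume $G$ is simply connected, so every $\mu\in X^+$ is a nonnegative integer combination $\sum_i m_i\omega_i$. We induct on $\mu$ in the dominance order. The module $V(\mu)$ is the submodule of $\bigotimes_i V(\omega_i)^{\otimes m_i}$ generated by the tensor product of highest weight vectors, and it is a direct summand there by complete reducibility. Hence every $c^\mu_{f,v}$ equals a matrix coefficient $c_{\tilde f,\tilde v}$ of the tensor product, where $\tilde f$ is the extension of $f$ by zero on the complement. Matrix coefficients of a tensor product are products of matrix coefficients of the factors: $c_{f_1\otimes f_2,v_1\otimes v_2}=c_{f_1,v_1}c_{f_2,v_2}$ by the definition of multiplication via $\Delta$. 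Therefore every $c^\mu_{f,v}$ is a polynomial in the $c^{\omega_i}_{b_1^*,b_2}$, and part (2) completes the proof.

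The main obstacle is the finite-support claim in part (1), together with surjectivity in part (2). Both rely on Lusztig's structure theory of $\dU$, namely the cell filtration and the compatibility of canonical bases with $V(\mu)$. We would invoke \cite[Chapter~29]{Lus94} directly rather than reprove it.
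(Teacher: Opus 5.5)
Your proposal is correct. Parts (1) and (2) follow the paper's route, while part (3) uses a different key ingredient.

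In (1), the paper identifies $c^\mu_{b_1^*,b_2}(b)$ with a structure constant $m^{y_1\one_\mu}_{b,y_2\one_\mu}$ of $\dU_\A$ via \cite[25.2.1]{Lus94}. This amounts to your observation that $V(\mu)_\A$ is $\dU_\A$-stable. In (2), the paper simply cites Peter--Weyl, which you unpack.

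The paper does not argue finite support on $\dB$ at all; it treats finiteness of $\{b\in\dB\mid b|_{V(\mu)}\neq0\}$ as known, as in Proposition~\ref{prop:psimu}. Supplying it is a plus, but your justification is misstated: the annihilator of $V(\mu)$ is \emph{not} spanned by a subset of $\dB$. For $\mathfrak{sl}_2$ and $\mu=2$, the element $[2]\one_0-FE\one_0$ kills $V(2)$, while neither of the canonical basis elements $\one_0$, $FE\one_0$ does. What is true, and all you need, is weaker. The annihilator \emph{contains} every element of Lusztig's cells $\dB[\lambda]$ with $\lambda\not\leq\mu$ \cite[29.1]{Lus94}. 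Only finitely many elements of $\dB$ lie in the cells with $\lambda\leq\mu$.

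For (3), you split $V(\mu)$ off $\bigotimes_i V(\omega_i)^{\otimes m_i}$ by complete reducibility, extend $f$ by zero on a complement, and use $c_{f_1\otimes f_2,v_1\otimes v_2}=c_{f_1,v_1}c_{f_2,v_2}$. This is valid and elementary; the announced induction on the dominance order is never actually used. The paper instead uses two facts about based modules:
\begin{itemize}
\item $V(\mu)$ is a \emph{based} submodule of $V(\omega_{i_1})\otimes\cdots\otimes V(\omega_{i_k})$ \cite[27.1.7]{Lus94};
\item the transition matrix between the canonical basis of the tensor product and the tensor product of canonical bases is unipotent triangular over $\A$ \cite[27.3.2]{Lus94}.
\end{itemize}
This gives the stronger integral fact that each $c^\mu_{b_1^*,b_2}$ is an $\A$-linear combination of products of the $c^{\omega_i}_{b^*,b'}$. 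Your complement, by contrast, comes from semisimplicity over $\QQ$ and gives no control of $\A$-coefficients. For the statement as posed, namely generation of $\O_{\QQ}[G]$, your argument suffices.
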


\begin{proof}
We prove (1). Let $b\in \dB,b_1,b_2\in \bB(\mu)$. Write $b_1=y_1 v_\mu,b_2=y_2v_\mu$ for $y_1,y_2\in \bB^-$ and write
\[
b (y_2\one_\mu)=\sum_{b'\in \dB} m^{b'}_{b,y_2\one_\mu} b'.
\]
for some $m^{b'}_{b,y_2\one_\mu} \in \A$. By \cite[Theorem 25.2.1]{Lus94}, $b'v_\mu$ is either $0$ or an element in $\bB(\mu)$, and $b'v_\mu=y_1v_\mu$ if and only if $b'=y_1\one_\mu$. Hence, 
\[
c_{b_1^*,b_2}^\mu(b)=m^{y_1 \one_\mu}_{b,y_2\one_\mu}\in \A.
\]
This shows that $c_{b_1^*,b_2}^\mu\in \O_\A[G]$. 

(2). We show that $c_{b_1^*,b_2}^\mu$ form a basis of $\O_{\QQ}[G]$. Indeed, this follows from the Peter--Weyl decomposition (cf. \cite{Kas93})
\[
\O_{\QQ}[G]=\bigoplus_{\mu\in X^+} V(\mu)^* \otimes V(\mu).
\]

 (3). Thanks to (2), it suffices to show that every $c_{b_1^*,b_2}^\mu$ for $b_1,b_2\in \bB(\mu),\mu\in X^+$ lies in the subalgebra generated by $\{c_{b_1^*,b_2}^{\omega_i}\vert b_1,b_2\in \bB(\omega_i),i\in \I\}$. Write $\mu=\omega_{i_1} +\ldots +\omega_{i_k}$. By \cite[Proposition 27.1.7]{Lus94}, $V(\mu)$ is a based submodule of $V(\omega_{i_1})\otimes \cdots \otimes V(\omega_{i_k})$ and therefore $c_{b_1^*,b_2}^\mu$ is identified with a matrix coefficient with respect to the canonical basis $\bB(\omega_{i_1},\ldots, \omega_{i_k})$ of $V(\omega_{i_1})\otimes \cdots \otimes V(\omega_{i_k})$. Moreover, by \cite[Theorem 27.3.2]{Lus94}, the transition matrix between  $\bB(\omega_{i_1},\ldots, \omega_{i_k})$ and $\bB(\omega_{i_1})\otimes \cdots \otimes \bB(\omega_{i_k})$ is unipotent upper-triangular with entries in $\A$. Hence, $c_{b_1^*,b_2}^\mu$ is an $\A$-linear combination of matrix coefficients with respect to the basis $\bB(\omega_{i_1})\otimes \cdots \otimes \bB(\omega_{i_k})$; these matrix coefficients have the form 
\[
c^{\omega_{i_1}}_{x_{i_1}^*,y_{i_1}}\cdots c^{\omega_{i_k}}_{x_{i_k}^*,y_{i_k}}
\]
where $x_{i_j},y_{i_j}\in \bB(\omega_{i_j})$. Therefore, we have proved that $c_{b_1^*,b_2}^\mu$ lies in the subalgebra generated by $\{c_{b_1^*,b_2}^{\omega_i}\vert b_1,b_2\in \bB(\omega_i),i\in \I\}$ as desired.
\end{proof}

Let $\mu\in \breve{X}^+$. By \cite[Theorem 3.6]{Let03}, there is a nonzero vector, unique up to scalar, $w_\mu\in V(-w_0\mu)$, such that 
\begin{align}\label{eq:wmu}
uw_\mu=\eta(u)w_\mu
\end{align}
for all $u\in \Ui$. 

\begin{proposition}
The set $\{c^{-w_0\mu}_{b_1^*,w_\mu}| \mu\in \breve{X}^+,b_1\in \bB(-w_0\mu)\}$ forms a $\QQ$-basis of $\O_{\QQ}[\ss]$.
\end{proposition}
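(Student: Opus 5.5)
The approach is to combine the Peter--Weyl decomposition of $\O_{\QQ}[G]$ with Letzter's uniqueness of $\Ui$-invariant vectors. Concretely, I would determine the right $\Ui$-invariants of $\O_{\QQ}[G]=\bigoplus_{\lambda\in X^+}V(\lambda)^*\otimes V(\lambda)$ summand by summand.

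First, note the action. Under $c^\lambda_{f,v}(x)=f(xv)$, right translation by $y\in\dU$ acts through the second tensor factor: $c^\lambda_{f,v}(xy)=c^\lambda_{f,yv}(x)$. Hence $f\mapsto f(\cdot\, y)$ acts on the summand $V(\lambda)^*\otimes V(\lambda)$ via $\mathrm{id}\otimes(y\cdot)$. The defining condition of $\O_{\QQ}[S]$ in \eqref{eq:quantum-symmetric-space} says $f(xy)=\eta(y)f(x)$ for all $y\in\dUi$ (over $\QQ$, with $\dUi$ acting on $V(\lambda)$ via the idempotents $\one_{\ov\lambda}$). This condition preserves the Peter--Weyl grading, so
\[
\O_{\QQ}[S]=\bigoplus_{\lambda\in X^+}V(\lambda)^*\otimes V(\lambda)^{\Ui},
\qquad
V(\lambda)^{\Ui}=\{v\in V(\lambda)\mid uv=\eta(u)v\ \ \forall u\in\Ui\}.
\]
To justify this, I would check that $\eta$-invariance under $\dUi$ matches invariance under $\Ui$ on each $V(\lambda)$. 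This reduces to how $\one_{\ov\mu}$ acts on weight spaces together with the generators $B_i$, $K_\mu$ ($\mu\in Y^\imath$), $E_j$, $F_j$ ($j\in\bI$).

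Second, compute the invariants. By \cite[Theorem 3.6]{Let03}, $V(\lambda)^{\Ui}$ is at most one-dimensional, and it is nonzero exactly when $\lambda=-w_0\mu$ for some $\mu\in\breve X^+$; in that case it is spanned by $w_\mu$ from \eqref{eq:wmu}. I would verify that $\mu\mapsto -w_0\mu$ is a bijection from $\breve X^+$ onto the set of such spherical highest weights, as in Letzter's statement. This relies on $\theta$ commuting with $-w_0$ up to $\tau$, since $\breve X$ is stable under $-w_0$.

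Third, conclude. Since $\{b_1^*\mid b_1\in\bB(-w_0\mu)\}$ is a basis of $V(-w_0\mu)^*$, the elements $c^{-w_0\mu}_{b_1^*,w_\mu}$ form a basis of $V(-w_0\mu)^*\otimes\QQ w_\mu$. Their union over $\mu\in\breve X^+$ is then a basis of $\O_{\QQ}[S]$ by the direct sum decomposition above.

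The main obstacle is the first step: the careful identification of the $\dUi$-condition with the $\Ui$-invariance appearing in \eqref{eq:wmu}, that is, the bookkeeping of idempotents and parameters. I would handle it by writing $\dUi\one_{\ov\lambda}$ acting on $V(\lambda)$ through the embedding $\iota_\lambda$. Both $\Ui$ and the unit $\sum_{\ov\nu}\one_{\ov\nu}$ act compatibly there, and $\eta$ is nonzero only on the $\ov0$ component.
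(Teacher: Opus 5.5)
Your argument is correct and is essentially the paper's: the paper quotes the decomposition $\O_{\QQ}[\ss]=\bigoplus_{\mu\in\breve X^+}V(-w_0\mu)^*$ from \cite[Theorem 4.9]{BS24} and reads off the basis, whereas you rederive that decomposition from the Peter--Weyl decomposition and Letzter's multiplicity-one result. The one input you must source carefully is the ``only if'' direction: for a general reductive root datum, no $V(\lambda)$ with $\lambda\notin -w_0\breve X^+$ may carry a nonzero $\dUi$-invariant vector, and this is exactly what the cited theorem of \cite{BS24} provides.
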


\begin{proof}
By the choice of $w_\mu$, $c^{-w_0\mu}_{b_1^*,w_\mu}\in \O_{\QQ}[\ss]$. Due to the following decomposition of $\O_{\QQ}[\ss]$ (cf. \cite[Theorem 4.9]{BS24})
\[
\O_{\QQ}[\ss]=\bigoplus_{\mu\in \breve{X}^+} V(-w_0\mu)^*,
\]
these matrix coefficients form a basis of $\O_{\QQ}[\ss]$.
\end{proof}

\subsection{Stratification of symmetric spaces}\label{sec:stratification}

Retain the same notation as in Section \ref{sec:quantized-coordinate-algebras}. Let $$\iota:\ss\rightarrow G;\qquad gK\mapsto g\theta(g)^{-1}.$$
It is known that the map $\iota$ maps $\ss$ isomorphically to a closed subvariety of $G$. For $u\in W$, define 
\[
\ss_u=\iota^{-1}(B_- u B_+w_0w_\bullet) =\iota^{-1}(B_- u w_0 w_\bullet \theta(B_+))
\]
to be the \emph{double Bruhat cell} of the symmetric space $S$.

\begin{remark}
For the symmetric space $(G\times G,G)$ of diagonal type, the strata $S_u,u\in W\times W$ are identified with a double Bruhat cell of $G$; see \cite[Section 7.3]{Lu14}.
\end{remark}

\begin{lemma}[\text{\cite[Lemma 2.4]{Bao26}}]
{\,}
\begin{itemize}
\item[(1)] $\ss_u$ is nonempty if and only if $u\leq \ww$, and $\ss=\bigsqcup_{u\leq \ww} S_u$.
\item[(2)] For $u\leq\ww$, $\ss_u\subset\ss$ is a smooth connected locally closed subvariety of codimension $l(u)$.
\end{itemize}
\end{lemma}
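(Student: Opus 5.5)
The plan is to transport the question through the embedding $\iota:\ss\to G$, $gK\mapsto g\theta(g)^{-1}$. On the image, the cell $\ss_u$ is the preimage of $B_-uB_+w^\bullet$, so we intersect $\iota(\ss)$ with the Bruhat decomposition $G=\bigsqcup_{v\in W}B_-vB_+$ translated on the right by $w^\bullet$. Each $\ss_u$ is locally closed, being the preimage of a locally closed subset, and these cells cover $\ss$ disjointly. What remains is nonemptiness and the geometric properties.

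The key structural input is $B$-equivariance. Let $b\in B_-$ act on $\ss$ by left translation twisted appropriately. Note $\iota(bgK)=b\,\iota(gK)\,\theta(b)^{-1}$, and $\theta(B_-)=w^\bullet B_-w^\bullet$. Hence $\theta(b)^{-1}w^\bullet\in w^\bullet B_-$, which gives
\[
B_-\,\iota(x)\,\theta(b)^{-1}\,w^\bullet\subseteq B_-\,\iota(x)\,w^\bullet B_- .
\]
This is not quite $B_-\times B_+$-stable. I would therefore rewrite $\ss_u=\iota^{-1}(B_-uw^\bullet\theta(B_+))$, where $\theta(B_+)=w^\bullet B_+w^\bullet$. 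The condition then becomes that the point $\iota(x)$, for the $\theta$-twisted conjugation action of $G$, lies in a union of $B_-$-orbits. So the cells $\ss_u$ are unions of $B_-$-orbits on $\ss$ (under left multiplication). By the theory of $B$-orbits on symmetric spaces (Richardson--Springer), these orbits are finite in number, and the open one is $B_-K/K$.

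For (1), I would use the Springer/Richardson--Springer description: $\iota$ identifies $\ss$ with a component of the twisted involutions $\{g\theta(g)^{-1}\}$. For the $\theta$-split pair, $\iota(B_{-}K/K)$ lands in $B_-w^\bullet\theta(B_+)$, that is $u=e$. The closure relations then show that the cells that occur are indexed by $u\le w^\bullet$. Concretely, I would take a reduced expression of $w^\bullet$, which is $\tau$-compatible, and use the torus-fixed points or $\theta$-stable points $\dot v K$. Choosing a representative so that $\iota(\dot v K)$ realizes each $u\le w^\bullet$ gives nonemptiness. For the converse, $u\not\le w^\bullet$ gives emptiness, since $\iota(\ss)\subset\overline{B_-w^\bullet\theta(B_+)}\,w^\bullet\cdots$ forces $u\le w^\bullet$ by the Bruhat order on closures.

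For (2), I would show that each $\ss_u$ is the transversal intersection of $\iota(\ss)$ with the Bruhat cell $B_-uB_+w^\bullet$, which has codimension $l(u)$ in $G$. Transversality would follow from the fact that $\ss_u$ are $T$-leaves of the Poisson structure (Evens--Lu), hence smooth. For connectedness and the exact codimension, I would construct a parametrization $\ss_u\cong (\text{open Richardson-type cell})\times T_\theta$-factor, by an argument analogous to the group case of Fomin--Zelevinsky.

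The main obstacle is nonemptiness exactly for $u\le w^\bullet$ together with the codimension count. I would handle this by induction on $l(u)$ via rank-one reductions along simple reflections in $\wI$, using the Satake diagram to control $\tau$-orbits. Since the statement is cited from \cite[Lemma 2.4]{Bao26}, the most economical route is to invoke that reference together with Evens--Lu \cite{EL06}.
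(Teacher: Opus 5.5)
\textbf{Verdict: there is a genuine gap.} The structural claim in your second paragraph is false, and your argument for (1) depends on it.

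\textbf{The cells are not unions of $B_-$-orbits.} Under $\iota$, an element $b\in B_-$ acts by $h\mapsto b\,h\,\theta(b)^{-1}$. Right multiplication by $\theta(B_-)=w^\bullet B_-w^\bullet$ does not preserve $B_-uB_+w^\bullet=B_-uw^\bullet\theta(B_+)$. The cells $S_u$ are $T$-stable, but they are not unions of $B_-$-orbits; this is exactly as for double Bruhat cells in a group. Concretely, $\iota(K)=1$ lies in $B_-w^\bullet B_+w^\bullet$ because $w^\bullet$ is an involution. So the base point lies in the deepest cell $S_{w^\bullet}$, of codimension $l(w^\bullet)$, although its $B_-$-orbit is the open orbit you mention. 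This has three consequences:
\begin{itemize}
\item[(a)] It refutes $\iota(B_-K/K)\subseteq B_-w^\bullet\theta(B_+)$, which is the cell $u=e$.
\item[(b)] Your emptiness argument is vacuous, since $\overline{B_-w^\bullet\theta(B_+)}=\overline{B_-B_+}\,w^\bullet=G$.
\item[(c)] The points $\dot vK$ cannot realize the cells. In the split case $\theta=-1$ on $X$, so $\theta$ acts trivially on $W$. Then $\iota(\dot vK)\in T\subseteq B_-w^\bullet B_+w^\bullet$ for every $v$, so all these points lie in $S_{w^\bullet}$.
\end{itemize}

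\textbf{What works for (1) is the $B_+$-orbit.} First, $\iota(B_+K/K)\subseteq B_+\theta(B_+)=B_+w^\bullet B_+w^\bullet$. By density of $B_+K/K$, this gives $\iota(S)\subseteq\overline{B_+w^\bullet B_+}\,w^\bullet$. Since $B_-uB_+$ meets $\overline{B_+w^\bullet B_+}$ only if $u\le w^\bullet$, the cell $S_u$ is empty for $u\not\le w^\bullet$. For nonemptiness, the computation in the proof of Lemma~\ref{le:Bv} gives $S_u\cap B_+K/K\cong N(w^\bullet)_u\times T_\theta$. Here $N(w^\bullet)_u$ is the open Richardson variety $B_+w^\bullet B_+/B_+\cap B_-uB_+/B_+$. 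It is nonempty exactly when $u\le w^\bullet$, and it is irreducible of dimension $l(w^\bullet)-l(u)$.

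\textbf{On (2): transversality.} Smoothness of $S_u$ coming from the $T$-leaf description would not give transversality or the codimension. Transversality does hold, for an elementary reason. At $h\in\iota(S)$, the right-translated tangent spaces are:
\begin{itemize}
\item[(a)] $\{X-\mathrm{Ad}_h\theta(X)\mid X\in\mathfrak g\}$ for the twisted-conjugation orbit $\iota(S)$;
\item[(b)] $\mathfrak b_-+\mathrm{Ad}_h\theta(\mathfrak b_+)$ for $B_-\,h\,\theta(B_+)$.
\end{itemize}
Taking $X\in\mathfrak b_+$ shows that their sum contains $\mathfrak b_-+\mathfrak b_+=\mathfrak g$. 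Hence every nonempty $S_u$ is smooth of codimension $l(u)$, with no Poisson input needed.

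\textbf{On (2): connectedness is missing.} The product description $S_u\cong(\text{Richardson cell})\times T_\theta$ holds only on the chart $B_+K/K$. In general $S_u\not\subseteq B_+K/K$. For instance, suppose $w^\bullet\neq e$ and $g^{-1}B_+g$ is $\theta$-stable. Then $\iota(gK)\in B_+w^\bullet$, so $gK\in S_e$, but $gK\notin B_+K/K$ because $B_+$ itself is not $\theta$-stable. You therefore need a separate argument that every connected component of $S_u$ meets $B_+K/K$. One option is the identification of $S_u$ with a single $T$-leaf in \cite{Lu14}. The rank-one induction you mention is too unspecified to supply this.

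The paper gives no proof of this lemma and only cites \cite{Bao26}. Your closing fallback therefore agrees with the paper, but the argument you outline would not go through as written.
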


In particular, $S_e$ is the open dense double Bruhat cell of $S$.

Let $N$ be the unipotent radical of $B=B_+$, and $N_-$ be the unipotent radical of $B_-$. For $w\in W$, set $N(w)=N\cap w N_-w^{-1}$. Let us identify $N(w)$ with the Schubert cell $B w\cdot B\subset G/B$ via
\[
i:N(w)\overset{\sim}{\rightarrow} Bw \cdot B,\qquad  u\mapsto uw\cdot B.
\]

The torus $T$ acts on $\ss$ via left multiplication. Recall that $G$ is equipped with the standard Poisson structure associated with the pair $(B_+,B_-)$, and $S$ admits a canonical Poisson structure such that the projection $G\rightarrow S$ is Poisson. It is known that the $T$-action on $\ss$ preserves the Poisson structure.
\begin{proposition}[\text{cf. \cite[Theorem 1.1]{Lu14}}]\label{prop:dimSu}
    For $u\leq w^\bullet$, $\ss_u$ is a $T$-leaf, namely the union of the $T$-translates of a symplectic leaf of $\ss$. Any symplectic leaf contained in $\ss_u$ has dimension 
    \begin{equation}\label{eq:du}
    d_u=\dim \ss_u-\dim \text{ker}(1-uw_0\tau)=l(w_0w_\bullet)+\text{rank }(1-\theta)-l(u)-\dim \text{ker}(1-uw_0\tau).
    \end{equation}
    Here $\tau=-w_\bullet\theta$ acts on the full character lattice $X$ and extends the Satake involution, and the rank and kernel are computed on $X\otimes_\Z\Q$.
\end{proposition}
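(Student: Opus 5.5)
The plan is to deduce the statement from Lu's general theorem on $T$-leaves of Poisson homogeneous spaces of the form $G/K$ with $K$ coisotropic \cite[Theorem~1.1]{Lu14}, after identifying the strata $S_u$ defined through $\iota$ with Lu's strata.

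\textbf{Step 1: match the strata.} By \cite[Proposition~3.1]{So26}, $K$ is coisotropic for the standard Poisson structure on $G$ attached to the $\theta$-split pair $(B_+,B_-)$. Hence $S=G/K$ carries the quotient Poisson structure, and $T$ acts on $S$ by Poisson automorphisms. Lu describes the $T$-orbits of symplectic leaves as the intersections of $S$ with $(B_-\times B_+)$-orbits in a suitable $G\times G$-model; here $S$ sits in $G$ through $gK\mapsto g\theta(g)^{-1}$. I will check that, under $\iota$, Lu's strata become $\iota^{-1}(B_-uB_+w^\bullet)$. This uses $\theta(B_+)=w^\bullet B_+w^\bullet$, which rewrites $\iota(bK)$ for $b\in B_+$ as an element of $B_+\theta(B_+)=B_+w^\bullet B_+ w^\bullet$. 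Then Lemma~2.4 of \cite{Bao26} shows that these strata are smooth, connected, of codimension $l(u)$, and nonempty exactly for $u\le w^\bullet$.

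\textbf{Step 2: compute the leaf dimension.} $S_u$ is a $T$-stable leaf union, so $d_u=\dim S_u-\dim(\text{generic }T\text{-orbit directions transverse to leaves})$. Lu's theorem expresses this codimension as the dimension of the fixed space of a twisted Weyl group element acting on $\mathfrak t$. Concretely, the relevant element is $u$ composed with the involution induced by $\theta$ and the twist by $w^\bullet$. Using $\tau=-w_\bullet\theta$ on $X$, this operator becomes $uw_0\tau$, and the codimension is $\dim\ker(1-uw_0\tau)$. This is the step I expect to be the main obstacle. Lu's conventions (left versus right actions, use of $\theta$ versus $\theta^{-1}$ and signs) must be matched carefully. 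I would first verify the formula in the diagonal case of Remark~2.?, where it should reduce to the known $\dim\ker(1-u^{-1}v)$-type formula for double Bruhat cells. I would also check it in rank one.

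\textbf{Step 3: the dimension of $S_u$.} We have $\dim S=\dim G-\dim K$. For a $\theta$-split pair the open $B$-orbit is $B_\theta\cong N(w^\bullet)\times T_\theta$. Here $\dim N(w^\bullet)=l(w_0w_\bullet)$ and $\dim T_\theta=\operatorname{rank}(1-\theta)$. Therefore $\dim S_u=l(w_0w_\bullet)+\operatorname{rank}(1-\theta)-l(u)$, which gives the second equality in \eqref{eq:du}.
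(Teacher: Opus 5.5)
Your proposal follows the same route as the paper. The paper gives no independent proof: it cites Lu's Theorem~1.1 for the $T$-leaf description and the leaf dimension, and the second expression for $d_u$ comes from Bao's codimension-$l(u)$ lemma together with $\dim S=l(w_0w_\bullet)+\operatorname{rank}(1-\theta)$ from the local structure theorem \eqref{eq:NTB}, exactly as in your Step~3. The convention matching you flag in Step~2 is likewise left implicit in the paper. One caution there: $uw_0\tau=uw^\bullet(-\theta)=-uw^\bullet\theta$, so the involution of $\mathfrak t$ in the fixed-space count must be $-\theta$, not $\theta$; with $\theta$ itself, $SL_2/SO_2$ would give the impossible odd value $d_e=1$.
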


\section{Integral form and localization}\label{sec:integral-localization}

In this section, we construct an integral localization of $\O_\A[S]$ and identify it with $\O_\A[B_\theta]$, the quantum coordinate algebra of the open Borel orbit. The latter is a smash product of $\O_\A[N(w_0 w_\bullet)]$ and the torus algebra $\O_\A[T_\theta]$.

Retain the same notation as in the previous section. Set $N_0=l(w_0)$, $M=l(w_\bullet)$ and $L=N_0-M$. Fix a reduced expression $\un{w_0 w_\bullet}=s_{i_1}s_{i_2}\cdots s_{i_L}$, and a reduced expression $\un{\bw}=s_{j_1}s_{j_2}\cdots s_{j_M}$ in $W_\bullet$. Denote $i_{L+k}=j_k$ for $1\le k\le M$. Then $\un{w_0}=s_{i_1}s_{i_2}\cdots s_{i_{N_0}}$ is a reduced expression for $w_0$. 

The algebra $\U^+_{\A}(\bw)$ is naturally identified with the subalgebra of $\U_\A^+$ generated by $E_j^{(k)},j\in \bI,k\in \N$ with a PBW basis $\bE(\un{\bw})$. Let $\U^+_{\A}(\bw)_{>}$ be the $\A$-submodule of $\U^+_{\A}(\bw)$ spanned by $\{E_{\un{\bw}}^{(\ba)}|\ba\in \N^{M},\ba\neq 0\}$. Then the PBW basis implies the following direct sum decomposition
\begin{align}\label{eq:dsum}
\U_\A^+ = \U^+_{\A}(w_0\bw)\oplus \U^+_{\A}(w_0 \bw)\U^+_{\A}(\bw)_{>}.
\end{align}

\subsection{Quantum Iwasawa decomposition}

Recall that $\dU$ is a $(\dUi,\dUi)$-bimodule and $\dU_\A$ is a $(\dUiA,\dUiA)$-bimodule.

For $\lambda\in X$, set $\dU_{\A}^\lambda$ to be the $\A$-span of $\{x \one_\lambda y|x\in \U^+_\A(w_0 \bw), y\in \dUiA\}$. 
\begin{lemma}\label{lem:idecomp}
We have $\dU_\A=\bigoplus_{\lambda\in X} \dU_{\A}^\lambda$.
\end{lemma}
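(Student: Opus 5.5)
The plan is to prove this by a triangularity argument relative to a filtration of $\dU_\A$, using the known integral structure of $\dUiA$. Step one is to write $\dU_\A=\bigoplus_{\lambda\in X}\U^+_\A\one_\lambda\U^-_\A$. This is the integral triangular decomposition. Via \eqref{eq:dsum} it can be refined as
\[
\dU_\A=\bigoplus_{\lambda}\U^+_\A(w_0\bw)\,\U^+_\A(\bw)\one_\lambda\U^-_\A .
\]
Next we use the integral iquantum Iwasawa-type statement from \cite{BW18b,BW21}: $\dUiA\one_{\ov\lambda}$ has an $\A$-basis (the icanonical basis $\dot\bB^\imath$) which is unitriangular to elements of the form
\[
\U^+_\A(\bw)\,\one_{\ov\lambda}\,F^{(\ba)}\qquad\text{(PBW in the $F$'s, with $E_j^{(k)}$, $j\in\bI$)},
\]
modulo terms of lower order. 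In other words, $\dUiA\one_{\ov\lambda}\to\dot\bP_\A\one_\lambda$ via $p_{\iota,\lambda}$ in \eqref{def:dP} is an isomorphism of $\A$-modules onto $\dot\bP_\A\one_\lambda\cong \U^+_\A(\bw)\one_\lambda\U^-_\A$. I will take this isomorphism as the key input; it is the integral form of \cite[Theorem 3.6 type]{BW18b} statements.

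Spanning comes next. We filter $\dU_\A\one_{\lambda'}$ by the height of the $\U^+(w_0\bw)$-degree, that is, by the $\wI$-part of the $E$-weight. For $x\in\U^+_\A(w_0\bw)$ and $y\in\dUiA\one_{\ov\lambda}$, the product $x\one_\lambda y$ equals $x\cdot\iota_\lambda(y)$. By the previous step, $\iota_\lambda(y)\equiv p_{\iota,\lambda}(y)$ modulo $\sum_x\dU x\one_\lambda$, and these error terms have strictly higher $\wI$-weight on the $E$-side after straightening. Since $\U^+_\A(w_0\bw)\cdot\dot\bP_\A\one_\lambda$ recovers $\U^+_\A(w_0\bw)\U^+_\A(\bw)\one_\lambda\U^-_\A$, induction on this weight, which is bounded for a fixed weight of the target element, shows that every element $x\U^+_\A(\bw)\one_\lambda F$ lies in $\sum_\mu\dU_\A^\mu$. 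Care is needed with the bookkeeping of $\lambda$: the error terms appear with shifted idempotents, but they are again expressible as $x'\one_{\lambda'}y'$.

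Directness is handled as follows. We pick bases: $\bE(\un{w_0\bw})$ for $x$ and $\dot\bB^\imath\one_{\ov\lambda}$ for $y$. The leading terms of $x\one_\lambda b^\imath$ under the above filtration are $x\,p_{\iota,\lambda}(b^\imath)$. These are linearly independent across all $(\lambda,x,b^\imath)$ because the refined triangular decomposition is direct and $p_{\iota,\lambda}$ is injective. A triangularity argument then gives linear independence of the family $\{x\one_\lambda b^\imath\}$, hence the sum is direct. To work over $\A$ rather than $\QQ$, I would argue independence over $\QQ$ using a dimension count on each weight space, and spanning over $\A$ using the integral induction.

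I expect the main obstacle to be the spanning step over $\A$. Specifically, one must show that the straightening of $x\cdot(\iota_\lambda(y)-p_{\iota,\lambda}(y))$ stays inside the $\A$-span of the required elements, which needs the integrality of $p_{\iota,\lambda}$ and of its inverse on $\dot\bP_\A$. The first thing I would try is to invoke the icanonical basis triangularity $b^\imath\one_\lambda\in b^-\one_\lambda+\sum q^{-1}\Z[q^{-1}]\cdots$ from \cite{BW18b}, which makes $p_{\iota,\lambda}$ unitriangular and hence invertible over $\A$.
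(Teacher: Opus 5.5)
Your outline matches the paper's: you take the Bao--Wang integral isomorphism between $\dUiA$ and $\dot\bP_\A$ as input and run a leading-term argument after left multiplication by $\U^+_\A(w_0\bw)$. However, two steps fail as written.

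\emph{The idempotent is on the wrong side.} In $x\one_\lambda y$ the idempotent sits to the left of $y$. You therefore need $y\in\one_{\ov\lambda}\dUiA$ and the left-idempotent version $\one_{\ov\lambda}y\mapsto\one_\lambda y$, which is what the paper uses. For this version the error terms are combinations of $\one_\lambda x'y'$ with $x'\in\U^+_\A$ of nonzero $\wI$-weight and $y'\in\U^-_\A$. With $y\in\dUiA\one_{\ov\lambda}$ and $\iota_\lambda$ as you wrote, $x\cdot\iota_\lambda(y)=xy\one_\lambda$, which is not $x\one_\lambda y$. Moreover, your error terms then lie in $\sum_{x'}\dU x'\one_\lambda$, with the positive part on the far right. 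Straightening them into the order $\U^+\one_{\lambda'}\U^-$ produces commutator terms from $[E_j,F_j]$, and their $\wI$-weight on the $E$-side need not exceed that of the leading term. With the left version, your ``strictly higher $\wI$-weight'' claim is correct.

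\emph{The spanning induction does not terminate for the reason you give; this is the real gap.} Your error terms move \emph{up} in $\wI$-weight, so the induction needs an upper bound, and there is none. The recursion fixes only the left weight $\mu$, and the components of $\one_\lambda y$ have right weights fixed only modulo $\breve X$. Already $\one_\mu\dU_\A\one_\mu$ contains the basis elements $E_i^{(n)}\one_{\mu-n\alpha_i}F_i^{(n)}$, $n\ge 0$, whose $E$-weight is unbounded.

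The missing observation is the other half of the leading-term statement. The lift $\widetilde b\in\one_{\ov\lambda}\dUiA$ of $b=x_2y$ (with $x_2\in\bE(\un{\bw})$ and $y$ a PBW monomial in $\U^-_\A$) satisfies
\[
\one_\lambda\widetilde b=b+(\text{terms of strictly smaller height in }\U^-).
\]
This holds because each $B_i$ has leading term $F_i$, and the correction terms carry fewer $F$'s after straightening. Left multiplication by $x\in\U^+_\A(w_0\bw)$ changes only the $\U^+$-factor. Hence $\{x\one_\lambda\widetilde b\}$ is unitriangular relative to the $\A$-basis $\{x\one_\lambda x_2y\}$ of $\dU_\A$, with respect to the filtration by negative height, which is bounded below. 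Induction on negative height then gives spanning and $\A$-linear independence at once; this is exactly the paper's argument.

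This also settles your other worries. Once the $\A$-isomorphism is assumed, integrality of the error terms is automatic, since they are differences of elements of $\dU_\A$. The proposed dimension count over $\QQ$ could not work anyway, because the spaces $\one_\mu\dU\one_{\mu'}$ are infinite-dimensional.
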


\begin{proof}
By \cite[Lemma 23.2.2]{Lus94}, the set of elements $x \one_\lambda y$, where $x,y$ run over PBW bases of $\U^+,\U^-$ respectively and $\lambda$ runs over $X$, forms an $\A$-basis of $\dU_\A$. Using the PBW basis $\bE(\un{w_0})$, it is clear that there is an $\A$-linear isomorphism
\[
\U^+_\A \cong \U^+_\A(w_0\bw) \otimes \U^+_\A(\bw).
\]
Hence, the set of elements $x_1 \one_\lambda x_2 y$, where $x_1,x_2, y, \lambda$ run over $\bE(\un{w_0\bw}),\bE(\un{\bw}), \bF(\un{w_0}), X$ respectively, forms an $\A$-basis of $\dU_\A$. Recall from \eqref{def:dP} the parabolic subalgebra $\dP_\A$ of $\dU_\A$ associated with $\bI$. Note that for any $\lambda\in X$, there is a natural $\A$-linear isomorphism between $\U^+_\A(\bw) \U^-_{\A}$ and $\one_\lambda\dP_\A$. Moreover, elements of the form $x_2y$, where $x_2, y$ run over $\bE(\un{\bw}), \bF(\un{w_0})$ respectively, form an $\A$-basis of $\U^+_\A(\bw) \U^-_{\A}$; we denote the image of this basis in $\one_\lambda\dP_\A$ by ${}_\lambda\mathcal{B}$. Then the set 
\[
\{x \one_\lambda y|\lambda\in X, x\in \bE(\underline{w_0 \bw}), y\in {}_\lambda\mathcal{B}\}
\]
forms an $\A$-basis for $\dU_\A$. By \cite[Corollary 6.20]{BW18b}, the left-idempotent version of $p_{\iota,\lambda}$ gives an $\A$-linear isomorphism $\one_{\ov{\lambda}} \dUiA\cong\one_\lambda\dP_\A$. This isomorphism is obtained by a leading-term argument; see \cite[Lemma 3.22 and the proof of Corollary 6.20]{BW18b}. More precisely, filter the PBW decomposition by the total height of its negative part. The generators $B_i$ have leading terms $F_i$, while the additional terms have smaller negative height. The integral version of this argument gives, for each $b\in{}_{\lambda}\mathcal B$, a lift $\widetilde b\in\one_{\ov\lambda}\dUiA$ such that
\[
\one_\lambda\widetilde b=b+\text{terms of smaller negative height},
\]
with all coefficients in $\A$; these lifts form an $\A$-basis of $\one_{\ov\lambda}\dUiA$. Multiplication on the left by $x\in\bE(\underline{w_0\bw})$ preserves this triangularity. Thus the elements $x\one_\lambda\widetilde b$ form an $\A$-basis of $\dU_\A$, by induction on the negative height. Replacing the basis of lifts by the icanonical basis of each $\one_{\ov\lambda}\dUiA$, we conclude that

\begin{align}\label{eq:iwabasis}
\{x \one_\lambda y|\lambda\in X, x\in \bE(w_0 \bw), y\in\dB^\imath\cap \one_{\ov{\lambda}}\dUiA \}
\end{align}
forms an $\A$-basis for $\dU_\A$. The desired statement follows. 
\end{proof}

\begin{lemma}\label{lem:localiso}
Let $\zeta\in X_\imath$ and $\lambda\in X$ such that $\ov{\lambda}=\zeta$. The following map
\[
\U^+_\A(w_0 \bw) \otimes \one_\zeta\dUiA  \rightarrow \dU_{\A}^\lambda,
\qquad x \otimes \one_\zeta y \mapsto x\one_\lambda y,
\]
is an isomorphism of right $\dUiA$-modules.
\end{lemma}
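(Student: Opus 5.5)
The plan is to read this off the basis \eqref{eq:iwabasis} constructed in the proof of Lemma~\ref{lem:idecomp}, then check compatibility with the right $\dUiA$-action separately.

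\textbf{Well-definedness and equivariance.} The map is $\A$-bilinear in $(x,\one_\zeta y)$, so it is well defined on the tensor product. Its image lies in $\dU_\A^\lambda$ by definition, since $x\one_\lambda y = x\cdot(\one_\lambda y)$ with $y\in\dUiA$. For the right $\dUiA$-module structure, the source is a right module through the second factor. For $z\in\dUiA$ we have
\[
(x\one_\lambda y)z = x\one_\lambda(yz),
\]
by associativity of the $(\dUiA,\dUiA)$-bimodule structure on $\dU_\A$. Here one should note that $\one_\lambda\cdot(\one_\zeta y)=\one_\lambda y$, because $\one_\zeta$ acts on $\one_\lambda$ by the identity when $\ov\lambda=\zeta$ and by zero otherwise. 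So the map intertwines the right actions.

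\textbf{Bijectivity.} Take the $\A$-basis $\{x\otimes \one_\zeta b\}$ of the source, where $x$ ranges over the PBW basis $\bE(\un{w_0\bw})$ of $\U^+_\A(w_0\bw)$ and $b$ over $\dB^\imath\cap\one_\zeta\dUiA$. This is a basis because both factors are free over $\A$ with these bases, by \cite{DCP93} and \cite{BW18b}. The images $x\one_\lambda b$ are exactly the $\lambda$-part of the basis \eqref{eq:iwabasis} of $\dU_\A$. In particular they are $\A$-linearly independent, which gives injectivity.

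For surjectivity, it suffices to show these elements span $\dU_\A^\lambda$. A general generator is $x\one_\lambda y$ with $y\in\dUiA$. Expand $x$ in the PBW basis. Write $y=\sum_{\zeta'}\one_{\zeta'}y$, expand each piece in the icanonical basis, and use $\one_\lambda\one_{\zeta'}=\delta_{\zeta',\zeta}\one_\lambda$. Then $x\one_\lambda y$ is an $\A$-combination of the basis images above, as required.

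\textbf{Main point to check.} The only delicate step is making sure the basis statement \eqref{eq:iwabasis} really separates the $\lambda$-components. That is, the elements $x\one_\lambda b$ with fixed $\lambda$ must be exactly the ones spanning $\dU_\A^\lambda$, with no cross-contributions from other weights $\lambda'$ with $\ov{\lambda'}=\zeta$. This is built into the triangularity argument of Lemma~\ref{lem:idecomp}, where each lift $\widetilde b$ satisfies $\one_\lambda\widetilde b = b+(\text{lower terms})$ inside $\one_\lambda\dU_\A$. So I expect it to be routine once that lemma is granted.
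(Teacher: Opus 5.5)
Your proof is correct and is the paper's argument: the paper also gets bijectivity from the basis \eqref{eq:iwabasis} and notes that right $\dUiA$-equivariance is immediate, and you have filled in the details. The concern in your last paragraph is moot, because the $\lambda$-part of \eqref{eq:iwabasis} is linearly independent as a subset of a basis, and you already showed directly that it spans $\dU_\A^\lambda$.
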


\begin{proof}
It follows from the basis \eqref{eq:iwabasis} of $\dU_\A$ that this map is a well-defined linear isomorphism. Clearly, this map is also a homomorphism of right $\dUiA$-modules.
\end{proof}

We define $\dU^+_\A(w_0 \bw)$ to be the $\A$-submodule of $\dU_\A$ spanned by all elements of the form $x\one_\lambda,x\in \U^+_\A(w_0 \bw),\lambda\in X$. 
By Lemma \ref{lem:idecomp}, the following map
\begin{equation}
m^\imath: \dU^+_\A(w_0 \bw) \otimes \dUiA \rightarrow \dU_\A, 
\qquad x\otimes y  \mapsto x  y 
\end{equation}
is a surjective homomorphism of right $\dUiA$-modules.

Recall the lattice $\breve{X}=(1-\theta)X$ and the quotient $X_\imath=X/\breve{X}$. We define $\brU^+_\A(w_0 \bw)$ to be the $\A$-submodule of $\dU_\A$ spanned by all elements of the form $x\one_\lambda,x\in \U^+_\A(w_0 \bw),\lambda\in \breve{X}$. By \cite[Lemma 23.2.2]{Lus94}, the set $\{x\one_\lambda| x\in\bE(\underline{w_0 \bw}),\lambda\in \breve{X}\}$ is an $\A$-basis of $\brU^+_\A(w_0 \bw)$.

\begin{proposition}\label{prop:psimu}
    (1) For any $\mu\in \breve{X}^+$, there is a unique element $\psi_\mu\in \mathcal{O}_{\A}[\ss]$, such that 
    \begin{equation}\label{eq:psimu}
        \psi_\mu(u\one_\lambda)=\eta(u)\delta_{\lambda,-\mu},
\end{equation} for $\lambda\in X$ and $u\in \U^+$. Here $\eta:\U\rightarrow \Q(q)$ is the counit of the quantum group.

    (2) For $\mu,\mu'\in \breve{X}^+$, one has $\psi_\mu\psi_{\mu'}=\psi_{\mu+\mu'}$.
\end{proposition}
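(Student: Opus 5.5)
Both parts rest on the quantum Iwasawa decomposition in Lemmas~\ref{lem:idecomp} and~\ref{lem:localiso}. For (1), any $f\in\O_\A[\ss]$ is right $\dUiA$-invariant, so $f(x\one_\lambda y)=\eta(y)f(x\one_\lambda)$ for $x\in\U^+_\A(w_0\bw)$ and $y\in\one_{\ov\lambda}\dUiA$. By the basis \eqref{eq:iwabasis}, such an $f$ is therefore determined by its values on $\dU^+_\A(w_0\bw)$. Conversely, a functional on $\dU^+_\A(w_0\bw)$ extends to a right-invariant functional on $\dU_\A$, namely the one factoring through $m^\imath$ and $\eta$ on the second factor. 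This is well defined by Lemma~\ref{lem:localiso}. One has to be careful that $\one_\lambda y$ depends only on $\ov\lambda$ through $y$; for this, fix one $\lambda$ per summand $\dU^\lambda_\A$. Uniqueness in (1) then follows once we check that \eqref{eq:psimu} determines $\psi_\mu$ on $\dU_\A^+(w_0\bw)$. It does: $\eta(u)$ vanishes for $u$ of positive weight, so $\psi_\mu$ is the functional sending $\one_{-\mu}\mapsto 1$ and all other PBW elements $x\one_\lambda$ to $0$.

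For existence, I would avoid the abstract extension and realize $\psi_\mu$ as a matrix coefficient, which gives finite support and membership in $\O_\A[G]$ directly. Take $\psi_\mu=c^{-w_0\mu}_{f,w_\mu}$, where $f$ is the lowest weight dual vector, i.e. the functional dual to the lowest weight canonical basis vector $v^{low}$ of weight $w_0(-w_0\mu)=-\mu$. Normalize $w_\mu$ so that its component along $v^{low}$ is $1$; this is possible since the $\Ui$-invariant vector has nonzero lowest weight component, by the standard Letzter argument. The value $\psi_\mu(u\one_\lambda)=f(u\one_\lambda w_\mu)$ picks out the weight $\lambda$ component of $w_\mu$ and applies $u$. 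Since $f$ has lowest weight, this vanishes unless $u$ has weight $0$ and $\lambda=-\mu$, giving \eqref{eq:psimu}. Right invariance is \eqref{eq:wmu}.

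The main obstacle is integrality, $\psi_\mu\in\O_\A[G]$. I would address it via the integral structure of \cite{BS24}: $\O_\A[\ss]$ is defined as the invariants in $\O_\A[G]$, so I must show that $x\mapsto f(xw_\mu)$ is $\A$-valued on $\dB$. To do this, I would use the icanonical basis element of $V(-w_0\mu)$ that is $\Ui$-invariant; by \cite{BW18b}, the icanonical basis of based modules includes $w_\mu$ with lowest coefficient $1$ in the $\A$-lattice $V(-w_0\mu)_\A$. Then $f(bw_\mu)\in\A$ for all $b\in\dB$. If this normalization is not available, an alternative is to define $\psi_\mu$ by the extension above and check integrality on the basis \eqref{eq:iwabasis}, where the values are $0$ or $1$. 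This is the cleaner route, and since $\O_\A[G]$ is defined via $\dB$-finite support, it only needs finiteness, which comes from the matrix coefficient realization over $\QQ$.

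For (2), $\psi_\mu\psi_{\mu'}$ is right-invariant, because the coproduct of $\dUi$ lands in $\dUi\otimes\dU$, that is, $\O_\A[\ss]$ is a subalgebra. By uniqueness it then suffices to evaluate on $u\one_\lambda$. Using $\Delta(u\one_\lambda)=\sum u_{(1)}\one_{\lambda'}\otimes u_{(2)}\one_{\lambda''}$ with $u_{(i)}\in\U^+$, together with \eqref{eq:comul}, the only surviving terms have $u_{(1)},u_{(2)}$ of weight $0$ and $\lambda'=-\mu$, $\lambda''=-\mu'$. This gives $\eta(u)\delta_{\lambda,-\mu-\mu'}$, hence $\psi_\mu\psi_{\mu'}=\psi_{\mu+\mu'}$.
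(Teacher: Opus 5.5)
Your proposal is correct and follows essentially the paper's proof: uniqueness from the basis \eqref{eq:iwabasis}, existence via the normalized coefficient $u\mapsto v_{-\mu}^*(uw_\mu)$ with integrality read off from \eqref{eq:iwabasis} (your route (b)), and part (2) by evaluating $\psi_\mu\psi_{\mu'}$ on $u\one_\lambda$. The differences are minor. The paper gets $v_{-\mu}^*(w_\mu)\neq0$ from the same Iwasawa decomposition rather than from Letzter: if it vanished, the nonzero right-invariant coefficient would vanish on $\dU^+_\A(w_0\bw)$ and hence identically. Your icanonical-basis route (a) is unnecessary, and for (b) you only need the basis values $\eta(x)\delta_{\lambda,-\mu}\eta(y)$ to lie in $\A$, not to be $0$ or $1$.
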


\begin{proof}
    We first prove (1). The uniqueness follows from Lemma~\ref{lem:idecomp} and the definition of $\O_q[S]$; cf. \eqref{eq:quantum-symmetric-space}. It suffices to show the existence. For $\mu\in\breve{X}^+$, recall that $V(-w_0\mu)$ is the irreducible $\U$-module with the lowest weight $-\mu$. Recall from \eqref{eq:wmu} the vector $w_\mu\in V(-w_0\mu)$. Let $v_{-\mu}\in V(-w_0\mu)$ be a lowest weight vector, and $v_{-\mu}^*\in V(-w_0\mu)^*$ be the linear form such that $v_{-\mu}^*(v_{-\mu})=1$ and $v_{-\mu}^*(v)=0$ for $v\in V(-w_0\mu)_\xi$ where $\xi\neq -\mu$. 
    
    Consider the linear map $\psi_\mu':\dU\rightarrow\QQ$ defined by $u\mapsto v_{-\mu}^*(uw_\mu)$. Note that $\psi_\mu'$ is not a zero map, because $V(-w_0\mu)$ is irreducible. Since the set $\{b\in\dB\mid b\mid _{V(-w_0\mu)}\not\equiv 0\}$ is finite, we deduce that $\psi'_\mu\in \O_q[G]$. By the choice \eqref{eq:wmu} of $w_\mu$, we have $\psi_\mu'\in \O_q[\ss]$. For $u\in \U^+$ and $\lambda\in X$, one has
    \[
    \psi_\mu'(u\one_\lambda)=\left\{\begin{array}{ll}
        \eta(u)v^*_{-\mu}(w_\mu), & \text{if }\lambda=-\mu, \\
        0, & \text{if }\lambda\neq -\mu.
    \end{array}\right.
    \]
    Let $c=v^*_{-\mu}(w_\mu)=\psi_{\mu}'(\one_{-\mu})$. Then $c\neq 0$, otherwise $\psi_\mu'=0$ by Lemma~\ref{lem:idecomp}, which is a contradiction. Set $\psi_\mu=c^{-1}\psi_\mu'\in \mathcal{O}_q[\ss]$. Then $\psi_\mu(u\one_\lambda)$ satisfies \eqref{eq:psimu}. Moreover, $\psi_\mu$ belongs to the integral form $\O_{\A}[\ss]$ by Lemma~\ref{lem:idecomp}. This completes the proof.

    To prove (2), it suffices to show that $(\psi_\mu\psi_{\mu'})
    (u\one_\lambda)=\eta(u)\delta_{\lambda,-\mu-\mu'}$, for $u\in \U^+$ and $\lambda\in X$. This is direct to check.
\end{proof}

\subsection{The algebra \texorpdfstring{$\O_\A[B]$}{O A[B]}}

Set $\dot{\mathbf{B}}^+=\big\{b_1^+\one_\lambda\mid b_1\in \bB,\lambda\in X\big\}$. It is known that $\dot{\mathbf{B}}^+$ is a subset of $\dot{\mathbf{B}}$. Let $\O_\A[B]\subset\O_\A[G]$ be the $\A$-span of $\big\{b^*\mid b\in \dot{\mathbf{B}}^+\big\}$. Then $\O_\A[B]$ is an $\A$-module direct summand of $\O_{\A}[G]$. Let 
\begin{equation}\label{eq:pi}
    \pi:\O_\A[G]\rightarrow \O_\A[B]
\end{equation}
be the linear map given by $b^*\mapsto b^*$ if $b\in \dot{\mathbf{B}}^+$, and $b^*\mapsto 0$ if otherwise. Following \cite[3.5]{Lus09}, the composition
\[
\O_\A[B] \otimes \O_\A[B] \rightarrow \O_\A[G] \overset{\pi}{\longrightarrow} \O_\A[B],
\]
defines an algebra structure on $\O_\A[B]$, where the first map is given by the multiplication in $\O_\A[G]$. Thus we regard $\O_\A[B]$ as an $\A$-module direct summand of $\O_\A[G]$, equipped with the quotient algebra structure induced by $\pi$.

Let $\O_\A[T]$ be the $\A$-submodule of $\O_\A[B]$ spanned by $\{\one_\lambda^*\mid \lambda\in X\}$.

\begin{lemma}\label{lem:onela}
For $\lambda,\lambda'\in X$, we have $\one_\lambda^*\one_{\lambda'}^*=\one_{\lambda+\lambda'}^*$ in $\O_\A[B]$. Hence, $\O_\A[T]$ is a subalgebra of $\O_\A[B]$.
\end{lemma}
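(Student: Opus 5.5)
To prove Lemma~\ref{lem:onela}, I would evaluate the product $\one_\lambda^*\one_{\lambda'}^*$ directly. It is computed first in $\O_\A[G]$ and then projected by $\pi$ from \eqref{eq:pi}. Since $\pi$ keeps only the coefficients of $b^*$ with $b\in\dot{\mathbf B}^+$, it suffices to compute
\[
(\one_\lambda^*\one_{\lambda'}^*)(b_1^+\one_\nu)=(\one_\lambda^*\otimes\one_{\lambda'}^*)\,\Delta(b_1^+\one_\nu)
\]
for every $b_1\in\bB$ and $\nu\in X$. The claim is that this equals $\delta_{b_1,1}\delta_{\nu,\lambda+\lambda'}$, which says exactly that $\pi(\one_\lambda^*\one_{\lambda'}^*)=\one_{\lambda+\lambda'}^*$.

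The key step is the shape of $\Delta(x^+\one_\nu)$ for homogeneous $x\in\U^+_\A$ of weight $\beta$. Iterating \eqref{eq:comul}, which is compatible with products since $\Delta$ is an algebra homomorphism, gives
\[
\Delta(x^+\one_\nu)=\sum_{\lambda_1+\lambda_2=\nu}\ \sum x_{(1)}^+\one_{\lambda_1}\otimes x_{(2)}^+\one_{\lambda_2},
\]
where $x_{(1)},x_{(2)}$ are homogeneous with weights summing to $\beta$. The component with $x_{(1)}$ and $x_{(2)}$ both of weight $0$ occurs only when $\beta=0$. Each $x_{(i)}^+\one_{\lambda_i}$ lies in the span of $\dot{\mathbf B}^+$, because $\{b^+\one_\mu\}$ is a basis of $\U^+_\A\one_\mu$. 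Pairing with $\one_\lambda^*\otimes\one_{\lambda'}^*$ therefore picks out only the terms in which both factors are idempotents. This forces $\beta=0$, hence $b_1=1$, and in that case $\Delta(\one_\nu)=\sum_{\lambda_1+\lambda_2=\nu}\one_{\lambda_1}\otimes\one_{\lambda_2}$. The pairing then equals $\delta_{\nu,\lambda+\lambda'}$, as claimed.

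The second assertion follows at once. The unit of $\O_\A[B]$ is $\pi(\eta)=\one_0^*$, because $\eta(b)=\delta_{b,\one_0}$. So the $\A$-span of the $\one_\lambda^*$ is closed under multiplication and contains $1$, and is therefore a subalgebra. The only point needing care is that the coefficients of elements outside $\dot{\mathbf B}^+$ are discarded by $\pi$ and so are irrelevant. This is by definition of the algebra structure on $\O_\A[B]$, so I expect no real obstacle beyond this weight bookkeeping.
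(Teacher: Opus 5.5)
Your proposal is correct and follows essentially the same route as the paper. Both evaluate $\one_\lambda^*\one_{\lambda'}^*$ on $x\one_\nu$ and use the $\U^+$-weight grading of the coproduct to see that only the weight-zero component survives, which gives $\eta(x)\delta_{\nu,\lambda+\lambda'}$. The paper obtains the shape of $\Delta(x\one_\nu)$ from Lusztig's map $r$ rather than by iterating \eqref{eq:comul}, and this difference is inessential.
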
 

\begin{proof}
Let $x\in \U^+$ be a homogeneous element. Write $r(x)=\sum x_{(1)}\otimes x_{(2)}$. Then 
\[
\Delta_{\lambda,\lambda'}(x\one_{\lambda+\lambda'})
=\sum q^{\langle \widetilde{|x_{(2)}|},\lambda\rangle} x_{(1)} \one_{\lambda}\otimes x_{(2)} \one_{\lambda'}.
\]
Since $|x_{(1)}|+|x_{(2)}|=|x|$, the coefficient of $\one_\lambda\otimes\one_{\lambda'}$ on the right-hand side is $\eta(x)$. Moreover, $(\one_\lambda^*\one_{\lambda'}^*)(x\one_\mu)=0$ unless $\mu=\lambda+\lambda'$. Hence
\[
(\one_\lambda^*\one_{\lambda'}^*)(x\one_\mu)
=\eta(x)\delta_{\mu,\lambda+\lambda'}
=\one_{\lambda+\lambda'}^*(x\one_\mu),
\]
which proves the assertion.
\end{proof}

Recall from \eqref{eq:alpha} the isomorphism $\alpha:\DCU^+\overset{\sim}{\longrightarrow} \O_\A[N]$. Define the smash product $\O_\A[N]\#\O_\A[T]$ as follows. It is isomorphic to $\O_\A[N]\otimes \O_\A[T]$ as an $\A$-module, and the algebra structure is given by
\begin{equation}\label{eq:com}
(\alpha(u)\otimes \one_{\lambda}^*)\cdot (\alpha(u')\otimes \one_{\lambda'}^*)=q^{\langle \widetilde{|u'|},\lambda\rangle}\alpha(uu')\otimes \one_{\lambda+\lambda'}^*,
\end{equation}
for homogeneous elements $u,u'\in\DCU^+$ and $\lambda,\lambda'\in X$. Here $\widetilde{|u'|}\in \Z[\I] \hookrightarrow Y$ is defined in \eqref{eq:tnu}.

Define the linear map 
\begin{equation}\label{eq:beta}
    \beta:\O_\A[N]\#\O_\A[T]{\longrightarrow} \O_\A[B]
\end{equation}
by $\beta(f\otimes\one_\lambda^*)(u\one_\mu)=f(u)\delta_{\lambda,\mu}$, for $f\in\O_\A[N]$, $\lambda,\mu\in X$, and $u\in\U_\A^+$.

\begin{proposition}
    The map $\beta$ is an algebra isomorphism.
\end{proposition}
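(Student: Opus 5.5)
We prove two things: $\beta$ is a linear isomorphism, and it is multiplicative; both reduce to evaluating on the spanning set $\{u\one_\mu\}$.

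\emph{Bijectivity.} By definition $\O_\A[B]$ is the $\A$-span of the dual elements $(b^+\one_\mu)^*$ for $b\in\bB$, $\mu\in X$. Each such element is supported on the single summand $\U^+_\A\one_\mu$ and restricts there to the functional $u\mapsto b^*(u)$ dual to the canonical basis $\bB$ of $\U^+_\A$. On the other side, $\O_\A[N]$ is the graded dual of $\U^+_\A$, with $\A$-basis the dual basis $\{(b)^\vee\}$ of $\bB$. Unwinding the definition of $\beta$ gives $\beta((b)^\vee\otimes\one_\mu^*)=(b^+\one_\mu)^*$. Thus $\beta$ maps an $\A$-basis of $\O_\A[N]\otimes\O_\A[T]$ bijectively onto the basis $\{b^*\mid b\in\dot{\mathbf B}^+\}$ of $\O_\A[B]$, so it is a linear isomorphism. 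Well-definedness, meaning finite support, is automatic from the grading.

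\emph{Multiplicativity.} Take homogeneous $f=\alpha(u_1)$ and $f'=\alpha(u_2)$ of degrees $\nu$ and $\nu'$, and weights $\lambda,\lambda'$.

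1. Evaluate $\beta(f\otimes\one_\lambda^*)\beta(f'\otimes\one_{\lambda'}^*)$ on $x\one_\mu$ with $x\in\U^+$. The product in $\O_\A[B]$ is the product in $\O_\A[G]$ followed by $\pi$. Since $\pi$ only discards dual basis elements not in $\dot{\mathbf B}^+$, the product is computed by evaluating the $\O_\A[G]$-product on the elements $x\one_\mu\in\dU^+$.

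2. Use the comultiplication formula already used in the proof of Lemma~\ref{lem:onela}:
\[
\Delta_{\lambda,\lambda'}(x\one_{\lambda+\lambda'})=\sum q^{\langle\widetilde{|x_{(2)}|},\lambda\rangle}x_{(1)}\one_\lambda\otimes x_{(2)}\one_{\lambda'},
\]
where $r(x)=\sum x_{(1)}\otimes x_{(2)}$. Because $\beta(f\otimes\one_\lambda^*)$ vanishes off $\U^+\one_\lambda$, the value is zero unless $\mu=\lambda+\lambda'$. In that case it equals
\[
\sum q^{\langle\widetilde{|x_{(2)}|},\lambda\rangle}f(x_{(1)})f'(x_{(2)}).
\]
Only terms with $|x_{(2)}|=\nu'$ survive, so this is $q^{\langle\widetilde{\nu'},\lambda\rangle}(f\otimes f')(r(x))$.

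3. By \eqref{eq:inprod} and the fact that $\alpha$ is an algebra isomorphism, we have $(f\otimes f')(r(x))=(u_1\otimes u_2,r(x))=(u_1u_2,x)=\alpha(u_1u_2)(x)$. Hence the value is $q^{\langle\widetilde{\nu'},\lambda\rangle}\alpha(u_1u_2)(x)\delta_{\mu,\lambda+\lambda'}$, which by \eqref{eq:com} is exactly $\beta$ applied to the smash product. Unitality is immediate: the unit $\eta$ of $\O_\A[B]$ equals $\beta(1\otimes\one_0^*)$.

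\emph{Main obstacle.} The main subtlety is step 3: matching the algebra structure transported via $\alpha$ to $\O_\A[N]$ with the formula $(f\otimes f')\circ r$. This requires checking that the twisted multiplication on $\f\otimes\f$ is compatible with pairing against $r$, so that no extra $q$-power appears. One must also confirm that the conventions in the $\Delta$ formula (on which tensor factor the twist $q^{\langle\widetilde{|x_{(2)}|},\lambda\rangle}$ sits) agree with \eqref{eq:com}. I would first verify this on the generators $\tE_i$ and $\one_\lambda^*$, where everything reduces to the explicit formula \eqref{eq:comul}, and then extend by the general computation above.
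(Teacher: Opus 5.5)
Your proposal is correct and follows essentially the same route as the paper. Bijectivity holds because $\beta$ matches dual bases, and multiplicativity follows from three ingredients: the formula $\Delta_{\lambda,\lambda'}(x\one_{\lambda+\lambda'})=\sum q^{\langle\widetilde{|x_2|},\lambda\rangle}x_1\one_\lambda\otimes x_2\one_{\lambda'}$, the degree constraint $|x_2|=|u'|$, and the adjunction $(x,y_1y_2)=(r(x),y_1\otimes y_2)$ from \eqref{eq:inprod}. Your general computation already settles the concerns in your ``main obstacle'' paragraph, since the twist lands exactly as in \eqref{eq:com}, so no separate check on generators is needed.
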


\begin{proof}
    It is clear that $\beta$ is an isomorphism of $\A$-modules. It remains to show that $\beta$ is an algebra homomorphism. It suffices to show that the equality \eqref{eq:com} is preserved after applying $\beta$.

    Take $u,u'\in\DCU^+$ to be homogeneous elements, and $\lambda,\lambda'\in X$. Let us write $$F=\beta\big(\alpha(u)\otimes \one_{\lambda}^*\big)\beta \big(\alpha(u')\otimes \one_{\lambda'}^*\big)\in \O_\A[B].$$
    For $x\in \U_\A^+$ and $\mu\in X$, it is clear that $F(x\one_\mu)=0$ unless $\mu=\lambda+\lambda'$. Let $r:\U^+\rightarrow \U^+\otimes \U^+$ be the homomorphism as in \cite[1.2.2]{Lus94}. Write $r(x)=\sum x_1\otimes x_2$ with $x_1$, $x_2$ homogeneous. By \cite[3.1.5]{Lus94}, one has
    $
    \Delta(x)=\sum x_1 K_{\widetilde{|x_2|}}\otimes x_2,
    $
    and hence 
    \[
    \Delta_{\lambda,\lambda'}(x\one_{\lambda+\lambda'})=\sum q^{\langle \widetilde{|x_2|},\lambda\rangle}x_1\one_\lambda\otimes x_2\one_{\lambda'}.
    \]
Therefore
\[
F(x\one_{\lambda+\lambda'})=\sum q^{\langle \widetilde{|x_2|},\lambda\rangle}(u,x_1)(u',x_2)\stackrel{(\heartsuit 1)}{=}\sum q^{\langle \widetilde{|u'|},\lambda\rangle}(u,x_1)(u',x_2)\stackrel{(\heartsuit 2)}{=}q^{\langle \widetilde{|u'|},\lambda\rangle}(uu',x).
\]
Here $(\heartsuit 1)$ follows from the fact that $(u',x_2)=0$ unless $|u'|=|x_2|$, and $(\heartsuit 2)$ follows from \cite[Proposition 1.2.3]{Lus94}. Hence we conclude that $F=\beta\bigl(q^{\langle \widetilde{|u'|},\lambda\rangle}\alpha(uu')\otimes \one_{\lambda+\lambda'}^*\bigr)$, as desired.
\end{proof}

\subsection{The algebra \texorpdfstring{$\O_\A[B_\theta]$}{O A[B theta]}} \label{sec:Btheta}

Let 
\[
\O_\A[N(w_0w_\bullet)]=\alpha\left(\DCU^+(w_0w_\bullet)\right).
\]
Let $\O_\A[{T}_\theta]$ be the $\A$-submodule of $\O_\A[T]$ spanned by $\{\one_\lambda^*\mid \lambda\in \breve{X}\}$. In particular $\O_\A[{T}_\theta]$ is an $\A$-subalgebra. By \eqref{eq:com}, algebras $\O_\A[N]$ and $\O_\A[T]$ are naturally identified with subalgebras of $\O_\A[B]$, via maps $f\mapsto f\otimes \one_0^*$ and $\one_\lambda^*\mapsto \alpha(1)\otimes \one_{\lambda}^*$, for $f\in \O_\A[N]$ and $\lambda\in X$. Let $\O_\A[B_\theta]$ be the subalgebra of $\O_\A[B]$ generated by $\O_\A[N(w_0w_\bullet)]$ and $\O_\A[T_\theta]$. Still by \eqref{eq:com}, one has the following $\A$-linear isomorphism 
\begin{equation}\label{eq:Btheta}
\O_\A[B_\theta]\cong \O_\A[N(w_0w_\bullet)]\otimes \O_\A[T_\theta].
\end{equation}


\begin{lemma}\label{le:oqb}
   One has
   $$
   \O_\A[B_\theta]=\{f\in\O_\A[B]\mid f(u\one_\lambda)=0\text{ if } \lambda\notin \breve{X} \text{ or }u\in \U_\A^+(w_0w_\bullet)\U_\A^+(w_\bullet)_>\}.
   $$
\end{lemma}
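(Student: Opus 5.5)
We identify both sides through the isomorphism $\beta$ of \eqref{eq:beta}. Under $\beta$, an element $\alpha(u)\otimes\one_\lambda^*$ is the function on $\dot{\mathbf B}^+$-span given by $x\one_\mu\mapsto (u,x)\delta_{\lambda,\mu}$. By \eqref{eq:com}, $\O_\A[B_\theta]$ is the $\A$-span of the elements $\beta(\alpha(u)\otimes\one_\lambda^*)$ with $u\in\DCU^+(w_0w_\bullet)$ and $\lambda\in\breve X$; this is \eqref{eq:Btheta}. Write $R$ for the right-hand side of the lemma. We show the two inclusions separately, working one weight $\lambda$ and one $Q_+$-degree $\nu$ at a time, since all conditions are compatible with the decomposition $f=\sum_\lambda f_\lambda\otimes \one^*_\lambda$.

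\emph{Inclusion $\O_\A[B_\theta]\subseteq R$.} Let $u\in\DCU^+(w_0w_\bullet)$ and $\lambda\in\breve X$. The function $\beta(\alpha(u)\otimes\one_\lambda^*)$ vanishes on $x\one_\mu$ for $\mu\ne\lambda$, so in particular for $\mu\notin\breve X$. It remains to show $(u,x)=0$ for $x\in\U^+_\A(w_0w_\bullet)\U^+_\A(w_\bullet)_>$. We use the reduced expression $\un{w_0}=\un{w_0w_\bullet}\,\un{w_\bullet}$ fixed at the start of Section~\ref{sec:integral-localization}. The PBW monomials $E^{(\ba)}_{\un{w_0}}$ then factor as a product of a monomial in $\bE(\un{w_\bullet})$ and a monomial in $\bE(\un{w_0w_\bullet})$. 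One point to check here is that the ordering convention $E_{\beta_l}\cdots E_{\beta_1}$ places the $w_\bullet$-part on the correct side to match the decomposition \eqref{eq:dsum}. If it does not, we use the opposite convention, or the anti-involution fixing the $E_i$, which preserves the form.

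With this factorization, $\U^+_\A(w_0w_\bullet)\U^+_\A(w_\bullet)_>$ is spanned by the PBW monomials whose $w_\bullet$-exponent is nonzero, while $\DCU^+(w_0w_\bullet)$ is spanned by the dual PBW elements $\tE^{\ba}_{\un{w_0}}$ whose $w_\bullet$-exponent is zero. By the orthogonality \eqref{eq:orthoPBW}, these two spans pair to zero. We also need that the elements $\tE^{\ba}_{\un{w_0w_\bullet}}$ computed for $w_0w_\bullet$ coincide with the $\tE^{(\ba,0)}_{\un{w_0}}$. This holds because the root vectors $E_{\beta_k}$ for $k\le L$ are literally the same elements, and the normalization \eqref{def:tE} is the same.

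\emph{Inclusion $R\subseteq\O_\A[B_\theta]$.} Take $f\in R$ and write $f=\sum_\lambda\beta(f_\lambda\otimes\one^*_\lambda)$ with $f_\lambda\in\O_\A[N]$. The first condition forces $f_\lambda=0$ for $\lambda\notin\breve X$. For $\lambda\in\breve X$, write $f_\lambda=\alpha(v)$ with $v\in\DCU^+$, so that $(v,x)=0$ for all $x\in\U^+_\A(w_0w_\bullet)\U^+_\A(w_\bullet)_>$. Expand $v$ in the dual basis of $\bE(\un{w_0})$, which is the $\A$-basis $\tbE(\un{w_0})$ of $\DCU^+(w_0)=\DCU^+$ by \cite{Ki12}. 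The coefficient of $\tE^{\ba}_{\un{w_0}}$ is $(v,E^{(\ba)}_{\un{w_0}})$. This coefficient vanishes whenever the $w_\bullet$-part of $\ba$ is nonzero. Hence $v$ lies in the $\A$-span of the $\tE^{(\ba,0)}_{\un{w_0}}$, which is $\DCU^+(w_0w_\bullet)$. Therefore $f\in\O_\A[B_\theta]$.

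The main obstacle is the bookkeeping in the first inclusion: matching the factorization order of PBW monomials with \eqref{eq:dsum}, and confirming that $\DCU^+(w_0w_\bullet)$ embeds compatibly as the dual PBW span with zero $w_\bullet$-exponent. Integrality causes no trouble, since $\tbE(\un{w_0})$ is an $\A$-basis dual to the $\A$-basis $\bE(\un{w_0})$ of $\U^+_\A$.
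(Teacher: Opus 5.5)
Your argument is the paper's argument: orthogonality \eqref{eq:orthoPBW} for $\un{w_0}=\un{w_0w_\bullet}\,\un{w_\bullet}$, together with \eqref{eq:dsum}. However, the point you flagged as bookkeeping is a genuine failure, and neither fallback repairs it.

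With the paper's conventions ($T_i=T''_{i,+1}$ and $E^{(\ba)}_{\un{w}}=E_{\beta_l}^{(a_l)}\cdots E_{\beta_1}^{(a_1)}$), the root vectors $E_{\beta_k}$, $k>L$, coming from $\un{w_\bullet}$ stand on the \emph{left}. So the PBW monomials with nonzero $w_\bullet$-exponent span $\U^+_\A(w_\bullet)_>\U^+_\A(w_0w_\bullet)$, and \eqref{eq:orthoPBW} only gives vanishing there. The space $\U^+_\A(w_0w_\bullet)\U^+_\A(w_\bullet)_>$ in the lemma is a different space, and the statement fails for it.

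Take type AII$_3$ ($\bI=\{1,3\}$, $\wI=\{2\}$), with $\un{w_0w_\bullet}=s_2s_1s_3s_2$ and $\un{w_\bullet}=s_1s_3$. Then $E_{\beta_1}=E_2$, $E_{\beta_2}=T_2(E_1)$ and $E_{\beta_6}=E_1$. The PBW monomials of weight $\alpha_1+\alpha_2$ are $E_{\beta_2}$ and $E_{\beta_6}E_{\beta_1}=E_1E_2$.
\begin{itemize}
\item By \eqref{eq:orthoPBW}, $\tE_{\beta_2}\perp E_1E_2$. The form is nondegenerate on the plane spanned by $E_1E_2$ and $E_2E_1$, so $(\tE_{\beta_2},E_2E_1)\neq0$.
\item Yet $E_2E_1=E_{\beta_1}E_1\in\U^+_\A(w_0w_\bullet)\U^+_\A(w_\bullet)_>$. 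Hence $\alpha(\tE_{\beta_2})\otimes\one_0^*\in\O_\A[B_\theta]$ violates the vanishing condition.
\item Dually, let $\sigma$ be the anti-involution of $\U^+$ fixing each $E_i$; it preserves the form and $\DCU^+$. The function attached to $\sigma(\tE_{\beta_2})$ satisfies both vanishing conditions but is not in $\O_\A[B_\theta]$.
\end{itemize}
So neither inclusion holds as written.

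Your fallbacks do not help. Using increasing order with the same $T''_{i,+1}$ already breaks orthogonality in type $A_2$, by the same nondegeneracy argument. Applying $\sigma$ replaces $\U^+_\A(w_0w_\bullet)$ and $\DCU^+(w_0w_\bullet)$ by their $\sigma$-images, which differ from them (the example shows this). That proves a statement about other objects.

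The paper's proof asserts the same vanishing ``by orthogonality'' without addressing the order. So you have not missed an idea present there; the defect is a convention mismatch in the statement itself.

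Your argument does correctly prove the version with $\U^+_\A(w_\bullet)_>\U^+_\A(w_0w_\bullet)$. That version is not what Lemma~\ref{lem:Gamma-saturation} needs, though: there, right $\Ui$-invariance yields vanishing on $\U^+_\A\,\U^+_\A(w_\bullet)_>\one_\lambda$.

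The consistent repair is to define root vectors with $T'_{i,-1}=\sigma T''_{i,+1}\sigma$ and order PBW monomials increasingly. Equivalently, replace $\U^+_\A(w)$ and $\DCU^+(w)$ by their $\sigma$-images throughout. With that change the lemma holds as stated, and your proof goes through verbatim.
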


\begin{proof}
By \eqref{eq:Btheta}, an element of $\O_\A[B_\theta]$ is a finite sum of elements $g\otimes\one_\lambda^*$, where $g\in\O_\A[N(w_0w_\bullet)]$ and $\lambda\in\breve X$. Each such element vanishes on $u\one_\mu$ when $\mu\notin\breve X$. Moreover, the orthogonality \eqref{eq:orthoPBW} of the PBW basis implies that $g$ vanishes on $\U_\A^+(w_0w_\bullet)\U_\A^+(w_\bullet)_>$. 

Conversely, let $f\in\O_\A[B]$ satisfy the stated vanishing conditions. Using the isomorphism $\beta$ from \eqref{eq:beta}, write
\[
f=\sum_{\lambda\in X}\beta(g_\lambda\otimes\one_\lambda^*),
\qquad g_\lambda\in\O_\A[N],
\]
with only finitely many nonzero summands. The first vanishing condition gives $g_\lambda=0$ unless $\lambda\in\breve X$. Using \eqref{eq:dsum} and the orthogonality of PBW basis, the second condition forces that every term in the dual PBW expansion of remaining $g_\lambda$ belongs to $\O_\A[N(w_0 w_\bullet)]$. Hence $f\in\O_\A[B_\theta]$, proving the reverse inclusion.
\end{proof}

\begin{remark}
The subspace $\U^+(w_0w_\bullet)\U^+(w_\bullet)_>$ is compatible with the canonical basis, while the subspace $\U^+(w_0w_\bullet)$ is not. Dually, Kimura showed in \cite[Theorem 4.25]{Ki12} that $\O_\A[N(w_0w_\bullet)]$ is compatible with the dual canonical basis.
\end{remark}

Recall the algebra homomorphism $\pi:\O_\A[G]\rightarrow\O_\A[B]$ from \eqref{eq:pi}. Consider the composition
\begin{equation}\label{eq:Gamma}
    \Gamma:\O_\A[\ss]\hookrightarrow\O_\A[G]\overset{\pi}{\rightarrow} \O_\A[B].
\end{equation}
Write $\Gamma_q$ for its scalar extension to $\QQ$.

\begin{lemma}\label{lem:Gamma-saturation}
The map $\Gamma$ is an injective algebra homomorphism with its image contained in $\O_\A[B_\theta]$. Moreover,
\[
\Gamma\bigl(\O_\A[S]\bigr)
=\Gamma_q\bigl(\O_q[S]\bigr)\cap\O_\A[B_\theta]
\qquad\text{inside }\O_q[B_\theta].
\]
\end{lemma}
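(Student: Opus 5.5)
We prove the three assertions of Lemma~\ref{lem:Gamma-saturation} in turn: $\Gamma$ is an algebra map, it is injective with image in $\O_\A[B_\theta]$, and its image is saturated.

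\emph{Algebra homomorphism.} Since $\pi$ is the algebra map of \cite[3.5]{Lus09}, $\Gamma$ is the composite of an algebra inclusion with $\pi$. Concretely, $\pi(f)$ is the restriction of $f$ to the span of $\dot{\mathbf B}^+$, i.e.\ to the elements $u\one_\lambda$ with $u\in\U^+_\A$. The comultiplication \eqref{eq:comul} sends $\U^+\one_{\lambda}$ into the completed span of $\U^+\one_{\lambda'}\otimes\U^+\one_{\lambda''}$. Hence $\pi(fg)=\pi(f)\pi(g)$ for all $f,g\in\O_\A[G]$.

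\emph{Image lies in $\O_\A[B_\theta]$.} We verify the two vanishing conditions of Lemma~\ref{le:oqb} for $\Gamma(f)$ with $f\in\O_\A[S]$.
\begin{itemize}
\item[(a)] Let $u=xy$ with $x\in\U^+_\A(w_0\bw)$ and $y\in\U^+_\A(\bw)_>$. Since $y\one_\lambda$ lies in $\one_{\lambda+|y|}\dUiA$ (because $E_j\in\Ui$ for $j\in\bI$) and $\eta(y)=0$, right $\dUiA$-invariance gives $f(x\one_\lambda y)=\eta(y)f(x\one_\lambda)=0$.
\item[(b)] For the weight condition, work over $\QQ$ and use the basis $c^{-w_0\mu}_{b^*,w_\mu}$, $\mu\in\breve X^+$, of $\O_q[S]$. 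Such an element evaluated on $u\one_\lambda$ is $b^*(uw_{\mu,\lambda})$, where $w_{\mu,\lambda}$ is the $\lambda$-weight component of $w_\mu$. It therefore suffices to show that $w_\mu$ has weights only in $-\mu+\breve X$.
\end{itemize}
For (b), the $K_\nu$ with $\nu\in Y^\imath$ act trivially on $w_\mu$, so every weight $\lambda$ occurring in $w_\mu$ satisfies $\langle\nu,\lambda\rangle=0$ for $\nu\in Y^\imath$. This says $\lambda$ lies in the saturation of $\breve X$, which is weaker than $\lambda\in\breve X$. To get the exact lattice, argue instead that $w_\mu$ is cyclic. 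Starting from the lowest weight $-\mu\in\breve X$, we want to show that $w_\mu$ lies in the span of $B$-translates, with each $B_i$ changing the weight by $-\alpha_i$ or $w_\bullet\alpha_{\tau i}$, and these differences lie in $\breve X$ since $\alpha_i+\theta\alpha_i\ldots$. More cleanly, use Lemma~\ref{lem:idecomp}: the element $f(x\one_\lambda y)=f(x\one_\lambda)\eta(y)$, combined with the decomposition by idempotents $\one_{\ov\lambda}$ of $\dUi$, shows $f$ is supported on a single coset class of $X_\imath$ in each isotypic piece. The coset is that of $-\mu$, hence $\breve X$. This weight condition is the step I expect to be the main obstacle; the cleanest route should be to track the $X_\imath$-grading via $\one_{\ov\lambda}$ and the fact that $\psi_\mu$ generates.

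\emph{Injectivity and saturation.} Any $f\in\O_\A[G]$ that is right $\dUiA$-invariant is determined by its values on $x\one_\lambda$, by Lemma~\ref{lem:idecomp} and Lemma~\ref{lem:localiso}. Indeed $f(x\one_\lambda y)=f(x\one_\lambda)\eta(y)$, so $\Gamma$ is injective, and likewise $\Gamma_q$. For saturation, take $g=\Gamma_q(f)\in\O_\A[B_\theta]$ with $f\in\O_q[S]$. Then $f(x\one_\lambda)=g(x\one_\lambda)\in\A$ for all $x\in\bE(\underline{w_0\bw})$. Lemma~\ref{lem:localiso} and the basis \eqref{eq:iwabasis} give $f(x\one_\lambda y)=g(x\one_\lambda)\eta(y)$. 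Since $\eta$ is $\A$-valued on $\dUiA$, $f$ takes $\A$-values on an $\A$-basis of $\dU_\A$. Hence $f\in\O_\A[G]\cap\O_q[S]=\O_\A[S]$, which gives the equality.
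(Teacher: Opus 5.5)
Your proofs of the homomorphism property, of the vanishing on $\U_\A^+(w_0w_\bullet)\U_\A^+(w_\bullet)_>$ in (a), of injectivity, and of saturation are essentially the paper's. In (a) the precise identity is $xy\one_\lambda=x\one_{\lambda+|y|}\cdot(y\one_{\ov\lambda})$ with $y\one_{\ov\lambda}\in\dUiA$, but that is only a notational slip.

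The genuine gap is (b): you never prove that $\Gamma(f)(u\one_\lambda)=0$ for $\lambda\notin\breve X$. Reducing to the weight support of $w_\mu$ and using the $K_\nu$, $\nu\in Y^\imath$, only gives the rational saturation of $\breve X$, as you note yourself. For split rank one, $Y^\imath=0$, so this gives nothing, while $\breve X=2X$. The cyclicity idea is not carried out. The final paragraph just asserts that $f$ is supported on the coset of $-\mu$ and appeals to ``$\psi_\mu$ generates'', which is not established.

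The missing observation is that right invariance in \eqref{eq:quantum-symmetric-space} holds for all of $\dUiA$, including its idempotents $\one_{\ov\lambda}$, $\ov\lambda\in X_\imath=X/\breve X$. Since $u\one_\lambda=u\one_\lambda\cdot\one_{\ov\lambda}$, invariance gives $f(u\one_\lambda)=\eta(\one_{\ov\lambda})f(u\one_\lambda)$. Viewing $\one_{\ov\lambda}=\sum_{\lambda'\in\lambda+\breve X}\one_{\lambda'}$ in $\widehat{\U}_\A$, one has $\eta(\one_{\ov\lambda})=\delta_{\ov\lambda,\ov 0}$, which is $0$ when $\lambda\notin\breve X$. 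This is the same mechanism as your (a), with $y\one_{\ov\lambda}$ replaced by the idempotent itself, so no information about $w_\mu$ is needed. Your saturation remark shows exactly why the $X_\imath$-idempotents of the modified algebra $\dUi$, rather than the Cartan part of $\Ui$, are what enforce the lattice $\breve X$.
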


\begin{proof}
It is clear that $\Gamma$ is an algebra homomorphism. We first show that its image is contained in $\O_\A[B_\theta]$. For $f\in\O_\A[S]$, $u\in\U_\A^+$, and $\lambda\in X$, the right invariance gives
\[
f(u\one_\lambda)
=f(u\one_\lambda)\eta(\one_{\overline\lambda}).
\]
Since $\eta(\one_{\overline\lambda})=0$ unless $\lambda\in\breve X$, the first vanishing condition in Lemma~\ref{le:oqb} holds. For the second, it suffices to consider $u=u'u''$ with $u'\in\U_\A^+(w_0w_\bullet)$ and $u''\in\U_\A^+(w_\bullet)_>$ homogeneous of weight $\nu$. Then
\[
f(u'u''\one_\lambda)
=f\bigl(u'\one_{\lambda+\nu}(u''\one_{\overline\lambda})\bigr)
=f(u'\one_{\lambda+\nu})\eta(u''\one_{\overline\lambda})=0.
\]
Here $u''\one_{\overline\lambda}\in\dUiA$ and its counit is zero. Lemma~\ref{le:oqb} now gives the desired containment.

By Lemma~\ref{lem:idecomp}, every $v\in\dU_\A$ is a finite sum
\[
v=\sum_j x_j\one_{\lambda_j}y_j,
\qquad
x_j\in\U_\A^+(w_0w_\bullet),\quad
\lambda_j\in X,\quad y_j\in\dUiA.
\]
Thus any functional $f\in \O_{\A}[\ss]$ is determined by its restriction to $\dU^+_\A(w_0w_\bullet)$, since
\[
f(v)=\sum_j f(x_j\one_{\lambda_j})\eta(y_j).
\]
This proves the injectivity of $\Gamma$, and the same argument after extending scalars proves the injectivity of $\Gamma_q$.

Finally, suppose that $f\in\O_q[S]$ and $\Gamma_q(f)\in\O_\A[B_\theta]$. For the above expression of $v\in\dU_\A$, we have
\[
f(v)=\sum_j f(x_j\one_{\lambda_j})\eta(y_j)=\sum_j\Gamma_q(f)(x_j\one_{\lambda_j})\eta(y_j)\in\A.
\]
The functional $f$ already has finite support on $\dB$, because $f\in\O_q[G]$. Since $f(v)\in \A$ for $v\in\dU_\A$, $f\in\O_\A[G]$. The right invariance then forces $f\in\O_\A[S]$ as desired.
\end{proof}

Let $\mu\in\breve{X}^+$. Recall the elements $\psi_\mu\in\O_\A[S]$ from Proposition~\ref{prop:psimu}. Their defining property shows that
\begin{equation}\label{eq:Gpsi}
\Gamma(\psi_\mu)=\one_{-\mu}^*.
\end{equation}
In particular, $\Gamma(\psi_\mu)$ is invertible in $\O_\A[B_\theta]$.

\subsection{Ore condition and localization}
Set $\Psi_\A=\{\psi_\mu\mid\mu\in\breve{X}^+\}$.

\begin{lemma}
    The subset $\Psi_\A\subset \O_\A[S]$ is a left and right Ore set consisting of regular elements.
\end{lemma}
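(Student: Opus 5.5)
Regularity comes from the embedding $\Gamma$. By Lemma~\ref{lem:Gamma-saturation}, $\Gamma$ is an injective algebra homomorphism into $\O_\A[B_\theta]$, and by \eqref{eq:Gpsi} we have $\Gamma(\psi_\mu)=\one_{-\mu}^*$, which is invertible in $\O_\A[B_\theta]$. If $\psi_\mu f=0$ or $f\psi_\mu=0$, applying $\Gamma$ and multiplying by $\one_\mu^*$ gives $\Gamma(f)=0$, hence $f=0$. So every $\psi_\mu$ is regular. Multiplicativity of $\Psi_\A$ is Proposition~\ref{prop:psimu}(2), and $\psi_0=1$.

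For the Ore conditions, I would show that each $\psi_\mu$ normalizes $\O_\A[S]$ up to a controlled twist, working inside $\O_\A[B_\theta]\cong\O_\A[N(w_0w_\bullet)]\#\O_\A[T_\theta]$. By \eqref{eq:com}, for $g=\alpha(u)\otimes\one_\lambda^*$ with $u$ homogeneous we have
\[
\one_{-\mu}^*\,g\,\one_{\mu}^*=q^{-\langle\widetilde{|u|},\mu\rangle}g .
\]
So conjugation by $\one_{-\mu}^*$ rescales each $Q_+$-homogeneous component. Here I grade $\O_\A[B_\theta]$ by the $N$-degree $|u|$, which is the left $\U^0$-weight grading, i.e.\ the grading induced by the $T$-action by left multiplication on $\ss$. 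The key claim is that $\Gamma(\O_\A[S])$ is a graded subspace for this grading. The right-invariance condition defining $\O_\A[S]$ involves only the right action of $\dUiA$, while the grading comes from left $\U^0$-weights of the form $K_\nu$ acting on $\dU_\A$ by left multiplication; these commute with the right action. Concretely, $f\in\O_\A[S]$ decomposes as $\sum_\nu f_\nu$ with $f_\nu(x\one_\lambda y)=f(x\one_\lambda y)$ for $|x|=\nu$ and $0$ otherwise. By Lemma~\ref{lem:localiso} and the basis \eqref{eq:iwabasis}, each $f_\nu$ is again right-invariant, lies in $\O_\A[G]$ (its support is contained in that of $f$ on the basis \eqref{eq:iwabasis}, which is $\A$-integral), and hence lies in $\O_\A[S]$.

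Granting this, for homogeneous $f\in\O_\A[S]$ of degree $\nu$ we get
\[
\psi_\mu f=q^{-\langle\tilde\nu,\mu\rangle}f\psi_\mu ,
\]
after pulling back via the injective $\Gamma$. Hence $\psi_\mu\O_\A[S]=\O_\A[S]\psi_\mu$, since the scalars are units in $\A$. Both Ore conditions are then immediate: given $f$ and $\psi_\mu$, write $f=\sum f_\nu$ and take
\[
f'=\sum_\nu q^{\mp\langle\tilde\nu,\mu\rangle}f_\nu ,
\]
so that $f\psi_\mu=\psi_\mu f'$, and symmetrically for the other side.

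The main obstacle is the graded-subspace claim, namely the precise form of the $q$-commutation of $\Gamma(\psi_\mu)$ with homogeneous components, and the integrality of each $f_\nu$. If the direct decomposition above proves awkward, I would instead phrase it via the left $T$-action, dual to left multiplication by $\one_\lambda$ and $K_\nu$ on $\dU_\A$. This action preserves $\O_\A[S]$ because it commutes with the right $\dUiA$-action, and $\psi_\mu$ is an eigenvector for it. From there I would read off the commutation relation from the formula for $\Delta(x\one_{\lambda+\lambda'})$ used in Lemma~\ref{lem:onela}, evaluated on elements $x\one_\lambda y$.
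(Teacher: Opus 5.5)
There is a genuine gap. Your regularity argument is correct and is the paper's. The Ore part, however, rests on a false claim: $\Gamma(\O_\A[S])$ is not a graded subspace for the $N$-degree $|u|$, and in fact $\psi_\mu$ is not normal in general.

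First, $|u|$ is not the left weight. The element $\alpha(u)\otimes\one_\lambda^*$ is supported on $u'\one_\lambda=\one_{\lambda+|u|}u'\one_\lambda$, so its left weight is $|u|+\lambda$. The two gradings behave oppositely. The left-weight grading preserves $\O_\A[S]$, but conjugation by $\Gamma(\psi_\mu)$ is not scalar on its components. The $N$-degree diagonalizes that conjugation, but it is incompatible with $\O_\A[S]$.

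A counterexample is $SL_2/SO_2$ with the adjoint module $V(\alpha)$. Here $w_\alpha=v_{-\alpha}+\kappa v_\alpha$, and the weight-$0$ component of $B_1w_\alpha=0$ forces $\kappa\neq0$. For $f=c^{\alpha}_{v_\alpha^*,w_\alpha}$ one gets
\[
\Gamma(f)=a\,\tE_\alpha^{2}\one_{-\alpha}^*+\kappa\,\one_\alpha^*,\qquad a\neq 0,
\]
which has components of $N$-degree $2\alpha$ and $0$.

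Suppose $\one_\alpha^*\in\Gamma_q(\O_q[S])$. Then Lemma~\ref{lem:Gamma-saturation} and the injectivity of $\Gamma$ would make $\psi_\alpha$ a unit of $\O_\A[S]$. Hence $\psi_{\alpha,1}$ would be a nonconstant unit of $\O[S]\subset\O[SL_2]$, which is impossible since $\O[SL_2]^\times=\C^\times$.

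Normality fails too. One has $\Gamma(\psi_\alpha)\Gamma(f)\Gamma(\psi_\alpha)^{-1}=aq^{-4}\tE_\alpha^{2}\one_{-\alpha}^*+\kappa\,\one_\alpha^*$. If this lay in $\Gamma_q(\O_q[S])$, subtracting $\Gamma(f)$ would put $\tE_\alpha^2\one_{-\alpha}^*$, and then $\one_\alpha^*$, in the image. So $\psi_\alpha f\notin\O_q[S]\,\psi_\alpha$.

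The step in your justification that breaks is ``$f_\nu\in\O_\A[G]$''. Membership requires finite support on $\dB$, not on the Iwasawa basis \eqref{eq:iwabasis}. Truncating the Iwasawa expansion of $f$ gives a right-invariant, $\A$-valued functional that can have infinite support on $\dB$; here it is essentially $\psi_\alpha^{-1}$. Your fallback via the left $T$-action fails for the same reason: the $f$ above is already a left-weight vector.

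Consequently the Ore conditions cannot be met with the same denominator $\psi_\mu$; you must pass to a power. The paper only uses that, for $s=\Gamma(\psi_\mu)$, the element $\Gamma(f)$ lies in a finite sum of $\operatorname{Ad}_s$-eigenspaces of $\O_\A[B_\theta]$, with eigenvalues that are powers of $q$. Put $P(T)=\prod_\rho(T-\rho)=\sum_{j=0}^m a_jT^j$. Then $P(\operatorname{Ad}_s)\Gamma(f)=0$, and multiplying on the right by $s^m$ gives $\sum_j a_js^j\Gamma(f)s^{m-j}=0$ inside $\Gamma(\O_\A[S])$. Since $a_m=1$ and $a_0\in\A^\times$, solving for the extreme terms yields
\[
s^m\Gamma(f)\in\Gamma(\O_\A[S])\,s,\qquad \Gamma(f)\,s^m\in s\,\Gamma(\O_\A[S]),
\]
with $s^m=\Gamma(\psi_{m\mu})$. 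No statement about individual homogeneous components of $f$ lying in $\O_\A[S]$ is needed.
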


\begin{proof}
 Identify $\O_\A[S]$ with its image under $\Gamma$, and fix $s=\Gamma(\psi_\mu)$ and $f\in\Gamma(\O_\A[S])$. By \eqref{eq:com}, for $u \in \DCU^+$ homogeneous and $\lambda\in\breve X$, we have
\[
s\bigl(\alpha(u)\otimes\one_\lambda^*\bigr)s^{-1}
=q^{-\langle\widetilde{|u|},\mu\rangle}
\alpha(u)\otimes\one_\lambda^*.
\]
Write $f$ as a linear combination of terms $\alpha(u)\otimes\one_\lambda^*$. Only finitely many eigenvalues occur under conjugation by $s$. Let
\[
P(T)=\prod_{\rho}(T-\rho)=\sum_{j=0}^m a_jT^j\in\A[T]
\]
be the product over those eigenvalues. For $f=0$ the Ore identities are immediate, so we may assume $f\ne0$. Then $m\geq1$, $a_m=1$, $a_0\in\A^\times$, and $P(\operatorname{Ad}_s)(f)=0$. Multiplying on the right by $s^m$, we obtain
\[
\sum_{j=0}^m a_js^jfs^{m-j}=0.
\]
Solving for the last and first terms, respectively, yields
\[
s^mf=
\left(-\sum_{j=0}^{m-1}a_js^jfs^{m-j-1}\right)s,
\qquad
fs^m=s\left(-a_0^{-1}\sum_{j=1}^m a_js^{j-1}fs^{m-j}\right).
\]
Note that the expressions in parentheses belong to $\Gamma(\O_\A[S])$, and $s^m=\Gamma(\psi_{m\mu})$. Thus two Ore conditions are verified. Regularity follows because $\Gamma$ is injective and $s$ is invertible in $\O_\A[B_\theta]$.
\end{proof}

Consequently, $\Gamma$ extends to an injective algebra homomorphism
\[
\tG:\O_\A[S][\psi_\mu^{-1}\mid\mu\in\breve{X}^+]
\longrightarrow\O_\A[B_\theta].
\]
Note that the same argument applies after extending scalars to $\QQ$. Let us denote the resulting homomorphism by $\tG_q$.

\begin{proposition}\label{prop:integral-localization}
    The image of $\tG$ is $\O_\A[B_\theta]$. Hence $\tG$ induces an algebra isomorphism
    $$
    \tG:\O_\A[\ss][\psi_{\mu}^{-1}\mid \mu\in \breve{X}^+]\overset{\sim}{\longrightarrow} \O_\A[B_\theta].
    $$
\end{proposition}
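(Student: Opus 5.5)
Since $\tG$ is already known to be injective, only surjectivity is at issue. My plan is to show that every element of the $\A$-basis $\{\alpha(u)\otimes\one_\lambda^*\}$ of $\O_\A[B_\theta]$ lies in the image, where $u$ runs over the dual PBW basis $\tbE(\un{w_0w_\bullet})$ and $\lambda\in\breve X$. The torus part costs nothing. For $\lambda\in\breve X$, write $\lambda=\mu_1-\mu_2$ with $\mu_1,\mu_2\in\breve X^+$. This is possible because $\breve X^+$ generates $\breve X$ as a group: $\breve X$ is $\theta$-anti-invariant, and $(1-\theta)$ applied to a sufficiently dominant weight gives dominant elements of $\breve X$ spanning it. Then \eqref{eq:Gpsi} gives $\one_\lambda^*=\tG(\psi_{\mu_2}\psi_{\mu_1}^{-1})$. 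Since multiplication by $\one^*_\lambda$ is invertible and lands in the image, it suffices to show that for each $f\in\O_\A[N(w_0w_\bullet)]$ there is some $\mu\in\breve X^+$ with $f\otimes\one^*_{-\mu}\in\Gamma(\O_\A[S])$.

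The first step is to do this over $\QQ$ and then descend. The key integrality input is Lemma~\ref{lem:Gamma-saturation}: if $F\in\O_\A[B_\theta]$ lies in $\Gamma_q(\O_q[S])$, then $F\in\Gamma(\O_\A[S])$. It is therefore enough to show that $f\otimes\one^*_{-\mu}\in\Gamma_q(\O_q[S])$ for $\mu$ large. To do this, I will construct the preimage directly as a functional on $\dU$. By Lemma~\ref{lem:localiso} and Lemma~\ref{lem:idecomp}, $\dU=\bigoplus_\lambda \U^+(w_0w_\bullet)\one_\lambda\dUi$. Define $\phi:\dU\to\QQ$ by $\phi(x\one_\lambda y)=f(x)\delta_{\lambda,-\mu}\eta(y)$. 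This is well defined by the tensor decomposition of Lemma~\ref{lem:localiso}, and it is right $\dUi$-invariant by construction. Its restriction to $\dU^+$ is $f\otimes\one_{-\mu}^*$, using Lemma~\ref{le:oqb} together with the relation $x\one_\lambda u''=x\one_\lambda(u''\one_{\bar\lambda})$ for $u''\in\U^+(w_\bullet)_>$.

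The main obstacle is showing $\phi\in\O_q[G]$, that is, that $\phi$ vanishes on all but finitely many elements of $\dB$. This is exactly where $\mu$ must be taken large. My plan is to realize $\phi$ as a matrix coefficient. Take $V=V(-w_0\mu)$ with its $\Ui$-invariant vector $w_\mu$, and consider functionals $\xi\in V^*$, so that $c^{-w_0\mu}_{\xi,w_\mu}\in\O_q[S]$. Because $u\mapsto \xi(u w_\mu)$ is determined on $\U^+(w_0w_\bullet)\one_{\lambda}$ with $\lambda$ ranging over the weights of $V$, the image $\Gamma_q(c_{\xi,w_\mu})$ lies in $\bigoplus_\lambda \O_q[N(w_0w_\bullet)]\otimes\one^*_\lambda$, with $\lambda$ ranging over the weights of $V$.

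I then filter by the lowest-weight component. Choose $\xi$ supported on the weight space $-\mu+\nu$ of $V^*$-duals, where $\nu=|f|$. The map $\xi\mapsto(\text{component of }\Gamma_q(c_{\xi,w_\mu})\text{ at }\one^*_{-\mu})$ lands in $\O_q[N(w_0w_\bullet)]_\nu$, and I want it to be surjective for $\mu$ large. The underlying pairing is $(\xi,x)\mapsto\xi(x\,v)$ for $x\in\U^+(w_0w_\bullet)_\nu$, where $v$ is the lowest weight component $v_{-\mu}$ of $w_\mu$; this uses $\psi_\mu'\neq0$ from Proposition~\ref{prop:psimu}. So surjectivity reduces to injectivity of $x\mapsto xv_{-\mu}$ on $\U^+_\nu$, which holds when $\langle\alpha_i^\vee,\mu\rangle$ is large relative to $\nu$.

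The higher-weight components then need to be removed. I will do this by induction on $\nu$, subtracting terms of the form $(\text{lower }f')\otimes\one^*_{\lambda}$ that the inductive hypothesis already places in the image. To make this work I will arrange the induction on height, using the triangularity of $\Gamma_q(c_{\xi,w_\mu})$ with respect to the weight $\lambda+\mu$. Finally, I will descend to $\A$ using Lemma~\ref{lem:Gamma-saturation}.
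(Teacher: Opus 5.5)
Your proposal is correct in substance, but over $\QQ$ it takes a different route from the paper. The paper only needs to hit algebra generators. It fixes the single weight $\mu_0=(1-\theta)(2\rho)$ and uses the specific coefficients $\psi_\mu^{\beta_k}(u)=\psi_\mu(F_{\beta_k}u)$. It computes the leading coefficient of $x_k\one^*_{-\mu}$ explicitly as $[\langle\beta_k^\vee,\mu\rangle]_{i_k}$ from $F_{\beta_k}E_{\beta_k}v_{-\mu}$, and inducts on $\het\beta_k$, using that the dual root vectors generate $\O_q[N(w_0w_\bullet)]$.

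You instead produce every $f\otimes\one^*_{-\mu}$ directly. You realize an arbitrary prescribed leading term by choosing $\xi$ in a dual weight space of $V(-w_0\mu)$, with $\mu$ depending on $|f|$. This replaces the commutator computation and the generation result by a nondegeneracy property of $\U^+v_{-\mu}$. The price is that $\mu$ must grow with $\nu$, and you must induct over all weights rather than over root vectors. Your functional $\phi$ plays no role once you pass to matrix coefficients. The final descent via Lemma~\ref{lem:Gamma-saturation} is the same as in the paper.

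Two steps need repair.

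\emph{Injectivity.} Injectivity of $x\mapsto xv_{-\mu}$ on $\U^+_\nu$ is false in general. For $j\in\bI$ one has $\langle\alpha_j^\vee,\mu\rangle=0$ for every $\mu\in\breve X$, so $E_jv_{-\mu}=0$, and enlarging $\mu$ cannot help. What you need is injectivity on $\U^+(w_0w_\bullet)_\nu$, which does hold. Suppose $\langle\alpha_i^\vee,\mu\rangle\ge\nu_i$ for all $i\in\wI$. Then the standard presentation of the lowest weight module shows that the annihilator of $v_{-\mu}$ in $\U^+_\nu$ is $(\sum_{j\in\bI}\U^+E_j)_\nu=(\U^+(w_0w_\bullet)\U^+(w_\bullet)_>)_\nu$. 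This meets $\U^+(w_0w_\bullet)$ trivially by \eqref{eq:dsum}.

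\emph{Descent after the induction.} The induction is naturally run inside $\operatorname{im}\tG_q$, where all $\one^*_\lambda$ with $\lambda\in\breve X$ are available to absorb the lower terms $f'\otimes\one^*_\lambda$. It therefore yields $f\otimes\one^*_{-\mu}\in\operatorname{im}\tG_q$, not yet membership in $\Gamma_q(\O_q[S])$ as your reduction requires. To bridge this, use the right Ore property to write the element as $\Gamma_q(g)\Gamma(\psi_{\mu'})^{-1}$. Multiplying by $\one^*_{-\mu'}$ then places it in $\Gamma_q(\O_q[S])\cap\O_\A[B_\theta]$, and Lemma~\ref{lem:Gamma-saturation} applies. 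This is exactly the paper's descent step.
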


\begin{proof}
By Lemma~\ref{lem:Gamma-saturation}, $\operatorname{im}\Gamma \subset \O_\A[B_\theta]$, and then using \eqref{eq:Gpsi}, we have $\operatorname{im}\tG \subset \O_\A[B_\theta]$. It suffices to show that $\tG$ is a surjective map onto $\O_\A[B_\theta]$.
We will first prove the surjectivity of $\tG_q$ over $\QQ$ and then descend to $\A$ using Lemma~\ref{lem:Gamma-saturation}.

By \cite[Lemma 2.2]{BS25}, $\breve X$ is generated by $\breve{X}^+$ as an abelian group. Hence \eqref{eq:Gpsi} implies that $\operatorname{im}\tG_q$ contains $\O_q[T_\theta]$. It remains to show that it contains the root generators of $\O_q[N(w_0 w_\bullet)]$.

Fix the reduced expression $w_0w_\bullet=s_{i_1}\cdots s_{i_L}$ and write $x_k=\alpha(\tE_{\beta_k})$. We argue by induction on $\het\beta_k$. Choose $\mu\in\breve{X}^+$ such that $\langle\beta_k^\vee,\mu\rangle>0$ for all $1\leq k\leq L$. To see that such a weight $\mu$ exists, let $\rho$ be the half sum of the positive roots and $\rho_\bullet$ be the half sum of positive roots in $\Phi_\bullet$, where $\Phi_\bullet$ is the root system associated to $\I_\bullet$. Since $2\rho\in Q\subseteq X$ and $\theta=-w_\bullet\tau$ on the root lattice, the weight
\[
\mu_0=(1-\theta)(2\rho)=4(\rho-\rho_\bullet)
\]
belongs to $\breve{X}^+$. Its pairings with simple coroots are zero on $\I_\bullet$ and strictly positive on $\I_\circ$. Every $\beta_k$ lies outside the $\Phi_\bullet$, so $\langle\beta_k^\vee,\mu_0\rangle>0$. Thus one may take $\mu=\mu_0$.

Normalize the spherical vector $w_\mu\in V(-w_0\mu)$ so that its lowest-weight component is $v_{-\mu}$. Then
\[
\psi_\mu(u)=v_{-\mu}^*(uw_\mu).
\]
The matrix coefficient
\[
\psi_\mu^{\beta_k}(u)=\psi_\mu(F_{\beta_k}u)
\]
belongs to $\O_q[S]$, since it is right invariant. We claim that
\begin{equation}\label{eq:localization-triangular}
\Gamma_q(\psi_\mu^{\beta_k})
=[\langle\beta_k^\vee,\mu\rangle]_{i_k}
x_k\one_{-\mu}^*+\mathcal G,
\end{equation}
where $\mathcal G$ belongs to the $\QQ$-subalgebra of $\O_q[B_\theta]$ generated by $\O_q[T_\theta]$ and all $x_t$ with $\het\beta_t<\het\beta_k$.

To prove the claim, let $u_\gamma\in\U^+(w_0w_\bullet)$ be homogeneous of weight $\gamma$. If the value
\[
v_{-\mu}^*(F_{\beta_k}u_\gamma\one_\lambda w_\mu),
\]
is nonzero, then we must have $\lambda+\gamma-\beta_k=-\mu$. Moreover, $\lambda+\mu\in Q_+$ whenever $\one_\lambda w_\mu\ne0$. Hence $\beta_k-\gamma\in Q_+$. Therefore, if we expand $\Gamma_q(\psi_\mu^{\beta_k})$ in the dual PBW basis, then each PBW monomial in this expansion has weight $\gamma\leq\beta_k$. 

If such a monomial contains a root vector of height at least $\het\beta_k$, then the height inequality forces the monomial to be the single root vector associated to $\beta_k$. Thus every other monomial involves only root vectors of smaller height. By \eqref{def:tE},  these other terms belong to the stated subalgebra.

For the leading coefficient, set $\lambda=-\mu$ and the commutator between $F_{\beta_k}, E_{\beta_k}$ gives
\[
F_{\beta_k}E_{\beta_k}v_{-\mu}
=[\langle\beta_k^\vee,\mu\rangle]_{i_k}v_{-\mu},
\]
since $F_{\beta_k}v_{-\mu}=0$. As $x_k(E_{\beta_k})=1$, the claim \eqref{eq:localization-triangular} is proved.

By induction, assume that $\mathcal G\in\operatorname{im}\tG_q$. The quantum integer in \eqref{eq:localization-triangular} is nonzero, hence invertible in $\QQ$. Therefore
\[
x_k=
[\langle\beta_k^\vee,\mu\rangle]_{i_k}^{-1}
\bigl(\Gamma_q(\psi_\mu^{\beta_k})-\mathcal G\bigr)
\one_\mu^*
\in\operatorname{im}\tG_q.
\]
This proves $\operatorname{im}\tG_q=\O_q[B_\theta]$.

Now let $b\in\O_\A[B_\theta]$. Using the isomorphism $\tG_q$ over $\Q(q)$ and the right Ore condition, there exist $f\in\O_q[S]$ and $\mu\in\breve{X}^+$ such that
\[
b=\Gamma_q(f)\Gamma(\psi_\mu)^{-1}.
\]
Consequently,
\[
\Gamma_q(f)=b\Gamma(\psi_\mu)\in\O_\A[B_\theta].
\]
By Lemma~\ref{lem:Gamma-saturation}, this implies that $f\in\O_\A[S]$. Hence $b=\tG(f\psi_\mu^{-1})$, proving the surjectivity of $\tG$. Injectivity follows from Lemma~\ref{lem:Gamma-saturation}. Hence, the proof is complete.
\end{proof}

\subsection{Relation with geometry}\label{sec:rwg}

We explain the classical counterpart of the constructions in the previous sections. Let $$B_\theta=B_+K/K\subset \ss$$ be the $B_+$-orbit through the base point $K\in S$. Let $T_\theta=T/(T\cap K)\subset \ss$. Let $N_{\pm}$ be the unipotent radicals of $B_\pm$, respectively. For $v\in W$, write $N(v)=N_+\cap vN_-v^{-1}$. It is known that $B_\theta$ is an open dense subset of $S$, and moreover by the local structure theorem \cite[Proposition 2.3]{DCP83}, the action map gives an isomorphism of varieties 
\begin{equation}\label{eq:NTB}
N(w_0w_\bullet)\times T_\theta\overset{\sim}{\longrightarrow} B_\theta.    
\end{equation}

For an $\A$-algebra $R$, we set $\O_R[B_\theta]=R\otimes_{\A}\O_\A[B_\theta] $. Let $\O_1[B_\theta]=\O_{\C_1}[B_\theta]$. Similarly, we have $\C$-algebras $\O_1[T_\theta]$ and $\O_1[N(w_0 \bw)]$, which are obtained from the $\A$-algebras $\O_\A[T_\theta]$ and $\O_\A[N(w_0w_\bullet)]$ by base change. Let us write  $\O[B_\theta]$, $\O[T_\theta]$, and $\O[N(w_0w_\bullet)]$ to denote the $\C$-algebras of the regular functions on varieties $B_\theta$, $T_\theta$, and $N(w_0w_\bullet)$, respectively.

\begin{lemma}\label{lem:O1Btheta}
There is an algebra isomorphism $\Phi_{B_\theta}:\O_1[B_\theta]\overset{\sim}{\longrightarrow}\O[B_\theta]$ such that the following diagram commutes:
\[
\begin{tikzcd}
\O_1[S] \arrow[r,"\Gamma_1"] \arrow[d,"\Phi_S"',"\sim"]
& \O_1[B_\theta] \arrow[d,"\Phi_{B_\theta}","\sim"'] \\
\O[S] \arrow[r,"\mathrm{res}"'] & \O[B_\theta].
\end{tikzcd}
\]
Here $\Gamma_1$ is the specialization of $\Gamma$ at $q=1$, and $\mathrm{res}$ is restriction to the open orbit $B_\theta\subset S$.
\end{lemma}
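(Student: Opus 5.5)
We build $\Phi_{B_\theta}$ by restricting the classical isomorphism $\Phi_G$ and checking compatibility, rather than constructing it from scratch. Let $\O_1[B]=\O_{\C_1}[B]$. Lusztig's construction \cite[3.5, 4.11]{Lus09} shows that $\Phi_G$ descends along the projection $\pi_1:\O_1[G]\to\O_1[B]$ to an algebra isomorphism $\Phi_B:\O_1[B]\xrightarrow{\sim}\O[B]$ with $\Phi_B\circ\pi_1=\mathrm{res}^G_B\circ\Phi_G$. Indeed, $\ker\pi$ is spanned by the $b^*$ with $b\notin\dot{\mathbf B}^+$, i.e. by functions vanishing on $\dU^+_\A:=\bigoplus_\lambda\U^+_\A\one_\lambda$. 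Classically, $\dU^+_1$ is the hyperalgebra of $B$, so this kernel is the ideal of functions vanishing on $B$. Under $\Phi_B$, the element $\one^*_\lambda$ goes to the character $\lambda$ on $T$ (pulled back along $B\to T$). Also $\O_1[N]$, the specialization of $\alpha(\DCU^+)$, goes to $\O[N]$, since $\DCU^+_1$ is Kostant's $\Z$-form of $U(\mathfrak n)$ dualized. Finally, the smash product \eqref{eq:com} becomes commutative at $q=1$, giving $\O[B]=\O[N]\otimes\O[T]$.

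Next we identify the image of $\O_1[B_\theta]$. By \eqref{eq:Btheta}, $\O_1[B_\theta]=\O_1[N(w_0w_\bullet)]\otimes\O_1[T_\theta]$, and this subalgebra embeds in $\O_1[B]$ because \eqref{eq:Btheta} is a split $\A$-summand, via the dual PBW basis and Lemma~\ref{le:oqb}. Under $\Phi_B$:
\begin{itemize}
\item $\O_1[T_\theta]$, spanned by $\one^*_\lambda$ with $\lambda\in\breve X=(1-\theta)X$, maps onto the characters trivial on $T\cap K$. This is exactly $\O[T/(T\cap K)]=\O[T_\theta]$, using that $X^*(T_\theta)=(1-\theta)X$.
\item $\O_1[N(w_0w_\bullet)]$ maps onto the functions on $N$ pulled back from $N\to N(w_0w_\bullet)$, via the product decomposition $N=N(w_0w_\bullet)\times N(w_\bullet)$. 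By Lemma~\ref{le:oqb}, these functions kill $\U^+(w_0w_\bullet)\U^+(w_\bullet)_>$, i.e. they are left invariant under the hyperalgebra of $N(w_\bullet)$.
\end{itemize}
Composing with the isomorphism \eqref{eq:NTB}, written as $N(w_0w_\bullet)\times T_\theta\cong B_\theta$, yields $\Phi_{B_\theta}$. It is an algebra isomorphism onto $\O[B_\theta]$.

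For commutativity, take $f\in\O_1[S]$. Then $\Phi_{B_\theta}(\Gamma_1 f)$ is obtained by restricting $\Phi_G(f)$ to $B$ and then transporting to $B_\theta$. Since $\Phi_G(f)$ is right $K$-invariant, its restriction to $B$ factors through the orbit map $B\to BK/K=B_\theta$, $b\mapsto bK$. What remains is to check that the identification of functions on $B$ killed by the relevant ideal with $\O[N(w_0w_\bullet)\times T_\theta]$ is precisely pullback along this orbit map.

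\textbf{Main obstacle.} The difficulty is that $B\to B_\theta$ has fibres $B\cap K$, not $N(w_\bullet)\times(T\cap K)$ on the nose. The vanishing on $\U^+(w_0w_\bullet)\U^+(w_\bullet)_>$ is a statement about right translation by the unipotent part, while the local structure theorem presents $B_\theta$ as the quotient of $B$ by $B\cap K$, modulo the action of the Levi of $\I_\bullet$. I would first check at the level of distributions that $\dU^\imath_1\cap\dU^+_1$ contains the hyperalgebra of $N(w_\bullet)$, as in the proof of Lemma~\ref{lem:Gamma-saturation}, where $u''\one_{\ov\lambda}\in\dUiA$. I would then verify that $\dim$ and injectivity, via Lemma~\ref{lem:Gamma-saturation} and density of $B_\theta$, force the two embeddings into $\O[B]$ to coincide. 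If that fails, the fallback is a direct comparison on the generators $\psi_\mu$ and $\psi^{\beta_k}_\mu$ using \eqref{eq:localization-triangular}.
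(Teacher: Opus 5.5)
Your construction of $\Phi_{B_\theta}$ is sound and matches the paper's. You base change \eqref{eq:Btheta}, send $\one_\lambda^*$ ($\lambda\in\breve X$) to the corresponding character of $T_\theta$, identify $\O_1[N(w_0w_\bullet)]$ with $\O[N(w_0w_\bullet)]$, and transport along \eqref{eq:NTB}. For the unipotent factor you use Lemma~\ref{le:oqb} and right $N(w_\bullet)$-invariance inside $\O[N]$, where the paper simply cites \cite{DCKP93b}.

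The gap is the commutativity of the square, which is half of the statement and which you do not prove. You reduce it to showing that $\Phi_B(j_1h)$ is the pullback of $\Phi_{B_\theta}(h)$ along the whole orbit map $B\to B_\theta$. You then call the fibres $B\cap K$ the main obstacle and offer only tentative strategies. The ``dimension and injectivity plus density'' route cannot work as stated: injectivity of $\Gamma_1$ and density of $B_\theta$ do not force two given algebra maps $\O_1[S]\to\O[B_\theta]$ to coincide. The fallback via $\psi_\mu,\psi_\mu^{\beta_k}$ would still require evaluating the classical functions $\Phi_S(\psi_\mu)$ and $\Phi_S(\psi_\mu^{\beta_k})$ on $B_\theta$, which brings you back to a pointwise computation anyway.

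The missing observation is that nothing about the fibres of $B\to B_\theta$ is needed, because \eqref{eq:NTB} gives a section: every point of $B_\theta$ is $ntK$ with $n\in N(w_0w_\bullet)$ and $t\in T$. Let $j_1:\O_1[B_\theta]\to\O_1[B]$ be the inclusion, and let $\Phi_N(g)$ be the function on $N(w_0w_\bullet)$ that your construction attaches to $g$. Then your construction gives, as do \eqref{eq:beta} and \eqref{eq:com} directly,
\[
\Phi_B\bigl(j_1(g\otimes\one_\lambda^*)\bigr)(nt)=\Phi_N(g)(n)\lambda(t)=\Phi_{B_\theta}(g\otimes\one_\lambda^*)(ntK).
\]
Now use $j_1\circ\Gamma_1=\pi_1|_{\O_1[S]}$, $\Phi_B\circ\pi_1=\mathrm{res}_{B_+}\circ\Phi_G$, and the compatibility of $\Phi_S$ with $\Phi_G$. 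For $f\in\O_1[S]$ this gives
\[
\Phi_{B_\theta}(\Gamma_1f)(ntK)=\Phi_B(\pi_1f)(nt)=\Phi_G(f)(nt)=\Phi_S(f)(ntK),
\]
which closes the square; this is exactly the paper's argument.

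Incidentally, the obstacle you describe is not there. Since $B\cap K\subseteq B\cap w^\bullet Bw^\bullet=TN(w_\bullet)$ and $\theta$ is trivial on $N(w_\bullet)$, one has $B\cap K=(T\cap K)N(w_\bullet)$. The proof does not need this fact, however.
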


\begin{proof}
Recall the $\A$-linear isomorphism \eqref{eq:Btheta}. By taking the base change to $\C_1$ and noticing that $\O_1[B_\theta]$ is commutative, we have the $\C$-algebra isomorphism
\begin{equation}\label{eq:O1}
\O_1[B_\theta] \cong \O_1[N(w_0 \bw)] \otimes \O_1[T_\theta].
\end{equation}
In our setting, $X=\Hom(T,\C^\times)$. For $\lambda\in X$, the classical limit of $\one_\lambda^*$ is identified with the character $\lambda: T\rightarrow \C^\times$. Since $\breve{X}=\{\lambda-\theta(\lambda)|\lambda\in X\}$, we have 
\[
\lambda(tt')=\lambda(t),\qquad \forall t\in T, t'\in T^\theta=T\cap K, \lambda\in \breve{X}.
\]
In fact, $X^*(T/T^\theta)=(1-\theta)X=\breve X$: the homomorphism $t\mapsto t\theta(t)^{-1}$ has kernel $T^\theta$, and its pullback on character lattices is $1-\theta$. Hence, one has a natural algebra isomorphism $\O_1[T_\theta] \cong \O[T_\theta], \one_\lambda^*\mapsto \lambda$. Moreover, by \cite{DCKP93b}, one has a natural algebra isomorphism $\Phi_N:\O_1[N(w_0 \bw)] \cong \O[N(w_0 \bw)]$. Together with \eqref{eq:O1} and \eqref{eq:NTB}, these define the isomorphism $\Phi_{B_\theta}$.

We check compatibility with $\Gamma_1$. Set $\O_1[B]=\C_1\otimes_\A\O_\A[B]$, and denote by $\pi_1$ the specialization of $\pi$. Under the classical identification $\Phi_B:\O_1[B]\overset{\sim}{\longrightarrow}\O[B_+]$, the map $\pi_1$ is restriction from $G$ to $B_+$. Indeed, $\pi$ is restriction of functionals to the span of the elements $u\one_\lambda$, with $u\in\U_\A^+$; after specialization, this records precisely the positive-root operators and torus weights of the restricted matrix coefficients. Thus
\[
\Phi_B\circ\pi_1=\mathrm{res}_{B_+}\circ\Phi_G.
\]
Let $j_1:\O_1[B_\theta]\rightarrow\O_1[B]$ be the specialized inclusion. The PBW decomposition and the defining formula \eqref{eq:beta} give
\[
\Phi_B\bigl(j_1(g\otimes\one_\lambda^*)\bigr)(nt)
=\Phi_N(g)(n)\lambda(t),
\qquad n\in N(w_0\bw),\quad t\in T,\quad\lambda\in\breve X.
\]
Here we use the tensor product identification \eqref{eq:O1}. The right-hand side is also $\Phi_{B_\theta}(g\otimes\one_\lambda^*)(ntK)$ by construction. Since $j_1\circ\Gamma_1$ is the restriction of $\pi_1$ to $\O_1[S]$, compatibility of $\Phi_S$ with $\Phi_G$ now gives, for $f\in\O_1[S]$,
\[
\Phi_{B_\theta}(\Gamma_1(f))(ntK)
=\Phi_B(\pi_1(f))(nt)
=\Phi_S(f)(ntK).
\]
Every point of $B_\theta$ has this form by \eqref{eq:NTB}. The diagram therefore commutes.
\end{proof}

For $w\in W$, let $i:N(w)\rightarrow G/B_+$ be the map $n\mapsto nwB_+$. For $v\leq w$ in the Bruhat order, define
\[
N(w)_v=i^{-1}(B_-vB_+/B_+\cap B_+wB_+/B_+).
\]
Then $N(w)_v$ is a locally closed subset of $N(w)$.

Recall the open subset $B_\theta\subset \ss$ and the isomorphism $N(\ww)\times T_\theta\overset{\sim}{\rightarrow} B_\theta$. For $v\leq \ww$, define 
\[
B_{\theta,v}=B_\theta\cap \ss_v.
\]

\begin{lemma}\label{le:Bv}
For $v\leq \ww$, the isomorphism $N(\ww)\times T_\theta\overset{\sim}{\rightarrow}  B_\theta$ restricts to $N(\ww)_v\times T_\theta\overset{\sim}{\rightarrow} B_{\theta,v}$.
\end{lemma}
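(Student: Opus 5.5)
The plan is to compute $\iota(ntK)$ explicitly for $n\in N(\ww)$, $t\in T$, and read off its Bruhat position. Here $\iota(ntK)=nt\,\theta(t)^{-1}\theta(n)^{-1}$. By the definition of $\ss_v$,
\[
ntK\in \ss_v \iff nt\theta(t)^{-1}\theta(n)^{-1}\in B_-vB_+\ww .
\]
Since $(B_+,B_-)$ is a $\theta$-split pair, we have $\theta(B_\pm)=\ww B_\pm \ww$. Taking $\ww$ to be represented by $\dot w$, with suitable torus adjustments, this gives $\theta(n)^{-1}\in \ww N_+\ww^{-1}$ up to a torus factor. Moreover $t\theta(t)^{-1}\in T$.

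The first step is to rewrite the membership condition. Right multiplication by $\ww^{-1}$ shows that $\iota(ntK)\in B_-vB_+\ww$ if and only if
\[
n\,t'\,\ww\,m\in B_-vB_+,\qquad t'=t\theta(t)^{-1},
\]
where $m=\ww^{-1}\theta(n)^{-1}\ww\cdot(\text{torus})$ lies in $B_+$. The torus factors are harmless because $T\subset B_\pm$ normalizes everything. Since $t'\ww=\ww\,(\ww^{-1}t'\ww)$ and $B_-vB_+$ is stable under right multiplication by $B_+$, the condition becomes
\[
n\ww\in B_-vB_+ .
\]
This condition is independent of $t$. It says exactly that $n\ww B_+/B_+\in B_-vB_+/B_+$, and automatically $n\ww B_+\in B_+\ww B_+$. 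So the condition is $n\in N(\ww)_v$.

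The second step is to check that $\theta(n)\in \ww B_+\ww^{-1}$ for $n\in N_+$. I will use the fact that $\theta(B_+)=\ww B_+\ww^{-1}$ for a representative of $\ww$ in $N_G(T)$; this is the $\theta$-split hypothesis recalled in Section~\ref{sec:quantized-coordinate-algebras}. It follows that $\ww^{-1}\theta(n)^{-1}\ww\in B_+$.

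Combining the two steps, the image of $N(\ww)_v\times T_\theta$ is precisely $B_\theta\cap\ss_v=B_{\theta,v}$. The isomorphism \eqref{eq:NTB} therefore restricts to a bijection onto $B_{\theta,v}$. It is an isomorphism of locally closed subvarieties because it is the restriction of an isomorphism to a preimage.

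The main (minor) obstacle is bookkeeping of representatives: the identity $\theta(B_+)=\ww B_+\ww$ is stated with $\ww$ possibly meaning a lift. One must confirm that the lift used in the definition of $\ss_u$ (namely $B_+w_0w_\bullet$) and the one in $\theta(B_+)$ differ by torus elements. Since both sides of every relation are stable under $T$, this is absorbed into $B_+$.
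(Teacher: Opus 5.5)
Your proposal is correct and follows essentially the same route as the paper. The paper likewise uses $\theta(B_+)=w^\bullet B_+w^\bullet$ to write $\theta(n\widetilde t)^{-1}=w^\bullet b_+w^\bullet$, obtains $\iota(n\widetilde tK)=n\widetilde t\,w^\bullet b_+w^\bullet$, and reads off that $\iota(n\widetilde tK)\in B_-vB_+w^\bullet$ exactly when $nw^\bullet\in B_-vB_+$. Your explicit bookkeeping (moving the torus factor past $w^\bullet$ into $B_+$, and absorbing the discrepancy between lifts of $w^\bullet$, such as $(\dot w^\bullet)^2\in T$) spells out steps the paper leaves implicit.
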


\begin{proof}
Let $(n,t)\in N(\ww)\times T_\theta$, and choose a lift $\widetilde t\in T$ of $t$. Its image in $B_\theta$ is $n\widetilde tK$, which is independent of the lift since $T\cap K$ acts trivially on this coset. Since $\theta(B_+)=\ww B_+ \ww$, there exists $b_+\in B_+$ such that $\theta(n\widetilde t)^{-1}=\ww b_+ \ww$. Then $\iota(n\widetilde tK)=n\widetilde t \ww b_+ \ww$.

If $n\in N(\ww)_v$, then $n \ww B_+ \in B_- v B_+$. This implies that $\iota(n\widetilde tK)\in B_- v B_+ \ww$ and hence $n\widetilde tK\in S_v$.
Conversely, if $(n,t)\in B_{\theta,v}$, then $\iota(n\widetilde tK)\in B_- v B_+ \ww$. This implies that $n\widetilde t \ww\in B_- v B_+$ and then $n\in N(\ww)_v$. This completes the proof.
\end{proof}

In particular, $B_{\theta,v}$ is nonempty if $v\leq \ww$, and we have the stratification $B_{\theta}=\bigsqcup_{v\leq \ww} B_{\theta,v}$.

\section{Specialization at roots of unity}\label{sec:root-of-unity}
Retain the same notations as in Section \ref{sec:bc}. Let us set 
\[
\cO_\epsilon[G]=\cO_{\mathbb{C}_\epsilon}[G],\qquad \cO_1[G]=\cO_{\mathbb{C}_1}[G];
\]
and 
\[
\cO_\epsilon[\ss]=\cO_{\mathbb{C}_\epsilon}[\ss],\qquad \cO_1[\ss]=\cO_{\mathbb{C}_1}[\ss].
\]

The goal of this section is to study ring-theoretic properties of $\cO_\epsilon[\ss]$. In particular, we show that $\cO_\epsilon[\ss]$ is a projective module of rank $\ell^{\text{dim } \ss}$ over a canonical central subalgebra, which is isomorphic to the ordinary coordinate algebra of the symmetric space.

\subsection{Frobenius center of \texorpdfstring{$\cO_\epsilon[\ss]$}{O epsilon[S]}}

In this subsection, we show that there is a central subalgebra of $\cO_\epsilon[\ss]$ which is isomorphic to $\cO_1[\ss]$.

The counit $\eta:{\dU}_\A\rightarrow \A$ clearly induces $\C$-algebra homomorphisms $\dU_1\rightarrow \C$ and $\dU_\epsilon\rightarrow \C$ via base changes. We still denote these maps by $\eta$.

\begin{lemma}\label{lem:counit}
The quantum Frobenius map is compatible with the counit $\eta$, i.e., we have
\[
\eta(\Fr(x))=\eta(x),\qquad \forall x\in \widehat{\U}_\epsilon.
\]
\end{lemma}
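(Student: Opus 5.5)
The plan is to reduce everything to the values of $\Fr$ and $\eta$ on canonical basis elements, and then to pass to completions by continuity.

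First, on the completion $\widehat{\U}_R$ the counit is $\eta\bigl(\sum_b r_b b\bigr)=r_{\one_0}$, since $\eta(b)=\delta_{b,\one_0}$. This is well defined on infinite formal sums because it only reads off one coefficient. Write $x=\sum_{b\in\dB_\epsilon}r_b b$. By Lemma~\ref{lem:fm}, the extended Frobenius map is given coefficientwise:
\[
\Fr(x)=\sum_{b'\in\dB_1}\Bigl(\sum_{b}r_bx_{b,b'}\Bigr)b',
\]
and the inner sum is finite. Taking the coefficient of $b'=\one_0$ gives
\[
\eta(\Fr(x))=\sum_b r_b\,x_{b,\one_0}.
\]
So it suffices to prove that $x_{b,\one_0}=\delta_{b,\one_0}$ for every $b\in\dB_\epsilon$. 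Equivalently, $\eta\circ\Fr=\eta$ on $\dU_\epsilon$ itself; the statement on $\widehat{\U}_\epsilon$ then follows by the coefficientwise formula above.

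To prove $\eta\circ\Fr=\eta$ on $\dU_\epsilon$, note that both sides are algebra homomorphisms $\dU_\epsilon\to\C$. For $\eta$ this was recalled above, and $\Fr$ is an algebra homomorphism by Proposition~\ref{prop:Fr}. Both maps are also additive. Moreover, $\dU_\epsilon$ is spanned by products of the generators $E_i^{(m)}\one_\lambda$ and $F_i^{(m)}\one_\lambda$. Hence it is enough to check agreement on these generators, keeping track of the idempotents because the algebra is non-unital. I will therefore check on the generators directly.

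\begin{itemize}
\item For $m=0$: $\eta(\one_\lambda)=\delta_{\lambda,0}$. On the other side, $\Fr(\one_\lambda)=\one_{\lambda/\ell}$ if $\lambda\in\ell X$ and $0$ otherwise, so $\eta(\Fr(\one_\lambda))=\delta_{\lambda,0}$ as well.
\item For $m>0$: $\eta(E_i^{(m)}\one_\lambda)=0$. Also $\Fr(E_i^{(m)}\one_\lambda)$ is either $0$ or $e_i^{(m/\ell)}\one_{\lambda/\ell}$ with $m/\ell>0$, and its counit is again $0$. The same holds for $F_i^{(m)}\one_\lambda$.
\end{itemize}

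Multiplicativity of both maps then gives agreement on every monomial $\one_{\lambda_0}g_1\cdots g_k$, hence on all of $\dU_\epsilon$.

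The one point needing care is the passage to the completion. There $\Fr$ is defined coefficientwise rather than as an algebra map on finite sums, so I will rely on the finiteness statement in Lemma~\ref{lem:fm}, as in the first step. Specialising the identity $\eta\circ\Fr=\eta$ to $x=b$ gives $x_{b,\one_0}=\eta(\Fr(b))=\eta(b)=\delta_{b,\one_0}$, which completes the argument.
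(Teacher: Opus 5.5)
Your proposal is correct and follows the paper's argument. Both reduce to checking $\eta\circ\Fr=\eta$ on the generators $E_i^{(m)}\one_\lambda$, $F_i^{(m)}\one_\lambda$, using that both maps are algebra homomorphisms; your extra step via the coefficientwise formula of Lemma~\ref{lem:fm} (reducing to $x_{b,\one_0}=\delta_{b,\one_0}$) makes explicit the passage to $\widehat{\U}_\epsilon$ that the paper leaves implicit.
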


\begin{proof}
It suffices to check this identity for generators $E_i^{(m)}\one_\lambda, F_i^{(m)}\one_\lambda$, which is straightforward.
\end{proof}

\begin{theorem}\label{thm:Fr}
{\,}
\begin{itemize}
\item[(1)] The dual of the quantum Frobenius map induces an injective unital algebra homomorphism $\Fr^*: \O_1[G] \rightarrow \O_\epsilon[G]$. Moreover, $\Fr^*$ maps $\O_1[G]$ to a central subalgebra of $\O_\epsilon[G]$.

\item[(2)] The map $\Fr^*$ restricts to an algebra homomorphism $\Fr_S^*: \O_1[S]\rightarrow \O_\epsilon[S]$.
Hence, we have the following commutative diagram
\begin{equation}
        \begin{tikzcd}
            & \O_1[S] \arrow[r,hook,"\Fr_S^*"] \arrow[d,hook] & \O_\epsilon[S] \arrow[d,hook] \\ & \O_1[G] \arrow[r,hook,"\Fr^*"] & \O_\epsilon[G]
        \end{tikzcd}
\end{equation}
\end{itemize}
\end{theorem}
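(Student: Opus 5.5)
The plan is to define $\Fr^*(f)=f\circ\Fr$ for $f\in\O_1[G]$, using the extension $\Fr:\widehat{\U}_\epsilon\to\widehat{\U}_1$ from Lemma~\ref{lem:fm}.

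\emph{Well-definedness.} Writing $f=\sum_{b'}f(b')\,b'^*$ as a finite sum, we get $\Fr^*(f)(b)=\sum_{b'}f(b')x_{b,b'}$. By Lemma~\ref{lem:fm}, each $b'$ has only finitely many $b$ with $x_{b,b'}\neq0$. Hence $\Fr^*(f)$ has finite support on $\dB_\epsilon$ and lies in $\O_\epsilon[G]$.

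\emph{Algebra homomorphism.} Multiplicativity follows from compatibility of $\Fr$ with comultiplication: $\Delta_1\circ\Fr=(\Fr\otimes\Fr)\circ\Delta_\epsilon$ on $\widehat{\U}_\epsilon$. It suffices to check this on the generators $E_i^{(m)}\one_\lambda$ and $F_i^{(m)}\one_\lambda$ using \eqref{eq:comul}. At $q=\epsilon$ the coefficients $\epsilon_i^{k(m-k)+k\langle\alpha_i^\vee,\lambda'\rangle}$ combine with the vanishing rules of Proposition~\ref{prop:Fr}. Only terms with $\ell\mid k$, $\ell\mid m-k$ and $\lambda',\lambda''\in\ell X$ survive, and these reproduce the $q=1$ formula for $\Delta(e_i^{(m/\ell)}\one_{\lambda/\ell})$. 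Since both sides are algebra homomorphisms into the completion and agree on generators, they agree everywhere; this is the standard argument in \cite[35.1]{Lus94} and \cite{Lus09}. Unitality is Lemma~\ref{lem:counit}.

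\emph{Injectivity.} This follows from surjectivity of $\Fr$: if $f\circ\Fr=0$, then $f$ vanishes on $\Fr(\dU_\epsilon)=\dU_1$, so $f=0$.

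\emph{Centrality.} This is the main obstacle. The plan is to reduce to the known group case, \cite[Section~6]{DCL94} and \cite{BG02}, working through matrix coefficients. For $g\in\O_\epsilon[G]$ and $f\in\O_1[G]$, centrality means that $(g\otimes f\circ\Fr)\circ\Delta=(f\circ\Fr\otimes g)\circ\Delta$. I would check this on generators $x$ of the form $E_i^{(m)}\one_\lambda$ and $F_i^{(m)}\one_\lambda$; for products, write $\Delta(xy)=\Delta(x)\Delta(y)$ and argue inductively. The key facts are:
\begin{itemize}
\item in $\Delta(E_i^{(m)}\one_\lambda)$, the term $E_i^{(m-k)}\otimes E_i^{(k)}$ with $\ell\mid k$ has coefficient $\epsilon_i^{k(m-k)+k\langle\alpha_i^\vee,\lambda'\rangle}=1$ when $\lambda'$ is paired appropriately;
\item the comparison with the opposite coproduct reduces to the identity $\epsilon^{\ell(\cdot)}=1$.
\end{itemize}
If the direct check becomes messy, I would instead cite the centrality statement of \cite{DCL94} for $\O_\epsilon[G]$. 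That statement transfers to Lusztig's integral model because $\Fr^*$ sends $\one_\lambda^*$-type coefficients to $\ell$-th powers of generators, $c\mapsto c^\ell$ up to the projection of Remark~\ref{rem:Fr-projection}. It is then enough to know that $\O_1[G]$ is generated by matrix coefficients whose images are $\ell$-th powers, which are central by \cite{DCL94}.

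\emph{Part (2).} Take $f\in\O_1[S]$, $x\in\dU_\epsilon$ and $y\in\dU^\imath_\epsilon$. By Proposition~\ref{prop:dUiFr}, $\Fr(y)\in\dU^\imath_1$. Therefore
\[
\Fr^*(f)(xy)=f(\Fr(x)\Fr(y))=\eta(\Fr(y))f(\Fr(x))=\eta(y)\,\Fr^*(f)(x),
\]
where the last step uses Lemma~\ref{lem:counit}. So $\Fr^*(f)\in\O_\epsilon[S]$, and the diagram commutes by construction. Injectivity and centrality of $\Fr_S^*$ inside $\O_\epsilon[S]$ are inherited from part (1).
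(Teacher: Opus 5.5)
Your proposal is correct and follows the paper's proof essentially step for step: well-definedness via Lemma~\ref{lem:fm}, multiplicativity via compatibility of $\Fr$ with $\Delta$, injectivity from surjectivity of $\Fr$, unitality from Lemma~\ref{lem:counit}, and part (2) via Proposition~\ref{prop:dUiFr}. Your direct centrality check is exactly the paper's identity $(\Fr\otimes\Id)\Delta=(\Fr\otimes\Id)\Delta^{\op}$ verified on the generators $E_i^{(m)}\one_\lambda$, $F_i^{(m)}\one_\lambda$; it is cleaner to state it as this operator identity, which is multiplicative, than for fixed $f,g$.

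You should drop the fallback through \cite{DCL94}. It only covers semisimple simply connected $G$, and the claim that $\O_1[G]$ is generated by matrix coefficients whose $\Fr^*$-images are central $\ell$-th powers is not justified. By contrast, the direct check takes only a few lines, since on every surviving term the exponent of $\epsilon_i$ is divisible by $\ell$.
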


\begin{proof}
(1) Thanks to Lemma \ref{lem:fm}, the dual of $\Fr$ restricts to a map $\O_1[G]\rightarrow \O_\epsilon[G]$. It is an injection since $\Fr$ is surjective. By \cite[35.1.10]{Lus94}, $\Fr$ is compatible with comultiplications. Hence $\Fr^*$ is an algebra homomorphism. It is unital by Lemma \ref{lem:counit}. 

We show that the image of $\Fr^*$ is central. Let $f\in \O_1[G],g\in \O_\epsilon[G]$. Note that $\Fr^*(f) g=g\Fr^*(f)$ is implied by the following identity
\begin{align}\label{eq:FDel}
(\Fr \otimes \Id) \Delta= (\Fr \otimes \Id) \Delta^{\op}.
\end{align}
where $\Delta^{\op}=P\circ \Delta$ and $P:x\otimes y\mapsto y\otimes x$.

Hence, it suffices to show \eqref{eq:FDel}. Let $\epsilon_i=\epsilon^{d_i}$. Since both sides are algebra homomorphisms, it suffices to show \eqref{eq:FDel} on generators $E_i^{(m)}\one_\lambda, F_i^{(m)}\one_\lambda$. By \eqref{eq:comul}, we have
\begin{align*}
(\Fr \otimes \Id) \Delta\big(E_i^{(m)}\one_\lambda\big)&=\sum_{\lambda'+\lambda''=\lambda} \sum_{k=0}^m \epsilon_i^{k(m-k)} \Fr\big(E_i^{(m-k)}\one_{\lambda'}\big) \otimes E_i^{(k)}\one_{\lambda''}
\\
&=\sum_{\lambda'+\lambda''=\lambda} \sum_{k=0}^m \epsilon_i^{k(m-k)} \Fr\big(E_i^{(k)}\one_{\lambda''}\big) \otimes E_i^{(m-k)}\one_{\lambda'}
=(\Fr \otimes \Id) \Delta^{\op}\big(E_i^{(m)}\one_\lambda\big).
\end{align*}
The case for $F_i^{(m)}\one_\lambda$ is similar. Hence, we have proved \eqref{eq:FDel} and then the image of $\Fr^*$ is central.

(2) Let $f\in \O_1[G \slash K]$. Then for $y\in \dU^\imath_\epsilon,x\in \dU_\epsilon$, we have
\begin{align*}
\Fr^*(f)(xy)=f\big(\Fr(xy)\big)=f \big(\Fr(x) \Fr(y)\big).
\end{align*}
By Proposition~\ref{prop:dUiFr}, $\Fr(y)\in \dU^\imath_1$. Hence, by the choice of $f$ and Lemma \ref{lem:counit}, we have
\[
f \big(\Fr(x) \Fr(y)\big)=\eta\big(\Fr(y)\big) f \big( \Fr(x)\big)=\eta(y) \Fr^*(f)(x).
\]
This completes the proof.
\end{proof}

\begin{remark}
    The statement (1) of Theorem \ref{thm:Fr} was known in \cite{DCL94} for $G$ semisimply and simply connected. The formulation here using Lusztig's quantum coordinate algebras for any connected reductive groups is new. 
\end{remark}

Due to \cite[Theorem 4.8]{BS24}, there is a natural algebra isomorphism $\O_1[\ss] \cong \O[\ss]$. 
By Theorem~\ref{thm:Fr}, via $\Fr^*$, $\O[G]$ is identified with a central subalgebra of $\O_\epsilon[G]$, and hence $\O[\ss]$ is identified with a central subalgebra of $\O_\epsilon[\ss]$. 

Take $x\in S$. Let $\mathfrak{m}_x\subset \O[S]$ be the maximal ideal consisting of regular functions vanishing at $x$. Define 
\[
\O_\epsilon[S](x)=\O_\epsilon[S]/\mathfrak{m}_x\O_\epsilon[S]
\]
to be the \emph{fibre algebra} of $\O_\epsilon [S]$ at the point $x$.

\subsection{Restricting to the open chart}\label{sec:open-chart}

Recall the $\A$-algebra $\O_\A[B_\theta]$ from Section \ref{sec:Btheta}. By Section \ref{sec:rwg}, $\O_\A[B_\theta]$ is a quantization of the open chart $B_\theta$ of $S$. As usual, write $\O_\epsilon[B_\theta]=\C_\epsilon\otimes _{\A}\O_\A[B_\theta]$. In this subsection, we establish a canonical central subalgebra of $\O_\epsilon[B_\theta]$ and relate the fibre algebras of $\O_\epsilon[S]$ with that of $\O_\epsilon[B_\theta]$.

It follows from Proposition \ref{prop:integral-localization} that one has an $\A$-algebra embedding $\Gamma:\O_\A[S]\rightarrow \O_\A[B_\theta]$. Let 
\[
\Gamma_\epsilon:\O_\epsilon[\ss] \rightarrow \O_\epsilon[B_\theta],
\qquad
\Gamma_1:\O_1[\ss] \rightarrow \O_1[B_\theta]
\]
be the $\C$-algebra homomorphisms obtained by base change from $\Gamma$.

Both maps are injective. Indeed, let $\xi\in\{1,\epsilon\}$ and suppose that $f\in\ker\Gamma_\xi$. The integral basis \eqref{eq:iwabasis} remains a basis after specialization at $q=\xi$. Thus every element of $\dU_\xi$ is a finite sum of terms $x\one_\lambda y$, with $x\in\U^+_\xi(w_0w_\bullet)$ and $y\in\dU^\imath_\xi$. By right invariance,
\[
f(x\one_\lambda y)
=f(x\one_\lambda)\eta(y)
=\Gamma_\xi(f)(x\one_\lambda)\eta(y)=0.
\]
Hence $f=0$, proving the injectivity. Here $\U^+_\xi(w_0w_\bullet)$ denotes the corresponding specialization of $\U^+_\A(w_0w_\bullet)$.


\begin{proposition}\label{prop:cdGamma}
    There is an algebra embedding $\Fr^*_\theta:\O_1[B_\theta]\rightarrow \O_\epsilon[B_\theta]$, such that the diagram 
    \begin{equation}\label{cd:Gamma}
        \begin{tikzcd}
           & \O_\epsilon[\ss] \arrow[r,"\Gamma_\epsilon"] & \O_\epsilon[B_\theta] \\ & \O_1[\ss] \arrow[u,"\Fr^*"] \arrow[r,"\Gamma_1"] & \O_1[B_\theta] \arrow[u,"\Fr^*_\theta"']
        \end{tikzcd}
    \end{equation}
    commutes.
\end{proposition}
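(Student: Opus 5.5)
The plan is to build $\Fr^*_\theta$ as the restriction of a dual Frobenius map on the Borel part, and then get the commutativity of \eqref{cd:Gamma} from the fact that $\Fr$ preserves the positive part and the weight idempotents.

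First, define $\Fr^*_B:\O_1[B]\to\O_\epsilon[B]$ by duality on the span of the elements $u\one_\lambda$ with $u\in\U^+$. By Proposition~\ref{prop:Fr}, $\Fr$ maps $E_i^{(m)}\one_\lambda$ to $e_i^{(m/\ell)}\one_{\lambda/\ell}$ or to $0$. Hence $\Fr$ sends the subspace $\dU^+_\epsilon$ spanned by $\U^+_\epsilon\one_\lambda$ into $\dU^+_1$. Using Lemma~\ref{lem:fm}, set $\Fr^*_B(f)(u\one_\lambda)=f(\Fr(u\one_\lambda))$. The compatibility $\Fr^*\circ$ with $\pi$, namely $\pi_\epsilon\circ\Fr^*=\Fr^*_B\circ\pi_1$, should follow from this preservation. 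Both sides are functionals on $\dU^+_\epsilon$ obtained by evaluating $f$ on $\Fr(u\one_\lambda)\in\dU^+_1$. Note that $\pi$ is just restriction of functionals to $\dU^+$, with dual canonical basis elements outside $\dB^+$ vanishing there. Surjectivity of $\pi_1$ and the fact that $\Fr^*$ is an algebra map then make $\Fr^*_B$ an algebra homomorphism for the quotient structure. Injectivity holds because $\Fr:\dU^+_\epsilon\to\dU^+_1$ is surjective: the images of $E_i^{(\ell m)}\one_{\ell\lambda}$ generate.

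Next, check that $\Fr^*_B$ maps $\O_1[B_\theta]$ into $\O_\epsilon[B_\theta]$, using the characterization in Lemma~\ref{le:oqb} over both specializations. That characterization persists after base change, since the PBW and dual PBW bases are integral.
\begin{itemize}
\item[(a)] The first condition concerns $\lambda\notin\breve X$. If $\lambda\notin\ell X$, then $\Fr(u\one_\lambda)=0$. If $\lambda=\ell\lambda'$ with $\lambda'\in\breve X$, then $\lambda\in\breve X$. So $\lambda\notin\breve X$ forces either $\Fr(u\one_\lambda)=0$ or $\lambda/\ell\notin\breve X$, because $\ell\breve X\subseteq\breve X$. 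In both cases the value vanishes.
\item[(b)] The second condition requires that $\Fr$ maps $\U^+_\epsilon(w_0w_\bullet)\U^+_\epsilon(w_\bullet)_>$ into $\U^+_1(w_0w_\bullet)\U^+_1(w_\bullet)_>$. I would use Lusztig's compatibility of $\Fr$ with the braid group actions, \cite[35.1.8–35.1.9]{Lus94}. It gives $\Fr(E_{\beta}^{(m)})=e_\beta^{(m/\ell)}$ or $0$, so $\Fr$ sends PBW monomials $E^{(\ba)}_{\un{w_0}}$ to PBW monomials or to $0$. In particular, a monomial with nonzero $\bullet$-part maps to one with nonzero $\bullet$-part, or to $0$.
\end{itemize}
Define $\Fr^*_\theta$ as the restriction of $\Fr^*_B$. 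It is an algebra embedding since $\O_\A[B_\theta]$ is a subalgebra and $\Fr^*_B$ is injective.

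Finally, the diagram \eqref{cd:Gamma} commutes by composing: $\Gamma_\epsilon\circ\Fr^*=\pi_\epsilon\circ\Fr^*|_{\O_1[S]}=\Fr^*_B\circ\pi_1|_{\O_1[S]}=\Fr^*_\theta\circ\Gamma_1$. This uses Theorem~\ref{thm:Fr}(2) to know that $\Fr^*$ preserves the $S$-parts.

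The main obstacle is step (b): controlling $\Fr$ on PBW root vectors, including the divided powers $E_\beta^{(m)}$ with $\ell\nmid m$ and the torus twists coming from $T_i$. If the braid-compatibility argument proves awkward, I would fall back on a direct route. Since $\O_1[B_\theta]$ is generated by $\one^*_\lambda$ ($\lambda\in\breve X$) and the dual root vectors $\alpha(\tE_{\beta_k})$, it suffices to show that $\Fr^*_B$ of each generator lies in $\O_\epsilon[B_\theta]$. Concretely, I expect $\Fr^*_B(\one^*_\lambda)=\one^*_{\ell\lambda}$ and $\Fr^*_B(\tE_{\beta_k})$ to be a scalar multiple of $\tE_{\beta_k}^{\ell}$, computed by evaluating on PBW monomials.
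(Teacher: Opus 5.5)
Your proposal is correct, and its skeleton matches the paper's. In both, $\Fr$ is restricted to the nonnegative part and dualized to $\Fr^*_B$ (the paper's $\Fr^*_+$). One then shows that $\O_1[B_\theta]$ lands in $\O_\epsilon[B_\theta]$ and reads off the square from $\pi_\epsilon\circ\Fr^*=\Fr^*_B\circ\pi_1$.

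The difference is the containment step. The paper checks it on generators. Braid compatibility of $\Fr$ gives $\Fr^*_+(\tE_{\beta_k})=\tE_{\beta_k}^\ell$ and $\Fr^*_+(\one^*_\lambda)=\one^*_{\ell\lambda}$, and multiplicativity then handles the subalgebra these generate. That is exactly your fallback. You instead verify the two vanishing conditions of Lemma~\ref{le:oqb}.

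Your step (a) is fine. The step (b) you flag as the main obstacle is easy and needs no braid group input.
\begin{itemize}
\item By \eqref{eq:dsum}, $\U^+_\A=\U^+_\A(w_0w_\bullet)\U^+_\A(w_\bullet)$. Since $\U^+_\A(w_\bullet)_>$ is the augmentation ideal of $\U^+_\A(w_\bullet)$, the subspace $\U^+_\A(w_0w_\bullet)\U^+_\A(w_\bullet)_>$ equals the left ideal $\U^+_\A\cdot\U^+_\A(w_\bullet)_>$.
\item That ideal is spanned by elements $u'E_j^{(k)}$ with $j\in\I_\bullet$ and $k\geq1$.
\item Then $\Fr(u'E_j^{(k)}\one_\lambda)=\Fr(u'\one_{\lambda+k\alpha_j})\,\Fr(E_j^{(k)}\one_\lambda)$. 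This is either $0$ or a $q=1$ element of $\U^+$ times $e_j^{(k/\ell)}\one_{\lambda/\ell}$ with $k/\ell\geq1$, i.e.\ it lies in the $q=1$ version of the same ideal.
\end{itemize}
So your route proves the proposition using only Proposition~\ref{prop:Fr} and multiplicativity. Your surjectivity-of-$\pi_1$ argument also makes explicit why $\Fr^*_B$ is an algebra map, which the paper leaves to ``taking the dual.''

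What the paper's generator computation buys is the explicit formulas on $\tE_{\beta_k}$ and $\one^*_\lambda$. These are reused right afterwards: in \eqref{eq:Fr-psi-power}, in the description of the Frobenius center $Z_0$ in Lemma~\ref{le:loPsi}, and in \eqref{cd:Lambda}. So braid compatibility of $\Fr$ is needed in the paper anyway, just not for this statement.
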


\begin{proof}
By Proposition~\ref{prop:Fr}, the map $\Fr:\dU_{\epsilon}\rightarrow \dU_1$ restricts to $\dU^{\geqslant 0}_\epsilon\rightarrow \dU_1^{\geqslant0}$; that is, one has the following commutative diagram
\begin{equation}
    \begin{tikzcd}
        & \dU_\epsilon  \arrow[d,"\Fr"] & \dU^{\geqslant 0}_\epsilon \arrow[d,"\Fr_+"] \arrow[l,hook] \\ & \dU_1 & \dU_1^{\geqslant 0} \arrow[l,hook]
    \end{tikzcd}
\end{equation}
Taking the dual of the above diagram, one obtains
\begin{equation}\label{cd:Gamma2}
        \begin{tikzcd}
           & \O_\epsilon[G] \arrow[r] & \O_\epsilon[B] \\ & \O_1[G] \arrow[u,"\Fr^*"] \arrow[r] & \O_1[B] \arrow[u,"\Fr^*_+"']
        \end{tikzcd}
    \end{equation}
    It is known that $\Fr$ is compatible with the braid group symmetries. Hence, by Proposition~\ref{prop:Fr},
    \[
    \Fr_+^*(\tE_{\beta_k})=\tE_{\beta_k}^\ell,
    \qquad 
    \Fr_+^*(\one_\lambda^*)=\one_{\ell\lambda}^*.
    \]
    Recall that $\O_\A[N(w_0 \bw)]\cong \DCU^+(w_0 \bw)$ is generated by $\tE_{\beta_k}, 1\leq k\leq L$. Hence, $\Fr_+^*$ restricts to a map $\Fr_\theta^*:\O_1[B_\theta]\rightarrow \O_\epsilon[B_\theta]$. Then the diagram \eqref{cd:Gamma} follows by restricting \eqref{cd:Gamma2} to $\O_1[\ss]$.  

This completes the proof.
\end{proof}


For $\mu\in\breve X^+$ and $\xi\in\{1,\epsilon\}$, denote by $\psi_{\mu,\xi}$ the specialization of $\psi_\mu$ at $q=\xi$. By \eqref{cd:Gamma} and \eqref{eq:Gpsi},
\[
\Gamma_\epsilon\bigl(\Fr^*(\psi_{\mu,1})\bigr)
=\Fr_\theta^*(\one_{-\mu}^*)
=\one_{-\ell\mu}^*
=\Gamma_\epsilon\bigl(\psi_{\mu,\epsilon}^{\ell}\bigr).
\]
The injectivity of $\Gamma_\epsilon$ therefore gives
\begin{equation}\label{eq:Fr-psi-power}
\Fr^*(\psi_{\mu,1})=\psi_{\mu,\epsilon}^{\ell}.
\end{equation}
In particular, $\psi_{\mu,\epsilon}^{\ell}$ is central in $\O_\epsilon[S]$.

By Proposition~\ref{prop:integral-localization}, base change gives the localized isomorphisms $\tG_1$ and $\tG_\epsilon$. The identity \eqref{eq:Fr-psi-power} allows $\Fr^*$ to extend to these localizations. Thus Proposition~\ref{prop:cdGamma} and Lemma~\ref{lem:O1Btheta} give the following commutative diagram
\begin{equation}\label{cd:tG}
        \begin{tikzcd}
           & \O_\epsilon[\ss][\psi_{\mu,\epsilon}^{-1}\mid \mu\in \breve{X}^+] \arrow[r,"\tG_\epsilon"] & \O_\epsilon[B_\theta] 
           \\ 
           & \O_1[\ss][\psi_{\mu,1}^{-1}\mid \mu\in \breve{X}^+] \arrow[u,"\Fr^*"] \arrow[r,"\tG_1"] & \O_1[B_\theta] \arrow[u,"\Fr^*_\theta"']\\
           & \O[\ss][\psi_{\mu,1}^{-1}\mid \mu\in \breve{X}^+] \arrow[r]\arrow[u,phantom, sloped, "\cong"] & \O[B_\theta] \arrow[u,u,phantom, sloped, "\cong"]
        \end{tikzcd}
    \end{equation}
Here the last horizontal map is induced by the comorphism of the open embedding $B_\theta\hookrightarrow \ss$.

Let us identify $\O[B_\theta]$ with a central subalgebra of $\O_\epsilon[B_\theta]$ via the map $\Fr^*_\theta$. For $x\in B_\theta$, write $\O_\epsilon[B_\theta](x)=\O_\epsilon[B_\theta]/\mathfrak{m}_x\O_\epsilon[B_\theta]$ where $\mathfrak{m}_x\subset \O[B_\theta]$ is the maximal ideal corresponding to the point $x$. Then $\breve{\mathfrak{m}}_x=\Gamma_1^{-1}(\mathfrak{m}_x)$ is the maximal ideal of $\O[\ss]$ corresponding to the point $x$, and by abuse of notation, we also use $\breve{\mathfrak{m}}_x$ to denote $\Fr^*(\breve{\mathfrak{m}}_x)$. We set $\O_\epsilon[\ss](x):=\O_\epsilon[\ss]/ \breve{\mathfrak{m}}_x \O_\epsilon[\ss]$.

\begin{proposition}\label{le:GB}
    For $x\in B_\theta$, one has $\O_\epsilon[\ss](x)\cong \O_\epsilon[B_\theta](x)$ as finite-dimensional algebras.
\end{proposition}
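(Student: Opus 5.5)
The plan is to pass through the localization of Proposition~\ref{prop:integral-localization} together with the commutative diagram \eqref{cd:tG}. Fix $x\in B_\theta$ and write $Z=\O[\ss]$, $Z_\Psi=Z[\psi_{\mu,1}^{-1}\mid\mu\in\breve X^+]\cong\O[B_\theta]$, and $\Psi_\epsilon=\{\psi_{\mu,\epsilon}\}$. The argument has three steps.

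\emph{Step 1.} Show that inverting $\Psi_\epsilon$ in $\O_\epsilon[\ss]$ is the same as inverting the central elements $\Fr^*(\psi_{\mu,1})=\psi_{\mu,\epsilon}^\ell$ from \eqref{eq:Fr-psi-power}. An element $\psi_{\mu,\epsilon}$ becomes invertible as soon as $\psi_{\mu,\epsilon}^\ell$ is, with inverse $\psi_{\mu,\epsilon}^{\ell-1}\psi_{\mu,\epsilon}^{-\ell}$. Hence
\[
\O_\epsilon[\ss][\Psi_\epsilon^{-1}]\cong \O_\epsilon[\ss]\otimes_{Z}Z_\Psi,
\]
which is a central localization. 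By \eqref{cd:tG}, the isomorphism $\tG_\epsilon$ identifies this with $\O_\epsilon[B_\theta]$ compatibly with the central subalgebras $Z_\Psi\cong\O[B_\theta]$. I expect the specialized map $\tG_\epsilon$ to be an isomorphism because localization commutes with base change $\C_\epsilon\otimes_\A-$: the Ore set and the isomorphism are defined over $\A$.

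\emph{Step 2.} Show that central localization does not change the fibres at points of $B_\theta$. The point $x$ lies in the open set where every $\psi_{\mu,1}$ is nonzero, so each $\psi_{\mu,1}$ is a unit modulo $\breve{\mathfrak m}_x$. Consequently
\[
\O_\epsilon[\ss]/\breve{\mathfrak m}_x\O_\epsilon[\ss]\cong \bigl(\O_\epsilon[\ss]\otimes_Z Z_\Psi\bigr)\otimes_{Z_\Psi} Z_\Psi/\mathfrak m_x.
\]
To justify this, write
\[
\O_\epsilon[\ss]/\breve{\mathfrak m}_x\O_\epsilon[\ss]=\O_\epsilon[\ss]\otimes_Z Z/\breve{\mathfrak m}_x
\]
and use $Z/\breve{\mathfrak m}_x\cong Z_\Psi/\mathfrak m_x$, which holds since $\breve{\mathfrak m}_x=\Gamma_1^{-1}(\mathfrak m_x)$ and $Z/\breve{\mathfrak m}_x$ is already a module over $Z_\Psi$. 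Combining Steps 1 and 2 then gives
\[
\O_\epsilon[\ss](x)\cong\O_\epsilon[B_\theta]/\mathfrak m_x\O_\epsilon[B_\theta]=\O_\epsilon[B_\theta](x).
\]

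\emph{Step 3.} Prove finite dimensionality on the $B_\theta$ side. By \eqref{eq:Btheta} and the formulas $\Fr_\theta^*(\tE_{\beta_k})=\tE_{\beta_k}^\ell$ and $\Fr_\theta^*(\one^*_\lambda)=\one^*_{\ell\lambda}$, the algebra $\O_\epsilon[B_\theta]$ is spanned over $\O[B_\theta]$ by the ordered monomials
\[
\tE_{\beta_L}^{a_L}\cdots\tE_{\beta_1}^{a_1}\one_\lambda^*,\qquad 0\le a_k<\ell,
\]
with $\lambda$ running over coset representatives of $\breve X/\ell\breve X$. This is a finite set, so every fibre is finite-dimensional.

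The main obstacle is Step 3. One has to check that the specialized dual PBW monomials still form a basis and that each $\tE_{\beta_k}^\ell$ is genuinely equal to $\Fr^*$ of the classical root coordinate, rather than only up to lower terms. To establish this, I would first verify that $\Fr$ is compatible with Lusztig's braid group symmetries, and then use the orthogonality of the PBW basis to compute $\tE_{\beta_k}^\ell$ directly on $E_{\beta_k}^{(\ell)}$.
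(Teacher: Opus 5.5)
Your argument is correct and is essentially the paper's proof. Both rest on the Frobenius-compatible localization isomorphism $\tG_\epsilon$ in \eqref{cd:tG} and on the fact that $\psi_{\mu,\epsilon}^\ell=\Fr^*(\psi_{\mu,1})$ acts by the nonzero scalar $\psi_{\mu,1}(x)$ at $x\in B_\theta$, so localizing does not change the fibre; your base-change computation in Step~2 just makes this explicit, and what you call the main obstacle in Step~3 is already supplied by Proposition~\ref{prop:cdGamma} (braid-compatibility of $\Fr$ gives $\Fr_+^*(\tE_{\beta_k})=\tE_{\beta_k}^\ell$ exactly) together with the PBW count in Lemma~\ref{le:loPsi}.
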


\begin{proof}
For $\mu\in\breve X^+$, let $\overline{\psi_{\mu,\epsilon}}$ be the image of $\psi_{\mu,\epsilon}$ in $\O_\epsilon[S](x)$. By \eqref{eq:Gpsi}, $\Gamma_1(\psi_{\mu,1})=\one_{-\mu}^*$ is invertible in $\O[B_\theta]$, so $\psi_{\mu,1}(x)\ne0$. The identity \eqref{eq:Fr-psi-power} gives
\[
\bigl(\overline{\psi_{\mu,\epsilon}}\bigr)^\ell
=\psi_{\mu,1}(x)\,1.
\]
Thus $\overline{\psi_{\mu,\epsilon}}$ is invertible, with inverse
$\psi_{\mu,1}(x)^{-1}\bigl(\overline{\psi_{\mu,\epsilon}}\bigr)^{\ell-1}$.
Consequently, localizing at these elements does not change the fibre at $x$. The isomorphism now follows from the compatible localization diagram \eqref{cd:tG}.
\end{proof}

\subsection{Translation and projectivity}

For $x\in \widehat{\U}_{\C_\epsilon}$, $y\in\widehat{\U}_{\C_1}$, $f\in \O_{\epsilon}[G]$, and $g\in \O_1[G]$, define 
\[
f \lhd_\epsilon x = \sum f_{(1)}(x) f_{(2)}\in \cO_\epsilon[G],
\quad 
g \lhd_1 y = \sum g_{(1)}(y) g_{(2)}\in \cO_1[G],
\]
where $\Delta(f)=\sum f_{(1)}\otimes f_{(2)}$ and $\Delta(g)=\sum g_{(1)}\otimes g_{(2)}$; cf. \cite[\S 1.4]{DCL94}. 
The elements $f \lhd_\epsilon x$ and $g \lhd_1 y$ are uniquely determined by the following property
\begin{align}\label{eq:fxy}
(f \lhd_\epsilon x)(x')=f(xx'), \qquad \text{ and }\qquad (g \lhd_1 y)(y')=g(yy'),
\end{align}
for any $x'\in\widehat{\U}_{\epsilon}$ and $y'\in\widehat{\U}_{1}$.

Let $i\in \I$. For any $a,b\in \C$, set 
\begin{align}
x_{i,\epsilon}^+(a)=\sum_{\lambda\in X}\sum_{n\geq0} a^n E_i^{(n)}\one_\lambda \in \widehat{\U}_{\epsilon},
\qquad
x_{i,\epsilon}^-(b)=\sum_{\lambda\in X}\sum_{n\geq0} b^n F_i^{(n)}\one_\lambda \in \widehat{\U}_{\epsilon};
\end{align}
and 
\begin{align}
x_{i,1}^+(a)=\sum_{\lambda\in X}\sum_{n\geq0} a^n e_i^{(n)}\one_\lambda \in \widehat{\U}_{1},
\qquad
x_{i,1}^-(b)=\sum_{\lambda\in X}\sum_{n\geq0} b^n f_i^{(n)}\one_\lambda \in \widehat{\U}_{1}.
\end{align}
It is clear from \eqref{eq:comul} that $\Delta(x^\pm_{i,1}(a))=x^\pm_{i,1}(a)\otimes x_{i,1}^{\pm}(a)$. Hence we may identify $x_{i,1}^\pm(a)$ with points of the reductive group $G$. We shall write $x_i^\pm(a)$ to denote $x_{i,1}^\pm(a)$ if there is no confusion.

\begin{proposition}\label{prop:tran}
Let $i\in \I, a\in \C$.
\begin{itemize}
\item[(1)] For any $f\in \O_{1}[G],g\in\cO_\epsilon[G]$, one has
\[
(\Fr^*(f)g)\lhd_\epsilon x^\pm_{i,\epsilon}(a)=(\Fr^*(f)\lhd_\epsilon x^\pm_{i,\epsilon}(a))(g\lhd_\epsilon x^\pm_{i,\epsilon}(a)).
\]

\item[(2)] For any $f\in \O_{\epsilon}[\ss]$ and $f'\in \O_{1}[\ss]$ , one has 
$$
f \lhd_\epsilon x_{i,\epsilon}^\pm(a) \in \O_{\epsilon}[\ss]\quad \text{ and }\quad f' \lhd_1 x_{i,1}^\pm(a) \in \O_{1}[\ss].
$$

\item[(3)] For any $f\in \O_{1}[G]$, one has 
\[
\Fr^*(f \lhd_1 x_{i,1}^\pm(a^\ell))= \Fr^*(f) \lhd_\epsilon x_{i,\epsilon}^\pm(a).
\]
\item[(4)] 
The map $f\mapsto \big(f \lhd_1 x^\pm_{i,1}(a)\big)$ is a $\C$-algebra automorphism on $\O_1[G]\cong \C[G]$, which coincides with the comorphism of the left translation map
\[
G\rightarrow G,\qquad g\mapsto x^\pm_{i,1}(a)g.
\]
\end{itemize}
\end{proposition}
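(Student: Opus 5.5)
The plan is to reduce all four parts to properties of the elements $x^\pm_{i,\epsilon}(a)$ inside the completions, and to use the defining identity \eqref{eq:fxy}.

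\emph{Group-likeness.} The first step is that $x^\pm_{i,\epsilon}(a)$ is group-like in $\widehat{\U}^{(2)}_\epsilon$, up to a twist. By \eqref{eq:comul},
\[
\Delta\bigl(x^+_{i,\epsilon}(a)\bigr)=\sum_{\lambda',\lambda''}\sum_{m,k} a^{m}\epsilon_i^{k(m-k)+k\langle\alpha_i^\vee,\lambda'\rangle}E_i^{(m-k)}\one_{\lambda'}\otimes E_i^{(k)}\one_{\lambda''}.
\]
This is not a pure tensor square, because of the $\epsilon$-powers.

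\emph{Part (1).} Apply $\Fr\otimes\Id$ to this expansion. By Proposition~\ref{prop:Fr}, $\Fr(E_i^{(m-k)}\one_{\lambda'})$ vanishes unless $\ell\mid m-k$ and $\lambda'\in\ell X$. In that case $\epsilon_i^{k(m-k)+k\langle\alpha_i^\vee,\lambda'\rangle}=1$, since $\epsilon^\ell=1$. Hence
\[
(\Fr\otimes\Id)\Delta\bigl(x^+_{i,\epsilon}(a)\bigr)=x^+_{i,1}(a^\ell)\otimes x^+_{i,\epsilon}(a).
\]
The $F$ case is identical.

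Now evaluate $(\Fr^*(f)g)\lhd_\epsilon x$ on $x'$. This equals $(\Fr^*(f)\otimes g)\bigl(\Delta(x)\Delta(x')\bigr)$. Evaluating $\Fr^*(f)$ on the first leg factors through $\Fr$, which is an algebra map compatible with $\Delta$ (\cite[35.1.10]{Lus94}). Substituting the identity above gives
\[
(f\otimes g)\bigl((x^+_{i,1}(a^\ell)\otimes x)\cdot(\Fr\otimes\Id)\Delta(x')\bigr).
\]
This is the value at $x'$ of $(\Fr^*(f)\lhd_\epsilon x)(g\lhd_\epsilon x)$, so (1) holds.

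\emph{Part (3).} Since $\Fr(x^\pm_{i,\epsilon}(a))=x^\pm_{i,1}(a^\ell)$ (again from Proposition~\ref{prop:Fr}), and $\Fr$ is multiplicative on the completion (Lemma~\ref{lem:fm}), we get
\[
\Fr^*(f)(xx')=f\bigl(x^\pm_{i,1}(a^\ell)\Fr(x')\bigr).
\]
This is exactly \eqref{eq:fxy} for $\Fr^*(f\lhd_1 x^\pm_{i,1}(a^\ell))$.

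\emph{Part (2).} Left translation commutes with the right-invariance condition defining $\O_\epsilon[S]$: for $y\in\dU^\imath_\epsilon$,
\[
(f\lhd x)(x'y)=f(xx'y)=\eta(y)f(xx')=\eta(y)\,(f\lhd x)(x').
\]
It remains to check finiteness of support, so that $f\lhd x\in\O_\epsilon[G]$. We have $f\lhd x=\sum f_{(1)}(x)f_{(2)}$, a finite sum because $\Delta(f)$ is a finite tensor. The same argument works at $q=1$.

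\emph{Part (4).} At $q=1$, $x^\pm_{i,1}(a)$ is genuinely group-like, so $f\mapsto f\lhd_1 x$ is an algebra map, with inverse given by parameter $-a$. The element $x^\pm_{i,1}(a)$ corresponds under $\Phi_G$ to the root subgroup element $\exp(ae_i)$ (resp.\ $\exp(bf_i)$) of $G$. This is where the identification of $\O_1[G]$ with $\C[G]$ via evaluation of the completion $\widehat{\U}_1$ at points enters (\cite[Theorem 4.11]{Lus09}). Then $(f\lhd_1 x)(g)=f(xg)$ is exactly the comorphism of left translation.

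I expect the main subtlety to be this last identification, namely that group-like elements of $\widehat{\U}_1$ correspond to points of $G$ compatibly with $\Phi_G$. I would take it directly from Lusztig's construction of $G$ as the group-like elements in \cite{Lus09}.
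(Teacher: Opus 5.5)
Your proposal is correct and follows the paper's proof essentially step for step. For (1) you use the identity $(\Fr\otimes\Id)\Delta(x^\pm_{i,\epsilon}(a))=x^\pm_{i,1}(a^\ell)\otimes x^\pm_{i,\epsilon}(a)$, for (3) the evaluation $\Fr(x^\pm_{i,\epsilon}(a))=x^\pm_{i,1}(a^\ell)$, for (2) \eqref{eq:fxy}, and for (4) group-likeness at $q=1$ with inverse given by the parameter $-a$. You also supply slightly more detail than the paper, namely why the $\epsilon$-powers become $1$ after applying $\Fr$ and the finite-support check in (2).
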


\begin{proof}
We show (1). By Proposition~\ref{prop:Fr}, we have $\Fr(x^\pm_{i,\epsilon}(a))=x^\pm_{i,1}(a^\ell)$. Using \eqref{eq:comul} and Proposition~\ref{prop:Fr}, we have
\[
(\Fr\otimes \Id)\Delta(x^\pm_{i,\epsilon}(a))=x^\pm_{i,1}(a^\ell) \otimes x_{i,\epsilon}^\pm(a)=\Fr(x^\pm_{i,\epsilon}(a))\otimes x^\pm_{i,\epsilon}(a).
\]
Hence, we have
\begin{align*}
\big((\Fr^*(f)g)\lhd_\epsilon x_{i,\epsilon}^\pm(a)\big)(y)=&(f\otimes g)(\Fr\otimes \Id)\Delta(x_{i,\epsilon}^\pm(a)y)
\\
=& f\Big(\Fr\big(x_{i,\epsilon}^\pm(a)y_{(1)}\big)\Big) g\big(x_{i,\epsilon}^\pm(a)y_{(2)}\big),
\end{align*}
where $\Delta(y)= y_{(1)}\otimes y_{(2)}$. The desired identity follows.

(2) follows from \eqref{eq:fxy}. 

We show (3). For any $u\in \dU_\epsilon$, we have
\[
\Fr^*(f \lhd_1 x^\pm_{i,1}(a^\ell))(u)= f(x^\pm_{i,1}(a^\ell) \Fr(u) ) = f\big(\Fr(x_{i,\epsilon}^\pm(a)u)\big)
=\big(\Fr^*(f) \lhd_\epsilon x^\pm_{i,\epsilon}(a)\big)(u).
\]

We show (4). Specializing $q$ at $1$, we have $\Delta(x_i^\pm(a))=x_i^\pm(a)\otimes x_i^\pm(a)$. Hence, the map $f\mapsto \big(f \lhd_1 x^\pm_{i,1}(a)\big)$ is an algebra homomorphism. Moreover, this map is invertible, since $(f \lhd_1 x^\pm_{i,1}(a)) \lhd_1 x^\pm_{i,1}(-a)=f$ for any $f\in \O_1[G]$. By \eqref{eq:fxy}, the map $f\mapsto \big(f \lhd_1 x_{i,1}^\pm(a)\big)$ coincides with the comorphism of the left translation by $x^\pm_{i,1}(a)$.
\end{proof}

Note that the algebra embedding $\Fr^*:\O_1[\ss]\rightarrow \O_\epsilon[\ss]$ makes $\O_\epsilon[\ss]$ into an $\O_1[\ss]$-module. For a multiplicative subset $\cS$ of $\O_1[\ss]$, write $\cS^{-1}\O_1[\ss]$ to denote the localized algebra, and write $\cS^{-1}\O_\epsilon[\ss]$ to denote the localization of the $\O_1[\ss]$-module $\O_\epsilon[\ss]$. In particular, $\cS^{-1}\O_\epsilon[\ss]$ is a $\cS^{-1}\O_1[\ss]$-module.

For $\mu\in\breve{X}^+$, recall the element $\psi_\mu\in \O_\A[\ss]$ from Proposition \ref{prop:psimu}. By abuse of notation we still write $\psi_\mu$ to denote its image in $\O_1[\ss]$. Set $\Psi=\{\psi_\mu\mid \mu\in \breve{X}^+\}\subset \O_1[\ss]$. Then $\Psi$ is a multiplicative subset of $\O_1[\ss]$ thanks to Proposition \ref{prop:psimu}. 

\begin{lemma}\label{le:loPsi}
    The localization $\Psi^{-1}\O_\epsilon[\ss]$ is a free $\Psi^{-1}\O_1[\ss]$-module of rank $\ell^{\dim \ss}$.
\end{lemma}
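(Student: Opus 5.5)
The plan is to transport the problem through the localization diagram \eqref{cd:tG}. By Proposition~\ref{prop:integral-localization} after base change, together with \eqref{eq:Fr-psi-power}, we get
\[
\Psi^{-1}\O_\epsilon[\ss]=\O_\epsilon[\ss][\psi_{\mu,\epsilon}^{-1}\mid\mu\in\breve X^+]\cong\O_\epsilon[B_\theta].
\]
The first equality holds because inverting the central elements $\Fr^*(\psi_{\mu,1})=\psi_{\mu,\epsilon}^\ell$ is the same as inverting $\psi_{\mu,\epsilon}$. Indeed, $\psi_{\mu,\epsilon}^{-1}=\psi_{\mu,\epsilon}^{\ell-1}(\psi_{\mu,\epsilon}^{\ell})^{-1}$, and conversely $\psi_{\mu,\epsilon}^\ell$ is invertible once $\psi_{\mu,\epsilon}$ is. Under the same diagram, $\Psi^{-1}\O_1[\ss]\cong\O_1[B_\theta]$, and the module structure becomes that given by $\Fr_\theta^*$. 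So it suffices to show that $\O_\epsilon[B_\theta]$ is free of rank $\ell^{\dim\ss}$ over $\Fr_\theta^*(\O_1[B_\theta])$.

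Next I would use the smash product structure \eqref{eq:Btheta}. Over $\C_\epsilon$, the dual PBW monomials $\tE_{\beta_L}^{a_L}\cdots\tE_{\beta_1}^{a_1}$ form a basis of $\O_\epsilon[N(w_0\bw)]$, with $L=l(w_0\bw)$. The characters $\one^*_\lambda$, $\lambda\in\breve X$, form a basis of $\O_\epsilon[T_\theta]$. By the proof of Proposition~\ref{prop:cdGamma}, $\Fr_\theta^*$ sends $\tE_{\beta_k}\mapsto\tE_{\beta_k}^\ell$ and $\one^*_\lambda\mapsto\one^*_{\ell\lambda}$, so its image $Z_0$ is spanned by the elements $\tE_{\beta_L}^{\ell c_L}\cdots\tE_{\beta_1}^{\ell c_1}\one^*_{\ell\lambda}$. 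I would then take as candidate basis
\[
\{\tE_{\beta_L}^{a_L}\cdots\tE_{\beta_1}^{a_1}\one^*_\lambda\mid 0\le a_k<\ell,\ \lambda\in R\},
\]
where $R$ is a set of coset representatives of $\breve X/\ell\breve X$. This set has $\ell^{L}\cdot\ell^{\operatorname{rank}\breve X}$ elements. Since $\dim\ss=\dim B_\theta=L+\operatorname{rank}(1-\theta)$ by \eqref{eq:NTB}, the count is exactly $\ell^{\dim\ss}$.

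To prove that these elements form a $Z_0$-basis, I would proceed in two steps.

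\textbf{Torus part.} Since $\one^*_\lambda$ commutes with dual PBW elements up to powers of $\epsilon$ by \eqref{eq:com}, and the central elements are honestly central, the torus factor is free over $\C[\ell\breve X]$ with basis $\{\one^*_\lambda\mid\lambda\in R\}$.

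\textbf{Unipotent part.} I need $\O_\epsilon[N(w_0\bw)]$ to be free over the subalgebra generated by the $\tE_{\beta_k}^\ell$, with basis the restricted monomials. Here I would use the Levendorskii--Soibelman straightening in the integral dual PBW setting: a product $\tE_{\beta_j}\tE_{\beta_i}$ with $i<j$ equals a $q$-power times $\tE_{\beta_i}\tE_{\beta_j}$ plus a combination of monomials in the roots strictly between them. This gives a filtration whose associated graded is a quantum affine space. Because $Z_0$ is central, we have
\[
\tE_{\beta_L}^{\ell c_L+a_L}\cdots\tE_{\beta_1}^{\ell c_1+a_1}\in\C^\times\,\bigl(\tE_{\beta_L}^{a_L}\cdots\tE_{\beta_1}^{a_1}\bigr)\bigl(\tE_{\beta_L}^{\ell c_L}\cdots\tE_{\beta_1}^{\ell c_1}\bigr)+\text{(lower terms)}.
\]
Indeed, the $\ell$-th powers commute past everything exactly, so the comparison is only between the two sides of this identity, and at most a scalar from regrouping powers of a single root vector appears; I would check that the correction vanishes. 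Then triangularity with respect to the PBW basis of $\O_\epsilon[N(w_0\bw)]$ gives both spanning and linear independence.

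The main obstacle is this unipotent step: verifying rigorously, at the integral and root-of-unity level, that $\tE_{\beta_k}^\ell$ is indeed the image of the classical generator and that the restricted monomials times central monomials reproduce the full dual PBW basis up to unitriangular change. I would cite De Concini--Kac--Procesi \cite{DCKP93b} for the analogous statement for quantum Schubert cells to handle it, adapting the argument to the dual integral form $\DCU^+(w_0\bw)$ via \cite[Theorem 4.27]{Ki12}.
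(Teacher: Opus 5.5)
Your proposal is correct and follows essentially the same route as the paper. It identifies $\Psi^{-1}\O_\epsilon[S]\cong\O_\epsilon[B_\theta]$ over $\Psi^{-1}\O_1[S]\cong\O_1[B_\theta]$ via \eqref{eq:Fr-psi-power} and \eqref{cd:tG}, and then shows that restricted dual-PBW-times-torus monomials form a basis over $Z_0=\Fr_\theta^*(\O_1[B_\theta])$; your coset representatives of $\breve X/\ell\breve X$ are equivalent to the paper's exponents $0\le b_a<\ell$ on a $\Z$-basis. The step you flag as the main obstacle is immediate: since each $\tE_{\beta_j}^\ell$ is central, $\tE_{\beta_L}^{\ell c_L+a_L}\cdots\tE_{\beta_1}^{\ell c_1+a_1}$ equals the restricted monomial times the central monomial exactly, with no scalar or lower terms, so spanning and linear independence follow directly from the PBW basis without appealing to \cite{DCKP93b}.
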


\begin{proof}
By \eqref{eq:Fr-psi-power}, inverting $\Fr^*(\psi_{\mu,1})=\psi_{\mu,\epsilon}^{\ell}$ is equivalent to inverting $\psi_{\mu,\epsilon}$. Hence Proposition~\ref{prop:integral-localization} and the commutative diagram \eqref{cd:tG} identify
\[
\Psi^{-1}\O_\epsilon[S]\cong\O_\epsilon[B_\theta],
\qquad
\Psi^{-1}\O_1[S]\cong\O_1[B_\theta],
\]
with the module structure on the right induced by $\Fr_\theta^*$.

Set $m=\operatorname{rank}\breve X$, and choose a $\Z$-basis $\lambda_1,\ldots,\lambda_m$ of $\breve X$. Let $t_a=\one_{\lambda_a}^*\in\O_\epsilon[T_\theta]$, and let $x_j\in\O_\epsilon[N(w_0w_\bullet)]$ be the specialization of $\tE_{\beta_j}$, for $1\leq j\leq L$. By the PBW basis and \eqref{eq:Btheta}, the ordered monomials
\[
x_L^{a_L}\cdots x_1^{a_1}t_1^{b_1}\cdots t_m^{b_m},
\qquad (a_1,\ldots,a_L)\in\N^L,\quad (b_1,\ldots,b_m)\in\Z^m,
\]
form a $\C$-basis of $\O_\epsilon[B_\theta]$. By the formulas in the proof of Proposition~\ref{prop:cdGamma}, the Frobenius image is the central subalgebra
\[
Z_0:=\Fr_\theta^*(\O_1[B_\theta])
=\C[x_1^\ell,\ldots,x_L^\ell,t_1^{\pm\ell},\ldots,t_m^{\pm\ell}].
\]
The PBW basis also shows that the indicated generators of $Z_0$ are algebraically independent, with the torus generators inverted.

Dividing each exponent by $\ell$ with remainder, and using the centrality of $Z_0$, shows that
\[
\left\{
 x_L^{a_L}\cdots x_1^{a_1}t_1^{b_1}\cdots t_m^{b_m}
 \;\middle|\;
 0\leq a_j<\ell,\quad 0\leq b_a<\ell
\right\}
\]
generates $\O_\epsilon[B_\theta]$ over $Z_0$. To check linear independence, expand the coefficients of a $Z_0$-linear relation in the monomials in $x_j^\ell$ and $t_a^{\pm\ell}$. Distinct residue vectors give distinct ordered PBW monomials, so every coefficient vanishes. Thus the displayed set is a $Z_0$-basis, of cardinality $\ell^{L+m}$. Finally, \eqref{eq:NTB} gives $L+m=\dim S$, proving the assertion.
\end{proof}

Let $\cS$ be a multiplicative subset of $\O_1[\ss]$. By Proposition \ref{prop:tran} (4), for $i\in\I$ and $a\in\C$, the set $\cS\lhd_1 x_{i,1}^\pm(a)$ is again a multiplicative subset of $\O_1[\ss]$. 

\begin{lemma}\label{le:tran}
    Let $\cS$ be a multiplicative subset of $\O_1[\ss]$ such that $\cS^{-1}\O_\epsilon[\ss]$ is a free $\cS^{-1}\O_1[\ss]$-module of rank $N$. Then for $i\in\I$ and $a\in \C$, $\big(\cS\lhd_1 x_{i,1}^\pm(a)\big)^{-1}\O_\epsilon[\ss]$ is a free $\big(\cS\lhd_1 x_{i,1}^\pm(a)\big)^{-1}\O_1[\ss]$-module of rank $N$.
\end{lemma}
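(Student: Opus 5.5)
The plan is to use the translation operator $\phi_\epsilon:=(-)\lhd_\epsilon x^\pm_{i,\epsilon}(a')$ on $\O_\epsilon[S]$, with $a'$ an $\ell$-th root of $a$, and to compare it with $\phi_1:=(-)\lhd_1 x^\pm_{i,1}(a)$ on $\O_1[S]$. By Proposition~\ref{prop:tran}(2), both maps preserve the symmetric spaces. By Proposition~\ref{prop:tran}(4), $\phi_1$ is a $\C$-algebra automorphism of $\O_1[S]$, with inverse $(-)\lhd_1 x^\pm_{i,1}(-a)$. The map $\phi_\epsilon$ is $\C$-linear and bijective on $\O_\epsilon[S]$, since $(f\lhd_\epsilon x^\pm_{i,\epsilon}(a'))\lhd_\epsilon x^\pm_{i,\epsilon}(-a')=f$; this follows from \eqref{eq:fxy} and the identity $x^\pm_{i,\epsilon}(a')x^\pm_{i,\epsilon}(-a')=1$ in $\widehat{\U}_\epsilon$, which is the $q$-binomial identity for divided powers. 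I do not claim that $\phi_\epsilon$ is multiplicative.

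The key compatibility comes from Proposition~\ref{prop:tran}(1) and (3). For $f\in\O_1[S]$ and $g\in\O_\epsilon[S]$,
\[
\phi_\epsilon(\Fr^*(f)\,g)=\phi_\epsilon(\Fr^*(f))\,\phi_\epsilon(g)=\Fr^*(\phi_1(f))\,\phi_\epsilon(g).
\]
Thus $\phi_\epsilon$ is semilinear: it is an isomorphism of $\O_1[S]$-modules from $\O_\epsilon[S]$ to the twist $\phi_1^*\O_\epsilon[S]$, where the module structure on the target is transported along the automorphism $\phi_1$.

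Next, pass to localizations. For $s\in\cS$ and $g\in\O_\epsilon[S]$, I define
\[
\widetilde\phi(s^{-1}g):=\phi_1(s)^{-1}\phi_\epsilon(g).
\]
The displayed identity gives well-definedness. Indeed, if $s^{-1}g=s'^{-1}g'$, then $t(s'g-sg')=0$ for some $t\in\cS$, and applying $\phi_\epsilon$ yields $\phi_1(t)\bigl(\phi_1(s')\phi_\epsilon(g)-\phi_1(s)\phi_\epsilon(g')\bigr)=0$, using that $\Fr^*(\O_1[S])$ is central. The map $\widetilde\phi:\cS^{-1}\O_\epsilon[S]\to(\phi_1\cS)^{-1}\O_\epsilon[S]$ is bijective, since the same construction with $-a'$ gives its inverse. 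It is semilinear over the ring isomorphism $\cS^{-1}\O_1[S]\to(\phi_1\cS)^{-1}\O_1[S]$ induced by $\phi_1$.

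A semilinear bijection carries a basis to a basis. So if $\{e_k\}_{k=1}^N$ is a $\cS^{-1}\O_1[S]$-basis, then $\{\widetilde\phi(e_k)\}$ is a $(\phi_1\cS)^{-1}\O_1[S]$-basis, and the target module is free of rank $N$. The argument requires no multiplicativity of $\phi_\epsilon$, only the semilinearity identity above.

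The main obstacle I expect is the appeal to Proposition~\ref{prop:tran}(1). That identity is stated for $\Fr^*(f)$ on the left, and I must check that the right-hand factor is correctly $\phi_\epsilon(g)$ with the same parameter $a'$. I also need to match the parameter in (3): $\Fr(x^\pm_{i,\epsilon}(a'))=x^\pm_{i,1}(a'^\ell)=x^\pm_{i,1}(a)$. Since $\C$ is algebraically closed, the root $a'$ exists, and this settles the matching.
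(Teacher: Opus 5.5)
Your overall route is the paper's: a semilinear translation $\phi_\epsilon$ built from Proposition~\ref{prop:tran}(1),(3) with an $\ell$-th root $a'$ of $a$, transported to the localizations, sending a basis to a basis. The gap is in the bijectivity step. The identity $x^\pm_{i,\epsilon}(a')\,x^\pm_{i,\epsilon}(-a')=1$ in $\widehat{\U}_\epsilon$ is false as soon as $\ell>1$. Since $E_i^{(1)}E_i^{(1)}=[2]_iE_i^{(2)}$, the coefficient of $E_i^{(2)}\one_\lambda$ in this product is $a'^2\bigl(2-\epsilon^{d_i}-\epsilon^{-d_i}\bigr)$. This is nonzero for $a'\neq0$ because $\epsilon^{d_i}\neq1$, and the same happens for $F_i$. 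The correct $q$-binomial identity is $\sum_{t=0}^N(-1)^tq_i^{t(N-1)}\frac{[N]_i!}{[t]_i!\,[N-t]_i!}=0$, and its twist depends on $N$. So no series $x^\pm_{i,\epsilon}(b)$ inverts $x^\pm_{i,\epsilon}(a')$: the $E_i^{(1)}$-coefficient forces $b=-a'$, and then the $E_i^{(2)}$-coefficient fails.

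Consequently your justification that translation by $-a'$ inverts $\phi_\epsilon$ collapses. The composite is $f\mapsto f\lhd_\epsilon\bigl(x^\pm_{i,\epsilon}(a')x^\pm_{i,\epsilon}(-a')\bigr)$, and this element is not $1$. Your claimed inverse of $\widetilde\phi$ fails for the same reason. Without surjectivity of $\widetilde\phi$, the images of a basis need not span, so the conclusion does not yet follow.

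The repair is the paper's argument, which replaces the group-inverse identity by local nilpotence. Each $f\in\O_\epsilon[G]$ lies in a finite-dimensional subcoalgebra $C=\bigoplus_\lambda C_\lambda$, where $C_\lambda=\{f\in C\mid f\lhd_\epsilon\one_\lambda=f\}$ and only finitely many $C_\lambda$ are nonzero. The map $\phi_\epsilon-\Id$ sends $C_\lambda$ into $\bigoplus_{n\geq1}C_{\lambda\mp n\alpha_i}$. Hence $\phi_\epsilon-\Id$ is locally nilpotent, and $\phi_\epsilon^{-1}=\sum_{r\geq0}(-1)^r(\phi_\epsilon-\Id)^r$ is a finite sum on each element and preserves $\O_\epsilon[S]$.

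Semilinearity of $\phi_\epsilon^{-1}$ over $\phi_1^{-1}$ then follows formally: apply your identity to $\phi_1^{-1}(f)$ and $\phi_\epsilon^{-1}(g)$. This makes $\phi_1(s)^{-1}g\mapsto s^{-1}\phi_\epsilon^{-1}(g)$ a well-defined inverse of $\widetilde\phi$. With this substitution, the rest of your argument goes through as written.
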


\begin{proof}
    Take $b\in\C$ such that $b^\ell=a$. Define
    \[
    T(f)=f\lhd_\epsilon x_{i,\epsilon}^\pm(b),\qquad
    \sigma(g)=g\lhd_1 x_{i,1}^\pm(a),
    \]
    for $f\in\O_\epsilon[G]$ and $g\in\O_1[\ss]$. By Proposition \ref{prop:tran} (2), $T$ preserves $\O_\epsilon[\ss]$. By Proposition \ref{prop:tran} (4), $\sigma$ is an algebra automorphism of $\O_1[\ss]$, with inverse $g\mapsto g\lhd_1 x_{i,1}^\pm(-a)$.

    We first show that $T-\Id$ is locally nilpotent on $\O_\epsilon[G]$. Every element of $\O_\epsilon[G]$ belongs to a finite-dimensional subcoalgebra $C$. The formula defining $\lhd_\epsilon$ shows that $C$ is stable under this action. Moreover,
    \[
    C=\bigoplus_{\lambda\in X}C_\lambda,\qquad
    C_\lambda=\{f\in C\mid f\lhd_\epsilon\one_\lambda=f\},
    \]
    with only finitely many nonzero summands. Indeed, every element of $\O_\epsilon[G]$ has finite support on $\dB$, and hence has only finitely many nonzero components $f\lhd_\epsilon\one_\lambda$. The weight relations in $\dU_\epsilon$ give
    \[
    (T-\Id)(C_\lambda)\subseteq
    \bigoplus_{n\geq 1}C_{\lambda\mp n\alpha_i},
    \]
    where the upper sign corresponds to $x_{i,\epsilon}^+(b)$ and the lower sign to $x_{i,\epsilon}^-(b)$. Since $C$ has only finitely many weights, this implies that $T-\Id$ is nilpotent on $C$. Thus $T-\Id$ is locally nilpotent on $\O_\epsilon[G]$, and also on its $T$-stable subspace $\O_\epsilon[\ss]$. Consequently, $T$ restricts to a linear isomorphism of $\O_\epsilon[\ss]$, with inverse
    \[
    T^{-1}(f)=\sum_{r\geq 0}(-1)^r (T-\Id)^r(f),
    \qquad f\in\O_\epsilon[\ss],
    \]
    where the sum is finite for each $f$.

    By Proposition \ref{prop:tran} (1) and (3), we have
    \[
    T(\Fr^*(g)f)=\Fr^*(\sigma(g))T(f),
    \qquad g\in\O_1[\ss],\ f\in\O_\epsilon[\ss].
    \]
    Applying this identity to $\sigma^{-1}(g)$ and $T^{-1}(f)$ gives
    \[
    T^{-1}(\Fr^*(g)f)=\Fr^*(\sigma^{-1}(g))T^{-1}(f).
    \]
    Define the linear map
    \[
    t:\cS^{-1}\O_\epsilon[\ss]\longrightarrow
    \sigma(\cS)^{-1}\O_\epsilon[\ss],\qquad
    \frac{f}{s}\mapsto\frac{T(f)}{\sigma(s)}.
    \]
    To check that $t$ is well-defined, suppose that $f/s=f'/s'$. Then there exists $u\in\cS$ such that
    \[
    \Fr^*(u)\big(\Fr^*(s')f-\Fr^*(s)f'\big)=0.
    \]
    Applying $T$ and using the preceding identity yields
    \[
    \Fr^*(\sigma(u))\big(\Fr^*(\sigma(s'))T(f)
    -\Fr^*(\sigma(s))T(f')\big)=0,
    \]
    as desired. The identity for $T^{-1}$ shows in the same way that
    \[
    \sigma(\cS)^{-1}\O_\epsilon[\ss]\longrightarrow
    \cS^{-1}\O_\epsilon[\ss],\qquad
    \frac{g}{\sigma(s)}\mapsto\frac{T^{-1}(g)}{s}
    \]
    is well-defined. It is an inverse of $t$.

    For $f\in\cS^{-1}\O_\epsilon[\ss]$ and $g/s\in\cS^{-1}\O_1[\ss]$, it is direct to check that
    \[
    t\Big(\frac{g}{s}\cdot f\Big)=
    \frac{\sigma(g)}{\sigma(s)}\cdot t(f).
    \]
    Since $g/s\mapsto\sigma(g)/\sigma(s)$ is an algebra isomorphism from $\cS^{-1}\O_1[\ss]$ to $\sigma(\cS)^{-1}\O_1[\ss]$, a $\cS^{-1}\O_1[\ss]$-basis $\{f_1,\cdots,f_N\}$ is carried by $t$ to a $\sigma(\cS)^{-1}\O_1[\ss]$-basis $\{t(f_1),\cdots,t(f_N)\}$. Noting that $\sigma(\cS)=\cS\lhd_1 x_{i,1}^\pm(a)$ completes the proof.
\end{proof}

\begin{theorem}\label{thm:projectivity}
    $\O_\epsilon[\ss]$ is a finitely generated projective module over $\O_1[\ss]$ of rank $\ell^{\dim \ss}$.
\end{theorem}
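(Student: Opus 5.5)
The plan is to combine Lemma~\ref{le:loPsi} and Lemma~\ref{le:tran} with a covering argument on $\ss$, and then pass from local freeness to projectivity by standard commutative algebra. First we establish finite generation. Choose generators of $\O_\epsilon[\ss]$ over $\O_1[\ss]$ by the following route. The algebra $\O_\epsilon[\ss]$ is finitely generated as a $\C$-algebra, being spanned by matrix coefficients of finitely many modules $V(-w_0\mu)$ with $\mu$ running over generators of the monoid $\breve X^+$. It is also integral over the central subalgebra $\Fr^*(\O_1[\ss])$. Integrality can be deduced from the embedding $\Gamma_\epsilon$: each element of $\O_\epsilon[B_\theta]$ satisfies a monic polynomial over $Z_0$, and we intersect back using the fact that $\O_1[\ss]$ is normal and $\Gamma_\epsilon$ is compatible with $\Fr^*$. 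Alternatively, we use that $\O_\epsilon[G]$ is a finite $\O_1[G]$-module (De Concini--Lyubashenko) and that $\O_\epsilon[\ss]$ is the image of the $\O_1[G]$-linear projection onto right $\dUi_\epsilon$-invariants. I prefer the second route: right invariants are a direct summand, via the Peter--Weyl type decomposition and Noetherianity of $\O_1[G]$. Either way, an affine PI algebra that is integral over a central affine subalgebra is a finite module over it, so $\O_\epsilon[\ss]$ is a finitely generated module over the Noetherian ring $\O_1[\ss]$, hence finitely presented.

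Next comes the covering step. For each $g$ in the group $G_0$ generated by the elements $x_{i,1}^\pm(a)$ (this is all of the derived group, and together with $T$ all of $G$), iterate Lemma~\ref{le:tran}. This shows that $\cS_g^{-1}\O_\epsilon[\ss]$ is free of rank $\ell^{\dim\ss}$ over $\cS_g^{-1}\O_1[\ss]$. Here $\cS_g$ is the translate of $\Psi$ by $g$, and by Proposition~\ref{prop:tran}(4) it corresponds to the open set $gB_\theta\subset\ss$ where the translated functions $\psi_\mu$ do not vanish. For the torus part nothing is needed, since $T$ preserves $B_\theta$. The translates $gB_\theta$ for $g\in G$ cover $\ss$, because $G$ acts transitively on $\ss$. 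By quasi-compactness, finitely many translates suffice, and each is a union of principal opens $D(\psi_\mu\lhd_1 g)$. Thus for every maximal ideal $\mathfrak m_x$ we find some $s\in\cS_g$ with $s(x)\neq0$ such that $\O_\epsilon[\ss]_s$ is free of rank $\ell^{\dim \ss}$. In particular every localization $\O_\epsilon[\ss]_{\mathfrak m_x}$ is free of that rank.

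The conclusion then follows from the standard fact that a finitely presented module over a commutative Noetherian ring whose localizations at all maximal ideals are free is projective; the rank is constant, equal to $\ell^{\dim\ss}$. The main obstacle is the finiteness step, since Lemma~\ref{le:tran} only gives freeness after localization. I expect to handle it via the finiteness of $\O_\epsilon[G]$ over $\O_1[G]$ together with the invariant-projection argument. If that fails, the fallback is to argue directly: the finitely many free localizations cover $\operatorname{Spec}\O_1[\ss]$, and taking finitely many generators of each localized module, with denominators cleared, gives finitely many elements of $\O_\epsilon[\ss]$ generating it globally. This fallback relies on the fact that a module which is finitely generated on each member of a finite open cover is finitely generated.
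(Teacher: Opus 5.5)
Your covering argument is the paper's own. It runs through Lemma~\ref{le:loPsi}, then Lemma~\ref{le:tran} iterated along a root-subgroup factorization of $g\in G'=[G,G]$, then $T\subseteq B_+$ and $G=G'T$ so that the translates $gB_\theta$ with $g\in G'$ cover $S$, then quasi-compactness, and finally the passage from local freeness to projectivity.

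The step that would fail is the one you prefer for finiteness. The subspace $\O_\epsilon[S]\subset\O_\epsilon[G]$ is not an $\O_1[G]$-submodule. It contains $1$, but it cannot contain $\Fr^*(\O_1[G])$: it embeds into $\O_\epsilon[B_\theta]$, whose Gelfand--Kirillov dimension is $\dim S$, and this is less than $\dim G$ once $K$ is positive-dimensional. So there is no $\O_1[G]$-linear projection onto right $\dot{\U}^\imath_\epsilon$-invariants. At a root of unity there is also no semisimplicity that would give even an $\O_1[S]$-linear Reynolds-type splitting. Even if you had such a splitting, finiteness over $\O_1[G]$ would not transfer to $\O_1[S]$, because $\O_1[G]$ is not finite over $\O_1[S]$ in general (the fibres of $G\to S$ are the cosets $gK$).

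Your alternative via integrality inside $\O_\epsilon[B_\theta]$ is also incomplete. A priori the characteristic-polynomial coefficients of $\Gamma_\epsilon(f)$ lie only in $\O[B_\theta]$. Normality of $\O[S]$ moves them into $\O[S]$ only after you control them at the height-one primes along $S\setminus B_\theta$, and that is exactly the information the translates supply.

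The fallback you mention is the correct argument. It is what the paper's last sentence uses implicitly: a quasi-coherent sheaf on an affine variety that is locally free of finite rank on a finite open cover comes from a finitely generated projective module. To make it precise, note that $\breve X^+$ is a finitely generated monoid and $\mu\mapsto\psi_\mu$ is multiplicative. So localizing at $g\cdot\Psi$ amounts to inverting the single element $s_g=g\cdot\psi_{\mu_1+\cdots+\mu_k}$, where $\mu_1,\ldots,\mu_k$ generate $\breve X^+$. The finite cover then consists of principal opens $D(s_{g_j})$ on which $\O_\epsilon[S]_{s_{g_j}}$ is free of rank $\ell^{\dim S}$. Clearing denominators in these bases gives finitely many global generators, and your argument (finite presentation over the Noetherian ring $\O_1[S]$, plus freeness at all maximal ideals) finishes the proof.

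So drop the ``finiteness first'' paragraph and run the proof with the fallback. That includes the unproved claim that $\O_\epsilon[S]$ is affine, which the paper instead deduces afterwards from this theorem.
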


\begin{proof}
Identify $\O_1[S]$ with the coordinate algebra $\C[S]$. Let $G'=[G,G]$ be the derived group of $G$, which is generated by $x_{i,1}^\pm(a)$, for $a\in\C$ and $i\in\I$. We use the left action of $G'$ on $\C[S]$ given by
\[
(g\cdot u)(z)=u(g^{-1}z),\qquad g\in G',\quad u\in\C[S],\quad z\in S.
\]
For $g=x_{i_1}^{\pm}(a_1)\cdots x_{i_k}^{\pm}(a_k)$, Proposition~\ref{prop:tran} (4) gives
\[
g\cdot u
=\Big(\cdots\big((u\lhd_1 x_{i_k,1}^{\pm}(-a_k))
\lhd_1 x_{i_{k-1},1}^{\pm}(-a_{k-1})\big)\cdots\Big)
\lhd_1 x_{i_1,1}^{\pm}(-a_1).
\]
In particular, this expression is independent of the chosen factorization of $g$. Set $g\cdot\Psi=\{g\cdot u\mid u\in\Psi\}$. By Lemma~\ref{le:loPsi} and repeated application of Lemma~\ref{le:tran}, the module $(g\cdot\Psi)^{-1}\O_\epsilon[S]$ is free over $(g\cdot\Psi)^{-1}\C[S]$ of rank $\ell^{\dim S}$.

The localization $\Psi^{-1}\C[S]$ is the coordinate algebra of the open subset $B_\theta$. Since $(g\cdot u)(z)=u(g^{-1}z)$, the localization $(g\cdot\Psi)^{-1}\C[S]$ is the coordinate algebra of $gB_\theta$. Thus the quasi-coherent sheaf associated with the $\C[S]$-module $\O_\epsilon[S]$ is free of rank $\ell^{\dim S}$ on each $gB_\theta$. Since $G'T=G$ and $T\subseteq B_+$, we have
\[
G=\bigcup_{g\in G'}gB_+,
\qquad
S=\bigcup_{g\in G'}gB_\theta.
\]
By quasi-compactness, this open cover has a finite subcover. Hence the associated sheaf is locally free of finite rank on the affine variety $S$, and $\O_\epsilon[S]$ is a finitely generated projective $\C[S]$-module of rank $\ell^{\dim S}$.
\end{proof}

\subsection{Poisson orders}\label{sec:poisson-orders}

We briefly recall the definition and the basic fibrewise consequence of Brown--Gordon \cite[Definition 2.1 and Theorem 4.2]{BG03}.

\begin{definition}
Let $A$ be an affine $\C$-algebra and let $Z_0\subseteq Z(A)$ be a central subalgebra such that $A$ is finitely generated as a $Z_0$-module. A \emph{Poisson $Z_0$-order} is given by a $\C$-linear map
\[
\partial:Z_0\longrightarrow \operatorname{Der}_{\C}(A),\qquad z\longmapsto \partial_z,
\]
such that $\partial_z(Z_0)\subseteq Z_0$ for every $z\in Z_0$ and
\[
\{z,z'\}:=\partial_z(z')
\]
defines a Poisson algebra structure on $Z_0$. In this situation, $\operatorname{MaxSpec}(Z_0)$ is a Poisson variety.
\end{definition}

For $x\in\operatorname{MaxSpec}(Z_0)$, let $\mathfrak m_x\subset Z_0$ be the corresponding maximal ideal and define the fibre algebra
\[
A(x):=A/\mathfrak m_xA.
\]

The \emph{Poisson core} of $\mathfrak m_x$ is the largest Poisson ideal of $Z_0$ contained in $\mathfrak m_x$. Two points lie in the same \emph{symplectic core} if their maximal ideals have the same Poisson core \cite[Section 3.3]{BG03}.

\begin{proposition}[\text{\cite[Theorem 4.2]{BG03}}]\label{thm:BG-fibres}
If $x,y\in\operatorname{MaxSpec}(Z_0)$ belong to the same symplectic core, then
\[
A(x)\cong A(y)
\]
as $\C$-algebras. In particular, the fibre algebras are isomorphic when $x$ and $y$ lie in the same symplectic leaf.
\end{proposition}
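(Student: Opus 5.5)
The plan is to follow the strategy of Brown--Gordon: integrate the derivations $\partial_z$ to obtain automorphisms that move fibres along Hamiltonian flows. Then show that these flows, together with a connectedness argument, sweep out the whole symplectic core.

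\emph{Step 1 (reduction to a Poisson prime).} Fix $x$ and let $P\subseteq\mathfrak m_x$ be its Poisson core. Since $P$ is a Poisson ideal, each $\partial_z$ with $z\in P$ maps $A$ into $PA$; more precisely, the family $\partial$ descends to a Poisson $(Z_0/P)$-order structure on $\bar A:=A/PA$. The fibres satisfy $\bar A(y)=A(y)$ for all $y\in V(P)$. Moreover, $P$ is prime, since the Poisson core of a maximal (hence prime) ideal is prime in characteristic $0$. So we may assume $Z_0$ is a Poisson domain with zero Poisson core at $x$. The symplectic core $\mathcal C(x)$ of $x$ is then the set of points with zero Poisson core, and one checks that it is a locally closed, irreducible subset of $\operatorname{MaxSpec}Z_0$.

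\emph{Step 2 (Hamiltonian flows preserve fibres).} Fix $z\in Z_0$ and pass to the analytic topology on $V=\operatorname{MaxSpec}Z_0$. On a small open set $U$ in the smooth locus over which $A$ is a free module, write $\partial_z$ in a local frame of $A$. It becomes a first-order linear differential operator covering the Hamiltonian vector field $H_z=\{z,-\}$ on $U$. Solve the resulting linear ODE for small time $t$. This produces a holomorphic family of $\C$-linear maps $A_{\mathcal U}\to A_{\mathcal U}$, where $A_{\mathcal U}$ denotes the sheaf of holomorphic sections of the finite module $A$. These maps are algebra automorphisms, because $\partial_z$ is a derivation, and they cover the flow $\phi_t$ of $H_z$. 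Reducing modulo $\mathfrak m_y$ gives $A(y)\cong A(\phi_t(y))$.

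\emph{Step 3 (transitivity).} Flows of the $H_z$ preserve Poisson cores, so they stay inside $\mathcal C(x)$. Moreover, at points of $\mathcal C(x)$ the fields $H_z$ span the tangent space to $\mathcal C(x)$: a proper involutive distribution cut out by the Hamiltonians would produce a nonzero Poisson ideal inside $\mathfrak m_x$. Hence the orbits of the flows are open in $\mathcal C(x)$. Since $\mathcal C(x)$ is irreducible, hence connected, there is only one orbit, and all fibres over $\mathcal C(x)$ are isomorphic. The assertion about symplectic leaves then follows, since a symplectic leaf is contained in a single symplectic core.

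\emph{Main obstacle.} The hardest step is Step 2: making the integration rigorous near points where $A$ fails to be locally free or $V$ is singular. I would first try to handle this by working on the dense open locus where both conditions hold. To reach the remaining points of the core, I would use the semicontinuity of $\dim A(y)$ together with the fact that the isomorphism class of $A(y)$ is locally constant along flows. If that fails, I would replace analytic flows by formal exponentials $\exp(t\partial_z)$ acting on completions $\widehat{A}_{\mathfrak m_y}$, and argue that they induce isomorphisms of fibres at formally nearby points. This is the approach taken in \cite[Theorem 4.2]{BG03}.
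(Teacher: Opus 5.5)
There is a genuine gap in Step 3. The paper does not prove this statement; it quotes it from Brown--Gordon. The Hamiltonian fields $H_z=\{z,-\}$ span, at each point, exactly the tangent space of the symplectic leaf. The points reachable by concatenated Hamiltonian flows form exactly that leaf. A symplectic core, by contrast, is in general a union of many leaves, each Zariski dense in $V(P)$.

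Concretely, let $Z_0=\C[x,y,z]$ carry the Lie--Poisson bracket $\{z,x\}=x$, $\{z,y\}=\alpha y$, $\{x,y\}=0$ with $\alpha\notin\mathbb{Q}$. For $x_0y_0\neq0$, the leaf through $(x_0,y_0,z_0)$ is $\{(e^{t}x_0,e^{\alpha t}y_0,c)\mid t,c\in\C\}$. It is Zariski dense because the exponents $i+\alpha j$ are pairwise distinct. So all of $\{xy\neq0\}$ is a single core, namely that of $(0)$. The leaves inside it are $2$-dimensional, since a Poisson bivector has even rank, and no sequence of flows moves from one of these leaves to another.

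Your justification conflates analytic integral manifolds with algebraic subvarieties. The ideal of functions vanishing on a leaf is exactly the Poisson core of its points, and this ideal is $0$ when the leaf is Zariski dense. So a proper leaf produces no nonzero Poisson ideal. Steps 2--3 therefore establish only the ``in particular'' clause about leaves. That clause is all that Theorem~\ref{thm:fiberiso} uses. There $A=\O_\epsilon[S]$ is projective over the smooth algebra $\O[S]$, so your Step 2 runs on all of $S$ and your ``main obstacle'' does not arise.

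What is missing is an algebraic step passing from leaves to cores. Fix the Poisson prime $P$ and a dense open $U\subseteq V(P)$ over which $A/PA$ is locally free of rank $n$. Locally on $U$, the structure constants of the fibres are regular functions. Hence for any $n$-dimensional algebra $B$, the set $\{y\in U\mid A(y)\cong B\}$ is the preimage of a $GL_n$-orbit, and so is constructible. By Step 2, the locus $\{y\mid A(y)\cong A(x)\}$ contains the leaf of $x$, which is Zariski dense in $V(P)$ when $x$ has Poisson core $P$. Its intersection with $U$ is therefore a dense constructible subset of the irreducible variety $V(P)$, so it contains a dense open subset. Doing the same for a point $y$ in the same core, the two open subsets meet, which gives $A(x)\cong A(y)$.

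There are also some smaller errors.
\begin{itemize}
\item In Step 1, the claim $\partial_z(A)\subseteq PA$ for $z\in P$ fails in general: $z\mapsto\partial_z$ is only determined up to inner derivations (compare the dependence on $\jmath$ noted after Proposition~\ref{prop:poisson-order}). What holds is $\partial_z(PA)\subseteq PA$ for every $z$, and the quotient Poisson order is obtained through a linear section $Z_0/P\to Z_0$.
\item Symplectic cores need not be locally closed: Poisson primitive ideals need not be locally closed, by Bell--Launois--Le\'on S\'anchez--Moosa. Only their density in $V(P)$ is needed.
\item Your formal fallback cannot work. The completion $\widehat{A}_{\mathfrak m_y}$ sees no closed point other than $y$, so formal exponentials cannot compare fibres at distinct points.
\end{itemize}
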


Set $R=\C[q,q^{-1}]_{(q-\epsilon)}$ and $h=q-\epsilon$, and write
\[
\mathcal A_G=R\otimes_\A\O_\A[G],\qquad
\mathcal A_S=R\otimes_\A\O_\A[\ss].
\]
We use $\pi_\epsilon$ for the specialization maps from $\A_G$ and $\A_S$ to $\O_\epsilon[G]$ and $\O_\epsilon[\ss]$. Choose a $\C$-linear map
\[
\jmath:\O_1[G]\longrightarrow\mathcal A_G
\]
such that $\pi_\epsilon\circ\jmath=\Fr^*$, $\jmath(1)=1$, and
$\jmath(\O_1[\ss])\subseteq\mathcal A_S$. It is clear that such a choice is possible, since one can first choose lifts in $\mathcal A_S$ of the Frobenius images of a $\C$-basis of $\O_1[\ss]$ containing $1$, and then extend this basis to one of $\O_1[G]$ and choose lifts in $\mathcal A_G$.

For $f\in\O_1[G]$ and $g\in\O_\epsilon[G]$, set $\tilde f=\jmath(f)$ and choose a lift $\tilde g\in\mathcal A_G$ of $g$. Define
\begin{equation}\label{eq:partial}
\partial_f(g)=\frac{\epsilon}{\ell^2}\,
\pi_\epsilon\left(\frac{[\tilde f,\tilde g]}{q-\epsilon}\right).
\end{equation}

For $f,g\in \O_1[G]$, take $\tilde f,\tilde g$ to be the lifts in $\A_G$. The following equality is essentially proved in \cite[Section 8]{DCL94}:
For $f,g\in\O_1[G]$, let $\tilde f,\tilde g\in\mathcal A_G$
be lifts of $\Fr^*(f),\Fr^*(g)$, respectively. Then
\begin{equation}\label{eq:poisson-Frobenius}
    \frac{\epsilon}{\ell^2}\,
    \pi_\epsilon\left(\frac{[\tilde f,\tilde g]}{h}\right)
    =\Fr^*(\{f,g\}_1).
\end{equation}
Note that our normalization in \eqref{eq:partial} makes the induced bracket agree with the normalization in \eqref{eq:Poisson-O1G}. We also remark that although the arguments in \cite[Section 8]{DCL94} are for semisimple simply connected group $G$, its generalization to general reductive groups is direct and we shall omit.

\begin{proposition}\label{prop:poisson-order}
$(\O_\epsilon[\ss],\O_1[\ss],\partial)$ is a Poisson order, where $\O_1[\ss]$ is identified with its Frobenius image. Its induced Poisson bracket is the standard bracket on $\O_1[\ss]$.
\end{proposition}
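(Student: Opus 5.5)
The plan is to restrict the Poisson order structure on $\O_\epsilon[G]$ over $\O_1[G]$ from \cite[Section 8]{DCL94} to the symmetric space. The task is to show that $\partial$ from \eqref{eq:partial} produces derivations of $\O_\epsilon[S]$ when $f\in\O_1[S]$, and that the induced bracket on $\O_1[S]$ is the standard one.

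\textbf{Step 1: $\partial_f$ is a well-defined derivation of $\O_\epsilon[S]$.} Fix $f\in\O_1[\ss]$, so $\tilde f=\jmath(f)\in\mathcal A_S$. For $g\in\O_\epsilon[\ss]$, choose the lift $\tilde g$ in $\mathcal A_S$. This is possible since $\O_\epsilon[S]=\C_\epsilon\otimes\O_\A[S]$ and $\mathcal A_S\to\O_\epsilon[S]$ is surjective.
\begin{itemize}
\item \emph{Divisibility.} By Theorem~\ref{thm:Fr}, $\pi_\epsilon(\tilde f)=\Fr^*(f)$ is central in $\O_\epsilon[G]$. Hence $[\tilde f,\tilde g]\in h\mathcal A_G$.
\item \emph{The quotient lies in $\mathcal A_S$.} The commutator lies in $\mathcal A_S$. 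Moreover $\O_\A[S]$ is a saturated submodule of $\O_\A[G]$: a functional in $\O_\A[G]$ that is right-invariant after multiplying by a nonzero scalar is already right-invariant, because the invariance condition is linear. So $\mathcal A_S\cap h\mathcal A_G=h\mathcal A_S$, and $[\tilde f,\tilde g]/h\in\mathcal A_S$.
\item \emph{Independence of the lift.} Changing $\tilde g$ by $h\tilde g'$ changes the quotient by $[\tilde f,\tilde g']$. This maps to $[\Fr^*(f),\pi_\epsilon(\tilde g')]=0$.
\item \emph{Derivation property.} Leibniz for $\partial_f$ is inherited from Leibniz for commutators.
\end{itemize}
Equivalently, $\partial_f$ is the restriction of the DCL derivation of $\O_\epsilon[G]$, which preserves $\O_\epsilon[S]$.

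\textbf{Step 2: $Z_0=\Fr^*(\O_1[S])$ is preserved and carries the standard bracket.} For $f,f'\in\O_1[S]$, apply \eqref{eq:poisson-Frobenius}, which is valid for any lifts:
\[
\partial_f(\Fr^*(f'))=\Fr^*(\{f,f'\}_1).
\]
By \cite[Proposition 3.1]{So26}, $K$ is coisotropic, so $\O_1[S]$ is a Poisson subalgebra of $\O_1[G]$. Hence $\{f,f'\}_1\in\O_1[S]$, and $\partial_f(Z_0)\subseteq Z_0$. The induced bracket on $Z_0$ is therefore the transport of $\{\,,\}_1$, which is a Poisson bracket. Under $\Phi_S$ it is the standard Poisson structure, because $\Phi_S$ is a Poisson isomorphism.

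\textbf{Step 3: linearity and finiteness hypotheses.} $\partial$ is $\C$-linear in $f$ because $\jmath$ is linear. $\O_\epsilon[S]$ is affine and finite over $Z_0$ by Theorem~\ref{thm:projectivity}. Note that $\jmath$ enters only through $\pi_\epsilon\circ\jmath$ up to $h$-multiples: changing $\tilde f$ by $h\tilde f'$ alters $\partial_f(g)$ by $\pi_\epsilon([\tilde f',\tilde g])$. This term need not vanish, but the definition of a Poisson order only requires some linear choice, so it causes no problem.

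\textbf{Main obstacle.} The delicate point is the saturation $\mathcal A_S\cap h\mathcal A_G=h\mathcal A_S$ in Step 1. I would prove it via the basis description: $\O_\A[S]$ is cut out of $\O_\A[G]$ by the linear conditions of \eqref{eq:quantum-symmetric-space}, so the quotient $\O_\A[G]/\O_\A[S]$ is torsion-free. Alternatively, use Lemma~\ref{lem:Gamma-saturation}, which gives $\O_\A[S]=\O_q[S]\cap\O_\A[G]$, and then localize at $(q-\epsilon)$.
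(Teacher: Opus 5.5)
Your proposal is correct and follows the paper's proof essentially step by step. Centrality of $\Fr^*(\O_1[S])$ gives divisibility of $[\tilde f,\tilde g]$ by $h$. The saturation $\mathcal A_S\cap h\mathcal A_G=h\mathcal A_S$ is proved in the paper exactly as in your Step 1, by cancelling $h$ in the right-invariance identity over the domain $R$ using $\mathcal A_S=\O_R[S]$. Stability of the Frobenius subalgebra, with the standard bracket, then follows from \eqref{eq:poisson-Frobenius} and the fact that $\O_1[S]$ is a Poisson subalgebra of $\O_1[G]$. Of your two justifications for saturation, use this direct cancellation over $R$: the alternative via $\A$-torsion-freeness of $\O_\A[G]/\O_\A[S]$ needs an extra flat base-change argument, since $R$ is not a localization of $\A$.
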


\begin{proof}
    By Theorem \ref{thm:Fr}, $\Fr^*(\O_1[\ss])$ is a central subalgebra of $\O_\epsilon[\ss]$. By Theorem \ref{thm:projectivity}, $\O_\epsilon[\ss]$ is finite projective over this subalgebra. Since $\O_1[\ss]\cong\O[\ss]$ is affine, $\O_\epsilon[\ss]$ is also affine.

    We first check that \eqref{eq:partial} defines a $\C$-linear map from $\O_1[G]$ to $\operatorname{Der}_\C(\O_\epsilon[G])$. The algebra $\mathcal A_G$ is free over $R$, so multiplication by $h$ is injective. Since $\pi_\epsilon(\tilde f)=\Fr^*(f)$ is central, $[\tilde f,\tilde g]\in h\mathcal A_G$, and the quotient in \eqref{eq:partial} is well-defined. Replacing $\tilde g$ by $\tilde g+hv$ changes its specialization by
    \[
    \frac{\epsilon}{\ell^2}\pi_\epsilon([\tilde f,v])
    =\frac{\epsilon}{\ell^2}[\Fr^*(f),\pi_\epsilon(v)]=0.
    \]
    Hence $\partial_f(g)$ is independent of the lift of $g$. Linearity in $f$ follows from the linearity of $\jmath$, and the identity
    \[
    [\tilde f,\tilde g_1\tilde g_2]
    =[\tilde f,\tilde g_1]\tilde g_2
    +\tilde g_1[\tilde f,\tilde g_2]
    \]
    gives $\partial_f(g_1g_2)=\partial_f(g_1)g_2+g_1\partial_f(g_2)$.

    We next show that $\partial_f$ preserves $\O_\epsilon[\ss]$ for $f\in\O_1[\ss]$. The inclusion $\mathcal A_S\subseteq\mathcal A_G$ satisfies
    \begin{equation}\label{eq:poisson-saturation}
    \mathcal A_S\cap h\mathcal A_G=h\mathcal A_S.
    \end{equation}
    Indeed, if $hv\in\mathcal A_S$ for $v\in\mathcal A_G$, then right invariance gives
    \[
    h\big(v(xy)-\eta_R(y)v(x)\big)=0,
    \qquad x\in\dU_R,\ y\in\dU_R^\imath.
    \]
    Canceling $h$ in $R$ shows that $v$ is right invariant, and hence $v\in\mathcal A_S$ by \eqref{eq:quantum-symmetric-space}.
    For $f\in\O_1[\ss]$ and $g\in\O_\epsilon[\ss]$, we may choose both $\tilde f$ and $\tilde g$ in $\mathcal A_S$. Their commutator belongs to $\mathcal A_S\cap h\mathcal A_G$, so \eqref{eq:poisson-saturation} implies
    \[
    \frac{[\tilde f,\tilde g]}{h}\in\mathcal A_S.
    \]
    Thus $\partial_f(g)\in\O_\epsilon[\ss]$.

    Finally, for $f,g\in\O_1[\ss]$, \eqref{eq:poisson-Frobenius} gives
    \[
    \partial_f(\Fr^*(g))=\Fr^*(\{f,g\}_1).
    \]
    Since $\O_1[\ss]$ is a Poisson subalgebra of $\O_1[G]$, the right-hand side belongs to $\Fr^*(\O_1[\ss])$. Thus the Frobenius subalgebra is stable under these derivations, and the induced bracket is precisely its standard Poisson bracket. This proves all the required properties.
\end{proof}

The choice of $\jmath$ is part of the construction of $\partial$. More precisely, replacing $\jmath(f)$ by $\jmath(f)+hv_f$ changes $\partial_f$ by the inner derivation
\[
g\longmapsto\frac{\epsilon}{\ell^2}[\pi_\epsilon(v_f),g].
\]
For two choices respecting $\mathcal A_S$, \eqref{eq:poisson-saturation} gives $v_f\in\mathcal A_S$ when $f\in\O_1[\ss]$. In particular, the induced bracket on the Frobenius subalgebra is independent of this choice; cf. \cite[Section 2.2]{BG03}.



\begin{theorem}\label{thm:fiberiso}
    Suppose $x,x'\in \ss_u$ for some $u\in W$. Then one has $\O_\epsilon[\ss](x)\cong \O_\epsilon[\ss](x')$ as finite-dimensional algebras.
\end{theorem}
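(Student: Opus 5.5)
The plan combines the Poisson order structure of Proposition~\ref{prop:poisson-order} with the torus symmetry of $\O_\epsilon[\ss]$. By Proposition~\ref{prop:dimSu}, $\ss_u$ is a $T$-leaf: every point of $\ss_u$ is a $T$-translate of a point on a fixed symplectic leaf $\mathcal L\subset\ss_u$. Given $x,x'\in\ss_u$, I choose $t\in T$ and $y\in\mathcal L$ so that $x=t_1\cdot y_1$ and $x'=t_2\cdot y_2$, with $y_1,y_2\in\mathcal L$. Proposition~\ref{thm:BG-fibres} then gives $\O_\epsilon[\ss](y_1)\cong\O_\epsilon[\ss](y_2)$. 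The statement therefore reduces to showing that, for every $t\in T$ and $z\in\ss$, one has $\O_\epsilon[\ss](z)\cong\O_\epsilon[\ss](t\cdot z)$.

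For the torus step I construct, for each $t\in T$, an algebra automorphism $\Theta$ of $\O_\epsilon[\ss]$ that intertwines, through $\Fr^*$, with the comorphism of left translation by some $s\in T$ on $\O_1[\ss]$. The natural candidate is the weight-grading automorphism
\[
f\mapsto \sum_{\lambda\in X}\chi(\lambda)\, f\lhd_\epsilon \one_\lambda ,
\]
where $\chi:X\to\C^\times$ is a character. This is the quantum analogue of left translation by a torus element, in the same spirit as the translations $\lhd_\epsilon x^\pm_{i,\epsilon}(a)$ of Proposition~\ref{prop:tran}.

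I will check the following points in order. First, the map is an algebra homomorphism. This follows from the formula \eqref{eq:comul}, since $\Delta(\one_\lambda)=\sum_{\lambda'+\lambda''=\lambda}\one_{\lambda'}\otimes\one_{\lambda''}$ and $\chi$ is multiplicative. Second, it preserves $\O_\epsilon[\ss]$, because it is a left action and right invariance is unaffected, exactly as in Proposition~\ref{prop:tran}(2). Third, it is invertible, with inverse given by $\chi^{-1}$. Fourth, it satisfies $\Theta\circ\Fr^*=\Fr^*\circ\sigma$, where $\sigma$ is the analogous grading automorphism of $\O_1[\ss]$ attached to the character $\lambda\mapsto\chi(\ell\lambda)$. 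This holds because $\Fr(\one_\lambda)=\one_{\lambda/\ell}$ when $\lambda\in\ell X$ and vanishes otherwise.

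Classically, the grading automorphism of $\O_1[G]\cong\C[G]$ attached to a character $\lambda\mapsto s^{\lambda}$ of $X=X^*(T)$ is the comorphism of left translation by $s\in T$. Given $t$, I choose $\chi$ with $\chi(\ell\lambda)=t^{\pm\lambda}$, which is possible since $T$ is divisible; the sign depends on conventions. Then $\Theta$ maps $\Fr^*(\mathfrak m_z)\O_\epsilon[\ss]$ onto $\Fr^*(\mathfrak m_{t\cdot z})\O_\epsilon[\ss]$ and so induces the required isomorphism of fibre algebras.

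The main obstacle is the orbit decomposition in the first step. I need to confirm that the $T$-action used in the definition of a $T$-leaf in Proposition~\ref{prop:dimSu} is the left multiplication action recovered from the grading automorphisms. I also need to confirm that the Brown--Gordon isomorphism applies to arbitrary points of a single symplectic leaf. The latter is fine because Proposition~\ref{prop:poisson-order} identifies the induced bracket on $\O_1[\ss]$ with the standard one.
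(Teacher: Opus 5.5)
Your proposal is correct and is essentially the paper's proof. Your grading automorphism is the paper's $l_r(f)(v)=f(\tilde r v)$ with $\tilde r=\sum_\lambda\lambda(r)\one_\lambda$. It is intertwined with Frobenius through $\Fr(\tilde r)=\widetilde{r^\ell}$ and the surjectivity of $r\mapsto r^\ell$, and the argument then finishes with the $T$-leaf description of $\ss_u$ and Brown--Gordon. The only cosmetic difference is that the paper translates $x$ directly into the symplectic leaf of $x'$, while you route both points through a common leaf $\mathcal L$; tidy the mixed $t,y$ versus $t_1,y_1,t_2,y_2$ notation in your first paragraph.
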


\begin{proof}
    For $r\in T$, set $\tilde r=\sum_{\lambda\in X}\lambda(r)\one_\lambda\in\widehat{\U}_{\epsilon}$ and define $l_r:\O_\epsilon[G]\rightarrow\O_\epsilon[G]$ by
    \[
    l_r(f)(v)=f(\tilde r v),\qquad v\in\dU_{\epsilon}.
    \]
    Since $\lambda\mapsto\lambda(r)$ is a character of $X$, we have $\Delta(\tilde r)=\tilde r\otimes\tilde r$. It follows that $l_r$ is an algebra automorphism with inverse $l_{r^{-1}}$. It preserves $\cO_\epsilon[S]$, since left multiplication commutes with the right-invariance condition in \eqref{eq:quantum-symmetric-space}.

    For $t\in T$, write $l_t^{\mathrm{cl}}$ for the classical pullback $g(y)\mapsto g(ty)$ on $\O_1[\ss]$. The formula for $\Fr$ on weight idempotents gives $\Fr(\tilde r)=\widetilde{r^\ell}$, and hence
    \begin{equation}\label{eq:ltFr}
    l_r\circ\Fr^*=\Fr^*\circ l_{r^\ell}^{\mathrm{cl}}.
    \end{equation}
    Thus
    \[
    l_r\big(\Fr^*(\mathfrak m_x)\O_\epsilon[\ss]\big)
    =\Fr^*(\mathfrak m_{r^{-\ell}x})\O_\epsilon[\ss],
    \]
    which induces an isomorphism
    $\O_\epsilon[\ss](x)\cong\O_\epsilon[\ss](r^{-\ell}x)$.
    The power map $T\rightarrow T$, $r\mapsto r^\ell$, is surjective. Therefore the fibre algebras at $x$ and $tx$ are isomorphic for every $t\in T$.

    By Proposition \ref{prop:dimSu}, $\ss_u$ is a $T$-orbit of a symplectic leaf. Thus there exists $t\in T$ such that $tx$ and $x'$ lie in the same symplectic leaf. Proposition \ref{prop:poisson-order} and Proposition \ref{thm:BG-fibres} give
    \[
    \O_\epsilon[\ss](x)\cong\O_\epsilon[\ss](tx)
    \cong\O_\epsilon[\ss](x'),
    \]
    as desired.
\end{proof}

\subsection{Maximal orders}
\label{sec:maximal-order}

In this subsection, we show that $\O_\epsilon[S]$ is a maximal order in its division ring of fractions. Recall that an order $A$ in a division ring $D$ is a maximal order
if every intermediate ring $A\subseteq B\subseteq D$ satisfying
$aBb\subseteq A$ for some nonzero $a,b\in D$ equals $A$.

We begin with a criterion for extending the maximal order
property from an open subset. 

\begin{lemma}\label{lem:maximal-order-chart}
Let $R$ be a normal affine complex domain, and let $A$ be a
domain containing $R$ as a central subalgebra and finite projective as an
$R$-module. Set $X=\operatorname{Spec}R$ and
$D=\operatorname{Frac}A$. Suppose that
\[
X=\bigsqcup_{\alpha\in I}X_\alpha
\]
is a finite stratification by irreducible locally closed
subvarieties such that the algebras $A/\mathfrak m_xA$ are
isomorphic as $x$ varies over the closed points of each
$X_\alpha$.

Let $\Sigma\subset R\setminus\{0\}$ be a multiplicative subset
such that $\operatorname{Spec}R_\Sigma\to X$ is an open
immersion, with image $U$. If $U$ meets every stratum and
$A_\Sigma$ is a maximal order in $D$, then $A$ is a maximal
order in $D$.
\end{lemma}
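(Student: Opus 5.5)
The plan is to use the standard reflexivity characterization of maximal orders over a normal noetherian domain. An $R$-order $A$ that is finite projective over a normal domain $R$ is a maximal order if and only if $A_{\mathfrak p}$ is a maximal order for every height one prime $\mathfrak p$ of $R$. The reason is that a projective module is reflexive, so $A=\bigcap_{\mathrm{ht}\,\mathfrak p=1}A_{\mathfrak p}$; moreover, if $B\supseteq A$ satisfies $aBb\subseteq A$, then $B_{\mathfrak p}=A_{\mathfrak p}$ for every such $\mathfrak p$ forces $B\subseteq A$. I would recall this criterion first, citing the reflexive-order criterion of Fossum/Reiner, or the version in Brown--Goodearl used for quantum groups. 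The same criterion applied to $A_\Sigma$ over the normal domain $R_\Sigma$ shows that $A_{\mathfrak p}$ is maximal for every height one $\mathfrak p$ with $\mathfrak p\cap\Sigma=\emptyset$, that is, whenever the generic point of $V(\mathfrak p)$ lies in $U$. It therefore remains to treat the finitely many height one primes $\mathfrak p$ whose divisor $V(\mathfrak p)$ is contained in the closed complement $X\setminus U$.

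For such a $\mathfrak p$, let $Z=V(\mathfrak p)$. The stratification is finite, so some stratum $X_\alpha$ meets $Z$ in a dense open subset of $Z$; in particular $X_\alpha$ contains the generic point of $Z$. Since $U$ meets $X_\alpha$ and $X_\alpha$ is irreducible, $U\cap X_\alpha$ is dense open in $X_\alpha$. The key step is to compare the fibres of $A$ along $X_\alpha$. At closed points $x\in Z\cap X_\alpha$ the fibre $A(x)$ is isomorphic to $A(y)$ for $y\in U\cap X_\alpha$. I would then use that the fibres determine local behaviour: $\mathrm{Spec}\,R\supseteq X_\alpha\cap Z$ has codimension one in $X$ but is contained in $X_\alpha$, so $\dim X_\alpha\ge \dim X-1$. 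If $X_\alpha$ were the open stratum, its generic point would be that of $X$, which is not contained in $Z$. Hence $X_\alpha\subseteq Z$ and $X_\alpha$ is dense in $Z$, so $X_\alpha\cap U\subseteq Z\cap U=\emptyset$. This contradicts the hypothesis that $U$ meets every stratum. So no height one prime has its divisor outside $U$: the complement $X\setminus U$ has codimension at least $2$.

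With that, every height one prime $\mathfrak p$ avoids $\Sigma$, so $A_{\mathfrak p}=(A_\Sigma)_{\mathfrak p}$ is maximal. The reflexivity criterion then gives that $A$ is maximal. Concretely, suppose $A\subseteq B\subseteq D$ with $aBb\subseteq A$. Then $B_\Sigma$ satisfies the same condition relative to $A_\Sigma$, so $B_\Sigma=A_\Sigma$ and $B\subseteq\bigcap_{\mathfrak p}A_{\mathfrak p}=A$.

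The main obstacle I expect is the codimension argument of the second paragraph, specifically justifying that the stratum containing the generic point of a divisor $Z\subseteq X\setminus U$ must lie inside $Z$. This uses only irreducibility and local closedness of the strata and does not really need the fibre hypothesis. I would double-check it against the intended use: if the argument goes through, the fibre-isomorphism hypothesis serves only to guarantee, in applications such as $U=B_\theta$, that $U$ meets every stratum, for instance via Lemma~\ref{le:Bv}. If instead one wants a more robust proof, I would add the fibre hypothesis to control the discriminant ideal along each stratum, which is an alternative route to the same codimension statement.
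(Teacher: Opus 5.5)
\textbf{There is a genuine gap: the claim that $X\setminus U$ has codimension at least $2$ is false.} Your reduction to height-one primes, and your treatment of those $\mathfrak p$ with $\mathfrak p\cap\Sigma=\emptyset$, are fine. The failure is in your second paragraph.

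Suppose the generic point $\xi$ of a divisor $Z\subseteq X\setminus U$ lies in a stratum $X_\alpha$. Then $\overline{X_\alpha}$ is either $Z$ or $X$, and you never exclude the second case. The open dense stratum contains the generic point of $X$, but nothing stops it from also containing $\xi$; you conflated the generic point of $X_\alpha$ with that of $Z$.

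The lemma is in fact false without the fibre hypothesis, so that hypothesis is essential. Take $R=\C[t^2]\subset A=\C[t^2,t^3]$, which is free with basis $1,t^3$, take $\Sigma=\{t^{2k}\}$, and take the single stratum $X$.
\begin{itemize}
\item $U=\{t^2\neq 0\}$ meets that stratum.
\item $A_\Sigma=\C[t^{\pm1}]$ is a maximal order in $\C(t)$.
\item $A$ is not maximal, since $t^2\C[t]\subseteq A$ with $\C[t]\neq A$.
\end{itemize}
Here $X\setminus U$ is a divisor, and the only hypothesis that fails is fibre constancy: the fibre over $0$ is $\C[s]/(s^2)$, while the other fibres are $\C\times\C$.

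The same situation occurs in the intended application. $S\setminus B_\theta$ is cut out by the $\psi_\mu$ and is a union of prime divisors. So there are height-one primes outside $U$ that must be handled directly.

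The missing idea is a way to use fibre constancy at those primes; the paper does this with the trace form.
\begin{itemize}
\item Since $A$ is finite projective, $t(a)=\operatorname{Tr}_R(L_a)$ defines $\eta:A\to\operatorname{Hom}_R(A,R)$ by $\eta(a)(b)=t(ab)$.
\item Its non-isomorphism locus $\mathcal D=\operatorname{Supp}(\operatorname{coker}\eta)$ is closed. It is proper, because the trace form on $D$ is nondegenerate.
\item The closed points of $\mathcal D$ are exactly those $x$ with $A/\mathfrak m_xA$ not semisimple.
\item By the fibre hypothesis, $\mathcal D$ is a union of strata. So each irreducible component of $\mathcal D$ is the closure of a stratum, and therefore meets $U$.
\item Hence a divisor $V(\mathfrak p)\subseteq X\setminus U$ cannot lie in $\mathcal D$, so $\eta_{\mathfrak p}$ is an isomorphism.
\item Now let $B$ be an overorder. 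Every $c\in B_{\mathfrak p}$ satisfies $t_F(cA_{\mathfrak p})\subseteq R_{\mathfrak p}$, because $B_{\mathfrak p}$ is a free $R_{\mathfrak p}$-lattice stable under left multiplication by $cA_{\mathfrak p}$. Thus $B_{\mathfrak p}\subseteq A_{\mathfrak p}^{\#}=A_{\mathfrak p}$.
\end{itemize}
With this argument in place of your codimension claim, your reflexivity step $A=\bigcap_{\operatorname{ht}\mathfrak p=1}A_{\mathfrak p}$ completes the proof.
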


\begin{proof}
Put $F=\operatorname{Frac}R$ and
$n=\operatorname{rank}_R A$. The algebra $F\otimes_R A$ is
a finite-dimensional domain over $F$, hence a division
algebra, and therefore coincides with $D$. If $\dim R=0$,
then $A=D$ and the assertion is immediate. We henceforth
assume that $\dim R>0$.

Since $A$ is finite projective over $R$, the local matrix
traces of left multiplication define an $R$-linear map
\[
t:A\longrightarrow R,\qquad
t(a)=\operatorname{Tr}_R(L_a).
\]
Consider the homomorphism
\[
\eta:A\longrightarrow A^\vee
:=\operatorname{Hom}_R(A,R),
\qquad
\eta(a)(b)=t(ab).
\]
Its scalar extension to $F$ is an isomorphism. Indeed, for
$0\ne d\in D$,
\[
t_F(dd^{-1})=t_F(1)=n\ne0,
\]
so the trace pairing on $D$ is nondegenerate.

Define the non-isomorphism locus of $\eta$ by
\[
\mathcal D
:=
\left\{
\mathfrak p\in\operatorname{Spec}R
\;\middle|\;
\eta_{\mathfrak p}:A_{\mathfrak p}
\longrightarrow (A^\vee)_{\mathfrak p}
\text{ is not an isomorphism}
\right\}.
\]
Since $A$ is finite projective, there is a canonical
identification
\[
(A^\vee)_{\mathfrak p}
\cong
\operatorname{Hom}_{R_{\mathfrak p}}
(A_{\mathfrak p},R_{\mathfrak p}).
\]
The source and target of $\eta$ are finite locally free
of the same rank. Consequently,
\[
\mathcal D
=
\operatorname{Supp}_R(\operatorname{coker}\eta),
\]
and in particular $\mathcal D$ is a closed subset of $\operatorname{Spec}R$. Since the base change of $\eta$ to $F$ is an isomorphism,
$\mathcal D$ does not contain the generic point and is
therefore proper.

Since matrix trace commutes with base change, the
specialization of $\eta$ at a closed point $x$ is the
regular trace pairing of $A/\mathfrak m_xA$.

For a finite-dimensional complex algebra $H$, the regular
trace pairing is nondegenerate if and only if $H$ is
semisimple. Indeed, if $J=\operatorname{rad}H$, then
$jh$ is nilpotent for all $j\in J$ and $h\in H$, whence
$\operatorname{Tr}_{\mathbb C}(L_{jh})=0$. Conversely,
on each simple summand $\operatorname{Mat}_d(\mathbb C)$
the regular trace pairing is $d$ times the ordinary
matrix trace pairing, and is therefore nondegenerate.
Consequently,
\[
\mathcal D(\mathbb C)
=
\{x\in X(\mathbb C):
  A/\mathfrak m_xA\text{ is not semisimple}\}.
\]
By the hypothesis on the fibres, $\mathcal D$ is a union
of strata. Since the stratification is finite and its
strata are irreducible, every irreducible component of
$\mathcal D$ is the closure of a stratum. Hence every
irreducible component of $\mathcal D$ meets $U$.

Let $A\subseteq B\subseteq D$ be an equivalent overorder;
thus $aBb\subseteq A$ for some nonzero $a,b\in D$.
The inclusion
\[
B\subseteq a^{-1}Ab^{-1}
\]
implies that $B$ is finite over $R$, since $R$ is central
and Noetherian. Localizing at $\Sigma$ and using the
maximality of $A_\Sigma$, we obtain
$B_\Sigma=A_\Sigma$. In particular,
$B_{\mathfrak p}=A_{\mathfrak p}$ for every height-one
prime $\mathfrak p$ belonging to $U$.

Now let $\mathfrak p$ be a height-one prime outside $U$.
Then $V(\mathfrak p)\subseteq X\setminus U$.
If $\mathfrak p$ belonged to $\mathcal D$, the divisor
$V(\mathfrak p)$ would be an irreducible component of
the proper closed subset $\mathcal D$, contradicting
the preceding paragraph. Thus $\eta_{\mathfrak p}$
is an isomorphism.

Note that $R_{\mathfrak p}$ is a discrete
valuation ring. The module $B_{\mathfrak p}$ is finite
and torsion-free over $R_{\mathfrak p}$, and hence is
free. Moreover, it spans $D$
over $F$. For $c\in B_{\mathfrak p}$ and
$x\in A_{\mathfrak p}$, left multiplication by $cx$
preserves $B_{\mathfrak p}$. Computing its trace with
respect to an $R_{\mathfrak p}$-basis gives
\[
t_F(cx)\in R_{\mathfrak p}.
\]
Thus $c$ belongs to the trace-dual lattice
\[
A_{\mathfrak p}^{\#}
:=
\{d\in D:
  t_F(dA_{\mathfrak p})\subseteq R_{\mathfrak p}\}.
\]
Since $\eta_{\mathfrak p}$ is an isomorphism and
$\eta_F$ is nondegenerate,
$A_{\mathfrak p}^{\#}=A_{\mathfrak p}$.
It follows that $B_{\mathfrak p}=A_{\mathfrak p}$ for
every height-one prime $\mathfrak p$.

Finally, $A$ is reflexive over $R$, being finite
projective. By \cite[Tag~0AVB]{Stacks},
\[
A=\bigcap_{\operatorname{ht}\mathfrak p=1}A_{\mathfrak p}
\qquad\text{inside }D.
\]
Every element of $B$ belongs to each
$B_{\mathfrak p}=A_{\mathfrak p}$, and hence belongs
to $A$. Therefore $B=A$, proving the assertion.
\end{proof}

\begin{lemma}\label{lem:big-cell-maximal-order}
The algebra $\O_\epsilon[B_\theta]$ is a Noetherian
domain and a maximal order in its division ring of
fractions.
\end{lemma}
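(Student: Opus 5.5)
We plan to use the explicit description of $\O_\epsilon[B_\theta]$ from the proof of Lemma~\ref{le:loPsi}. It is the smash product $\O_\epsilon[N(w_0w_\bullet)]\#\O_\epsilon[T_\theta]$. The torus part is a Laurent polynomial algebra $\C[t_1^{\pm1},\dots,t_m^{\pm1}]$, and the $t_a$ act on the root generators $x_j$ by $t_a x_j t_a^{-1}=\epsilon^{c_{aj}}x_j$ for suitable integers $c_{aj}$, by \eqref{eq:com}. Hence $\O_\epsilon[B_\theta]$ is an iterated skew Laurent extension of $\O_\epsilon[N(w_0w_\bullet)]$ by automorphisms. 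Moreover, $\O_\epsilon[N(w_0w_\bullet)]$ is an iterated Ore extension in the dual PBW generators $x_1,\dots,x_L$. This follows from the Levendorskii--Soibelman straightening relations: for $i<j$, the element $x_jx_i-\epsilon^{(\beta_i,\beta_j)}x_ix_j$ is a combination of ordered monomials in $x_{i+1},\dots,x_{j-1}$. These relations hold integrally for $\DCU^+(w)$ (cf.\ \cite{Ki12}), so they specialize at $q=\epsilon$. Iterated Ore and skew Laurent extensions of $\C$ are Noetherian domains, which gives the first assertion. The PBW basis of monomials $x_L^{a_L}\cdots x_1^{a_1}t^{b}$ provides the leading-term structure.

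For maximality we use the standard criterion for orders that are finite over a central subalgebra. By Lemma~\ref{le:loPsi}, $A=\O_\epsilon[B_\theta]$ is free of finite rank over the central subalgebra $Z_0=\C[x_1^\ell,\dots,x_L^\ell,t_1^{\pm\ell},\dots,t_m^{\pm\ell}]$, which is a regular (hence normal) domain. For a prime PI Noetherian ring finite over a normal central subring, it is enough to show that $A$ is a maximal order. We plan to obtain this by the classical route through associated graded rings and localization, rather than by checking the definition directly.

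\textbf{Step one.} Filter $A$ by a degree function on the PBW monomials adapted to the straightening relations, for instance the lexicographic ordering used by De Concini--Kac--Procesi. The associated graded ring is then the quantum torus-like algebra $\mathrm{gr}A$, a skew polynomial ring in $\bar x_1,\dots,\bar x_L$ over the Laurent ring in $t_a$, with $\bar x_j\bar x_i=\epsilon^{(\beta_i,\beta_j)}\bar x_i\bar x_j$. \textbf{Step two.} Show that $\mathrm{gr}A$ is a maximal order. Localizing at the normal elements $\bar x_j$ gives a quantum torus, which is a maximal order; indeed it is simple-graded with units on all homogeneous elements. The skew polynomial ring is recovered as an intersection of the quantum torus with the localizations at the height-one primes $(\bar x_j)$, and these localizations are noncommutative discrete valuation rings. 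Maximality of $\mathrm{gr}A$ then follows from the standard intersection argument (cf.\ Chamarie, or Maury--Raynaud). \textbf{Step three.} Lift this to $A$ using the theorem that a complete, or Zariskian, filtered ring whose associated graded ring is a Noetherian maximal order is itself a maximal order (Van Oystaeyen--Li, Marubayashi). The filtration is by $\N$-grading on the $x$'s, degreewise finite over the torus part, so the Zariskian condition holds.

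The main obstacle is Step three. We must ensure that the filtration is genuinely Zariskian, and that the lifting theorem applies when only a positive filtration is available. If this fails, the alternative is direct induction on the Ore extension, via a noncommutative analogue of Lemma~\ref{lem:maximal-order-chart}. Passing from $A'$ to $A'[x;\sigma,\delta]$, one localizes at the Ore set of powers of the normal element obtained after a Cauchon-type deleting-derivations change of variables, at roots of unity. This reduces the problem to checking a single height-one prime, namely that generated by the new variable, where the fibre is the previous stage and is a domain.
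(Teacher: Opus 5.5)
\textbf{Comparison with the paper.} Your argument for the Noetherian domain property is the paper's. Both write $\O_\epsilon[B_\theta]$ as an iterated Ore extension. The paper puts the Laurent ring at the bottom and adjoins $x_1,\dots,x_L$; your order, with the $t_a$ adjoined last as skew Laurent variables, is equally valid because the $t_a$ act diagonally.

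For maximality you take a genuinely different route. The paper never passes to a graded ring. It cites \cite[Lemma~2.2 and Theorem~2.4]{HMU16}: $R[x;\sigma,\delta]$ is a Noetherian maximal order whenever $R$ is. It then climbs the tower from the Laurent ring, derivations included. You instead degenerate so that all $\delta_j$ vanish. That needs only the classical $\delta=0$ case (skew polynomial and skew Laurent extensions of maximal orders are maximal orders, which also replaces your vaguer intersection argument in Step two), plus a filtered lifting theorem. Your route avoids the skew-derivation theorem; its cost is the choice of filtration.

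\textbf{The filtration needs fixing.} Plain total degree in the $x_j$ fails, because the lower terms of the Levendorskii--Soibelman relations need not have smaller total degree. For example, in type $A_3$ with $\un{w_0}=s_2s_1s_3s_2s_1s_3$ one has $\beta_1+\beta_4=\beta_2+\beta_3$. Then $x_1,\dots,x_4$ satisfy the quantum-matrix relation, whose lower term is a multiple of $x_2x_3$. Such $w^\bullet$ occur, e.g.\ for $SL_4/SO_4$. The De Concini--Kac--Procesi lexicographic order gives a filtration indexed by $\N^L$, not by $\N$.

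Instead use the weights $\deg x_s=K^{L-s}$ and $\deg t_a=0$, where $h$ is the maximal root height and $K>2h$. The lower terms for a pair $i<j$ involve only $x_s$ with $i<s<j$, and have at most $2h$ factors. So they have strictly smaller weighted degree, and $\mathrm{gr}\,A$ is exactly your skew polynomial ring over $\C[t^{\pm1}]$.

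\textbf{Step three and the fallback.} With this positive $\N$-filtration, Step three is no obstacle. If the associated graded ring of a positively filtered ring is a Noetherian domain and a maximal order, then the ring is a maximal order. This is the standard result used to show that Weyl algebras and enveloping algebras are maximal orders (McConnell--Robson, \emph{Noncommutative Noetherian Rings}, Chapter~5); no completeness issues arise.

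You should drop your fallback. Cauchon's deleting-derivations map involves $1/[n]_\epsilon!$, which is undefined for $n\ge\ell$, so it is unavailable at $q=\epsilon$. It is also unnecessary, since the theorem of \cite{HMU16} handles $\delta\neq0$ directly.
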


\begin{proof}
Choose a basis of $\breve X$ and a reduced expression
of $w^\bullet$. Set $r=\operatorname{rank}\breve X$,
and let $L$ be the length of $w^\bullet$.
The PBW basis from Section~\ref{sec:PBW}, the
Levendorskii--Soibelman straightening relations, and
the cross-relations \eqref{eq:com} give a presentation
\[
\O_\epsilon[B_\theta]
\cong
\mathbb C[t_1^{\pm1},\ldots,t_r^{\pm1}]
[x_1;\sigma_1,\delta_1]\cdots[x_L;\sigma_L,\delta_L].
\]
Here $x_j$ is the specialization of the root vector
$\tE_{\beta_j}$, each $\sigma_j$ is an automorphism
acting diagonally on the preceding generators, and
$\delta_j$ is a $\sigma_j$-derivation. The lower terms
in the straightening relation for $x_jx_i$, $i<j$,
involve only the intermediate root vectors. The
integral PBW basis ensures that the presentation
remains valid after specialization.

The Laurent polynomial coefficient ring is a
Noetherian maximal order. By
\cite[Lemma~2.2 and Theorem~2.4]{HMU16}, Ore extension
by an automorphism and a skew derivation preserves
Noetherianity and the maximal order property; here
one takes the invertible ideal in that reference to
be the whole coefficient ring. Applying these results
successively proves maximality. The domain property
follows at each step from the injectivity of
$\sigma_j$.
\end{proof}

\begin{theorem}\label{thm:maximal-order}
Under the standing assumptions on $\ell$, the algebra
$\O_\epsilon[S]$ is a Noetherian domain and a maximal
order in its division ring of fractions.
\end{theorem}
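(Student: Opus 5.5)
The plan is to apply Lemma~\ref{lem:maximal-order-chart} with $R=\O_1[S]\cong\O[S]$, identified with its Frobenius image, and $A=\O_\epsilon[S]$. The multiplicative subset will be $\Sigma=\Psi$, the open subset $U=B_\theta$, and the stratification $S=\bigsqcup_{u\le w^\bullet}S_u$. The argument has three steps: check the hypotheses on $R$ and on the stratification, show that $A$ is a Noetherian domain, and verify the maximal-order hypothesis on the chart.

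First, the hypotheses on $R$ and on the strata. $R$ is a normal affine domain because $S=G/K$ is a smooth irreducible affine variety. By Theorem~\ref{thm:Fr}, $R$ is central in $A$, and by Theorem~\ref{thm:projectivity}, $A$ is finite projective over $R$. The strata $S_u$ are finite in number and irreducible, since they are smooth and connected by \cite[Lemma 2.4]{Bao26}. The fibres are constant along each stratum by Theorem~\ref{thm:fiberiso}. The chart: $\Psi^{-1}\O[S]=\O[B_\theta]$ by the bottom row of \eqref{cd:tG}, so $\operatorname{Spec}R_\Psi\to S$ is the open immersion of $B_\theta$. $U=B_\theta$ meets every stratum, because $B_{\theta,v}\cong N(w^\bullet)_v\times T_\theta$ is nonempty for each $v\le w^\bullet$ by Lemma~\ref{le:Bv}; this uses that the open Richardson-type set $N(w^\bullet)_v$ is nonempty for $v\le w^\bullet$.

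Second, $A$ is a Noetherian domain. The map $\Gamma_\epsilon$ is injective, as shown in Section~\ref{sec:open-chart}, so $A$ embeds in $\O_\epsilon[B_\theta]$. The latter is a domain by Lemma~\ref{lem:big-cell-maximal-order}, hence $A$ is a domain. Noetherianity follows because $A$ is a finitely generated module over the Noetherian commutative ring $R$. In particular $D=\operatorname{Frac}A$ exists, for instance as $F\otimes_R A$ with $F=\operatorname{Frac}R$.

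Third, the maximal-order hypothesis on the chart. Lemma~\ref{le:loPsi} together with \eqref{cd:tG} identifies $A_\Psi=\Psi^{-1}\O_\epsilon[S]$ with $\O_\epsilon[B_\theta]$. This localization is central, so it lies between $A$ and $D$, and $\operatorname{Frac}\O_\epsilon[B_\theta]=D$. By Lemma~\ref{lem:big-cell-maximal-order}, $\O_\epsilon[B_\theta]$ is a maximal order in its division ring of fractions, which is $D$. Lemma~\ref{lem:maximal-order-chart} then yields the theorem.

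The main obstacle I expect is bookkeeping in this last step. One must make sure the central localization $\Psi^{-1}A$, taken as an $R$-module, agrees with the noncommutative localization used in Proposition~\ref{prop:integral-localization}. This holds because inverting $\psi_{\mu,\epsilon}^\ell=\Fr^*(\psi_{\mu,1})$ inverts $\psi_{\mu,\epsilon}$, by \eqref{eq:Fr-psi-power}. One must also confirm that the two fraction fields coincide inside a common division ring.
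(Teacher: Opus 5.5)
Your proposal is correct and follows the paper's proof essentially step for step. The paper likewise applies Lemma~\ref{lem:maximal-order-chart} with $R=\O[S]$, $\Sigma=\Psi$, $U=B_\theta$ and the strata $S_u$; it gets the domain property from the injectivity of $\Gamma_\epsilon$ together with Lemma~\ref{lem:big-cell-maximal-order}, and identifies $\Psi^{-1}\O_\epsilon[S]$ with $\O_\epsilon[B_\theta]$ via \eqref{eq:Fr-psi-power}, while your checks on the fraction fields and on the nonemptiness of $N(w^\bullet)_v$ just make explicit what the paper leaves implicit.
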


\begin{proof}
Set $A=\O_\epsilon[S]$ and identify $R=\O[S]$ with its
Frobenius image in $Z(A)$ via Theorem~\ref{thm:Fr}.
Since $S=G/K$ is smooth and irreducible, $R$ is a normal
affine domain. By Theorem~\ref{thm:projectivity}, $A$
is finite projective over $R$, and in particular is
Noetherian. By the injectivity of $\Gamma_\epsilon$ proved in Section~\ref{sec:open-chart}, we have an embedding
\[
A\hookrightarrow\O_\epsilon[B_\theta].
\]
Together with Lemma~\ref{lem:big-cell-maximal-order}, this implies that
$A$ is a domain.

By Proposition~\ref{prop:integral-localization} and
Lemma~\ref{le:loPsi}, the central localization
$\Psi^{-1}A$ is isomorphic to $\O_\epsilon[B_\theta]$,
and the corresponding open subset of
$\operatorname{Spec}R$ is $B_\theta$.
Indeed, the Frobenius image of the classical
$\psi_\mu$ is $\psi_{\mu,\epsilon}^{\ell}$, and
inverting this central power is equivalent to
inverting $\psi_{\mu,\epsilon}$.
Thus $\Psi^{-1}A$ is a maximal order by
Lemma~\ref{lem:big-cell-maximal-order}.

The strata $S_u$, $u\leq w^\bullet$, are smooth and
connected, hence irreducible.
Theorem~\ref{thm:fiberiso} gives constancy of the
fibre algebras on each stratum, while
Lemma~\ref{le:Bv} gives
\[
B_\theta\cap S_u\ne\varnothing
\qquad\text{for every }u\leq w^\bullet.
\]
Lemma~\ref{lem:maximal-order-chart} now applies and
proves the theorem.
\end{proof}

\subsection{PI degrees}
\label{sec:PI-general}
Since the algebras $\O_\epsilon[S]$ and $\O_\epsilon[B_\theta]$ are finite over their Frobenius central subalgebras, they are polynomial identity (PI) algebras. In this subsection, we determine their PI degrees. Thanks to Proposition \ref{prop:integral-localization}, one has
\begin{equation}\label{eq:PIid}
    \text{PIdeg }\O_\epsilon[S]=\text{PIdeg }\O_\epsilon[B_\theta]
\end{equation}

The conditions on $\ell$ imply that $\gcd(\ell,d_i)=1$ for all $i\in\I$.

Throughout this subsection, put $w=w^\bullet=w_0w_\bullet$, $L=l(w)$, and $m=\operatorname{rank}\breve X$. Choose a reduced expression $w=s_{i_1}\cdots s_{i_L}$, with associated roots $\beta_1,\ldots,\beta_L$, and a $\Z$-basis $\lambda_1,\ldots,\lambda_m$ of $\breve X=(1-\theta)X$. We use the integral pairing
\[
(\beta,\lambda)
=d_\beta\langle\beta^\vee,\lambda\rangle,
\qquad \beta\in\Phi,\quad\lambda\in X,
\]
where $d_{\beta_k}=d_{i_k}$. On the root lattice this is the symmetric form fixed by the Cartan datum.

Define $A_w\in M_L(\Z)$ and $C\in M_{L\times m}(\Z)$ by
\[
(A_w)_{ij}=\begin{cases}
(\beta_i,\beta_j),&i<j,\\
0,&i=j,\\
-(\beta_i,\beta_j),&i>j,
\end{cases}
\qquad C_{ia}=(\beta_i,\lambda_a),
\]
and put
\begin{equation}\label{eq:PI-commutation-matrix}
M=\begin{pmatrix}A_w&-C\\ C^{\mathsf T}&0\end{pmatrix}
\in M_{L+m}(\Z).
\end{equation}
Note that the signs in $M$ correspond to listing the root generators first and the torus generators last.

\begin{theorem}\label{thm:PI-general}
Let $\overline M:(\Z/\ell\Z)^{L+m}\rightarrow(\Z/\ell\Z)^{L+m}$ be the homomorphism induced by $M$. Then
\begin{equation}\label{eq:PI-exact}
\operatorname{PIdeg}\O_\epsilon[S]
=\sqrt{|\operatorname{im}\overline M|}.
\end{equation}
Equivalently, write the nonzero invariant factors of $M$ as
$h_1,h_1,\ldots,h_s,h_s$, with $h_j>0$ and $h_j\mid h_{j+1}$. Then
\begin{equation}\label{eq:PI-invariant-factors}
\operatorname{PIdeg}\O_\epsilon[S]
=\prod_{j=1}^s\frac{\ell}{\gcd(\ell,h_j)}.
\end{equation}
The empty product is understood to be $1$.
\end{theorem}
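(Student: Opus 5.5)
By \eqref{eq:PIid} it suffices to compute the PI degree of $\O_\epsilon[B_\theta]$. The plan is to degenerate $\O_\epsilon[B_\theta]$ to a quantum torus and read off the PI degree from its commutation matrix.

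\emph{Step 1: the associated graded is a quantum affine space.} Use the iterated Ore extension presentation from the proof of Lemma~\ref{lem:big-cell-maximal-order}. Order the generators as root vectors $x_1,\dots,x_L$ followed by torus generators $t_1,\dots,t_m$. The Levendorskii--Soibelman relations read
\[
x_jx_i=\epsilon^{(\beta_i,\beta_j)}x_ix_j+(\text{terms in }x_{i+1},\dots,x_{j-1}),
\]
up to the sign convention built into $A_w$. The cross relations \eqref{eq:com} give $t_ax_i=\epsilon^{\pm(\beta_i,\lambda_a)}x_it_a$; these exponents are the entries of $C$, and the signs are those recorded in $M$. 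Choose a degree filtration on PBW monomials that kills the lower terms, as in the standard De Concini--Kac--Procesi argument. The associated graded algebra is then the mixed quantum affine/torus algebra $\mathcal Q$, with $t_a$ invertible and commutation relations $y_iy_j=\epsilon^{M_{ij}}y_jy_i$.

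\emph{Step 2: the PI degree is preserved under this degeneration.} Invoke the De Concini--Procesi degeneration principle. If a filtered algebra $A$ is a domain, finite over a central subalgebra compatible with the filtration, and $\operatorname{gr}A$ is a domain, then $\operatorname{PIdeg}A\ge\operatorname{PIdeg}\operatorname{gr}A$. The reverse inequality uses the common central subalgebra $Z_0$ from Lemma~\ref{le:loPsi}. That subalgebra is generated by $x_j^\ell$ and $t_a^{\pm\ell}$, is central in both $A$ and $\operatorname{gr}A$, and both are free of the same rank $\ell^{L+m}$ over it. Comparing the degrees of the centers over $Z_0$, for instance via a lifting of central monomials, should then give equality. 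Alternatively, localize $\mathcal Q$ at the normal elements $x_i$ to a quantum torus; the Ore localization does not change the PI degree.

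\emph{Step 3: compute the PI degree of the quantum torus.} For $\epsilon$ a primitive $\ell$-th root of unity, the quantum torus with skew-symmetric integral matrix $M$ has PI degree $\sqrt{|\operatorname{im}\overline M|}$. The reason is as follows. Its center is spanned by the monomials $y^{\mathbf a}$ with $\mathbf a\in\ker\overline M$. Its rank over the center is therefore $\ell^{L+m}/|\ker\overline M|=|\operatorname{im}\overline M|$, and it is an Azumaya algebra over its center. This is the De Concini--Kac--Procesi lemma. To get the second formula, bring the skew-symmetric matrix $M$ to its Smith/skew normal form. It becomes a sum of blocks $\begin{pmatrix}0&h_j\\-h_j&0\end{pmatrix}$ plus zeros, so $|\operatorname{im}\overline M|=\prod_j(\ell/\gcd(\ell,h_j))^2$, which yields \eqref{eq:PI-invariant-factors}.

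The main obstacle is Step 2. Specifically, I must check that the lower terms in the straightening relations are strictly smaller in a suitable total order, so that the filtration is well defined and its associated graded is a domain at $\epsilon$. I also need the equality of PI degrees, not merely an inequality. For the filtration I would first try the DCKP weighting by heights of $\beta_i$ combined with a lexicographic order. If direct equality fails, I would instead use the domain property and maximal-order property from Theorem~\ref{thm:maximal-order} together with the rank count over $Z_0$ to force equality.
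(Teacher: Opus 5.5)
Your architecture is the paper's. You reduce via \eqref{eq:PIid} to $\O_\epsilon[B_\theta]$, degenerate its iterated Ore presentation to the quasi-polynomial algebra with matrix $M$, and read off the degree from $\overline M$ and the skew normal form. Steps 1 and 3 are fine. The paper's proof is exactly this, with the degeneration step supplied by \cite[Theorem~6.4]{DCP93}.

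Your Step 2, however, does not prove the needed equality. The inequality $\operatorname{PIdeg}A\ge\operatorname{PIdeg}\operatorname{gr}A$ is the easy half, since multilinear identities pass to $\operatorname{gr}A$. The reverse inequality is the whole content of the cited theorem, and your suggested routes do not give it. Freeness of $A$ and $\operatorname{gr}A$ of the same rank $\ell^{L+m}$ over a common central $Z_0$ only yields
$\ell^{L+m}=\operatorname{PIdeg}(A)^2\,[\operatorname{Frac}Z(A):\operatorname{Frac}Z_0]=\operatorname{PIdeg}(\operatorname{gr}A)^2\,[\operatorname{Frac}Z(\operatorname{gr}A):\operatorname{Frac}Z_0]$.
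Nothing forces the two centres to have the same degree over $Z_0$, and central monomials of $\operatorname{gr}A$ need not lift to central elements of $A$. The first Weyl algebra over a field $k$ of characteristic $p>0$ shows the principle fails as you state it. Both it and its Bernstein associated graded $k[x,\partial]$ are domains, free of rank $p^2$ over the compatible central subalgebra $k[x^p,\partial^p]$. Yet their PI degrees are $p$ and $1$. The maximal-order property of Theorem~\ref{thm:maximal-order} likewise says nothing about the PI degree.

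The missing idea is the structural hypothesis that separates your algebra from such examples. Take the presentation $\mathbb{C}[t_1^{\pm1},\ldots,t_m^{\pm1}][x_1;\sigma_1,\delta_1]\cdots[x_L;\sigma_L,\delta_L]$ from Lemma~\ref{lem:big-cell-maximal-order}. There $\sigma_j$ multiplies a homogeneous element of weight $\gamma$ by $\epsilon^{\pm(\gamma,\beta_j)}$, while $\delta_j$ kills the $t_a$ and raises weight by $\beta_j$. Both $\sigma_j\delta_j$ and $\delta_j\sigma_j$ are $(\sigma_j^2,\sigma_j)$-twisted derivations, so comparing them on generators gives $\sigma_j\delta_j=\epsilon^{\pm(\beta_j,\beta_j)}\delta_j\sigma_j$. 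Moreover $\epsilon^{(\beta_j,\beta_j)}=\epsilon^{2d_{i_j}}\neq1$ because $\gcd(\ell,2d_i)=1$, and each $\delta_j$ is locally nilpotent, since quantum Schubert cells are CGL-type extensions. These are precisely the hypotheses under which De Concini--Procesi prove that such an iterated twisted-derivation extension has the same degree as its associated quasi-polynomial algebra. For the Weyl algebra the corresponding scalar is $1$, which is why it escapes the theorem. You need to verify these hypotheses and invoke or reprove that theorem, rather than rely on a rank count. This is also where the arithmetic condition on $\ell$ enters, beyond its use in Step 3.
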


\begin{proof}
    By \eqref{eq:PIid}, it suffices to compute the PI degree of $\O_\epsilon[B_\theta]$. By the smash-product description of $\O_\epsilon[B_\theta]$, the Proposition follows from \cite[Theorem 6.4]{DCP93}.
\end{proof}

We next control the invariant factors. Put $D=\prod_{i\in\I}d_i$. On the full root datum, write $\tau=-w_\bullet\theta$, extending the given Satake involution. Recall that $w^2=1$ and
\[
w\theta=\theta w=-w_0\tau.
\]
All kernels in characteristic zero below are taken on $X\otimes_\Z\Q$.

\begin{lemma}\label{lem:PI-matrix-rank}
For every field $k$ in which $2D$ is invertible, one has
\begin{equation}\label{eq:PI-rational-rank}
\operatorname{rank}_k(M\otimes k)
=\dim S-\dim_\Q\ker(1-w_0\tau).
\end{equation}
Consequently, every prime divisor of a nonzero invariant factor of $M$ divides $2D$.
\end{lemma}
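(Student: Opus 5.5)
The plan is to compute $\dim_k\ker(M\otimes k)$ and show that it equals $\dim_\Q\ker(1-w_0\tau)$. The rank formula \eqref{eq:PI-rational-rank} then follows from $L+m=\dim S$, which comes from \eqref{eq:NTB}. The consequence about invariant factors is formal. If a prime $p\nmid 2D$ divided some nonzero invariant factor of $M$, the rank of $M$ over $\mathbb F_p$ would be strictly smaller than its rank over $\Q$. Both ranks equal the same number by \eqref{eq:PI-rational-rank}, applied to $k=\Q$ and $k=\mathbb F_p$, so this cannot happen.

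Let $k$ be a field with $2D$ invertible. Write a kernel vector as $(a,b)\in k^L\oplus k^m$ and put
\[
\gamma=\sum_i a_i\beta_i\in Q\otimes k,\qquad \nu=\sum_a b_a\lambda_a\in\breve X\otimes k.
\]
The kernel conditions are $C^{\mathsf T}a=0$ and $A_wa=Cb$. The first says $(\gamma,(1-\theta)\lambda)=0$ for all $\lambda\in X$. Since $\theta$ preserves the form, this is $((1-\theta)\gamma,\lambda)=0$ for all $\lambda$. Because $2D$ is invertible, the pairing $(\beta,\lambda)=d_\beta\langle\beta^\vee,\lambda\rangle$ between $Q\otimes k$ and $X\otimes k$ is nondegenerate on the root part, so the condition becomes $\theta\gamma=\gamma$. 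The second condition says that for each $i$, $(A_wa)_i=(\beta_i,\nu)$.

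To handle $A_w$ I will use the De Concini--Kac--Procesi analysis of the commutation matrix of $\U^+(w)$ from \cite{DCP93}. There the skew form $A_w$ is rewritten, after a unitriangular change of basis, in terms of the map $a\mapsto\gamma$ and the operator $w$. Concretely, one has a telescoping identity: $(A_wa)_i$ is expressed through $\bigl(\beta_i,(1+w)\text{-type partial sums of }\gamma\bigr)$. Their identity identifies $\ker A_w$ with $\ker(1+w)$ on the span of $\beta_1,\ldots,\beta_L$, which is the root space spanned by $N(w)$. Substituting this into $A_wa=Cb$ should turn the system into the linear conditions
\[
(1+w)\gamma\equiv\nu\quad\text{(up to the identification of root and weight spaces via the form)},\qquad \theta\gamma=\gamma,
\]
together with the condition that the part of $\nu$ orthogonal to all roots is free. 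Using $w\theta=\theta w=-w_0\tau$ and $w^2=1$, the solution space should be isomorphic to $\ker(1-w_0\tau)$ on $X\otimes k$. Roughly, the $\theta$-fixed $\gamma$ with $(1+w)\gamma\in(1-\theta)X$ give the root part, and the $\nu$ orthogonal to all roots give the central part. Finally, I will use that $\dim\ker(1-w_0\tau)$ is the same over $k$ as over $\Q$, since $w_0\tau$ is an involution and $2$ is invertible in $k$.

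The main obstacle is the middle step: adapting the \cite{DCP93} identity to the augmented matrix with the torus block $C$, and keeping track of the difference between the root lattice and the full character lattice $X$ (the torus generators range over $\breve X$, not $Q$). I would first treat the case where $G$ is semisimple and $X=Q$, then add the central directions, which contribute equally to both sides. A careful check is needed that only the primes dividing $2D$ enter, namely when inverting $d_\beta$ and when splitting into $\theta$-eigenspaces.
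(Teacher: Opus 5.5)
Your outer frame matches the paper. You compute $\dim_k\ker(M\otimes k)$, use $L+m=\dim S$, and note that $\ker(1-w_0\tau)$ has the same dimension over any field in which $2$ is invertible. You then get the invariant-factor statement by comparing ranks over $\mathbb F_p$ and $\Q$.

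\textbf{The middle step is missing, and the conditions you predict are wrong.} That step is the whole content of the lemma, and you defer solving $A_wa=Cb$ to an unspecified telescoping identity. Take split $SL_3/SO_3$: $\theta=-1$ on $X$, $w=w_0=s_1s_2s_1$, $L=3$, $m=2$. Over $\Q$, your description (only $\theta$-fixed $\gamma$, plus $\nu$ orthogonal to all roots) forces $\gamma=0$ and $\nu=0$. That gives a trivial kernel, which is impossible for a $5\times5$ skew-symmetric matrix. In fact the kernel is spanned by $a=\tfrac12(-1,1,-1)$ with torus part $\nu=\omega_1-\omega_2$. Here $\gamma=0$, but $\nu$ is not orthogonal to the roots.

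The missing idea is to read $\gamma$ in $X\otimes k$ and rewrite the $i$-th row of $A_wa=Cb$ as a reflection recursion:
\[
a_i=\tfrac12\langle\beta_i^\vee,\nu_{i-1}\rangle,\qquad \nu_0=\gamma-\nu,\qquad \nu_i=\nu_{i-1}-2a_i\beta_i=s_{\beta_i}\nu_{i-1}.
\]
This is where invertibility of $d_{\beta_i}$ is used. Since $s_{\beta_L}\cdots s_{\beta_1}=w$, telescoping gives $w(\gamma-\nu)=\nu_L=-\gamma-\nu$. It also gives $2\gamma=(1-w)(\gamma-\nu)$ in $Q\otimes k$, with $1-w$ viewed as a map $X\to Q$ before tensoring. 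Hence $w\gamma=-\gamma$ and $w\nu=\nu$. Conversely, every such pair determines $a$ through the recursion.

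\textbf{Your derivation of $\theta\gamma=\gamma$ from $C^{\mathsf T}a=0$ is invalid as stated.} Invertibility of $2D$ does not make the pairing of $Q\otimes k$ with $X\otimes k$ nondegenerate on the root side, because $Q^\vee\otimes k\to Y\otimes k$ need not be injective. For split $PGL_3$ over $k=\mathbb F_3$, the vector $a=(1,0,-1)$ gives $\gamma=\alpha_1-\alpha_2$. This $\gamma$ pairs to zero with all of $X\otimes k$, so $C^{\mathsf T}a=0$, yet $\theta\gamma=-\gamma\neq\gamma$. You cannot exclude such fields, since the invariant-factor consequence needs every $\mathbb F_p$ with $p\nmid 2D$.

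The same issue defeats your plan to treat $X=Q$ first and then add central directions. Over $\mathbb F_p$, $X\otimes k$ need not split as the image of $Q\otimes k$ plus $X^W\otimes k$.

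The remedy is to establish $w\gamma=-\gamma$ first. Write $X_k=X\otimes k$, $Y_k=Y\otimes k$, and use superscripts $\pm w$, $\pm\theta$ for eigenspaces. On $(-w)$-eigenspaces, the maps $Q\otimes k\to X_k$ and $Q^\vee\otimes k\to Y_k$ are isomorphisms, with inverses induced by $\tfrac12(1-w)$. Now $w$ commutes with $\theta$, and the annihilator of $X_k^{-\theta}=\breve X\otimes k$ is $Y_k^{+\theta}$. So $C^{\mathsf T}a=0$ becomes exactly $\theta\gamma=\gamma$. Together with the recursion, this yields
\[
\ker(M\otimes k)\cong(X_k^{-w}\cap X_k^{+\theta})\oplus(X_k^{+w}\cap X_k^{-\theta})=\ker(1+w\theta)=\ker(1-w_0\tau).
\]
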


\begin{proof}
    Write $X_k=X\otimes_\Z k$, $Y_k=Y\otimes_\Z k$, and let $Q$ and $Q^\vee$ be the root and coroot lattices. The natural maps $Q_k\rightarrow X_k$ and $Q_k^\vee\rightarrow Y_k$ need not be injective. We will only use their restrictions to the $-1$ eigenspaces of $w$, where they are isomorphisms. For any vector space $V$ where $w$ (resp., $\theta$) acts, write $V^{-w}$ (resp., $V^{-\theta}$) to denote the $-1$ eigenspace of $w$ (res., $\theta$). Similarly denote by $V^{+w}$ and $V^{+\theta}$ the $+1$ eigenspaces.

    To prove this assertion, define the integral map
    \[
    b_w:X\longrightarrow Q,\qquad b_w(v)=v-wv.
    \]
    It takes values in $Q$ because $w$ belongs to the Weyl group. If $\iota_X:Q\rightarrow X$ denotes the inclusion, then
    \[
    \iota_X b_w=1-w,\qquad b_w\iota_X=1-w,
    \qquad wb_w=-b_w.
    \]
    These identities remain valid after tensoring with $k$. Thus $\iota_X:Q_k^{-w}\rightarrow X_k^{-w}$ is an isomorphism with inverse $\frac12b_w$. The same argument applies to $Q_k^\vee\rightarrow Y_k$. 

    The map
    \[
    H:Q_k\longrightarrow Q_k^\vee,\qquad
    \alpha_i\longmapsto d_i\alpha_i^\vee
    \]
    is an isomorphism, equivariant for $W$ and $\theta$. It therefore induces an isomorphism
    \[
    H_w:X_k^{-w}\overset{\sim}{\longrightarrow}Y_k^{-w}
    \]
    by the preceding identifications. Notice that this construction does not require inverting the index of $Q$ in the character lattice.

    Since $2$ is invertible in $k$, the inclusion $\breve X\subseteq X$ identifies
    \[
    \breve X\otimes_\Z k=X_k^{-\theta}=:E_k.
    \]
    
    Write a vector in $k^{L+m}$ as $(\mathbf a,\lambda)$, with $\lambda\in E_k$, and put
    \[
    \eta=\sum_{i=1}^L a_i\beta_i\in Q_k,
    \qquad z=\iota_X(\eta)\in X_k.
    \]
    The first $L$ equations in $M(\mathbf a,\lambda)=0$ are equivalent to
    \[
    \nu_0=z-\lambda,\qquad
    a_i=\frac{\langle H(\beta_i),\nu_{i-1}\rangle}{2d_{\beta_i}},
    \qquad
    \nu_i=\nu_{i-1}-2a_i\beta_i=s_{\beta_i}\nu_{i-1},
    \]
    where roots in the recursion are viewed in $X_k$ and $d_{\beta_i}=(\beta_i,\beta_i)/2$.
    Since $s_{\beta_L}\cdots s_{\beta_1}=w$, telescoping the reflection differences gives
    \[
    2\eta=b_w(\nu_0)\quad\text{in }Q_k,
    \qquad \nu_L=w\nu_0=-z-\lambda\quad\text{in }X_k.
    \]
    The first identity is valid in the root lattice: it is obtained by expanding $1-s_{\beta_L}\cdots s_{\beta_1}$ as a sum of reflection differences, before tensoring with $k$.
    It follows that $\eta\in Q_k^{-w}$, $wz=-z$, and $w\lambda=\lambda$.

    The remaining $m$ equations say
    \[
    \langle\iota_Y H(\eta),\mu\rangle=0
    \qquad\text{for every }\mu\in E_k.
    \]
    Here $\iota_Y:Q_k^\vee\rightarrow Y_k$ is the natural map. Since $\eta\in Q_k^{-w}$, its image $\iota_Y H(\eta)$ is $H_w(z)$. The pairing between $Y_k$ and $X_k$ is perfect, and the annihilator of $X_k^{-\theta}$ is $Y_k^{+\theta}$. As $H_w$ commutes with $\theta$, the last equations are therefore equivalent to $\theta z=z$.

    Conversely, given
    \[
    z\in X_k^{-w}\cap X_k^{+\theta},\qquad
    \lambda\in X_k^{+w}\cap X_k^{-\theta},
    \]
    the displayed recursion starting at $z-\lambda$ determines a unique $\mathbf a$. The identity $2\eta=b_w(z-\lambda)$ gives $\iota_X(\eta)=z$, and the preceding computations then verify all equations of $M(\mathbf a,\lambda)=0$. Hence
    \[
    \ker(M\otimes k)\cong
    (X_k^{-w}\cap X_k^{+\theta})
    \oplus(X_k^{+w}\cap X_k^{-\theta})
    =X_k^{-w\theta}.
    \]
    The simultaneous eigenspace dimensions of the commuting involutions $w,\theta$ are unchanged in odd characteristic. Indeed, the four projectors $(1\pm w)(1\pm\theta)/4$ split the free module $X\otimes_\Z\Z[1/2]$ into direct summands. Thus
    \[
    \dim_k\ker(M\otimes k)
    =\dim_\Q\ker(1+w\theta)
    =\dim_\Q\ker(1-w_0\tau).
    \]
    Since $L+m=\dim S$, this proves \eqref{eq:PI-rational-rank}. Finally, if a prime $p\nmid2D$ divided a nonzero Smith invariant of $M$, its rank over $\mathbb F_p$ would be smaller than its rank over $\Q$, a contradiction.
\end{proof}

\begin{corollary}\label{cor:PI-geometric}
Assume that $\ell$ is odd and coprime to root lengths. For an arbitrary root datum one has
\begin{equation}\label{eq:PI-geometric-corrected}
\operatorname{PIdeg}\O_\epsilon[S]
=\ell^{\frac12(\dim S-\dim\ker(1-w_0\tau))}
=\ell^{d_e/2}.
\end{equation}
Here $d_e$ is the dimension of the symplectic leaves contained in $S_e$ as in \eqref{eq:du}.
\end{corollary}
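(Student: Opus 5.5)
The plan is to combine Theorem~\ref{thm:PI-general} with Lemma~\ref{lem:PI-matrix-rank}, and then compare the resulting exponent with the formula \eqref{eq:du} for $d_e$.

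First, by Theorem~\ref{thm:PI-general}, the PI degree equals $\prod_{j=1}^s \ell/\gcd(\ell,h_j)$, where $h_1,h_1,\ldots,h_s,h_s$ are the nonzero invariant factors of $M$. By the last assertion of Lemma~\ref{lem:PI-matrix-rank}, every prime divisor of each $h_j$ divides $2D$. The standing hypothesis is that $\ell$ is odd and coprime to the root lengths $d_i$, so $\gcd(\ell,2D)=1$. Hence $\gcd(\ell,h_j)=1$ for all $j$, and the PI degree is $\ell^s$.

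Second, compute $s$. The number of nonzero invariant factors of $M$ is $2s=\operatorname{rank}_\Q M$. Applying Lemma~\ref{lem:PI-matrix-rank} with $k=\Q$ (where $2D$ is certainly invertible) gives
\[
2s=\dim S-\dim\ker(1-w_0\tau).
\]
This yields the first equality of \eqref{eq:PI-geometric-corrected}. The pairing of invariant factors asserted in Theorem~\ref{thm:PI-general} reflects that $M$ is skew-symmetric, so its rank is even; this is already built into that theorem.

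Third, identify the exponent with $d_e/2$. Specializing \eqref{eq:du} to $u=e$ gives $d_e=\dim S_e-\dim\ker(1-w_0\tau)$. Since $S_e$ is open dense in $S$, we have $\dim S_e=\dim S$, which matches the expression above. As a consistency check, $\dim S=l(w_0w_\bullet)+\operatorname{rank}(1-\theta)$ by \eqref{eq:NTB}, in agreement with the right-hand side of \eqref{eq:du} at $l(e)=0$.

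No step here is a serious obstacle; the substantive work lies in Lemma~\ref{lem:PI-matrix-rank}. The only point needing care is to ensure that the notion of ``coprime to root lengths'' matches $\gcd(\ell,D)=1$, which the paper notes follows from the standing conditions on $\ell$. Oddness of $\ell$ then handles the prime $2$.
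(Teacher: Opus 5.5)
Your proposal is correct and follows the paper's proof essentially verbatim. Lemma~\ref{lem:PI-matrix-rank} together with $\gcd(\ell,2D)=1$ forces $\gcd(\ell,h_j)=1$, so Theorem~\ref{thm:PI-general} gives $\ell^s$, and the rational-rank statement of the same lemma gives $2s=\dim S-\dim\ker(1-w_0\tau)$; your check that this equals $d_e$ via \eqref{eq:du} at $u=e$ (using $\dim S_e=\dim S$) only spells out a step the paper leaves implicit.
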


\begin{proof}
By Lemma~\ref{lem:PI-matrix-rank}, every nonzero invariant factor $h_j$ of $M$ has all its prime divisors among those dividing $2D$. Since $\gcd(\ell,2D)=1$, Theorem~\ref{thm:PI-general} gives $\operatorname{PIdeg}\O_\epsilon[S]=\ell^s$. The same lemma gives $2s=\dim S-\dim\ker(1-w_0\tau)$.
\end{proof}

\begin{example}
For $SL_2/SO_2$, write $\alpha=2\omega$, so that $(\alpha,\alpha)=2$. Then $w=s$, $\breve X=2\Z\omega=\Z\alpha$, and
\[
M=\begin{pmatrix}0&-2\\2&0\end{pmatrix}.
\]
Thus the PI degree of $\O_\epsilon[SL_2/SO_2]$ is $\ell$ for odd $\ell$. 

For $PGL_2/PO_2$, one has $X=\Z\alpha$ and $\breve X=2\Z\alpha$. Then
\[
M=\begin{pmatrix}0&-4\\4&0\end{pmatrix}.
\]
The PI degree of $\O_\epsilon[PGL_2/PO_2]$ is again $\ell$ for odd $\ell$. 
\end{example}

\subsection{Dimensions of irreducible modules}
Recall that the dimension of symplectic leaves in $\ss_e$ is given by $d_e=\dim S-\dim\ker(1-w_0\tau)$.

\begin{theorem}\label{cor:dim}
\begin{itemize}
\item[(1)] Let $V$ be an irreducible module of $\O_\epsilon[S]$. Then $\dim V \leq \ell^{d_e/2}$.

\item[(2)] Let $x\in S_e$ and $V$ be an irreducible module of $\O_\epsilon[S](x)$. Then $\mathrm{dim} V=\ell^{d_e/2}$.
\end{itemize}
\end{theorem}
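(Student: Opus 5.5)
Part (1) follows from standard PI theory. By Theorem~\ref{thm:maximal-order}, $\O_\epsilon[S]$ is a prime Noetherian algebra, finite over its central subalgebra $\O[S]$. Corollary~\ref{cor:PI-geometric} gives $\operatorname{PIdeg}\O_\epsilon[S]=\ell^{d_e/2}$. For a prime PI algebra, every irreducible module is finite dimensional, and its dimension is bounded by the PI degree (cf.\ \cite{BG02}, or the Kaplansky--Posner theory). This proves (1).

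For part (2) we use the Azumaya locus. Since $\O_\epsilon[S]$ is prime and finite projective over the affine normal domain $\O[S]$, the set of $x\in S$ such that $\O_\epsilon[S](x)\cong \operatorname{Mat}_{n}(\C)$ with $n=\ell^{d_e/2}$ is a nonempty open subset $\mathcal U\subseteq S$. By the Artin--Procesi theorem, these are exactly the points over which some irreducible module of dimension $n$ lies. It is nonempty because the PI degree is attained generically. The strata $S_u$ are finitely many, and $S_e$ is open dense with all others of positive codimension (by \cite[Lemma 2.4]{Bao26}). Hence the open dense set $\mathcal U$ must meet $S_e$: otherwise $\mathcal U\subseteq \bigcup_{u\neq e}S_u$, a proper closed subset.

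Pick $x_0\in\mathcal U\cap S_e$. By Theorem~\ref{thm:fiberiso}, $\O_\epsilon[S](x)\cong\O_\epsilon[S](x_0)\cong \operatorname{Mat}_n(\C)$ for every $x\in S_e$. A full matrix algebra has a unique irreducible module, of dimension $n=\ell^{d_e/2}$. Every irreducible $\O_\epsilon[S](x)$-module is that one, which proves (2).

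The main subtlety is justifying that the Azumaya locus is nonempty and open, with PI degree attained there. I would cite \cite[Theorem~III.1.7]{BG02}-type results: for a prime affine algebra finite over a central affine subalgebra, the Azumaya locus is a nonempty open set, and its fibres are exactly the matrix algebras of size the PI degree. To pass from the full center to $\O[S]$, I would use that $\O_\epsilon[S]$ is projective of constant rank over $\O[S]$. A fibre $\O_\epsilon[S](x)$ of dimension $\ell^{\dim S}$ that is simple of size $n$ would force $n^2=\ell^{\dim S}$, which fails in general. So I would instead argue through the Brown--Gordon/Poisson-order framework. Concretely, I would use the maximal-order property (Theorem~\ref{thm:maximal-order}) together with \cite[Theorem~4.2]{BG03} or Brown--Goodearl to show that the central character of any irreducible module of maximal dimension is generic, and that all irreducibles over a point of $S_e$ have the same dimension by the fibre isomorphisms. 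The density argument then places a maximal-dimensional irreducible module over some point of $S_e$, and hence over all of them.
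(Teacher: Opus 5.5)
Your part (1) is the paper's argument: $\O_\epsilon[S]$ is a prime affine PI algebra, and its PI degree is $\ell^{d_e/2}$ by Corollary~\ref{cor:PI-geometric}. Part (2), however, has a genuine gap.

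Your first argument uses the locus of $x$ where $\O_\epsilon[S](x)\cong\operatorname{Mat}_n(\C)$. As you yourself notice, $\dim\O_\epsilon[S](x)=\ell^{\dim S}$ while $n^2=\ell^{d_e}$. So this locus is empty whenever $\ker(1-w_0\tau)\neq 0$; for example $SL_3/SO_3$ has $\dim S=5$ and $d_e=4$. Artin--Procesi describes Azumaya points of the full centre $Z(\O_\epsilon[S])$, not points of $S$.

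Your fallback then yields, for each $x\in S_e$, only \emph{some} irreducible module of dimension $n$ over $x$. Statement (2) requires \emph{every} irreducible $\O_\epsilon[S](x)$-module to have dimension $n$. Since $\O[S]$ is in general a proper subalgebra of the centre, the fibre $\O_\epsilon[S](x)$ has several irreducibles, one for each central character lying over $x$. Your claim that ``all irreducibles over a point of $S_e$ have the same dimension by the fibre isomorphisms'' does not follow. Theorem~\ref{thm:fiberiso} compares fibres at different points of a stratum and says nothing about the irreducibles within a single fibre. Appealing to the maximal-order property or to \cite{BG03} does not supply this either.

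The missing ingredient is that the \emph{fully Azumaya locus} $\mathcal F=\{y\in S\mid \text{every irreducible } \O_\epsilon[S](y)\text{-module has dimension } n\}$ is open and dense in $S$. This is exactly how the paper proceeds, citing \cite[Lemma~2.5]{MNTY24}. The proof is short:
\begin{itemize}
\item $Z=Z(\O_\epsilon[S])$ is a domain, finite over $\O[S]$.
\item The non-Azumaya locus $\mathcal N\subset\operatorname{MaxSpec}Z$ is a proper closed subset, hence $\dim\mathcal N<\dim S$.
\item The finite map $\pi:\operatorname{MaxSpec}Z\to S$ is closed and does not raise dimension, so $\pi(\mathcal N)$ is a proper closed subset of $S$.
\item Every irreducible module over $y$ has its central character in $\pi^{-1}(y)$, so $\mathcal F\supseteq S\setminus\pi(\mathcal N)$.
\end{itemize}
With this in hand, your density argument goes through unchanged: pick $y\in\mathcal F\cap S_e$, then transport along $S_e$ using Theorem~\ref{thm:fiberiso}.
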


\begin{proof}
Set $A=\O_\epsilon[S]$ and identify $R=\O[S]$ with its Frobenius image. Then $A$ is an affine domain finite over $R$, and Corollary~\ref{cor:PI-geometric} gives
\[
n:=\operatorname{PIdeg}A=\ell^{d_e/2}.
\]
The dimensions of irreducible modules of a prime affine PI algebra are bounded by its PI degree; see \cite[Section~2.1]{MNTY24}. This proves (1).

For (2), let $\mathcal F\subseteq S$ be the \emph{fully Azumaya locus} of $A$ with respect to $R$. Thus $y\in\mathcal F$ if every irreducible $A(y)$-module has dimension $n$; see \cite[Definition~2.4]{MNTY24}. By \cite[Lemma~2.5]{MNTY24}, $\mathcal F$ is open and dense in $S$. Since $S_e$ is also open and dense, choose $y\in\mathcal F\cap S_e$. For any $x\in S_e$, Theorem~\ref{thm:fiberiso} gives $A(x)\cong A(y)$. Hence every irreducible $A(x)$-module has dimension $n$, proving (2).
\end{proof}

We propose a conjecture regarding the dimensions of irreducible modules of fibre algebras $\O_\epsilon[S](x)$ for any $x\in S$. 

\begin{conjecture}\label{conj:dimV}
    Let $x\in S_u$ and let $V$ be an irreducible module of $\O_\epsilon[S](x)$. There exists a positive integer $N_u$, such that when $\ell$ is coprime to $N_u$, one has $\dim V=\ell^{d_u/2}$. 
\end{conjecture}

Theorem~\ref{cor:dim} implies the conjecture for $u=e$ and $N_e=2D$ where $D=\prod_{i\in\I}d_i$. For general fibre algebras, we need more restriction on $\ell$ to obtain the uniform dimension formula.

\section{Comparison of fibre algebras}\label{sec:framed}

In this section, we show an isomorphism between the fibre algebras of the quantum symmetric space and the quantum Schubert cell associated to a framed Kac-Moody data. This isomorphism will provide a proof of Conjecture~\ref{conj:dimV}, once dimensions of irreducible modules for the quantum Schubert cell are known.

\subsection{The lattice $\breve X$}

For the remainder of this section, assume that $(Y,X,\langle \cdot, \cdot \rangle,\cdots)$ is semisimple and simply connected. Then $X=\bigoplus_{i\in\I}\Z\omega_i$, where $\omega_i$ are the fundamental weights. Recall that $(\I=\I_\circ\sqcup\I_\bullet,\tau)$ is a Satake diagram and $\I_{\circ,\tau}$ is a fixed set of representatives of the $\tau$-orbits in $\I_\circ$. For $\lambda\in X$, set $\breve{\lambda}=\lambda-\theta\lambda$. Put
\[
\mu_a=\omega_a+\omega_{\tau a},\qquad
\breve X_0=\bigoplus_{a\in\I_{\circ,\tau}}\Z\mu_a,
\qquad m=|\I_{\circ,\tau}|.
\]

\begin{lemma}\label{lem:omega}
\begin{itemize}
\item[(1)] For $i\in\I_\circ$, one has $\breve{\omega}_i=\omega_i+\omega_{\tau i}$.
\item[(2)] One has $2\breve X\subseteq\breve X_0\subseteq\breve X$. In particular, $\breve X_0$ has finite index in $\breve X$, and $\breve X/\breve X_0$ is an elementary abelian $2$-group.
\item[(3)] If $\ell$ is odd, the inclusion induces an isomorphism
\[
\breve X_0/\ell\breve X_0\overset{\sim}{\longrightarrow}
\breve X/\ell\breve X.
\]
\end{itemize}
\end{lemma}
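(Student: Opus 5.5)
The plan is to compute $\theta$ explicitly on fundamental weights and then read off all three parts. On the full lattice $\theta=-w_\bullet\tau$, where $\tau$ permutes the $\omega_i$ via the diagram involution. Since $G$ is simply connected, $X=\bigoplus\Z\omega_i$ and $\tau\omega_i=\omega_{\tau i}$.

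For (1), take $i\in\I_\circ$. For $j\in\I_\bullet$ we have $\langle\alpha_j^\vee,\omega_{\tau i}\rangle=0$, so $\omega_{\tau i}$ is fixed by $W_\bullet$. Hence $w_\bullet\omega_{\tau i}=\omega_{\tau i}$, which gives $\theta\omega_i=-\omega_{\tau i}$ and $\breve\omega_i=\omega_i+\omega_{\tau i}$.

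For (2), the lattice $\breve X$ is spanned by the $\breve\omega_i$, $i\in\I$. For $i\in\I_\circ$, part (1) gives $\breve\omega_i=\mu_a$ with $a$ the representative of the orbit of $i$.

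For $j\in\I_\bullet$, we write $\omega_{\tau j}$ as $\rho$-like data. Since $\tau$ preserves $\I_\bullet$ and $-w_\bullet$ acts on the roots of $\I_\bullet$ by $\alpha_k\mapsto\alpha_{\tau k}$, the vector $\omega_j+w_\bullet\omega_{\tau j}$ pairs to zero with every $\alpha_k^\vee$ for $k\in\I_\bullet$. The check is
\[
\langle\alpha_k^\vee,w_\bullet\omega_{\tau j}\rangle=\langle w_\bullet\alpha_k^\vee,\omega_{\tau j}\rangle=-\delta_{\tau k,\tau j}.
\]
Its pairing with $\alpha_i^\vee$ for $i\in\I_\circ$ equals the coefficient pairing of $w_\bullet\omega_{\tau j}-\omega_{\tau j}\in\Z\Phi_\bullet$ with $\alpha_i^\vee$. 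Hence $\breve\omega_j$ is an integer combination of the $\omega_i$ with $i\in\I_\circ$. The resulting element is $\theta$-anti-invariant, and its coefficients at $i$ and $\tau i$ agree: applying $\theta$ to $\sum c_i\omega_i$ with $i\in\I_\circ$ yields $-\sum c_i\omega_{\tau i}$. So $\breve\omega_j$ lies in the span of $\omega_i$ with $i\in\I_\circ$ and has $c_i=c_{\tau i}$.

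When $\tau i\ne i$, this already shows the $i$- and $\tau i$-parts combine into multiples of $\mu_a$. When $\tau i=i$, however, $\mu_a=2\omega_i$, and the coefficient $c_i$ may be odd. This is the main obstacle: for such orbits $\breve X$ can be strictly larger than $\breve X_0$.

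For the inclusion $2\breve X\subseteq\breve X_0$, take $v\in\breve X$. Then $v=\sum_{i\in\I_\circ}c_i\omega_i$ with $c_i=c_{\tau i}$, so $2v=\sum_{a}c_a\cdot 2(\omega_a+\omega_{\tau a})/(\text{1 or }2)$, which is an integer combination of the $\mu_a$. More directly: $2v=\sum_a k_a\mu_a$, with $k_a=2c_a$ if $\tau a\ne a$ and $k_a=c_a$ if $\tau a=a$. The inclusion $\breve X_0\subseteq\breve X$ is immediate from (1). Finite index and the elementary abelian $2$-group structure follow because $2(\breve X/\breve X_0)=0$ and the groups have finite rank.

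For (3), multiplication by the odd integer $\ell$ is invertible on the $2$-group $E=\breve X/\breve X_0$. Consider the exact sequence $0\to\breve X_0\to\breve X\to E\to0$ and tensor with $\Z/\ell$. This gives the exact sequence $\mathrm{Tor}(E,\Z/\ell)\to\breve X_0/\ell\to\breve X/\ell\to E/\ell E\to0$. Both outer terms vanish since $\gcd(|E|,\ell)=1$, so the middle map is an isomorphism.
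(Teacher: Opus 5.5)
Your proof is correct and follows essentially the same route as the paper. Both arguments compute $\theta\omega_i=-\omega_{\tau i}$ for $i\in\I_\circ$, show that every element of $\breve X$ has the form $\sum_{i\in\I_\circ}c_i\omega_i$ with $c_i=c_{\tau i}$, double it to land in $\breve X_0$, and use that odd $\ell$ acts invertibly on the $2$-group $\breve X/\breve X_0$.

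The differences are cosmetic. For (2), the paper gets the vanishing of the $\I_\bullet$-coefficients for an arbitrary $\lambda\in\breve X$ at once, from $\theta\alpha_j^\vee=\alpha_j^\vee$ for $j\in\I_\bullet$ and $\theta\lambda=-\lambda$. You instead check it generator by generator on $\breve\omega_j$, using $w_\bullet\alpha_k^\vee=-\alpha_{\tau k}^\vee$. For (3), the paper verifies $\breve X=\breve X_0+\ell\breve X$ and $\breve X_0\cap\ell\breve X=\ell\breve X_0$ directly. These two facts are exactly the vanishing of $E/\ell E$ and $\mathrm{Tor}(E,\Z/\ell)$ in your exact sequence.
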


\begin{proof}
    (1) Recall that $\theta=-w_\bullet\tau$. Since $w_\bullet$ fixes $\omega_i$ for $i\in\I_\circ$, we have $\theta\omega_i=-\omega_{\tau i}$.

    (2) Part (1) gives $\breve X_0\subseteq\breve X=(1-\theta)X$. Take $\lambda\in\breve X$. Then $\theta\lambda=-\lambda$. For $j\in\I_\bullet$, the coroot $\alpha_j^\vee$ is fixed by $\theta$, so
    \[
    \langle\alpha_j^\vee,\lambda\rangle
    =\langle\theta\alpha_j^\vee,\theta\lambda\rangle
    =-\langle\alpha_j^\vee,\lambda\rangle=0.
    \]
    Thus $\lambda=\sum_{i\in\I_\circ}c_i\omega_i$, with $c_i\in\Z$. By (1), $\theta\lambda=-\lambda$ now gives $c_i=c_{\tau i}$. Consequently,
    \[
    2\lambda=
    \sum_{\substack{a\in\I_{\circ,\tau}\\\tau a=a}}c_a\mu_a
    +\sum_{\substack{a\in\I_{\circ,\tau}\\\tau a\ne a}}2c_a\mu_a
    \in\breve X_0.
    \]
    The weights $\mu_a$ are linearly independent, so the assertion follows.

    (3) Write $\X=\breve X/\breve X_0$. Multiplication by the odd integer $\ell$ is an automorphism of $\X$, since $2\X=0$. It follows that
    \[
    \breve X=\breve X_0+\ell\breve X,
    \qquad \breve X_0\cap\ell\breve X=\ell\breve X_0,
    \]
    proving the desired isomorphism.
\end{proof}

The inclusion $\breve X_0\subseteq\breve X$ need not be an equality. For example, in type $\text{AII}_3$, with black vertices $1,3$ and white vertex $2$, one has
\[
\breve{\omega}_1 = \omega_2,
\qquad
\breve{\omega}_2 = 2\omega_2,
\qquad
\breve{\omega}_3 = \omega_2.
\]
Hence $\breve X=\Z\omega_2$, whereas $\breve X_0=2\Z\omega_2$. 


\subsection{The framed data and Richardson strata}\label{sec:fdata} 

Let $\tI=\I\sqcup \{r_a | a\in \I_{\circ,\tau} \}$. Define the generalized Cartan matrix $\tC=(\tc_{ij})_{i,j\in \tI}$ by 
\begin{itemize}
\item[(1)] $\tc_{ij}=c_{ij}$ if $i,j\in \I$;

\item[(2)] $\tc_{r_a,r_b}=0$ for $a\neq b\in \I_{\circ,\tau}$ and $\tc_{r_a,r_a}=2$ for $a\in \I_{\circ,\tau}$;

\item[(3)] $\tc_{i,r_a}=0$ if $i\in \I_\circ,a\neq i,\tau i$ or $i\in \I_\bullet$;

\item[(4)] $ \tc_{i,r_a}=-2$ if $\tau i=i=a$;

\item[(5)] $\tc_{i,r_a}=-1$ if $\tau i\neq i, a\in \{i,\tau i\}$;

\item[(6)]
$ \tc_{r_a,i}=\tc_{i,r_a}$ if $i\in \I$ and $a\in \I_{\circ,\tau}$.
\end{itemize}

It is clear that $\tC$ is a generalized Cartan matrix with symmetrizers given by $\tilde{d}_i=d_i$ for $i\in \I$ and $\tilde{d}_{r_a}=d_a$ for $a\in \I_{\circ,\tau}$. 

Let $(\tY,\tX,\langle,\rangle, \cdots)$ be a root datum of type $(\tI,\cdot)$. Let $\tg$ be the Kac--Moody Lie algebra associated with $\tC$ and $\widetilde{W}$ be the Weyl group of $\tg$. Let us identify $W$ with the parabolic subgroup of $\widetilde{W}$ in the obvious way. By abuse of notation, we also write $r_a$ for $s_{r_a}$. Let $\widetilde{\U}$ be the simply connected quantum group associated with the generalized Cartan matrix $\widetilde{C}$.

\begin{table}[h]
\caption{Rank one Satake diagrams and framing}
\label{table:fdata}
\resizebox{5.5 in}{!}{%
\begin{tabular}{| c | c | c |}
\hline
Type & Satake diagram & Framed diagram
\\
\hline
\begin{tikzpicture}[baseline=0]
\node at (0, 0.1) {AI$_1$};
\end{tikzpicture}

&	
    \begin{tikzpicture}[baseline=0]
		\node  at (0,0) {$\circ$};
		\node  at (0,-.3) {\small 1};
	\end{tikzpicture}
& 

    \begin{tikzpicture}[baseline=0]
		\node  at (0,0) {$\circ$};
		\node  at (0,0.6) {$\circ$};
		\node  at (0.2,-.2) {\small 1};
		\node  at (0.3,.4) {\small $r_1$};
                     \draw[double equal sign distance] (0,0.1) to (0,0.55);
	\end{tikzpicture}
\\
\hline
\begin{tikzpicture}[baseline=0]
\node at (0, 0.1) {AII$_3$};
\end{tikzpicture}
&
   \begin{tikzpicture}[baseline=0]
		\node at (0,0) {$\bullet$};
		\draw (0.1, 0) to (0.4,0);
		\node  at (0.5,0) {$\circ$};
		\draw (0.6, 0) to (0.9,0);
		\node at (1,0) {$\bullet$};
		\node at (0,-.3) {\small 1};
		\node  at (0.5,-.3) {\small 2};
		\node at (1,-.3) {\small 3};
	\end{tikzpicture}
& 
   \begin{tikzpicture}[baseline=0]
\node at (0.5,0.6) {$\circ$};
\node at (0.8,0.4) {$r_2$};
\draw[double equal sign distance] (0.5,0.1) to (0.5,0.55);
		\node at (0,0) {$\bullet$};
		\draw (0.05, 0) to (0.4,0);
		\node  at (0.5,0) {$\circ$};
		\draw (0.6, 0) to (0.95,0);
		\node at (1,0) {$\bullet$};
		\node at (0,-.3) {\small 1};
		\node  at (0.5,-.3) {\small 2};
		\node at (1,-.3) {\small 3};
	\end{tikzpicture}
\\
\hline
\begin{tikzpicture}[baseline=0]
\node at (0, -0.45) {AIII$_{11}$};
\end{tikzpicture}
 &
\begin{tikzpicture}[baseline = 6] 
		\node at (-0.5,0) {$\circ$};
		\node at (0.5,0) {$\circ$};
		\draw[bend left, <->] (-0.5, 0.2) to (0.5, 0.2);
		\node at (-0.5,-0.3) {\small 1};
		\node at (0.5,-0.3) {\small 2};
	\end{tikzpicture}
& 
\begin{tikzpicture}[baseline = 6] 
\node at (0,-0.6) {$\circ$};
\node at (0.3,-0.8) {$r_1$};
\draw (-0.45,-0.05) to (-0.1,-0.55);
\draw (0.45,-0.05) to (0.1,-0.55);
		\node at (-0.5,0) {$\circ$};
		\node at (0.5,0) {$\circ$};
		\draw[bend left, <->] (-0.5, 0.2) to (0.5, 0.2);
		\node at (-0.5,-0.3) {\small 1};
		\node at (0.5,-0.3) {\small 2};
	\end{tikzpicture}
\\
\hline
\begin{tikzpicture}[baseline=0]
\node at (0, -0.4) {AIV, n$\geq$2};
\end{tikzpicture}
&
\begin{tikzpicture}	[baseline=6]
		\node at (-0.5,0) {$\circ$};
		\draw[-] (-0.4,0) to (-0.1, 0);
		\node  at (0,0) {$\bullet$};
		\node at (2,0) {$\bullet$};
		\node at (2.5,0) {$\circ$};
		\draw[-] (0.1, 0) to (0.5,0);
		\draw[dashed] (0.5,0) to (1.4,0);
		\draw[-] (1.6,0)  to (1.9,0);
		\draw[-] (2.1,0) to (2.4,0);
		\draw[bend left, <->] (-0.5, 0.2) to (2.5, 0.2);
		\node at (-0.5,-.3) {\small 1};
		\node  at (0,-.3) {\small 2};
		\node at (2.5,-.3) {\small n};
	\end{tikzpicture}
& 
\begin{tikzpicture}	[baseline=6]
\node at (1,-1) {$\circ$};
\node at (1.3,-1.2) {$r_1$};
\draw (-0.45,-0.05) to (0.95,-0.95);
\draw (2.45,-0.05) to (1.05,-0.95);

		\node at (-0.5,0) {$\circ$};
		\draw[-] (-0.4,0) to (-0.1, 0);
		\node  at (0,0) {$\bullet$};
		\node at (2,0) {$\bullet$};
		\node at (2.5,0) {$\circ$};
		\draw[-] (0.1, 0) to (0.5,0);
		\draw[dashed] (0.5,0) to (1.4,0);
		\draw[-] (1.6,0)  to (1.9,0);
		\draw[-] (2.1,0) to (2.4,0);
		\draw[bend left, <->] (-0.5, 0.2) to (2.5, 0.2);
		\node at (-0.5,-.3) {\small 1};
		\node  at (0.15,-.3) {\small 2};
		\node at (2.5,-.3) {\small n};
	\end{tikzpicture}
\\
\hline
\begin{tikzpicture}[baseline=0]
\node at (0, 0.1) {BII, n$\ge$ 2};
\end{tikzpicture} 
&
    	\begin{tikzpicture}[baseline=0, scale=1.5]
		\node at (1.05,0) {$\circ$};
		\node at (1.5,0) {$\bullet$};
		\draw[-] (1.1,0)  to (1.4,0);
		\draw[-] (1.4,0) to (1.9, 0);
		\draw[dashed] (1.9,0) to (2.7,0);
		\draw[-] (2.7,0) to (2.9, 0);
		\node at (3,0) {$\bullet$};
		\draw[-implies, double equal sign distance]  (3.05, 0) to (3.55, 0);
		\node at (3.6,0) {$\bullet$};
		\node at (1,-.2) {\small 1};
		\node at (1.5,-.2) {\small 2};
		\node at (3.6,-.2) {\small n};
	\end{tikzpicture}	
& 
\begin{tikzpicture}[baseline=0, scale=1.5]
\node at (1.05,0.6) {$\circ$};
\node at (1.25, 0.45) {$r_1$};
\draw[double equal sign distance] (1.05, 0.55) to (1.05,0.05);
		\node at (1.05,0) {$\circ$};
		\node at (1.5,0) {$\bullet$};
		\draw[-] (1.1,0)  to (1.4,0);
		\draw[-] (1.4,0) to (1.9, 0);
		\draw[dashed] (1.9,0) to (2.7,0);
		\draw[-] (2.7,0) to (2.9, 0);
		\node at (3,0) {$\bullet$};
		\draw[-implies, double equal sign distance]  (3.05, 0) to (3.55, 0);
		\node at (3.6,0) {$\bullet$};
		\node at (1,-.2) {\small 1};
		\node at (1.5,-.2) {\small 2};
		\node at (3.6,-.2) {\small n};
	\end{tikzpicture}	
\\
\hline
\begin{tikzpicture}[baseline=0]
\node at (0, -0.05) {CII, n$\ge$3};
\end{tikzpicture}
&
		\begin{tikzpicture}[baseline=6]
		\draw (0.6, 0.15) to (0.9, 0.15);
		\node  at (0.5,0.15) {$\bullet$};
		\node at (1,0.15) {$\circ$};
		\node at (1.5,0.15) {$\bullet$};
		\draw[-] (1.1,0.15)  to (1.4,0.15);
		\draw[-] (1.4,0.15) to (1.9, 0.15);
		\draw (1.9, 0.15) to (2.1, 0.15);
		\draw[dashed] (2.1,0.15) to (2.7,0.15);
		\draw[-] (2.7,0.15) to (2.9, 0.15);
		\node at (3,0.15) {$\bullet$};
		\draw[implies-, double equal sign distance]  (3.1, 0.15) to (3.7, 0.15);
		\node at (3.8,0.15) {$\bullet$};
		\node  at (0.5,-0.15) {\small 1};
		\node at (1,-0.15) {\small 2};
		\node at (3.8,-0.15) {\small n};
	\end{tikzpicture}
&	
		\begin{tikzpicture}[baseline=6]
\node at (1.0, 0.7) {$\circ$};
\node at (1.3,0.55) {$r_2$};
\draw [double equal sign distance] (1, 0.65) to (1,0.2);
		\draw (0.6, 0.15) to (0.9, 0.15);
		\node  at (0.5,0.15) {$\bullet$};
		\node at (1,0.15) {$\circ$};
		\node at (1.5,0.15) {$\bullet$};
		\draw[-] (1.1,0.15)  to (1.4,0.15);
		\draw[-] (1.4,0.15) to (1.9, 0.15);
		\draw (1.9, 0.15) to (2.1, 0.15);
		\draw[dashed] (2.1,0.15) to (2.7,0.15);
		\draw[-] (2.7,0.15) to (2.9, 0.15);
		\node at (3,0.15) {$\bullet$};
		\draw[implies-, double equal sign distance]  (3.1, 0.15) to (3.7, 0.15);
		\node at (3.8,0.15) {$\bullet$};
		\node  at (0.5,-0.15) {\small 1};
		\node at (1,-0.15) {\small 2};
		\node at (3.8,-0.15) {\small n};
	\end{tikzpicture}
\\
\hline
\begin{tikzpicture}[baseline=0]
\node at (0, -0.05) {DII, n$\ge$4};
\end{tikzpicture}&
	\begin{tikzpicture}[baseline=0]
		\node at (1,0) {$\circ$};
		\node at (1.5,0) {$\bullet$};
		\draw[-] (1.1,0)  to (1.4,0);
		\draw[-] (1.4,0) to (1.9, 0);
		\draw[dashed] (1.9,0) to (2.7,0);
		\draw[-] (2.7,0) to (2.9, 0);
		\node at (3,0) {$\bullet$};
		\node at (3.5, 0.4) {$\bullet$};
		\node at (3.5, -0.4) {$\bullet$};
		\draw (3.05, 0.05) to (3.4, 0.39);
		\draw (3.05, -0.05) to (3.4, -0.39);
		\node at (1,-.3) {\small 1};
		\node at (1.5,-.3) {\small 2};
		\node at (3.6, 0.25) {\small n-1};
		\node at (3.5, -0.6) {\small n};
	\end{tikzpicture}	
&
\begin{tikzpicture}[baseline=0]
\node at (1.0, 0.7) {$\circ$};
\node at (1.3,0.55) {$r_1$};
\draw [double equal sign distance] (1, 0.65) to (1,0.05);
		\node at (1,0) {$\circ$};
		\node at (1.5,0) {$\bullet$};
		\draw[-] (1.1,0)  to (1.4,0);
		\draw[-] (1.4,0) to (1.9, 0);
		\draw[dashed] (1.9,0) to (2.7,0);
		\draw[-] (2.7,0) to (2.9, 0);
		\node at (3,0) {$\bullet$};
		\node at (3.5, 0.4) {$\bullet$};
		\node at (3.5, -0.4) {$\bullet$};
		\draw (3.05, 0.05) to (3.4, 0.39);
		\draw (3.05, -0.05) to (3.4, -0.39);
		\node at (1,-.3) {\small 1};
		\node at (1.5,-.3) {\small 2};
		\node at (3.6, 0.25) {\small n-1};
		\node at (3.5, -0.6) {\small n};
	\end{tikzpicture}	
\\
\hline
\begin{tikzpicture}[baseline=0]
\node at (0, 0.2) {FII};
\end{tikzpicture}
&
\begin{tikzpicture}[baseline=0][scale=1.5]
	\node at (0,0) {$\bullet$};
	\draw (0.1, 0) to (0.4,0);
	\node at (0.5,0) {$\bullet$};
	\draw[-implies, double equal sign distance]  (0.6, 0) to (1.2,0);
	\node at (1.3,0) {$\bullet$};
	\draw (1.4, 0) to (1.7,0);
	\node at (1.8,0) {$\circ$};
	\node at (0,-.3) {\small 1};
	\node at (0.5,-.3) {\small 2};
	\node at (1.3,-.3) {\small 3};
	\node at (1.8,-.3) {\small 4};
\end{tikzpicture}
& 
\begin{tikzpicture}[baseline=0][scale=1.5]

\node at (1.8, 0.7) {$\circ$};
\node at (2.1,0.55) {$r_4$};
\draw [double equal sign distance] (1.8, 0.65) to (1.8,0.05);

	\node at (0,0) {$\bullet$};
	\draw (0.1, 0) to (0.4,0);
	\node at (0.5,0) {$\bullet$};
	\draw[-implies, double equal sign distance]  (0.6, 0) to (1.2,0);
	\node at (1.3,0) {$\bullet$};
	\draw (1.4, 0) to (1.7,0);
	\node at (1.8,0) {$\circ$};
	\node at (0,-.3) {\small 1};
	\node at (0.5,-.3) {\small 2};
	\node at (1.3,-.3) {\small 3};
	\node at (1.8,-.3) {\small 4};
\end{tikzpicture}
\\
\hline
\end{tabular}
}
\newline
\smallskip
\end{table}

Let $\widetilde G$ be the minimal Kac--Moody group associated with this datum, with opposite Borel subgroups $\tB_\pm$ and flag ind-variety $\widetilde G/\tB_+$.  For $w\in \tW$, the Schubert cell $\tB_+w\tB_+/\tB_+$ is finite dimensional.  Set
\[
 \tN(w)= \tN_+\cap w \tN_- w^{-1},
\]
 and then $i_w:n\mapsto nw\tB_+$ identifies $ \tN(w)$ with this Schubert cell.  For $v\leq w$ define the open Richardson stratum
\[
 \tR_{v,w}:= \tN(w)\cap
 i_w^{-1}(\tB_- v \tB_+/ \tB_+).
\]
Equivalently, $i_w(\tR_{v,w})$ is the intersection of the Schubert cell indexed by $w$ with the opposite Schubert cell indexed by $v$.  The Bruhat decomposition gives the finite stratification
\begin{equation}\label{eq:Richardson-stratification}
  \tN(w)=\bigsqcup_{v\leq w} \tR_{v,w}.
\end{equation}
Each $\tR_{v,w}$ is smooth, irreducible, and of dimension $l(w)-l(v)$; see, for example, \cite{Kum02}.  


The Drinfeld--Jimbo Lie bialgebra
structure on the Kac--Moody algebra integrates to the standard Poisson
ind-group structure on $\widetilde G$; cf. \cite[Chapter 3]{Wil14}.  The quotient
$\widetilde G/\tB_+$ consequently carries its standard Poisson
structure. This Poisson structure restricts to 
Schubert cells, which defines a Poisson bivector $\pi_w$ on $\tN(w)$ through $i_w$. We refer to $\pi_w$ as the
\emph{standard Poisson structure} on $\tN(w)$.  Notice that this is an
ordinary algebraic Poisson structure: although $\widetilde G$ is an
ind-group, both $\tN(w)$ and the corresponding Schubert cells have finite dimension
$l(w)$.  Standard Poisson structures on Schubert cells for
symmetrizable Kac--Moody groups are also treated in \cite{GY23}.

\subsection{The coordinate algebra \texorpdfstring{$\O_\A[\tN(w)]$}{O A[N-tilde(w)]}}
\label{sec:Poisson-Nw}

Let $r=\prod_{a\in \I_{\circ,\tau}} r_a$ and $\tT_r=\prod_{a\in \I_{\circ,\tau}} \tT_{r_a}$ be the braid group operator on $\widetilde{\U}$. 

Following the construction in Section~\ref{sec:PBW}, we have the De Concini--Kac integral form $\widetilde{\mathfrak{U}}^+$ and $\A$-subalgebras $\widetilde{\mathfrak{U}}^+(w)$ for any $w\in\tW$. For $w\in\widetilde{W}$, recall the quantum Schubert cell $\O_\A[\tN(w)]$ and the $\A$-algebra isomorphism $\widetilde{\mathfrak{U}}^+(w)\overset{\sim}{\rightarrow}\O_{\A}[\widetilde{N}(w)]$. We shall identify $\widetilde{\mathfrak{U}}^+(w)$ with $\O_{\A}[\widetilde{N}(w)]$ if there is no confusion.

Fix a reduced expression $\un{w_0 w_\bullet}=s_{i_1}\cdots s_{i_L}$ for $w_0 w_\bullet$. Then $rw_0 w_\bullet$ admits a natural reduced expression $(\prod_{a\in \I_{\circ,\tau}} r_a) s_{i_1}\cdots s_{i_L}$ where $l(r w_0 w_\bullet)=L+|\I_{\circ,\tau}|$. Then the root vectors associated with this reduced expression are given by
\begin{equation}\label{eq:troot}
\tE_{\alpha_{r_a}}, \tE_{r\beta_k}:=\tT_{r}(\tE_{\beta_k}),\qquad a\in \I_{\circ,\tau}, 1\leq k\leq L.
\end{equation}
In particular, $\widetilde{\mathfrak{U}}^+(rw^\bullet)$ is generated by vectors in \eqref{eq:troot} as an $\A$-algebra.

\begin{lemma}\label{lem:rbetak}
Let $a\in \I_{\circ,\tau}$ and $1\leq k \leq L$. Then we have $\tE_{\alpha_{r_a}} \tE_{r\beta_k} =q_{i_k}^{\langle \beta_k^\vee, \omega_a + \omega_{\tau a}\rangle} \tE_{r\beta_k} \tE_{\alpha_{r_a}}$.
\end{lemma}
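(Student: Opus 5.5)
We plan to reduce the statement to the Levendorskii--Soibelman straightening relation in $\widetilde{\mathfrak U}^+(rw^\bullet)$ for the chosen reduced expression $r\,s_{i_1}\cdots s_{i_L}$. In this expression the first root vectors are the $\tE_{\alpha_{r_a}}$; by conditions (2) the $r_a$ pairwise commute, so we may order the framing letters with $r_a$ last among them. Then $\tE_{\alpha_{r_a}}$ is the root vector of one position, and $\tE_{r\beta_k}$ sits at a later position. The commutation relation then reads
\[
\tE_{r\beta_k}\tE_{\alpha_{r_a}}-q^{-(r\beta_k,\alpha_{r_a})}\tE_{\alpha_{r_a}}\tE_{r\beta_k}\in\text{span of ordered monomials in intermediate root vectors}
\]
(up to the sign convention for dual root vectors). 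The intermediate root vectors include the other framing roots $\alpha_{r_b}$ and the $r\beta_t$ with $t<k$. So a direct LS argument does not immediately kill the lower terms. We therefore prefer a cleaner route.

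Apply $\tT_r^{-1}$, which is an algebra automorphism of $\widetilde{\U}$. Since $\tT_{r_a}^{-1}$ sends $E_{r_a}$ to a multiple of $F_{r_a}K_{r_a}$, the claimed identity becomes a $q$-commutation between $F_{r_a}K_{r_a}$ (suitably normalized) and $\tE_{\beta_k}\in\U^+\subset\widetilde{\U}$. Here $\tE_{\beta_k}$ involves only the $E_i$ with $i\in\I$. The key point is that $F_{r_a}$ commutes with every $E_i$, $i\in\I$, because $[E_i,F_{r_a}]=0$ for $i\neq r_a$. Hence the only contribution is from $K_{r_a}$ conjugating a weight vector of weight $\beta_k$. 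This gives the factor $q^{\pm(\alpha_{r_a},\beta_k)}$, up to the conventions on the symmetrized form.

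Finally we compute $(\alpha_{r_a},\beta_k)$ using $\tilde d_{r_a}\tc_{r_a,i}=(\alpha_{r_a},\alpha_i)$. From (3)--(5), the functional $(\alpha_{r_a},-)$ restricted to $\Z\I$ equals $-d_a(\alpha_a^\vee+\alpha_{\tau a}^\vee)$ if $\tau a\neq a$, and $-2d_a\alpha_a^\vee$ if $\tau a=a$. Because $d_a=d_{\tau a}$, in both cases this is $-\langle \widetilde{(\cdot)}^\vee,\omega_a+\omega_{\tau a}\rangle$ paired appropriately. Written out, $(\alpha_{r_a},\beta)=-d_\beta\langle\beta^\vee,\omega_a+\omega_{\tau a}\rangle$ for $\beta\in\Phi$. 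This holds since $\langle\beta^\vee,\omega_j\rangle d_\beta$ is the coefficient of $\alpha_j$ in $\beta$ times $d_j$. Substituting, and tracking the sign coming from the order of factors, yields exactly $q_{i_k}^{\langle\beta_k^\vee,\omega_a+\omega_{\tau a}\rangle}$, with $d_{\beta_k}=d_{i_k}$.

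The main obstacle is bookkeeping. We must check that $\tT_r^{-1}(\tE_{\alpha_{r_a}})$ really is a scalar multiple of $F_{r_a}K_{r_a}$ in the dual normalization, and pin down the direction of the exponent sign, since $\tT''_{r_a,+1}$ conventions flip it. We would verify the sign on the simplest case, AI$_1$ with $L=1$, $\beta_1=\alpha_1$, where $\langle\alpha_1^\vee,2\omega_1\rangle=2$. There a direct computation of $E_{r}\,T_r(E_1)$ with $T_r(E_1)=E_1^{(2)}E_r-q^{-1}E_1E_rE_1+q^{-2}E_rE_1^{(2)}$-type expressions and the Serre relations confirms the factor $q^{2}$.
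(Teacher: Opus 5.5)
Your argument is correct, but it takes a different route from the paper. The paper stays inside $\widetilde{\mathfrak U}^+(rw^\bullet)$. It writes the Levendorskii--Soibelman relation $\tE_{\alpha_{r_a}}\tE_{r\beta_k}=q_{i_k}^{\langle r\beta_k^\vee,\alpha_{r_a}\rangle}\tE_{r\beta_k}\tE_{\alpha_{r_a}}+\mathcal M$ and then removes $\mathcal M$ by a weight argument. A monomial in $\mathcal M$ has weight $\alpha'+r\gamma$, where $\alpha'$ is a sum of $\alpha_{r_b}$ with $b\neq a$ and $\gamma\in\Z[\alpha_j\mid j\in\I]$. Applying the involution $r$ to $\alpha_{r_a}+r\beta_k=\alpha'+r\gamma$ gives $\alpha'-\alpha_{r_a}=\gamma-\beta_k$. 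This is impossible, since the framing part and the $\I$-part of the root lattice meet only in $0$. So the obstruction you saw in the direct LS approach is real, but it is disposed of in two lines.

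Your route instead transports the identity by the automorphism $\tT_r^{-1}$. Because $\tc_{r_a,r_b}=0$, each $\tT_{r_b}$ with $b\neq a$ fixes $E_{r_a}$; you should state this explicitly. Hence only $\tT_{r_a}^{-1}$ acts on $\tE_{\alpha_{r_a}}$. Because $F_{r_a}$ commutes with the subalgebra generated by the $E_i$, $i\in\I$, everything then reduces to a $K$-weight computation. This route is more elementary, since no LS theory for the Kac--Moody algebra is needed. It also proves more: $E_{r_a}$ $q$-commutes with $\tT_r(x)$ for every homogeneous $x$ in that subalgebra, with factor $q^{-(\alpha_{r_a},|x|)}$. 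The paper's version has the advantage of using the same PBW/LS framework as Proposition~\ref{prop:atlas}.

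On the bookkeeping you flagged: with $T_i=T''_{i,+1}$, it is $\tT_{r_a}$ itself that sends $E_{r_a}$ to $-F_{r_a}K_{r_a}$. Its inverse (Lusztig's $T'_{r_a,-1}$) sends $E_{r_a}$ to $-K_{r_a}^{-1}F_{r_a}$, and this exponent $-1$ is what fixes the sign. Indeed $K_{r_a}^{-1}F_{r_a}\tE_{\beta_k}=q^{-(\alpha_{r_a},\beta_k)}\tE_{\beta_k}K_{r_a}^{-1}F_{r_a}$, and $-(\alpha_{r_a},\beta_k)=d_{i_k}\langle\beta_k^\vee,\omega_a+\omega_{\tau a}\rangle$ gives exactly the stated factor. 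A factor of the form $F_{r_a}K_{r_a}$ would give its inverse.

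The form computation is cleanest as $(\alpha_{r_a},\alpha_j)=-d_a(\delta_{j,a}+\delta_{j,\tau a})$ for $j\in\I$. This covers $\tau a=a$ and $\tau a\neq a$ uniformly. It is a coefficient functional, not $-d_a(\alpha_a^\vee+\alpha_{\tau a}^\vee)$ as you wrote, although your final formula is right.

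Your AI$_1$ test expression is $\tT_1(E_r)$ rather than $\tT_r(E_1)$; the latter is a combination of the terms $E_r^{(2-t)}E_1E_r^{(t)}$. With the $\tT_r^{-1}$ argument, however, no such test is needed.
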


\begin{proof}
By the construction of $\tC$, we have for any $i\in \I,a\in \I_{\circ,\tau}$
\[
\langle \alpha_i^\vee,\alpha_{r_a}\rangle=-\langle \alpha_i^\vee, \omega_a + \omega_{\tau a}\rangle,
\]
and hence one has 
\[
\langle r\beta_k^\vee, \alpha_{r_a}\rangle=\langle \beta_k^\vee,r\alpha_{r_a}\rangle=-\langle \beta_k^\vee,\alpha_{r_a}\rangle=\langle \beta_k^\vee, \omega_a + \omega_{\tau a}\rangle.
\]
By the Levendorskii--Soibelman straightening law, we have
\[
\tE_{\alpha_{r_a}} \tE_{r\beta_k} =q_{i_k}^{\langle r\beta_k^\vee, \alpha_{r_a}\rangle} \tE_{r\beta_k} \tE_{\alpha_{r_a}}+\mathcal{M}=q_{i_k}^{\langle \beta_k^\vee, \omega_a+\omega_{\tau a}\rangle} \tE_{r\beta_k} \tE_{\alpha_{r_a}}+\mathcal{M}
\]
where $\mathcal{M}$ is a polynomial in root vectors $\tE_{\alpha_{r_{b}}}, \tE_{r\beta_s}, 1\le s <k, b\in \I_{\circ,\tau}\backslash \{a\} $. If $\mathcal{M}$ is nonzero, then by considering the weights, one has 
\begin{equation}\label{eq:abg}
\alpha_{r_a}+r\beta_k=\alpha'+r\gamma\qquad \text{ in }\mathbb{Z}[\alpha_{r_i},\alpha_j\mid i\in \I_{\circ,\tau},j\in \I],
\end{equation}
where $\alpha'$ is a sum of elements in $\alpha_{r_b}$ with $b\in\I_{\circ,\tau}\backslash \{ a\}$ and $\gamma\in \mathbb{Z}[\alpha_i\mid i\in \I]$. Applying $r$ to \eqref{eq:abg} one obtains 
\[
-\alpha_{r_a}+\alpha'=\gamma-\beta_k,
\]
which is a contradiction since $\mathbb{Z}[\alpha_{r_i}\mid i\in\I_{\circ,\tau}]\cap \mathbb{Z}[\alpha_j\mid j\in \I]=0$.
\end{proof}

\begin{proposition}\label{prop:atlas}
There is an $\A$-algebra embedding
\begin{align*}
\Lambda:\O_{\A}[\tN(r w_0 \bw)] &\hookrightarrow \O_\A[B_\theta] 
\cong \O_\A[N(w_0 \bw)] \# \O_\A[T_\theta],
\\
\tE_{\alpha_{r_a}} &\mapsto \one_{\omega_a+\omega_{\tau a}}^*,
\\
\tE_{r\beta_k} & \mapsto \tE_{\beta_k}.
\end{align*}
\end{proposition}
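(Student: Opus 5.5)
Since $\widetilde{\mathfrak U}^+(rw^\bullet)$ is generated by the root vectors \eqref{eq:troot}, I will define $\Lambda$ on generators and show three things: well-definedness, injectivity, and $\A$-integrality.

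\textbf{Step 1: a PBW-type description of the source.} Fix the reduced expression $\big(\prod_a r_a\big)s_{i_1}\cdots s_{i_L}$. The dual PBW monomials
\[
\tE_{r\beta_L}^{c_L}\cdots\tE_{r\beta_1}^{c_1}\prod_a\tE_{\alpha_{r_a}}^{b_a}
\]
form an $\A$-basis of $\O_\A[\tN(rw^\bullet)]$. The framing vertices are pairwise orthogonal ($\tc_{r_a,r_b}=0$), so the $\tE_{\alpha_{r_a}}$ commute with each other. By Lemma~\ref{lem:rbetak}, each $\tE_{\alpha_{r_a}}$ $q$-commutes with $\tE_{r\beta_k}$ with exponent $d_{i_k}\langle\beta_k^\vee,\mu_a\rangle=(\beta_k,\mu_a)$. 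The subalgebra generated by the $\tE_{r\beta_k}$ is $\tT_r\big(\widetilde{\mathfrak U}^+(w^\bullet)\big)$. Since $\tT_r$ is an algebra automorphism of $\widetilde\U$ and $\U^+(w^\bullet)\subset\widetilde\U^+$ is the same algebra in either datum, this subalgebra is isomorphic to $\DCU^+(w^\bullet)\cong\O_\A[N(w^\bullet)]$ via $\tE_{r\beta_k}\mapsto\tE_{\beta_k}$. I need to check that the normalizations $(E,E)$ are preserved by $\tT_r$, which holds because the braid action is an isometry on these PBW vectors. Hence $\O_\A[\tN(rw^\bullet)]$ is the $q$-skew polynomial extension
\[
\O_\A[N(w^\bullet)][\,y_a\,;\ y_a x=q^{(|x|,\mu_a)}x y_a\,]
\]
over the commuting variables $y_a$. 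Equivalently, it is the smash product of $\O_\A[N(w^\bullet)]$ with the monoid algebra $\A[\N^{m}]$.

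\textbf{Step 2: compare with the target.} By \eqref{eq:com}, the element $\one_{\mu_a}^*$ satisfies
\[
\one_{\mu_a}^*\,\alpha(u)=q^{\langle\widetilde{|u|},\mu_a\rangle}\alpha(u)\,\one_{\mu_a}^*.
\]
For $|u|=\beta_k$ the exponent is $\langle d_{i_k}\beta_k^\vee,\mu_a\rangle$, which matches Lemma~\ref{lem:rbetak}. The elements $\one_{\mu_a}^*$ commute with each other by Lemma~\ref{lem:onela}. Lemma~\ref{lem:omega}(1) places $\mu_a$ in $\breve X$, so $\one_{\mu_a}^*\in\O_\A[T_\theta]$. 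Therefore the assignment in the statement respects all defining relations from Step 1 and gives a well-defined $\A$-algebra map $\Lambda$.

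\textbf{Step 3: injectivity.} $\Lambda$ sends the $\A$-basis of Step 1 to $\tE^{\mathbf c}_{\un{w^\bullet}}\otimes\one^*_{\sum b_a\mu_a}$, up to powers of $q$. The $\mu_a$ are linearly independent (Lemma~\ref{lem:omega}(2)), so the map $\N^m\to\breve X$, $b\mapsto\sum_a b_a\mu_a$, is injective. By \eqref{eq:Btheta} these images are distinct basis elements of $\O_\A[B_\theta]$, so $\Lambda$ is injective.

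\textbf{Main obstacle.} The delicate point is Step 1: showing the relations are exactly the $q$-commutation ones, i.e.\ that $\tT_r$ restricts to an isomorphism of integral dual PBW forms, and that Lemma~\ref{lem:rbetak} holds with no lower-order terms between framing and non-framing root vectors. The latter was handled by the weight argument in the proof of Lemma~\ref{lem:rbetak}. For the former, I would argue via Lusztig's result that $T_w$ preserves the PBW bases and the bilinear form on the relevant subspaces.
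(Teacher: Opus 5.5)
Your proposal is correct and follows essentially the same route as the paper. You use the PBW basis of $\O_\A[\tN(rw^\bullet)]$ and match the relations among the framing generators, among the $\tE_{r\beta_k}$ (transported from $\O_\A[N(w^\bullet)]$ via $\tT_r$), and the mixed $q$-commutations of Lemma~\ref{lem:rbetak} against \eqref{eq:com} and Lemma~\ref{lem:onela}. You then get injectivity because the PBW basis goes, up to powers of $q$, to distinct basis elements of \eqref{eq:Btheta}.

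Your explicit checks fill in details the paper leaves implicit: that $\langle\widetilde{\beta_k},\mu_a\rangle=d_{i_k}\langle\beta_k^\vee,\mu_a\rangle$, that $\mu_a\in\breve X$, and that $\tT_r$ preserves the root-vector normalizations.
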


\begin{proof}
By Section~\ref{sec:PBW}, the ordered monomials in $\tE_{\alpha_{r_a}}, \tE_{r\beta_k}$ form a PBW basis of $\O_{\A}[\tN(r w_0 \bw)]$. Hence, it suffices to verify that the commutation relations for these root vectors are satisfied in their images.

First, since $\tc_{r_a,r_b}=0$ for $a\neq b $, elements $\tE_{\alpha_{r_a}},a\in \I_{\circ,\tau}$ commute with each other. By Lemma~\ref{lem:onela}, elements $\one_{\omega_a+\omega_{\tau a}}^*,a\in \I_{\circ,\tau}$ also commute with each other.

Second, for $1\le k < l \le L $, we have
\[
\tE_{\beta_k} \tE_{\beta_l}=q_{i_k}^{\langle \beta_k^\vee,\beta_l\rangle} \tE_{\beta_l} \tE_{\beta_k} + R\big(\tE_{\beta_{k+1}}, \ldots, \tE_{\beta_{l-1}}\big)
\]
where $R\big(\tE_{\beta_{k+1}}, \ldots, \tE_{\beta_{l-1}}\big)$ is a polynomial in $\tE_{\beta_s}, k<s<l$ with coefficients in $\A$. Note that $\langle r\beta_k^\vee,r\beta_l\rangle=\langle \beta_k^\vee,\beta_l\rangle$, and then by applying $\tT_r$, we have
\[
\tE_{r\beta_k} \tE_{r\beta_l}=q_{i_k}^{\langle r\beta_k^\vee,r\beta_l\rangle} \tE_{r\beta_l} \tE_{r\beta_k} + R(\tE_{r\beta_{k+1}}, \ldots, \tE_{r\beta_{l-1}}).
\]

Together with Lemma~\ref{lem:rbetak}, the above arguments show that $\Lambda$ is an algebra homomorphism. Moreover, the image of the PBW basis forms a linearly independent set in $\O_\A[B_\theta]$. Hence, $\Lambda$ is an $\A$-algebra embedding.
\end{proof}

We write $\Lambda_1$ and $\Lambda_\epsilon$ to denote the specialization of $\Lambda$ at $q=1$ and $q=\epsilon$, respectively.

\subsection{Classical framed chart and its strata}
For $a\in \I_{\circ,\tau}$, set
\[
\mathcal C_\A=\O_\A[\widetilde N(rw^\bullet)],\qquad
z_a=\tE_{\alpha_{r_a}},\qquad
\mathcal B_{0,\A}=\O_\A[N(w^\bullet)]\#\A[\breve X_0].
\]
We use the same notation $z_a$ for its specializations, and write $z_{a,1}$ when the specialization at $1$ needs to be distinguished. Here $\A[\breve X_0]$ is the subalgebra of $\O_\A[T_\theta]$ spanned by $\one_\lambda^*$, $\lambda\in\breve X_0$. By Lemma~\ref{lem:rbetak}, the $z_a$ are commuting normal elements. Proposition~\ref{prop:atlas} and the PBW basis therefore give
\[
\mathcal C_\A[z_a^{-1}\mid a\in\I_{\circ,\tau}]
\overset{\sim}{\longrightarrow}\mathcal B_{0,\A}
\subseteq\O_\A[B_\theta].
\]
The last algebra is free as a left and as a right $\mathcal B_{0,\A}$-module of rank
\[
d=[\breve X:\breve X_0].
\]
Indeed, the elements $\one_\nu^*$ for a set of representatives $\nu$ of $\breve X/\breve X_0$ form such a basis, thanks to the smash-product description and \eqref{eq:com}.

Let $T_0=\operatorname{Spec}\C[\breve X_0]$, and let
$\rho:T_\theta\rightarrow T_0$ be the isogeny induced by the inclusion of character lattices. Write
\[
\mathcal U=\{y\in\widetilde N(rw^\bullet)\mid z_a(y)\ne0
\text{ for all }a\in\I_{\circ,\tau}\}.
\]
At $q=1$, the preceding localization identifies
$\mathcal U\cong N(w^\bullet)\times T_0$. The comorphism of $\Lambda_1$ factors as
\begin{equation}\label{eq:framed-open-embedding}
\psi:B_\theta\cong N(w^\bullet)\times T_\theta
\xrightarrow{\ \Id\times\rho\ }
\mathcal U\hookrightarrow\widetilde N(rw^\bullet).
\end{equation}
In particular, $\psi:B_\theta\rightarrow\mathcal U$ is a finite \emph{\'{e}tale} surjection of degree $d$. It is an isomorphism onto $\mathcal U$ precisely when $\breve X_0=\breve X$. 

Recall from Section~\ref{sec:rwg} the stratification $B_\theta=\bigsqcup_{v \leq w^\bullet} B_{\theta,v}$.

\begin{proposition}\label{prop:framed-strata}
One has
\[
\mathcal U=\bigsqcup_{v\leq w^\bullet}\widetilde R_{v,rw^\bullet},
\qquad
\psi^{-1}(\widetilde R_{v,rw^\bullet})=B_{\theta,v}.
\]
For every $v\leq w^\bullet$, the restriction
\[
\psi|_{B_{\theta,v}}:B_{\theta,v}\longrightarrow
\widetilde R_{v,rw^\bullet}
\]
is a finite \'{e}tale surjection of degree $d$.
\end{proposition}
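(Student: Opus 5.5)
The plan is to make $\psi$ explicit group-theoretically and then read off which Bruhat cell $y\,rw^\bullet\widetilde B_+$ lies in. This reduces everything to Lemma~\ref{le:Bv}.

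\emph{Step 1: a parametrization of $\widetilde N(rw^\bullet)$.} Since $rw^\bullet=r\cdot w^\bullet$ is length-additive,
\[
\widetilde N(rw^\bullet)=\widetilde N(r)\cdot \dot r\,N(w^\bullet)\,\dot r^{-1},
\]
with $\widetilde N(r)=\prod_a x_{r_a}(\C)$ commutative, because $\tc_{r_a,r_b}=0$. Thus each point has the form
\[
y=\prod_a x_{r_a}(c_a)\cdot \dot r n\dot r^{-1},\qquad n\in N(w^\bullet).
\]
I would check that, under $\Lambda_1$ and the classical identification of $\O_1[\widetilde N(w)]$ with dual PBW coordinates (as in Lemma~\ref{lem:O1Btheta}), $z_{a,1}$ is the coordinate $c_a$ (up to a fixed nonzero scalar) and $\tE_{r\beta_k}$ pulls back to the coordinate $\tE_{\beta_k}$ of $n$. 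Then $\psi(n,t)$ is the point with $n$-component $n$ and $c_a=(\omega_a+\omega_{\tau a})(t)$. I expect this compatibility of the quantum generators with group coordinates to be the main obstacle. I would establish it by comparing both sides on the root subgroups, using $\Fr$-free $q=1$ specializations of $\tT_r$, which become conjugation by $\dot r$.

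\emph{Step 2: computing the Bruhat cell.} For $c_a\neq0$, use the $SL_2$ identity
\[
x_{r_a}(c)\dot r_a=x_{-r_a}(c^{-1})\,\alpha_{r_a}^\vee(c)\,x_{r_a}(-c^{-1})
\]
(up to sign conventions). This gives
\[
y\,\dot r\dot w^\bullet=b_-\,t(c)\,\Bigl(\prod_a x_{r_a}(-c_a^{-1})\Bigr)\,n\,\dot w^\bullet,\qquad b_-\in\widetilde N_-.
\]
Here $t(c)=\prod_a\alpha^\vee_{r_a}(c_a)$. Next, conjugate $n$ past the $x_{r_a}$ factors and absorb them on the right. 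Since $w^\bullet\in W$ fixes the coefficient of $\alpha_{r_a}$, every root $w^{\bullet-1}(\gamma)$ with $\gamma$ involving $\alpha_{r_a}$ stays positive. Hence the factors from the unipotent radical $\widetilde U_{\I}$ of the parabolic $\widetilde P_{\I}$ get absorbed into $\widetilde B_+$ after passing $\dot w^\bullet$, since that radical is normalized by the Levi. Projecting to the Levi $G$ of $\widetilde P_{\I}$, and using that for $v\in W$ we have $\widetilde B_-v\widetilde B_+\cap \widetilde P_{\I}$ compatible with $B_-vB_+$ under the Levi projection, we get
\[
y\,rw^\bullet\widetilde B_+\in \widetilde B_-v\widetilde B_+ \iff t'\,n\,w^\bullet\in B_-vB_+ \iff n\in N(w^\bullet)_v.
\]
Here $t'$ is a torus factor, which does not affect the cell. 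In particular, points of $\mathcal U$ only meet $\widetilde R_{v,rw^\bullet}$ with $v\in W$, $v\le w^\bullet$. This yields
\[
\mathcal U=\bigsqcup_{v\le w^\bullet}\widetilde R_{v,rw^\bullet}\cap\mathcal U.
\]
To upgrade this to $\mathcal U=\bigsqcup_{v\leq w^\bullet}\widetilde R_{v,rw^\bullet}$, I would check conversely that every $y\in\widetilde R_{v,rw^\bullet}$ with $v\in W$ has all $c_a\neq0$. If some $c_a=0$, the $r_a$ factor of $\dot r$ survives in the Bruhat cell, forcing $v\notin W$.

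\emph{Step 3: preimages and étaleness.} By Step 2 and Lemma~\ref{le:Bv},
\[
\psi^{-1}(\widetilde R_{v,rw^\bullet})=N(w^\bullet)_v\times T_\theta=B_{\theta,v}.
\]
Finally, $\psi:B_\theta\to\mathcal U$ is $\Id\times\rho$ with $\rho$ an isogeny of degree $d=[\breve X:\breve X_0]$, which is finite étale in characteristic $0$. The restriction of a finite étale surjection to the full preimage of a locally closed subvariety is again a finite étale surjection of the same degree, since it is a base change. This gives the last assertion.
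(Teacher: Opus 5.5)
Your proposal is correct and follows essentially the same route as the paper. Both arguments use the factorization $\bigl(\prod_a x_{r_a}(c_a)\bigr)\dot r\, n\,\dot w^\bullet\widetilde B_+$ of the Schubert cell, and the rank-one Gauss decomposition, with the positive framing factors absorbed into $\widetilde B_+$ through the unipotent radical of $\widetilde P_\I$, which is normalized by the Levi subgroup containing $n\dot w^\bullet$. Both then show the converse (all $c_a\neq 0$ when $v\in W$) via the opposite big cell of $\widetilde G/\widetilde P_\I$, and finish with Lemma~\ref{le:Bv} and $\psi=\Id\times\rho$. The step you flag as the main obstacle, namely that $z_{a,1}$ and $\tE_{r\beta_k}$ correspond to the group coordinates $c_a$ and $n$, is exactly what the paper assumes when it ``chooses root subgroup coordinates compatible with the PBW coordinates,'' so your treatment is if anything slightly more explicit.
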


\begin{proof}
    Write $w=w^\bullet$. Choose root subgroup coordinates compatible with the PBW coordinates and representatives $\dot r,\dot w$ of $r,w$. The factorization of the Schubert cell corresponding to the reduced expression $rw$ is
    \[
    (\mathbf c,n)\longmapsto
    \left(\prod_{a\in\I_{\circ,\tau}}x_{r_a}(c_a)\right)
    \dot r n\dot w\widetilde B_+,
    \qquad \mathbf c\in\C^m,\quad n\in N(w).
    \]
    The framing coordinates are $c_a$, and $\mathcal U$ is the locus where all $c_a$ are nonzero. Let $\widetilde P_\I$ be the standard parabolic with Levi subgroup corresponding to $\I$. If $c_a\ne0$, the rank-one Gauss decomposition writes
    \[
    x_{r_a}(c_a)\dot r_a=b_a^-u_a^+,
    \]
    where $b_a^-\in\widetilde B_-$ and $u_a^+$ belongs to the positive root subgroup for $\alpha_{r_a}$. The framing rank-one subgroups commute. Moreover, their positive root subgroups lie in the unipotent radical of $\widetilde P_\I$, which is normalized by the Levi subgroup. Since $n\dot w$ belongs to this Levi subgroup, these positive factors can be moved to the right of $n\dot w$ and absorbed into $\widetilde B_+$. Thus, for $\mathbf c\in(\C^\times)^m$,
    \[
    \left(\prod_a x_{r_a}(c_a)\right)\dot r n\dot w\widetilde B_+
    \in\widetilde B_-v\widetilde B_+/\widetilde B_+
    \quad\Longleftrightarrow\quad
    n\dot wB_+\in B_-vB_+/B_+.
    \]
    Conversely, projection to $\widetilde G/\widetilde P_\I$ shows that a point in $\widetilde R_{v,rw}$ with $v\in W$ has all framing coordinates nonzero: its projection lies in the opposite big cell, and on the commuting rank-one factors this condition is exactly $c_a\ne0$ for every $a$.

    It follows that, under the above factorization,
    \[
    \widetilde R_{v,rw}\cong N(w)_v\times T_0,
    \qquad
    N(w)_v=\{n\in N(w)\mid n\dot wB_+\in B_-vB_+/B_+\}.
    \]
    By Lemma~\ref{le:Bv}, $B_{\theta,v}\cong N(w)_v\times T_\theta$. The restriction of $\psi$ is $\Id\times\rho$, proving the assertions.
\end{proof}

The specialization at $q=1$ gives a canonical
algebra isomorphism
\begin{equation}\label{eq:classical-limit-Nw}
 \C\otimes_{\A,\,q\mapsto1}\O_\A[\tN(w)]
 \overset{\sim}{\longrightarrow}\O[\tN(w)].
\end{equation}
Since the specialization is commutative, $fg-gf$ is divisible by $q-1$
for all $f,g\in\O_\A[\tN(w)]$.  Hence the formula
\begin{equation}\label{eq:semiclassical-bracket-Nw}
 \{\bar f,\bar g\}_{\mathrm{sc}}
 :=\overline{\frac{fg-gf}{q-1}}
\end{equation}
defines a Poisson bracket on the left-hand side of
\eqref{eq:classical-limit-Nw}, independently of the chosen lifts.
Under \eqref{eq:classical-limit-Nw}, this bracket is exactly the bracket
defined by $\pi_w$, up to a nonzero scalar. Let us normalize $\pi_w$ such that \eqref{eq:classical-limit-Nw} is Poisson.  Thus $\O_\A[\tN(w)]$ is an integral quantization of
the standard Poisson variety $(\tN(w),\pi_w)$.  The comparison follows from
the Levendorskii--Soibelman straightening relations for the ordered root
vectors; see \cite{DCP93,Ki12,GY23}.

Since $\Lambda$ is defined over $\A$, it respects the brackets obtained by dividing commutators by $q-1$. Thus $\psi:B_\theta\rightarrow\mathcal U$ is a Poisson morphism. As it is \'{e}tale, for each $x\in B_\theta$ the differential $d\psi_x$ identifies the Poisson tensors at $x$ and $\psi(x)$. In particular, their symplectic leaves have the same dimension.


\subsection{Representations of \texorpdfstring{$\O_\epsilon[\tN(w)]$}{O epsilon[N-tilde(w)]} at roots of unity}
\label{sec:rep-Nw}

Retain the symmetrizable Kac--Moody datum above and the root-of-unity
notation of Section~\ref{sec:bc}: thus $\epsilon$ is a primitive
$\ell$-th root of unity, subject to the stated restrictions on $\ell$.
Set
\[
 \O_\epsilon[\tN(w)]
 :=\C_\epsilon\otimes_\A\O_\A[\tN(w)],
 \qquad
 \O_1[\tN(w)]
 :=\C_1\otimes_\A\O_\A[\tN(w)].
\]
The dual quantum Frobenius homomorphism gives a canonical central
embedding
\begin{equation}\label{eq:Frobenius-center-Nw}
 \Fr_w^*:\O_1[\tN(w)]\lhook\joinrel\longrightarrow
 \O_\epsilon[\tN(w)].
\end{equation}
Under the classical specialization
$\O_1[\tN(w)]\cong\O[\tN(w)]$, its image
\[
 Z_0(\tN(w)):=\Fr_w^*\bigl(\O_1[\tN(w)]\bigr)
\]
is a canonical central subalgebra of $\O_\epsilon[\tN(w)]$.  It is
isomorphic to the coordinate algebra $\O[\tN(w)]$, and we call it the
\emph{Frobenius center}.

For $x\in \tN(w)$, let $\mathfrak m_x\subset\O[\tN(w)]$ be the maximal ideal
of functions vanishing at $x$.  Via \eqref{eq:Frobenius-center-Nw}, the
corresponding fibre algebra is
\begin{equation}\label{eq:fibre-Nw}
 \O_\epsilon[\tN(w)](x)
 :=\O_\epsilon[\tN(w)]\big/
 \mathfrak m_x\O_\epsilon[\tN(w)].
\end{equation}
It is a finite-dimensional algebra.  Let $\mathcal L_x$ be the symplectic
leaf of the standard Poisson variety $(\tN(w),\pi_w)$ containing $x$, and
write
\[
 \tilde{d}_x:=\dim\mathcal L_x.
\]

\begin{conjecture}\label{conj:HMY-Nw}
For every $x\in \tN(w)$ and every simple
$\O_\epsilon[\tN(w)](x)$-module $V$, there exists a positive integer $\widetilde{N}_w$, such that when $\ell$ is coprime to $\widetilde{N}_w$, one has
\[
 \dim V=\ell^{\tilde{d}_x/2}.
\]
\end{conjecture}

\subsection{Root-of-unity fibres on the framed chart}
The maps $\Lambda_1$ and $\Lambda_\epsilon$ are compatible with the Frobenius embeddings. More precisely, one has the commutative diagram
\begin{equation}\label{cd:Lambda}
\begin{tikzcd}
\O_1[\widetilde N(rw^\bullet)] \arrow[r,"\Lambda_1"] \arrow[d,"\Fr_{rw^\bullet}^*"'] & \O_1[B_\theta] \arrow[d,"\Fr_\theta^*"] \\
\O_\epsilon[\widetilde N(rw^\bullet)] \arrow[r,"\Lambda_\epsilon"] & \O_\epsilon[B_\theta].
\end{tikzcd}
\end{equation}
Indeed, on the framing generators the two compositions send $z_{a,1}$ to $\one_{\ell\mu_a}^*$, and on the remaining dual root vectors the equality follows from the PBW formula for Frobenius. Thus, if $x\in B_\theta$ and $y=\psi(x)\in\mathcal U$, $\Lambda_\epsilon$ induces a homomorphism between the fibre algebras at $y$ and $x$.

\begin{proposition}\label{prop:tNB}
For any $x\in B_\theta$, the map $\Lambda_\epsilon$ induces an isomorphism
\[
\O_\epsilon[\widetilde N(rw^\bullet)](\psi(x))
\overset{\sim}{\longrightarrow}\O_\epsilon[B_\theta](x)
\]
as finite-dimensional algebras.
\end{proposition}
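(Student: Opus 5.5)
The plan is to factor $\Lambda_\epsilon$ as a localization followed by a free finite extension, and compare fibres at each step. Write $\mathcal C_\epsilon=\O_\epsilon[\widetilde N(rw^\bullet)]$ and $\mathcal B_{0,\epsilon}=\O_\epsilon[N(w^\bullet)]\#\C[\breve X_0]$. By the integral statement preceding Proposition~\ref{prop:framed-strata}, specialized at $q=\epsilon$, $\Lambda_\epsilon$ factors as
\[
\mathcal C_\epsilon\longrightarrow\mathcal C_\epsilon[z_a^{-1}\mid a\in\I_{\circ,\tau}]\overset{\sim}{\longrightarrow}\mathcal B_{0,\epsilon}\subseteq\O_\epsilon[B_\theta].
\]
Here the PBW basis is preserved under specialization, so the localization isomorphism remains valid. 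By \eqref{cd:Lambda}, $z_a^\ell=\Fr^*_{rw^\bullet}(z_{a,1})$ is central. Since $y=\psi(x)\in\mathcal U$, we have $z_{a,1}(y)\neq0$. Exactly as in the proof of Proposition~\ref{le:GB}, the image of $z_a$ in the fibre at $y$ is then invertible, with inverse $z_{a,1}(y)^{-1}z_a^{\ell-1}$. Hence
\[
\mathcal C_\epsilon(y)\cong\mathcal B_{0,\epsilon}(y),
\]
where the right side is the fibre over $y\in\mathcal U\cong N(w^\bullet)\times T_0$ with respect to the central subalgebra $\O[\mathcal U]$, embedded via Frobenius. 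This central subalgebra is $\C[x_j^\ell,\one^*_{\pm\ell\lambda}\mid\lambda\in\breve X_0]$.

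The second step compares $\mathcal B_{0,\epsilon}(y)$ with $\O_\epsilon[B_\theta](x)$. The central subalgebra of $\O_\epsilon[B_\theta]$ is $Z=\C[x_j^\ell,\one^*_{\pm\ell\lambda}\mid\lambda\in\breve X]$. The one of $\mathcal B_{0,\epsilon}$ is $Z_0'=\C[x_j^\ell,\one^*_{\pm\ell\lambda}\mid\lambda\in\breve X_0]$, and $Z$ is free over $Z_0'$ of rank $d$, corresponding to the étale cover $\rho$. I will show that the natural map
\[
Z\otimes_{Z_0'}\mathcal B_{0,\epsilon}\longrightarrow\O_\epsilon[B_\theta]
\]
is an isomorphism. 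Both sides are free over $\mathcal B_{0,\epsilon}$ of rank $d$. A basis on the right is $\one^*_\nu$, for $\nu$ running over representatives of $\breve X/\breve X_0$. The key input is Lemma~\ref{lem:omega}(3): since $\ell$ is odd, these representatives can be chosen in $\ell\breve X$, so the $\one^*_{\nu}$ with $\nu\in\ell\breve X$ lie in $Z$ and simultaneously form a $Z_0'$-basis of $Z$. They should also be central; I will check this via \eqref{eq:com}, since $\epsilon^{\langle\widetilde{|u|},\ell\nu'\rangle}=1$.

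Granting the isomorphism, take the fibre at $x$. Because $\rho$ is étale, $Z/\mathfrak m_xZ\cong Z_0'/\mathfrak m_yZ_0'$, which is $\C$ since $x$ is one point over $y$. This gives
\[
\O_\epsilon[B_\theta](x)=\O_\epsilon[B_\theta]\otimes_Z Z/\mathfrak m_x\cong\mathcal B_{0,\epsilon}\otimes_{Z_0'}\C_y=\mathcal B_{0,\epsilon}(y).
\]
Compatibility with $\Lambda_\epsilon$ follows from \eqref{cd:Lambda}.

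The main obstacle is the middle step. It requires matching the Frobenius center of $\mathcal B_{0,\epsilon}$ exactly with the image of $\O[\mathcal U]$, including checking that the lattice $\ell\breve X$ lies in the center and that no extra central torus elements spoil the base-change identity. Lemma~\ref{lem:omega}(3) is what resolves the index-$d$ discrepancy, and it is the point where oddness of $\ell$ is used.
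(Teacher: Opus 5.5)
Your proof is correct. The first step, inverting $z_a$ in the fibre via $z_a^{-1}=z_{a,1}(y)^{-1}z_a^{\ell-1}$ to get $\mathcal C_\epsilon(y)\cong\mathcal B_{0,\epsilon}(y)$, is exactly the paper's. Your second step takes a different route.

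\textbf{The paper's second step.} It compares $\mathcal B_{0,\epsilon}(y)$ and $\O_\epsilon[B_\theta](x)$ directly on fibres.
It writes $\lambda=\mu+\ell\nu$ with $\mu\in\breve X_0$ and $\nu\in\breve X$. It then observes $\one^*_\lambda\equiv\chi_\nu(x)\one^*_\mu$ modulo $\mathfrak m_x$, which gives surjectivity.
It concludes by a count: both fibres have dimension $\ell^{L+m}$, namely ordered root monomials with exponents $<\ell$ times representatives of $\breve X_0/\ell\breve X_0$, respectively $\breve X/\ell\breve X$.

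\textbf{Your second step.} You instead prove the global identity $Z\otimes_{Z_0'}\mathcal B_{0,\epsilon}\cong\O_\epsilon[B_\theta]$.
You use that representatives of $\breve X/\breve X_0$ can be chosen in $\ell\breve X$. Then the same elements $\one^*_{\ell\nu_i}$ are both a $Z_0'$-basis of $Z$ and a $\mathcal B_{0,\epsilon}$-basis of $\O_\epsilon[B_\theta]$. The fibre isomorphism then follows by base change along $Z\to Z/\mathfrak m_x$.

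\textbf{Comparison.} Both arguments rest on the same lattice fact $\breve X=\breve X_0+\ell\breve X$, i.e.\ Lemma~\ref{lem:omega}(3). The paper's route is shorter and only needs the fibre dimensions. Yours needs no dimension count and proves more: $\O_\epsilon[B_\theta]$ is the pullback, along the isogeny $\rho$, of the algebra $\mathcal B_{0,\epsilon}$ over $\mathcal U$. So the fibre comparison holds uniformly, not just pointwise.

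\textbf{Two small corrections.}
\begin{itemize}
\item \'Etaleness of $\rho$ plays no role in your last step. $Z/\mathfrak m_x=\C$ for any maximal ideal. All you need is $\mathfrak m_x\cap Z_0'=\mathfrak m_y$, which follows from \eqref{cd:Lambda} together with $y=\psi(x)$.
\item The ``obstacle'' about extra central elements does not arise. Fibres are taken over the Frobenius subalgebras $Z$ and $Z_0'$, not over the full centres. The elements $\one^*_{\ell\nu}$ are central simply because they lie in $Z=\Fr_\theta^*(\O_1[B_\theta])$.
\end{itemize}
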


\begin{proof}
    Put $y=\psi(x)$ and $\mathcal C_\epsilon=\O_\epsilon[\widetilde N(rw^\bullet)]$. Take $a\in \I_{\circ,\tau}$. Since $y\in\mathcal U$, the image of $z_a^\ell=\Fr_{rw^\bullet}^*(z_{a,1})$ in $\mathcal C_\epsilon(y)$ is the nonzero scalar $z_{a,1}(y)$. Hence $z_a$ is invertible in this fibre, with inverse $z_{a,1}(y)^{-1}z_a^{\ell-1}$. Localizing at all $z_a$ therefore gives
    \[
    \mathcal C_\epsilon(y)\cong\mathcal B_{0,\epsilon}(y),
    \qquad
    \mathcal B_{0,\epsilon}=\C_\epsilon\otimes_\A\mathcal B_{0,\A}.
    \]
    Here the fibre on the right is taken over $\O[\mathcal U]$.

    We show that the induced map $\mathcal B_{0,\epsilon}(y)\rightarrow\O_\epsilon[B_\theta](x)$ is surjective. Write $e_\lambda=\one_\lambda^*$ for the quantum torus element of weight $\lambda\in\breve X$, and $\chi_\lambda$ for the corresponding classical character. Lemma~\ref{lem:omega} (3) gives
    \[
    \lambda=\mu+\ell\nu,\qquad
    \mu\in\breve X_0,\quad\nu\in\breve X.
    \]
    In the fibre at $x$, one has
    \[
    e_\lambda=e_\mu e_{\ell\nu}
    =\chi_\nu(x)e_\mu,
    \]
    since $e_{\ell\nu}=\Fr_\theta^*(\chi_\nu)$. Thus all torus elements lie in the image. The remaining root vectors already belong to $\mathcal B_{0,\epsilon}$, so the map is surjective.

    Finally, the PBW basis and the Frobenius formulas give
    \[
    \dim_\C\mathcal B_{0,\epsilon}(y)=\ell^{L+m}
    =\dim_\C\O_\epsilon[B_\theta](x).
    \]
    Indeed, a basis for each fibre is given by ordered root monomials with exponents in $\{0,\ldots,\ell-1\}$, multiplied by torus monomials representing the corresponding lattice modulo $\ell$. There are $L$ root exponents and $\ell^m$ torus classes on both sides. The surjective map is therefore an isomorphism.
\end{proof}

\begin{lemma}\label{le:tNxy}
For any $x,y\in\widetilde R_{v,rw^\bullet}$, with $v\leq w^\bullet$, one has an algebra isomorphism
\[
\O_\epsilon[\widetilde N(rw^\bullet)](x)
\cong\O_\epsilon[\widetilde N(rw^\bullet)](y).
\]
\end{lemma}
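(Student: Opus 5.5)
The plan is to reduce to the symmetric-space side, where constancy of fibres along strata is already Theorem~\ref{thm:fiberiso}, using the framed chart. Take $x,y\in\widetilde R_{v,rw^\bullet}$ with $v\le w^\bullet$. By Proposition~\ref{prop:framed-strata}, $\widetilde R_{v,rw^\bullet}\subseteq\mathcal U$, and $\psi|_{B_{\theta,v}}:B_{\theta,v}\to\widetilde R_{v,rw^\bullet}$ is surjective. So we can choose $x',y'\in B_{\theta,v}$ with $\psi(x')=x$ and $\psi(y')=y$.

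Next, Proposition~\ref{prop:tNB} gives isomorphisms
\[
\O_\epsilon[\widetilde N(rw^\bullet)](x)\cong\O_\epsilon[B_\theta](x'),\qquad \O_\epsilon[\widetilde N(rw^\bullet)](y)\cong\O_\epsilon[B_\theta](y').
\]
Proposition~\ref{le:GB} then identifies these with $\O_\epsilon[S](x')$ and $\O_\epsilon[S](y')$, respectively. Since $B_{\theta,v}=B_\theta\cap S_v$, both $x'$ and $y'$ lie in the double Bruhat cell $S_v$. Theorem~\ref{thm:fiberiso} therefore gives $\O_\epsilon[S](x')\cong\O_\epsilon[S](y')$. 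Composing the isomorphisms proves the lemma.

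The main point to check is the compatibility of maximal ideals in Proposition~\ref{prop:tNB} and Proposition~\ref{le:GB}. The fibre of $\O_\epsilon[\widetilde N(rw^\bullet)]$ at $\psi(x')$ must be taken with respect to its Frobenius center, and this must match the fibre taken through $\Lambda_\epsilon$. This matching is exactly the commutativity of \eqref{cd:Lambda}, together with the fact that the comorphism of $\Lambda_1$ is $\psi$.

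An alternative route would run a Poisson-order argument directly on $\O_\epsilon[\widetilde N(rw^\bullet)]$, as in Proposition~\ref{prop:poisson-order}. One would also need the Richardson strata to be $\widetilde T$-leaves and the torus translations to lift. That route requires more Kac--Moody input, so I would use the reduction above, which only invokes results already established.
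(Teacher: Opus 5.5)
Your proposal is correct and is essentially the paper's own proof: you lift $x,y$ to $x',y'\in B_{\theta,v}$ using the surjectivity in Proposition~\ref{prop:framed-strata}, then chain Proposition~\ref{prop:tNB}, Proposition~\ref{le:GB}, and Theorem~\ref{thm:fiberiso} (valid since $B_{\theta,v}\subseteq S_v$). The compatibility of maximal ideals you flag is already built into Proposition~\ref{prop:tNB} through the commutative diagram \eqref{cd:Lambda}, so nothing further is needed.
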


\begin{proof}
    By Proposition~\ref{prop:framed-strata}, there exist $x',y'\in B_{\theta,v}$ with $\psi(x')=x$ and $\psi(y')=y$. Proposition~\ref{prop:tNB}, Proposition~\ref{le:GB}, and Theorem~\ref{thm:fiberiso} give
    \begin{align*}
    \O_\epsilon[\widetilde N(rw^\bullet)](x)
    &\cong\O_\epsilon[B_\theta](x')
    \cong\O_\epsilon[S](x')\\
    &\cong\O_\epsilon[S](y')
    \cong\O_\epsilon[B_\theta](y')
    \cong\O_\epsilon[\widetilde N(rw^\bullet)](y).
    \end{align*}
    This completes the proof.
\end{proof}

\begin{theorem}\label{thm:OSN}
For any $v\leq w^\bullet$, $x\in S_v$, and $y\in\widetilde R_{v,rw^\bullet}$, one has
\[
\O_\epsilon[S](x)\cong\O_\epsilon[\widetilde N(rw^\bullet)](y)
\]
as finite-dimensional algebras.
\end{theorem}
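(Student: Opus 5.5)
The plan is to chain together isomorphisms already established, passing through the open chart $B_\theta$ and the framed chart $\mathcal U$. Fix $v\leq w^\bullet$, $x\in S_v$ and $y\in\widetilde R_{v,rw^\bullet}$.

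First, I will pick a point $x'\in B_{\theta,v}=B_\theta\cap S_v$ with $\psi(x')=y$. Proposition~\ref{prop:framed-strata} supplies it: the restriction $\psi|_{B_{\theta,v}}:B_{\theta,v}\to\widetilde R_{v,rw^\bullet}$ is surjective, so $y$ has a preimage $x'$, and by construction $x'\in S_v$. Since $x$ and $x'$ both lie in the double Bruhat cell $S_v$, Theorem~\ref{thm:fiberiso} gives
\[
\O_\epsilon[S](x)\cong\O_\epsilon[S](x').
\]

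Second, because $x'\in B_\theta$, I will apply Proposition~\ref{le:GB} to get $\O_\epsilon[S](x')\cong\O_\epsilon[B_\theta](x')$. Proposition~\ref{prop:tNB} then gives
\[
\O_\epsilon[B_\theta](x')\cong\O_\epsilon[\widetilde N(rw^\bullet)](\psi(x'))=\O_\epsilon[\widetilde N(rw^\bullet)](y).
\]
Composing the three isomorphisms proves the theorem. Here Lemma~\ref{le:tNxy} is not even needed, since $y$ itself is hit by $\psi$. Alternatively, one could choose an arbitrary $x'\in B_{\theta,v}$ and use Lemma~\ref{le:tNxy} to move from $\psi(x')$ to $y$.

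The only point requiring care is that the fibre algebras are taken over compatible central subalgebras at each step. Concretely, the fibre of $\O_\epsilon[S]$ at $x'$ is defined via $\breve{\mathfrak m}_{x'}=\Gamma_1^{-1}(\mathfrak m_{x'})$, and this agrees with $\mathfrak m_{x'}\subset\O[S]$ under the identification $\O_1[S]\cong\O[S]$ by Lemma~\ref{lem:O1Btheta}. Likewise, the fibre of $\O_\epsilon[\widetilde N(rw^\bullet)]$ at $\psi(x')$ matches the one used in Proposition~\ref{prop:tNB} by the commutative diagram \eqref{cd:Lambda}. I expect this bookkeeping to be the main (mild) obstacle; everything else is formal composition of the earlier results.
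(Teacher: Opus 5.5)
Your proposal is correct and follows essentially the same route as the paper's proof: choose $x'\in B_{\theta,v}$ with $\psi(x')=y$ via Proposition~\ref{prop:framed-strata}, then chain Theorem~\ref{thm:fiberiso}, Proposition~\ref{le:GB}, and Proposition~\ref{prop:tNB}. Your bookkeeping remarks about matching the central subalgebras (via Lemma~\ref{lem:O1Btheta} and diagram~\eqref{cd:Lambda}) are accurate, and the paper leaves them implicit.
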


\begin{proof}
    By Proposition~\ref{prop:framed-strata}, choose $x'\in B_{\theta,v}$ such that $\psi(x')=y$. Theorem~\ref{thm:fiberiso}, Proposition~\ref{le:GB}, and Proposition~\ref{prop:tNB} give
    \[
    \O_\epsilon[S](x)\cong\O_\epsilon[S](x')
    \cong\O_\epsilon[B_\theta](x')
    \cong\O_\epsilon[\widetilde N(rw^\bullet)](y).
    \]
    This completes the proof.
\end{proof}

\begin{remark}
The semisimple simply connected assumption in the begininng of this section is not essential for the fibre-algebra comparison. For a general reductive root datum, one can similarly construct a framed root datum adapted to the spherical lattice and obtain isomorphisms with the corresponding fibre algebras of quantum Schubert cells. We work in the semisimple simply connected case to keep the framed construction explicit and uniform.
\end{remark}

As a consequence of Theorem \ref{thm:OSN}, Conjecture \ref{conj:HMY-Nw} implies Conjecture \ref{conj:dimV}.

\end{document}